\newtheorem{theorem}{\textbf{Theorem}}[section]
\newtheorem{lemma}{\textbf{Lemma}}[section]
\newtheorem{proposition}{\textbf{Proposition}}[section]
\newtheorem{corollary}{\textbf{Corollary}}[section]
\newtheorem{remark}{\textbf{Remark}}[section]
\newtheorem{definition}{\textbf{Definition}}[section]
\def\be{\begin{equation}}
\def\ee{\end{equation}}
\def\bt{\begin{theorem}}
\def\et{\end{theorem}}
\def\bl{\begin{lemma}}
\def\el{\end{lemma}}
\def\br{\begin{remark}}
\def\er{\end{remark}}
\def\bp{\begin{proposition}}
\def\ep{\end{proposition}}
\def\bc{\begin{corollary}}
\def\ec{\end{corollary}}
\def\bd{\begin{definition}}
\def\ed{\end{definition}}
\def\d{{\rm d}}
\def\l{\langle}
\def\r{\rangle}
\def\H{\bm{H}}
\def\L{\bm{L}}
\def\ddt{\frac{\d}{\d t}}
\def\n{{\bm{n}}}
\def\div{\mathrm{div}}
\def\J{\mathbf{J}}
\newcommand{\habil}[1]{}
\newcommand{\uloc}{\operatorname{uloc}}
\newcommand{\R}{\ensuremath{\mathbb{R}}}
\def\uu{\bm{u}}
\def\vv{\bm{v}}
\def\yy{\bm{y}}
\def \au {\rm}
\def \ti {\it}
\def \jou {\rm}
\def \bk {\it}
\def \no#1#2#3 {{\bf #1} (#3), #2.}
\def \eds#1#2#3 {#1, #2, #3.}
\def\@settitle{\begin{center}%
  \baselineskip14\p@\relax
    \huge
  \@title
  \end{center}%
}
\begin{document}

\title[Navier--Stokes/Cahn--Hilliard system for two-phase flows with chemotaxis]{
\emph{On a Thermodynamically Consistent Diffuse-Interface Model for Incompressible Two-Phase Flows \\ with Chemotaxis and Mass Transport
}}

\author[A. Giorgini, J.-N. He \& H. Wu]{
Andrea Giorgini$^\dagger$,\ \ \
Jingning He$^\ddagger$,\ \ \
Hao Wu$^\ast$
}

\address{$^\dagger$Dipartimento di Matematica \\
Politecnico di Milano \\
Milano 20133, Italy}
\email{andrea.giorgini@polimi.it}

\address{$^\ddagger$School of Mathematics \\
Hangzhou Normal University \\
Hangzhou 311121, P. R. China}
\email{hejingning@hznu.edu.cn}

\address{$^\ast$School of Mathematical Sciences\\
Fudan University\\
Shanghai 200433, P. R. China}
\email{haowufd@fudan.edu.cn}

\begin{abstract}
We investigate a hydrodynamic system of Navier--Stokes/Cahn--Hilliard type, which describes the motion of a two-phase flow of two incompressible fluids with unmatched densities coupled with a soluble chemical species. Derived from Onsager's variational principle, this thermodynamically consistent diffuse-interface model incorporates both the chemotaxis effects induced by the chemical species and the mass transport processes within the mixture. For the two-dimensional initial-boundary value problem, we establish the existence of global finite energy solutions and global weak solutions, using a suitable approximation scheme combined with compactness methods. Next, by carefully analyzing three decoupled subsystems and employing a bootstrap argument, we prove the existence and uniqueness of a global strong solution for sufficiently regular initial data, as well as the propagation of regularity for global weak solutions. In particular, we show that the density of the chemical substance stays bounded for all time if its initial datum is bounded. This implies a significant distinction from the classical Keller--Segel system: diffusion driven by the chemical potential gradient can prevent the formation of concentration singularities.
\smallskip

\noindent \textsc{Keywords.}
Diffuse interface model, two-phase flow, chemotaxis, Navier--Stokes equations, Cahn--Hilliard equation, cross-diffusion, global well-posedness, propagation of regularity.
\smallskip

\noindent \textsc{MSC 2020.}
35B65, 76D03, 76D05, 76D45, 76T06.
\end{abstract}
%
%
%
\date{\today}
\maketitle
\tableofcontents

\section{Introduction}
\setcounter{equation}{0}
We consider a thermodynamically consistent diffuse interface model that describes the dynamics of a two-phase flow of two viscous incompressible Newtonian fluids with unmatched densities coupled with a soluble chemical species. The resulting hydrodynamic system reads as follows
\begin{equation}
 \label{NSCHc}
\begin{cases}
\partial_t ( \rho(\varphi)\vv) + \div \big( \vv \otimes \big( \rho(\varphi) \vv + \J ) \big) - \div \big( 2\nu(\varphi) D \vv \big) + \nabla P
= \mu \nabla \varphi + w \nabla \sigma,\\
\div \, \vv=0,\\
\partial_t \varphi +\vv\cdot \nabla \varphi = \div\big(m(\varphi)\nabla \mu\big),\\
\mu= -\varepsilon \Delta \varphi+ \dfrac{1}{\varepsilon} \Psi'(\varphi) + \beta'(\varphi) \sigma,\\
\partial_t \sigma + \vv \cdot \nabla \sigma -\div \left( \sigma \nabla w\right)=0,\\
w = \ln \sigma + \beta(\varphi),
\end{cases}
\end{equation}
in $\Omega \times (0,\infty)$, where $\Omega$ is a bounded domain in $\mathbb{R}^2$ with a sufficiently smooth boundary $\partial \Omega$. The system \eqref{NSCHc} is completed with the following boundary and initial conditions
\begin{equation}
\label{NSCHc-bic}
\begin{cases}
\vv=\mathbf{0}, \quad \partial_\n \varphi
= m(\varphi)\partial_\n \mu
= \sigma \partial_\n w=0 \quad &\text{on }  \partial \Omega \times (0,\infty),\\
\vv|_{t=0}=\vv_0, \quad \varphi|_{t=0}=\varphi_0, \quad \sigma|_{t=0}=\sigma_0 \quad &\text{in } \Omega.
\end{cases}
\end{equation}
Here, $\n$ is the unit outward normal vector on $\partial \Omega$, and $\partial_\n$ denotes the outer normal derivative on $\partial \Omega$.
The system \eqref{NSCHc} was originally introduced by Abels, Garcke and Gr\"{u}n \cite{AGG2012}, derived from mass balance laws and Onsager's variational principle \cite{Ons31}. It presents a diffuse interface description of an isothermal mixed flow with two incompressible immiscible constituents, where $\varepsilon>0$ is a (small) parameter related to the ``thickness'' of the partially mixing interfacial region. The model under investigation pertains to scenarios in which the chemical species is soluble and dilute within the solvent, thereby preventing the formation of a third phase in the binary fluid mixture. On the other hand, it incorporates effects due to the transport of the chemical species across free interfaces between the two fluids,
while neglecting its influence on interfacial surface tension.

\subsection{Description of the model }
The coupled system \eqref{NSCHc} is characterized by the following state variables: the volume-averaged velocity $\vv\colon \Omega\times [0,\infty)\to \R^2$, the pressure of the fluid mixture $P\colon \Omega \times [0,\infty)\to \R$, the difference between the volume fractions of the two fluid components $\varphi\colon \Omega \times [0,\infty)\to [-1,1]$, and the density of the chemical species $\sigma\colon \Omega \times [0,\infty)\to [0,\infty)$. The scalar functions $\mu, w: \Omega \times [0,\infty)\to \mathbb{R}$ are the so-called chemical potentials with respect to the state variables $\varphi$ and $\sigma$, respectively.

In the equations for the linear momentum, $D \vv$ stands for the symmetrized gradient of $\vv$, that is, $D \vv=\frac12 (\nabla \vv +(\nabla \vv)^T)$. The average density $\rho$ and the average viscosity $\nu$ of the fluid mixture are defined as
\begin{equation}
\label{meanr-v}
\rho(\varphi)= \widetilde{\rho}_1 \frac{1-\varphi}{2}+ \widetilde{\rho}_2 \frac{1+\varphi}{2},
\qquad
\nu(\varphi)=\widetilde{\nu}_1 \frac{1-\varphi}{2}+ \widetilde{\nu}_2 \frac{1+\varphi}{2},
\end{equation}
where $\widetilde{\rho}_1$, $\widetilde{\rho}_2$ and $\widetilde{\nu}_1$, $\widetilde{\nu}_2$ denote the positive homogeneous density and viscosity parameters of the two fluid components, respectively.
In the current model, the density flux consists of two parts: $\rho(\varphi)\bm{v}$ which describes the transport by the fluid velocity, and the relative flux $\J$ given by
\begin{equation}
\label{Jrhonu}
\J= -\rho'(\varphi) m(\varphi) \nabla \mu= -\frac{\widetilde{\rho}_2-\widetilde{\rho}_1}{2} m(\varphi) \nabla \mu.
\end{equation}
This additional flux term accounts for diffusion of the components relative to the mean velocity in the case of unmatched densities \cite{AGG2012}. It is crucial to ensure the thermodynamic consistency of the system and simply vanishes when $\widetilde{\rho}_1=\widetilde{\rho}_2$. The interfacial force term $\mu \nabla \varphi + w \nabla \sigma$ can be equivalently written as
$$
\mu \nabla \varphi + w \nabla \sigma = -\varepsilon \mathrm{div} (\nabla \varphi \otimes \nabla \varphi) + \nabla \left(\frac{\varepsilon}{2}|\nabla \varphi|^2+ \frac{1}{\varepsilon}\Psi(\varphi)+ \sigma \ln \sigma -\sigma +\beta(\varphi) \sigma \right) \! ,
$$
where the first term on the right-hand side accounts for the capillary force due to surface tension, and the second term in the gradient can be absorbed into the pressure.

The evolution of the phase-field variable $\varphi$ is governed by a convective Cahn--Hilliard equation, in which diffusion of fluid components is taken into account. The scalar function $m\colon [-1,1]\to [0,\infty)$ is a mobility coefficient that measures the diffusion strength and may depend on $\varphi$ in general. In the chemical potential $\mu$, the nonlinear function $\Psi$ denotes the homogeneous free energy density of the mixing. A physically relevant example is the Flory--Huggins potential \cite{CH}:
\begin{equation}
\label{Log}
\Psi_{\mathrm{FH}}(r)
=\frac{\theta}{2}\big[ (1+r)\ln(1+r)+(1-r)\ln(1-r)\big]-\frac{\theta_0}{2} r^2, \quad r \in (-1,1),
\end{equation}
where $\theta, \theta_0$ are positive parameters. When $\theta< \theta_0$, $\Psi_{\mathrm{FH}}$ presents a double-well structure, which is important for the phase separation phenomenon. On the other hand, the singular nature of $\Psi_{\mathrm{FH}}$ and its derivatives near $\pm 1$ ensures that the phase-field variable $\varphi$ stays in the physically reasonable interval $[-1,1]$. The function $\beta$ characterizes the interaction between the fluid mixture and the chemical species. An example of $\beta$ was given in \cite{AGG2012}, which reaches for $\varphi\leq -1$ or $\varphi\geq 1$ the values $\beta_1$ or $\beta_2$ (parameters appearing in Henry's jump condition at the interface separating the two phases), respectively.

Finally, the density function $\sigma$ satisfies a mass balance law that results in a convection-diffusion equation. For simplicity, we have set the mobility coefficient in the $\sigma$-equation to be one, i.e., a positive constant.

In this study, we take classical boundary conditions as in \eqref{NSCHc-bic}: the fluid velocity field satisfies a no-slip boundary condition $\vv=\mathbf{0}$, the homogeneous Neumann type boundary conditions $m(\varphi)\partial_\n \mu=\sigma \partial_\n w=0$ indicate that there is no flux of the fluid components as well as the chemical species through the boundary, and $\partial_\n \varphi=0$ describes a ``contact angle'' of $\pi/2$ of the diffused interface and the boundary of the domain. In this setting, the initial boundary value problem \eqref{NSCHc}--\eqref{NSCHc-bic} satisfies two fundamental properties, that is, mass conservation and energy dissipation, which are the basis for subsequent analysis:

(1) \emph{Mass conservation}. Integrating \eqref{NSCHc}$_3$ and 
\eqref{NSCHc}$_5$ over $\Omega$, after integration by parts, we obtain
\begin{align}
\frac{\d}{\d t} \int_\Omega \varphi\,\d x =0,\quad \frac{\d}{\d t} \int_\Omega \sigma\,\d x =0,\quad \forall\, t>0.
\end{align}

(2) \emph{Energy dissipation}. Define the total energy
\begin{align}
\mathcal{E}(\bm{v},\varphi,\sigma)
&=\int_{\Omega}\Big( \underbrace{\frac{1}{2}\rho(\varphi)|\boldsymbol{v}|^{2}}_{\text{kinetic energy}}
+ \underbrace{\frac{\varepsilon}{2}|\nabla \varphi|^{2}
+ \frac{1}{\varepsilon}\Psi(\varphi)}_{\text{mixing energy}}
+ \underbrace{\sigma(\ln \sigma-1)}_{\text{entropy}}
+ \underbrace{\beta(\varphi) \sigma}_{\text{interaction}}\Big)\, \mathrm{d} x.
\label{total-Energy-0}
\end{align}
A direction calculation (see Section \ref{EXIST-WEAK} for related details) leads to the following formal energy identity
\begin{align}
\frac{\d}{\d t}\mathcal{E}(\bm{v},\varphi,\sigma)
+
\int_{\Omega} \left(2 \nu(\varphi)|D \boldsymbol{v}|^{2}
+m(\varphi)|\nabla \mu|^{2} +  \sigma|\nabla w|^2 \right)\, \mathrm{d} x=0,\quad \forall\, t>0.
\label{energy-id-0}
\end{align}

\subsection{Related literature}
When the interaction with chemical species is neglected, the system \eqref{NSCHc} reduces to the Navier--Stokes/Cahn--Hilliard system for incompressible two-phase flows with unmatched densities derived in \cite{AGG2012}. This model constitutes a thermodynamically consistent extension of the well-known ``Model H'' \cite{GPV96,HH77}, which is restricted to incompressible two-phase flows with ``matched'' densities, i.e., the density of the mixture is constant ($\widetilde{\rho}_1=\widetilde{\rho}_2$ in \eqref{meanr-v})
Many efforts in the literature have been dedicated to generalizing ``Model H'' to the case of unmatched densities, which is important in applications, see, for instance, \cite{Aki14,Bo02,DSS2007,LT98,Shen2013,SSBZ2017}. Comparison between existing models can be found in a recent work \cite{ten2023}, in which a unified framework was proposed for Navier--Stokes/Cahn--Hilliard models with unmatched densities. Unlike the quasi-incompressible models \cite{LT98,Shen2013,Aki14,SSBZ2017} -- which employ a mass-averaged velocity that is not divergence-free -- the model introduced in \cite{AGG2012} adopts the volume-averaged velocity as in \cite{Bo02,DSS2007}, thereby satisfying the incompressibility condition. In particular, this model is thermodynamically consistent as it satisfies an energy-dissipation law with a modified kinetic energy in terms of the volume-averaged velocity (cf. \cite{Bo02,DSS2007}). Furthermore, by treating the fluid mixture as a single fluid governed by the linear momentum conservation law (neglecting the momentum due to relative motions of the fluids \cite{GPV96}), the hydrodynamic system formulated with respect to the volume-averaged velocity remains frame-invariant (see \cite[Remark 2.2]{AGG2012}). For recent progress on the mathematical analysis of the model proposed in \cite{AGG2012} and its variants, we refer to \cite{ADG2013,ADG2013-2,AGG2023,AGP2024,Frigeri2016,Frigeri2021,GalGW2019, Gior2021,G2021,GK23} and the references therein.

The interaction between (fluid) mixtures and chemical species has drawn considerable attention in recent research, particularly in the context of tumor growth modeling \cite{CL10,GLSS,Hawk2011,Oden10}. Transport mechanisms such as \textit{chemotaxis} and \textit{mass transport} play an important role in associated complex dynamics. To see this, we first focus on the interplay between $\varphi$ and $\sigma$, neglecting their coupling to the macroscopic fluid flow. In \cite{GLSS}, the authors considered the free energy given by
$$
\mathcal{E}_{\text{free}}(\varphi, \sigma)= \int_{\Omega}\Big( \frac{\varepsilon}{2}|\nabla \varphi|^{2}
+ \frac{1}{\varepsilon}\Psi(\varphi)
+ \frac{1}{2}|\sigma|^2 + \chi (1-\varphi)\sigma \Big)\, \mathrm{d} x,
$$
which accounts for the phase separation process of the binary mixture (tumor and healthy tissues), the diffusion of a chemical species (e.g., a nutrient or a drug) and the interactions between them. The constant $\chi$ can be regarded as a parameter for a certain transport mechanism, e.g., the magnitude of chemotaxis sensitivities. Based on the mass balance law and the variational principle, they derived the following system (stated in the simplest form without fluid coupling):
\begin{equation}
\begin{cases}
\partial_t \varphi = \div(m(\varphi)\nabla \mu) +S(\varphi,\sigma),\\
\mu= -\varepsilon \Delta \varphi+ \dfrac{1}{\varepsilon} \Psi'(\varphi) -\chi \sigma,\\
\partial_t \sigma =\div(\nabla \sigma -\chi \nabla \varphi) +R(\varphi, \sigma),
\end{cases}
\label{CH-CH}
\end{equation}
where $S=S(\varphi, \sigma)$, $R=R(\varphi, \sigma)$ denote certain mass source terms.
In \eqref{CH-CH}, the mass fluxes are given by
\begin{align*}
&\bm{q}_\varphi:= -m(\varphi)\nabla \mu= -m(\varphi)\nabla( - \varepsilon\Delta \varphi + \varepsilon^{-1}\Psi'(\varphi)-\chi \sigma),
\quad
\bm{q}_\sigma:=-\nabla(\sigma -\chi \varphi).
\end{align*}
The term $\chi m(\varphi)\nabla \sigma$ in $\bm{q}_\varphi$ represents the chemotactic response of the binary mixture to the chemical species (i.e., chemotaxis), while the term $\chi \nabla \varphi$ in $\bm{q}_\sigma$ drives the chemical species via the concentration gradient of the mixture (i.e., mass transport). The latter is often referred to as ``active transport'' in the biological sense when $\chi\geq 0$, see \cite{GLSS} for detailed explanations. Analysis on the system \eqref{CH-CH} subject to different types of boundary conditions can be found in \cite{GLam17a,GLam17b}. When fluid effects are taken into account, we refer the reader to \cite{CGSS23,EG19jde,GLSS,FLRS18,KS22} for the coupling with Darcy's/Brinkman's equations, and to \cite{H,HW,HW24,LW2018} for the coupling with the Navier--Stokes system.

The reaction-diffusion equation for $\sigma$ in \eqref{CH-CH} has a cross-diffusion structure that depends linearly on $\varphi$. However, since the term $\chi\Delta \varphi$ has no sign property, the solution $\sigma$ may not satisfy a minimum principle. Even if the initial datum $\sigma_0$ is nonnegative, one cannot guarantee that $\sigma$ is nonnegative for $t>0$, in conflict with the physical interpretation of $\sigma$ as a concentration. This limitation motivated the authors of \cite{RSS2023} to propose an alternative evolution equation for $\sigma$:
\begin{align}
\partial _t\sigma = \div(\nabla  \sigma - \chi   \sigma\nabla \varphi) + R(\varphi,\sigma),\label{sigma-new}
\end{align}
which ensures the non-negativity of $\sigma$ with a properly designed source term $R$. From the perspective of the free energy, it corresponds to replacing the quadratic energy density $\frac12|\sigma|^2$ in $\mathcal{E}_{\text{free}}$ with the logarithmic entropy $\sigma\ln \sigma -\sigma$ (exactly as in \eqref{total-Energy-0}). This modification more naturally reflects the mass transfer
process of chemical species driven by the gradient of mixture distribution $\nabla \varphi$, since the mass flux $\chi\sigma\nabla \varphi$ is now proportional to the concentration of chemical species $\sigma$. Together with the Cahn--Hilliard equation for $\varphi$, we arrive at a new coupled system
\begin{equation}
\left\{ \begin{array}{l}
\partial_t \varphi = \div(m(\varphi)\nabla \mu) +S(\varphi,\sigma),
\\
\mu= -\varepsilon \Delta \varphi+ \dfrac{1}{\varepsilon} \Psi'(\varphi) -\chi \sigma,
\\
\partial _t\sigma = \div(\nabla  \sigma - \chi   \sigma\nabla \varphi) + R(\varphi,\sigma).
 \end{array} \right.
 \label{CH-KS}
\end{equation}
The nonlinear cross-diffusion structure in \eqref{sigma-new} is closely related to the well-known Keller--Segel system for chemotaxis (see, e.g., \cite{BBTW,Hor2004,KS}):
\begin{equation}
\left\{ \begin{array}{l}
\partial_t u=\mathrm{div}(\gamma (v)\nabla u-u\chi (v)\nabla v),\\
\tau \partial_t v=\Delta v + u -v.
\end{array} \right.
\label{KS-a}
\end{equation}
Replacing the second-order reaction-diffusion equation for $v$ in \eqref{KS-a} with the fourth-order Cahn--Hilliard equation for $\varphi$ in \eqref{CH-KS} leads to new mathematical challenges in the analysis. In \cite{RSS2023}, the authors established the existence of global weak solutions to \eqref{CH-KS} in two and three dimensions, assuming that $R(\varphi,\sigma)$ has a logistic growth with respect to $\sigma$. Further regularity properties were obtained under stronger assumptions on the structural data. In addition, the existence of the global attractor was recently achieved in \cite{SS2025}.
In this framework, analogously to the Keller--Segel system, the logistic term can penalize large values of the concentration, facilitating the derivation of global-in-time estimates.
Extensions to multi-species tumor growth models with chemotaxis and angiogenesis are presented in \cite{AS24,CGSS25}, which, however, adopt a different coupling structure, see \cite[Remark 1.1]{AS24}.
In terms of the fluid interaction, we refer to \cite{Sch24} for the analysis of a coupled system involving a Brinkman-type equation for the macroscopic velocity field. There, the author replaced $\sigma$ in the mass flux $\chi\sigma\nabla \varphi$ with a ``degenerate'' chemotactic sensitivity function like $\frac{\sigma}{1+\sigma^{q-1}}$ with $q\in (1,2]$, that is, with a slower growth for large values of $\sigma$ (cf. \cite{HorW2005,WWX18} for the Keller--Segel system). This modification served as an alternative regularization, thereby enabling a proof for the existence of global weak solutions in both two and three dimensions without invoking logistic degradation.
In the recent work \cite{GHW1}, the authors considered the coupling with a Navier--Stokes system for the fluid velocity field. Assuming matched densities for the binary fluids and the presence of logistic degradation, they established the existence of global weak solutions in two and three dimensions under general structural assumptions. Moreover, under stronger conditions on the coefficients and data, they proved the regularity and uniqueness of weak solutions in two dimensions. It is worth mentioning that in all the aforementioned works, global weak solutions are obtained under certain regularization of the system, either through logistic degradation or a degenerate chemotactic sensitivity function.

\subsection{Summary of results and proof strategies}
The main goal of this study is to analyze the initial boundary value problem \eqref{NSCHc}--\eqref{NSCHc-bic} in a general setting, that is, with unmatched densities, non-constant viscosity $\nu$ and mobility $m$, nonlinear interaction function $\beta$, but without any regularization due to logistic degradation or degenerate chemotactic sensitivity. Here, we focus on the case where the spatial dimension is two. The analysis in three dimensions is more involved and will be conducted in a forthcoming study.

We now present a summary of our main results:
\begin{itemize}
\item[(1)] When the initial energy $\mathcal{E}(\bm{v}_0,\varphi_0,\sigma_0)$ is finite (cf. \eqref{total-Energy-0}), problem \eqref{NSCHc}--\eqref{NSCHc-bic} admits a global finite energy solution $(\bm{v},\varphi, \mu, \sigma)$ that satisfies an energy inequality (cf. \eqref{energy-id-0}).
If in addition, $\sigma_0\in L^2(\Omega)$, problem \eqref{NSCHc}--\eqref{NSCHc-bic} admits a global weak solution. For the precise notion of solutions and the rigorous statements, we refer to Definition \ref{def-weak-1} and Theorem \ref{WEAK-SOL}.
\item[(2)] Under suitable stronger assumptions on the structural data and the initial data, problem \eqref{NSCHc}--\eqref{NSCHc-bic} admits a unique global strong  solution $(\bm{v},\varphi, \mu, \sigma)$, see Definition \ref{def-strong} and Theorem \ref{Reg-SOL}.
\item[(3)] Under the same structural assumptions as in (2), every global weak solution becomes a global strong solution for $t > 0$ (see Theorem \ref{regw}).
\item[(4)] For the global strong solution, if, in addition, $\sigma_0\in L^\infty(\Omega)$, then $\sigma$ is globally bounded (see Corollary \ref{non-blowup}).
\end{itemize}

In the following, we give some comments on the specific features of the problem, the novelties of the present work, and the proof strategies.

The energy dissipation relation \eqref{energy-id-0} serves as a starting point for the analysis. In order to derive global-in-time \textit{a priori} estimates from \eqref{energy-id-0}, two main arguments are required.
First, it is crucial to establish the coercivity of the total energy $\mathcal{E}(\bm{v},\varphi, \sigma)$. The singular potential $\Psi$ guarantees that the phase-field variable $\varphi$ takes its values in $[-1,1]$. The boundedness of $\varphi$ together with the entropy function $\sigma\ln \sigma$ allows us to control the interaction term $\beta(\varphi)\sigma$ that does not have a definite sign. In particular, it allows us to handle a nonlinear interaction function $\beta$,
going beyond the linear cases, e.g., $\beta(\varphi)=\chi(1-\varphi)$
considered in \cite{AS24,GHW1,RSS2023,Sch24}. Indeed, since only the value of $\beta$ in $[-1,1]$ is important, $\beta$ can be extended to zero outside a slightly larger interval like $(-2, 2)$ without affecting the model. Furthermore, the physical bound $\varphi\in [-1,1]$ ensures that both the average density $\rho(\varphi)$ and the average viscosity $\nu(\varphi)$ remain bounded and strictly positive. Secondly, a key difficulty lies in deriving additional estimates for $\varphi$ and $\sigma$, beyond the boundedness of the energy $\mathcal{E}(\bm{v},\varphi, \sigma)$, from the dissipative terms $\int_{\Omega} m(\varphi)|\nabla \mu|^{2} +  \sigma|\nabla w|^2 \, \mathrm{d} x$. This is achieved through a careful analysis of the coupling between the $\varphi$- and $\sigma$-equations (see Steps $3$ and $4$ in Section \ref{EXIST-WEAK}).

To prove the existence of a global finite energy/weak solution defined in Definition \ref{def-weak-1}, it is crucial to design a suitable approximation scheme being compatible with \textit{a priori} energy estimates. To avoid possible truncations in the $\sigma$-equation as in \cite{RSS2023}, we work with approximate solutions for the chemical density $\sigma$ within the class of classical solutions. By the semigroup theory for second-order parabolic equations and the strong maximum principle, these approximate solutions remain sufficiently smooth and strictly positive in $\Omega\times (0,\infty)$. Due to the singular potential $\Psi$, it does not seem convenient to construct approximate classical solutions for $(\varphi, \mu)$. Hence, we choose to treat the Navier--Stokes/Cahn--Hilliard subsystem for $(\bm{v}, \varphi, \mu)$ by a Faedo--Galerkin scheme in the spirit of \cite{Frigeri2016}. For this purpose, we first approximate the singular potential $\Psi$. Owing to the ``truncated'' shape of $\beta$ mentioned above, a standard polynomial-type regularization (cf. \cite{GMT2019,GiGrWu2018,MT16}) suffices, avoiding the more complex one used in \cite{GHW1}. As a price for this simplification, we have to deal with nonlinear, rather than linear, interaction functions.
Since the fourth-order Cahn--Hilliard equation with regular potential lacks a maximum principle, the physical bound of $\varphi$ cannot be guaranteed, and the average density $\rho(\varphi)$ may be negative outside the physical domain $[-1,1]$. To overcome this difficulty, we use a nonlinear extension of the linear density function $\rho(\varphi)$ to the entire line $\mathbb{R}$, which is denoted by $\widehat{\rho}(\varphi)$ so that $\widehat{\rho}$ has positive upper and lower bounds. In order to maintain a dissipative energy identity for the regularized problem, a further nonlinear correction term $\widehat{R}\bm{v}/2$ to the Navier--Stokes system is necessary, see Section \ref{modif-sys} for further details. Finally, to gain sufficient estimates to pass to the limit in the finite dimensional Galerkin scheme, we further introduce a $p$-Laplacian regularization in the modified Navier--Stokes system. In summary, the regularized problem can be solved by using the semi-Galerkin scheme mentioned above combined with the Schauder fixed-point theorem. The existence of a global finite energy/weak solution to the original problem \eqref{NSCHc}--\eqref{NSCHc-bic} can be obtained by passing to the limit with suitable compactness arguments. Here, we distinguish two types of variational solutions by different requirements on their initial data. The additional assumption $\sigma_0\in L^2(\Omega)$ for the global weak solution yields enhanced regularity properties, which in turn ensure its instantaneous regularization for $t > 0$.

Due to the highly nonlinear coupling structure of the system \eqref{NSCHc}, direct construction of a global strong solution seems rather difficult. We adopt a different strategy by studying three decomposed subsystems for the unknowns $\bm{v}$, $(\varphi, \mu)$ and $\sigma$, respectively. These results have independent interests and extend the previous literature on single equations (see, e.g. \cite{CGGG,Gior2021}). Then, based on the existence of a global weak solution and a novel bootstrap argument, we can establish the existence of a global strong solution. Uniqueness is a consequence of the energy method, relying on the crucial fact that the phase-field variable is strictly separated from the pure phases $\pm 1$ for all $t\geq 0$.

We note that the fourth-order Cahn--Hilliard equation yields lower temporal regularity but better spatial regularity for the phase-field variable $\varphi$. This regularity property, together with the boundedness of $\varphi$, leads to a fundamentally different behavior of the cross-diffusion structure compared to the Keller--Segel system in two dimensions, which has the so-called critical mass phenomenon, see e.g., \cite{GZ98,HorW01,NSY97}. In particular, for every global strong solution to problem \eqref{NSCHc}--\eqref{NSCHc-bic} with a bounded initial datum $\sigma_0$, the chemical density $\sigma$ remains bounded for all time. This finding reveals that, in the two-dimensional case, the diffusion process driven by the chemical potential $\mu$ (i.e., the Cahn--Hilliard dynamics) suppresses the growth of $\sigma$, precluding any blow-up.


\section{Main Results}
\setcounter{equation}{0}
\label{pm}
\subsection{Preliminaries}
First, we introduce some notation that will be used
throughout this paper.

Let $X$ be a real Banach space. We denote its norm by $\|\cdot\|_X$, its dual space by $X'$, and the duality pairing by $\langle \cdot,\cdot\rangle_{X',X}$. The bold letter $\bm{X}$ denotes the generic space of vectors or matrices, with each component belonging to $X$. Given a measurable set $I\subset \mathbb{R}$, $L^q(I;X)$ with $q\in [1,\infty]$ denotes the space of Bochner measurable $q$-integrable functions (for $q\in [1,\infty)$) or essentially bounded functions (for $q=\infty$) with values in the Banach space $X$. If $I=(a, b)$, we simply write $L^q(a,b;X)$. In addition, $L^q_{\uloc}([a,\infty); X)$ denotes the uniformly local variant of $L^q(a,\infty;X)$ consisting of all strongly measurable functions $f\colon
[a,\infty)\to X$ such that
\begin{equation*}
  \|f\|_{L^q_{\uloc}([a,\infty); X)}= \sup_{t\geq a}\|f\|_{L^q(t,t+1;X)} <\infty.
\end{equation*}
For a finite interval $[a,b]$, we set $L^q_{\uloc}([a,b]; X) := L^q(a,b;X)$. Next, we denote by $C([a,b];X)$ (resp. $C_{\mathrm{w}}([a,b];X)$) the Banach space of functions that are strongly (resp. weakly) continuous from $[a,b]$ to $X$. Accordingly, $BC([a,\infty);X)$ (or $BC_{\mathrm{w}}([a,\infty);X)$) stands for the Banach space of all bounded and continuous (or weakly continuous) functions. For $q\in [1,\infty]$, $W^{1,q}(a,b;X)$ is the space of all $f\in L^q(a,b;X)$ with $\partial_t f \in L^q(a,b;X)$, where $\partial_t$ denotes the vector-valued distributional derivative of $f$. The uniformly local space $W^{1,q}_{\uloc}([a,\infty);X)$ is defined by replacing $L^q(a,b;X)$ with $L^q_{\uloc}([a,\infty);X)$. Finally, we set $H^1(a,b;X):= W^{1,2}(a,b;X)$ and $H^1_{\uloc}([a,\infty);X) := W^{1,2}_{\uloc}([a,\infty);X)$.

Throughout the paper, we assume that $\Omega \subset\mathbb{R}^2$ is a bounded domain with a sufficiently smooth boundary $\partial\Omega$. For any $q \in [1,\infty]$, we denote by $L^{q}(\Omega)$ the Lebesgue space. For any positive integer $s$ and $q\in [1,\infty]$, we denote by $W^{s,q}(\Omega)$ the Sobolev space of function in $L^q(\Omega)$ with distributional derivatives of order up to $s$ belonging to $L^q(\Omega)$. If $q=2$, we use the standard notation $H^{s}(\Omega)$ for the Hilbert space $W^{s,2}(\Omega )$. For simplicity, the norm and inner product in the space $L^{2}(\Omega)$ (as well as in $\bm{L}^{2}(\Omega)$) are denoted by $\|\cdot\|$ and $(\cdot,\cdot)$, respectively. In view of the homogeneous Neumann boundary condition, we introduce the space
$$
H^2_{N}(\Omega):= \big\{f\in H^2(\Omega)\ |\  \partial_{\bm{n}}f=0 \ \textrm{on}\  \partial \Omega\big\}.
$$

For every $f\in (H^1(\Omega))'$, we define its generalized mean over $\Omega$ by
$\overline{f}=|\Omega|^{-1} \langle f,1\rangle_{(H^1(\Omega))',H^1(\Omega)}$. If $f\in L^1(\Omega)$, then it holds $\overline{f}= |\Omega|^{-1} \int_\Omega f \,\mathrm{d}x$. For convenience, we define linear subspaces with zero mean
\begin{align*}
L^2_{0}(\Omega):=\big\{f\in L^2(\Omega)\ |\ \overline{f} =0\big\},
\quad
V_0:= H^1(\Omega)\cap L_0^2(\Omega),
\quad
V_0^{-1}:= \big\{ f \in (H^1(\Omega))'\ |\  \overline{f}=0 \big\}.
\end{align*}
Let $\mathcal{A}_N\in \mathcal{L}(H^1(\Omega),(H^1(\Omega))')$ be the realization of the minus Laplacian $-\Delta$ subject to the homogeneous Neumann boundary condition such that
\begin{equation}\nonumber
	\langle \mathcal{A}_N u,v\rangle_{(H^1(\Omega))',H^1(\Omega)} := \int_\Omega \nabla u\cdot \nabla v \, \mathrm{d}x,\qquad \forall\, u,v\in H^1(\Omega).
\end{equation}
The restriction of $\mathcal{A}_N$ from $V_0$ onto $V_0^{-1}$ is an isomorphism. In addition, $\mathcal{A}_N$ is positively defined on $V_0$ and self-adjoint. We denote the inverse map by $\mathcal{N} =\mathcal{A}_N^{-1}: V_0^{-1} \to V_0$. For any $f\in V_0^{-1}$, $u= \mathcal{N} f \in V_0$ is the unique weak solution of the Neumann problem
$$
\begin{cases}
	-\Delta u=f, \quad \text{in} \ \Omega,\\
	\partial_{\bm{n}} u=0, \quad \ \  \text{on}\ \partial \Omega.
\end{cases}
$$
For every $f\in V_0^{-1}$, we define $\|f\|_{V_0^{-1}}=\|\nabla \mathcal{N} f\|$.
It is well-known that $f \to \|f\|_{V_0^{-1}}$ and $
f \to\big(\|f-\overline{f}\|_{V_0^{-1}}^2+|\overline{f}|^2\big)^\frac12$
are norms on $V_0^{-1}$ and $(H^1(\Omega))'$,
respectively, which are equivalent to the standard ones (see e.g., \cite{MZ04}). Recalling the well-known Poincar\'{e}--Wirtinger inequality
\begin{equation}
	\notag
\|f-\overline{f}\|\leq C \|\nabla f\|,\qquad \forall\,f\in H^1(\Omega),
\end{equation}
where $C>0$ depends only on $\Omega$, we find that $f\to \|\nabla f\|$ and  $f\to \big(\|\nabla f\|^2+|\overline{f}|^2\big)^\frac12$  are equivalent norms on $V_0$ and $H^1(\Omega)$, respectively. Moreover, we report the following standard Hilbert interpolation inequality
\begin{align}
	&\|f\|  \leq \|f\|_{V_0^{-1}}^{\frac12} \| \nabla f\|^{\frac12},
	 \qquad  \forall\, f \in V_0.
\label{inter-1}
\end{align}

Let us consider the following elliptic problem with a variable coefficient
\begin{equation}\label{epgq}
\begin{cases}
-\div(m(a) \nabla u) = f \quad & \text{in}\ \Omega \\
m(a) \partial_\n u = 0 \quad & \text{on} \ \partial\Omega.
\end{cases}
\end{equation}
The following result has been established in \cite{CGGG}:
\begin{lemma}\label{lem-mo}
Assume that $m\in C([-1,1])$ and
$m_{*} \leq m(r)\leq m^*$ for all $r \in [-1,1]$ with $m_{*}<m^*$ being two given positive constants, $a: \Omega \to [-1,1]$ is a measurable function.
For every $f\in V_0^{-1}$, problem \eqref{epgq} admits a unique weak solution $u\in V_0$ satisfying
\begin{equation}\label{def G_q}
( m(a) \nabla u ,\nabla v ) = \l f, v \r_{(H^1(\Omega))',H^1(\Omega)},
\qquad \forall \, v \in  V_{0}.
\end{equation}
In addition, if $m\in C^1([-1,1])$ and $a\in H^2(\Omega)$, then we have
\begin{align}
\|u\|_{H^2(\Omega)}\leq C(\|\nabla a\|\|a\|_{H^2(\Omega)}\|\nabla u\| +\|f\|),\qquad
\forall\, f\in L^2_0(\Omega).
\label{es-Ga-H2}
\end{align}
\end{lemma}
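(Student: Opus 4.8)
The plan is to treat the two assertions separately. For existence and uniqueness of the weak solution, I would invoke the Lax--Milgram theorem on the Hilbert space $V_0$. Define the bilinear form $B(u,v) = (m(a)\nabla u, \nabla v)$ for $u,v \in V_0$. Since $m_* \leq m(a) \leq m^*$ a.e. in $\Omega$, the form is bounded, $|B(u,v)| \leq m^*\|\nabla u\|\,\|\nabla v\|$, and coercive, $B(u,u) \geq m_*\|\nabla u\|^2$; recalling that $\|\nabla \cdot\|$ is an equivalent norm on $V_0$ by the Poincar\'e--Wirtinger inequality, coercivity holds with respect to the $V_0$-norm. The map $v \mapsto \langle f, v\rangle_{(H^1(\Omega))',H^1(\Omega)}$ is a bounded linear functional on $V_0$ because $f \in V_0^{-1} \subset (H^1(\Omega))'$. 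Lax--Milgram then yields a unique $u \in V_0$ satisfying \eqref{def G_q}.

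For the $H^2$ estimate I would first rewrite the equation in non-divergence form. Using $m \in C^1([-1,1])$ and expanding the divergence, the weak formulation \eqref{def G_q} with $f \in L^2_0(\Omega)$ is equivalent to the Neumann problem
$$
-\Delta u = g := \frac{1}{m(a)}\big(f + m'(a)\nabla a \cdot \nabla u\big) \quad \text{in } \Omega, \qquad \partial_\n u = 0 \quad \text{on } \partial\Omega.
$$
The first task is to justify $u \in H^2(\Omega)$. Here I would run a short bootstrap: since $a \in H^2(\Omega) \hookrightarrow W^{1,p}(\Omega)$ for every $p < \infty$ in two dimensions and $\nabla u \in L^2(\Omega)$ to start, H\"older gives $g \in L^{4/3}(\Omega)$, whence elliptic regularity for the Neumann Laplacian yields $u \in W^{2,4/3}(\Omega)$ and thus $\nabla u \in L^4(\Omega)$; feeding this back shows $g \in L^2(\Omega)$ and therefore $u \in H^2(\Omega)$. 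Rigorously, the $H^2$ membership would be obtained by the standard difference-quotient argument, the bootstrap only serving to place $g$ in $L^2(\Omega)$.

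Once $u \in H^2(\Omega)$ is secured, the quantitative estimate follows from the sharp regularity of the inverse Neumann Laplacian together with a two-dimensional interpolation. Since $u$ has zero mean and $\partial_\n u = 0$, one has $-\Delta u \in L^2_0(\Omega)$ and $u = \mathcal{N}(-\Delta u)$, so $\|u\|_{H^2(\Omega)} \leq C\|\Delta u\| = C\|g\|$. To bound $\|g\|$, the only delicate term is $\|\nabla a \cdot \nabla u\|$, which I would control by the Ladyzhenskaya/Gagliardo--Nirenberg inequality in dimension two:
$$
\|\nabla a \cdot \nabla u\| \leq \|\nabla a\|_{L^4}\|\nabla u\|_{L^4} \leq C \|\nabla a\|^{\frac12}\|a\|_{H^2(\Omega)}^{\frac12}\,\|\nabla u\|^{\frac12}\|u\|_{H^2(\Omega)}^{\frac12}.
$$
Combining the last two displays and applying Young's inequality to split off $\tfrac12\|u\|_{H^2(\Omega)}$, which is then absorbed into the left-hand side, would produce exactly \eqref{es-Ga-H2}. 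I expect the main obstacle to be this final step: one must route the estimate through the $L^4$-norms of the two gradients so that the interpolation generates a factor $\|u\|_{H^2(\Omega)}^{1/2}$ that can be absorbed, while keeping the remaining factors as the product $\|\nabla a\|\,\|a\|_{H^2(\Omega)}\,\|\nabla u\|$ demanded by \eqref{es-Ga-H2}. Using the zero-mean regularity $\|u\|_{H^2(\Omega)} \leq C\|\Delta u\|$ (rather than $\|u\|_{H^2(\Omega)} \leq C(\|\Delta u\| + \|u\|)$) is what prevents spurious lower-order terms from appearing on the right-hand side.
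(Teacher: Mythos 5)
The paper does not prove this lemma itself; it is quoted from \cite{CGGG}. Your argument is correct and is the standard route: Lax--Milgram for existence and uniqueness, then the rewriting $-\Delta u = m(a)^{-1}(f+m'(a)\nabla a\cdot\nabla u)$, the bound $\|u\|_{H^2(\Omega)}\le C\|\Delta u\|$ for mean-free Neumann functions, the Ladyzhenskaya interpolation $\|\nabla a\|_{L^4}\|\nabla u\|_{L^4}\le C\|\nabla a\|^{1/2}\|a\|_{H^2(\Omega)}^{1/2}\|\nabla u\|^{1/2}\|u\|_{H^2(\Omega)}^{1/2}$, and Young's inequality to absorb the half-power of $\|u\|_{H^2(\Omega)}$, which reproduces \eqref{es-Ga-H2} exactly. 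The only point requiring care is the one you flag: passing to non-divergence form presupposes $u\in H^2(\Omega)$, so the qualitative regularity must be secured first (by difference quotients on the divergence-form equation, or by smoothing $a$ and passing to the limit in the uniform estimate); with that caveat acknowledged, the proof is complete.
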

Based on Lemma \ref{lem-mo}, we can define the solution operator $\mathcal{G}_a:  V^{-1}_{0} \to  V_{0}$ for problem \eqref{epgq} such that $u = \mathcal{G}_a f$. Then we have
\begin{align}
& \sqrt{m_*}\|\sqrt{m(a)}\nabla \mathcal{G}_a f\|\leq \|\nabla \mathcal{N}f\|\leq \sqrt{m^*}\|\sqrt{m(a)}\nabla \mathcal{G}_a f\|,\qquad \forall\, f\in V_0^{-1},
\label{equiv-norm-h-1}
\\
&
\l f, \mathcal{G}_a  g \r_{(H^1(\Omega))',H^1(\Omega)}
= \l g, \mathcal{G}_a f \r_{(H^1(\Omega))',H^1(\Omega)}, \qquad \forall \, f,g \in  V^{-1}_{0},
\label{AUTO}
\end{align}
and
\begin{equation}
\label{inter-2}
\| f\|\leq \sqrt{m^*}
\| \nabla \mathcal{G}_a f \|^\frac12 \| \nabla f \|^\frac12,
\qquad \forall \, f \in  V_{0}.
\end{equation}

Let us now introduce Hilbert spaces for solenoidal vector-valued functions. As in \cite{S}, we denote by $\bm{L}^2_{0,\sigma}(\Omega)$, $\bm{H}^1_{0,\sigma}(\Omega)$ the closure of $C_{0,\sigma}^{\infty}(\Omega;\mathbb{R}^2)=\big\{\bm{u}\in C_0^{\infty}(\Omega;\mathbb{R}^2)\,|\, \mathrm{div}\, \bm{u}=0\big\}$ in  $\bm{L}^2(\Omega)$ and $\bm{H}^1(\Omega)$,
respectively.\footnote{The subscript $\sigma$ is a conventional notation for spaces of divergence-free functions in the literature. It should not be related to the chemical concentration $\sigma$ considered in this study.} We also set $\bm{W}^{1,q}_{0,\sigma}(\Omega)=\bm{W}^{1,q}(\Omega)\cap \bm{H}^1_{0,\sigma}(\Omega)$ for any $q\in (2,\infty)$.
Without ambiguity, we use $(\cdot,\cdot)$ and $\|\cdot\|$ for the inner product and the norm in $\bm{L}^2_{0,\sigma}(\Omega)$. For any function $\bm{u} \in \bm{L}^2(\Omega)$, the Helmholtz--Weyl decomposition yields (see \cite[Chapter \uppercase\expandafter{\romannumeral3}]{G})
\be
\bm{u}=\bm{u}_{0}+\nabla z,\quad\text{with}\ \ \bm{u}_{0} \in \bm{L}^2_{0,\sigma}(\Omega),\ z \in H^1(\Omega).\nonumber
\ee
Define the Leray projection $\bm{P}:\bm{L}^2(\Omega)\to \bm{L}^2_{0,\sigma}(\Omega)$ such that $\bm{P}(\bm{u})=\bm{u}_{0}$.
We have $\|\bm{P}(\bm{u})\|\leq \|\bm{u}\|$ for all $\bm{u}\in \bm{L}^2(\Omega)$.
The space $\bm{H}^1_{0,\sigma}(\Omega)$ is equipped with the inner product $(\bm{u},\bm{v})_{\bm{H}^1_{0,\sigma}}:=(\nabla \bm{u},\nabla \bm{v})$ and the norm $\|\bm{u}\|_{\bm{H}^1_{0,\sigma}}=\|\nabla \bm{u}\|$.
Owing to Korn's inequality
$$
\|\nabla \bm{u}\|\leq \sqrt{2}\|D\bm{u}\|
\leq \sqrt{2}\|\nabla \bm{u}\|,
\qquad \forall\, \bm{u}\in \bm{H}^1_{0,\sigma}(\Omega),
$$
$\|D\cdot\|$ gives an equivalent norm for $\bm{H}^1_{0,\sigma}(\Omega)$. Next, we introduce the Stokes operator $\bm{S}=\bm{P}(-\Delta)$ with the domain $D(\bm{S}):= \bm{H}^1_{0,\sigma}(\Omega)\cap\bm{H}^2(\Omega)$. Then $D(\bm{S})$ is a Hilbert space equipped with the inner product $(\bm{S}\bm{u},\bm{S}\bm{v})$ and the norm $\|\bm{S}\bm{u}\|$  (see, e.g., \cite[Chapter III]{S}). For any $\bm{u}\in D(\bm{S})$ and $\bm{\zeta} \in \bm{H}^1_{0,\sigma}(\Omega)$, it holds
$(\bm{S}\bm{u},\bm{\zeta})=(\nabla \bm{u},\nabla\bm{\zeta})$.

Finally, we recall some inequalities that will be used in the subsequent analysis.
\begin{lemma}[Generalized Young's inequality]
\label{You}
Let
\be
f(a):=\mathrm{e}^a-a-1, \quad g(b):=(b+1) \ln (b+1)-b.
\notag
\ee
Then, it holds
$$
a b \le f(a)+g(b), \quad \forall\, a, b \ge 0.
$$
\end{lemma}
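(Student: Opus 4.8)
The plan is to recognize this as a Fenchel--Young inequality for a pair of mutually conjugate convex functions. Indeed, $f(a)=\mathrm{e}^a-a-1$ is convex (since $f''(a)=\mathrm{e}^a>0$) with $f(0)=0$ and $f'(a)=\mathrm{e}^a-1$, while $g(b)=(b+1)\ln(b+1)-b$ satisfies $g'(b)=\ln(b+1)$ and $g''(b)=\tfrac{1}{b+1}>0$, so $g$ is convex with $g(0)=0$. A direct computation of the Legendre transform $f^\ast(b)=\sup_{a\in\mathbb{R}}\big(ab-f(a)\big)$, with supremum attained at $a=\ln(b+1)$, shows $f^\ast=g$; the assertion $ab\le f(a)+g(b)$ is then exactly the statement that $f$ and $g=f^\ast$ are conjugate. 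Since only elementary one-variable calculus is needed, I would give the self-contained minimization argument below rather than invoke convex duality.

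The concrete route is to fix $a\ge 0$ and define $h(b):=f(a)+g(b)-ab$ for $b\ge 0$, the goal being $h(b)\ge 0$. Using $g'(b)=\ln(b+1)$, one has $h'(b)=\ln(b+1)-a$, which vanishes precisely at $b_\ast=\mathrm{e}^a-1$. The single point requiring care, and the only place where the hypothesis $a\ge 0$ is used, is that this interior critical point is admissible: $a\ge 0$ forces $b_\ast\ge 0$, so the minimum over $[0,\infty)$ is attained at $b_\ast$ rather than at the boundary $b=0$. Together with $h''(b)=\tfrac{1}{b+1}>0$, which makes $h$ strictly convex on $[0,\infty)$, this identifies $b_\ast$ as the unique global minimizer.

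It then remains to check that the minimum value is exactly zero. Since $b_\ast+1=\mathrm{e}^a$, a short computation gives $g(b_\ast)=\mathrm{e}^a\ln(\mathrm{e}^a)-(\mathrm{e}^a-1)=a\mathrm{e}^a-\mathrm{e}^a+1$, whence
\begin{align}
h(b_\ast)&=\big(\mathrm{e}^a-a-1\big)+\big(a\mathrm{e}^a-\mathrm{e}^a+1\big)-a\big(\mathrm{e}^a-1\big)\notag\\
&=\mathrm{e}^a-a-1+a\mathrm{e}^a-\mathrm{e}^a+1-a\mathrm{e}^a+a=0.\notag
\end{align}
Therefore $h(b)\ge h(b_\ast)=0$ for all $b\ge 0$, i.e.\ $ab\le f(a)+g(b)$, and since $a\ge 0$ was arbitrary the claim follows. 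There is no genuine obstacle here; the argument is routine once one spots the conjugacy structure, and the sole subtlety is the boundary-versus-interior check for the minimizer, which is resolved precisely by the sign condition $a\ge 0$.
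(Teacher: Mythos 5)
Your proof is correct. The paper states Lemma \ref{You} without proof (it is the standard Fenchel--Young inequality for the conjugate pair $f(a)=\mathrm{e}^a-a-1$ and $f^\ast=g$), so there is no argument in the paper to compare against; your self-contained minimization — locating the critical point $b_\ast=\mathrm{e}^a-1$, using convexity of $h$, and verifying $h(b_\ast)=0$ — is a complete and accurate verification, and your identification of where $a\ge 0$ enters is apt.
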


The next lemma gives an interpolation inequality involving logarithmic norms (see \cite[Lemma A.5]{TW2014}):
\begin{lemma}
\label{GN-ln}
Let $\Omega\subset \mathbb{R}^2$ be a bounded domain with smooth boundary, $q \in (1,\infty)$,  $r\in (1,q)$  and $\alpha>0$. Then, there exists $C>0$ such that for each $\eta>0$,
\begin{equation}
\| u\|_{L^q(\Omega)}^q \leq \eta \| \nabla u\|^{q-r} \big\| u \ln^\alpha |u|\big\|_{L^r(\Omega)}^r
+ C \| u\|_{L^r(\Omega)}^q + C_\eta, \quad \forall \, u \in H^1(\Omega),
\notag
\end{equation}
for some positive constant $C_\eta$ depending on $\eta$.
\end{lemma}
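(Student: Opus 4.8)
The plan is to combine the standard Gagliardo--Nirenberg inequality on the bounded domain $\Omega \subset \R^2$ with an upward truncation that activates the logarithmic weight only where $|u|$ is large. Recall that for $1 < r < q < \infty$ the (additive) Gagliardo--Nirenberg inequality on a bounded smooth planar domain reads
\begin{equation}
\| w\|_{L^q(\Omega)}^q \le C_0 \| \nabla w\|^{q-r} \| w\|_{L^r(\Omega)}^r + C_0 \| w\|_{L^r(\Omega)}^q, \qquad \forall\, w \in H^1(\Omega),
\notag
\end{equation}
with a fixed $C_0 = C_0(\Omega,q,r) > 0$ (the interpolation exponent $\theta = 1 - r/q$ is forced by the two-dimensional scaling, in which $\| \nabla w\|$ is scale invariant). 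The key point is that $C_0$ is a fixed constant, whereas in the target inequality the coefficient of the gradient term must be made arbitrarily small; the logarithmic factor is exactly what permits this trade-off.

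First I would fix a threshold $M > \mathrm{e}$, to be chosen at the end in terms of $\eta$, and introduce the truncation $v := (|u|-M)_+\,\mathrm{sgn}(u)$. Since $s \mapsto (|s|-M)_+\,\mathrm{sgn}(s)$ is globally $1$-Lipschitz and $u \in H^1(\Omega)$, one has $v \in H^1(\Omega)$ with $|\nabla v| \le |\nabla u|$ a.e., hence $\| \nabla v\| \le \| \nabla u\|$. Moreover $v$ is supported in $\{|u| > M\}$ and $|u| \le |v| + M$ pointwise, so by convexity $|u|^q \le 2^{q-1}(|v|^q + M^q)$ and therefore
\begin{equation}
\| u\|_{L^q(\Omega)}^q \le 2^{q-1}\| v\|_{L^q(\Omega)}^q + 2^{q-1} M^q |\Omega|.
\notag
\end{equation}

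Next I would apply Gagliardo--Nirenberg to $v$ and convert the resulting $\| v\|_{L^r}$ into the logarithmic norm. On $\{|u|>M\}$ we have $(\ln|u|)^{\alpha r} \ge (\ln M)^{\alpha r}$ and $|v| \le |u|$, whence
\begin{equation}
\| v\|_{L^r(\Omega)}^r = \int_{\{|u|>M\}} |v|^r \,\mathrm{d}x \le (\ln M)^{-\alpha r} \int_{\{|u|>M\}} |u|^r (\ln|u|)^{\alpha r}\,\mathrm{d}x \le (\ln M)^{-\alpha r}\, \big\| u \ln^\alpha|u|\big\|_{L^r(\Omega)}^r.
\notag
\end{equation}
Combining the three displays, and using $\| v\|_{L^r} \le \| u\|_{L^r}$ in the middle term, gives
\begin{equation}
\| u\|_{L^q(\Omega)}^q \le \frac{2^{q-1}C_0}{(\ln M)^{\alpha r}}\, \| \nabla u\|^{q-r}\big\| u\ln^\alpha|u|\big\|_{L^r(\Omega)}^r + 2^{q-1}C_0 \| u\|_{L^r(\Omega)}^q + 2^{q-1}M^q|\Omega|.
\notag
\end{equation}
It now suffices to pick $M = M(\eta)$ so large that $2^{q-1}C_0 (\ln M)^{-\alpha r} \le \eta$; this fixes the gradient coefficient as required, leaves $C := 2^{q-1}C_0$ independent of $\eta$, and produces the additive constant $C_\eta := 2^{q-1}M(\eta)^q|\Omega|$.

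The only genuine obstacle is conceptual rather than technical: one must arrange the constants so that the coefficient of $\| u\|_{L^r}^q$ stays independent of $\eta$ while all the $\eta$-dependence is confined to the pure constant $C_\eta$. The truncation achieves exactly this separation, since the Gagliardo--Nirenberg constant $C_0$ and the convexity factor $2^{q-1}$ never see the threshold $M$, and $M$ enters only through the harmless additive volume term and through the gradient coefficient that $\eta$ is designed to control. A minor point worth flagging is the interpretation of $\ln^\alpha|u|$ near $|u|=0,1$ for non-integer $\alpha$; since the argument uses the weight only on $\{|u|>M\}$ with $M>\mathrm{e}$, where $\ln|u|>1$, any reasonable convention (e.g.\ replacing $\ln|u|$ by $\ln(\mathrm{e}+|u|)$) is immaterial to the estimate.
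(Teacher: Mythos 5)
Your proof is correct, and it is essentially the standard argument: the paper does not prove this lemma but cites it from Tao--Winkler (\cite[Lemma A.5]{TW2014}), whose proof proceeds by exactly the same device of truncating at a large level $M$, applying the additive Gagliardo--Nirenberg inequality to the truncated function, and using $\ln|u|\geq \ln M$ on $\{|u|>M\}$ to make the gradient coefficient small while the truncation threshold is absorbed into the $\eta$-dependent additive constant. No gaps.
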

The following lemma is a consequence of the classical Trudinger--Moser inequality (see \cite[Lemma 2.2]{W2020}):
\begin{lemma}
\label{TM-var}
Let $\Omega\subset \mathbb{R}^2$ be a bounded domain with smooth boundary. There exists $M>0$ depending on $\Omega$ such that if $\sigma\in C(\overline{\Omega})$ is nonnegative, $\sigma \not\equiv 0$, and $\varphi\in H^1(\Omega)$, then for each $\eta >0$, it holds
\begin{align}
\int_\Omega|\varphi|\sigma\,\mathrm{d}x\leq \frac{1}{\eta}\int_\Omega \sigma\ln\left(\frac{\sigma}{\overline{\sigma}}\right)\,\d x
+ \eta \left(\int_\Omega\sigma\,\d x\right)\|\nabla \varphi\|^2
+ M\eta \left(\int_\Omega\sigma\,\d x\right) \|\varphi\|_{L^1(\Omega)}^2 + \frac{M}{\eta}\int_\Omega \sigma\,\d x,
\notag
\end{align}
where $\overline{\sigma}=|\Omega|^{-1}\int_\Omega \sigma\,\d x>0$.
\end{lemma}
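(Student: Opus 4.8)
The plan is to derive the inequality from a duality (Gibbs/entropy) argument combined with the two-dimensional Trudinger--Moser inequality, rather than from a naive pointwise application of Lemma \ref{You}. Write $m := \int_\Omega \sigma\,\d x > 0$ and introduce the probability measure $\d\nu := (\sigma/m)\,\d x$. The central observation is that a pointwise use of the Young pair $(f,g)$ from Lemma \ref{You} on the product $|\varphi|\sigma$ would produce a term $\tfrac{1}{\eta}\int_\Omega e^{\eta|\varphi|}\,\d x$, which cannot be dominated by the polynomial quantities $\|\nabla\varphi\|^2$ and $\|\varphi\|_{L^1(\Omega)}^2$ on the right-hand side. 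The remedy is to keep the exponential \emph{inside a logarithm}, i.e. to use the integrated (variational) form of Young's inequality.

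Concretely, first I would establish the duality estimate
\begin{equation}
\int_\Omega |\varphi|\sigma\,\d x \le \frac{1}{\eta}\int_\Omega \sigma\ln\frac{\sigma}{\overline\sigma}\,\d x + \frac{m}{\eta}\ln\Big(\frac{1}{|\Omega|}\int_\Omega e^{\eta|\varphi|}\,\d x\Big).
\notag
\end{equation}
This follows from Jensen's inequality applied to the concave function $\ln$: upon writing $\tfrac{1}{|\Omega|}\int_\Omega e^{\eta|\varphi|}\,\d x = \int_\Omega e^{\eta|\varphi|-\ln(\sigma/\overline\sigma)}\,\d\nu$ (using $\overline\sigma=m/|\Omega|$) and applying $\ln\int(\cdot)\,\d\nu \ge \int\ln(\cdot)\,\d\nu$, one obtains $\ln\big(\tfrac{1}{|\Omega|}\int_\Omega e^{\eta|\varphi|}\big) \ge \tfrac{\eta}{m}\int_\Omega|\varphi|\sigma - \tfrac{1}{m}\int_\Omega\sigma\ln(\sigma/\overline\sigma)$, which is the displayed bound after rearranging. (Equivalently, this is the Gibbs variational inequality, which may also be founded on Lemma \ref{You} by optimizing the scaling.) The hypotheses $\sigma\in C(\overline\Omega)$, $\sigma\ge 0$, $\sigma\not\equiv 0$ guarantee $\overline\sigma>0$ and the measure-theoretic validity of these manipulations, since $\sigma\ln(\sigma/\overline\sigma)$ extends continuously by $0$ on $\{\sigma=0\}$.

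Next I would bound the exponential integral by the Trudinger--Moser inequality: there is $C_\Omega>0$ with $\int_\Omega e^{w}\,\d x \le C_\Omega\exp\big(\tfrac{1}{16\pi}\|\nabla w\|^2 + |\Omega|^{-1}\int_\Omega w\,\d x\big)$ for all $w\in H^1(\Omega)$. Taking $w=\eta|\varphi|$ and using $|\nabla|\varphi||=|\nabla\varphi|$ a.e. gives, after taking logarithms,
\begin{equation}
\ln\Big(\frac{1}{|\Omega|}\int_\Omega e^{\eta|\varphi|}\,\d x\Big) \le \ln\frac{C_\Omega}{|\Omega|} + \frac{\eta^2}{16\pi}\|\nabla\varphi\|^2 + \frac{\eta}{|\Omega|}\|\varphi\|_{L^1(\Omega)}.
\notag
\end{equation}
Multiplying by $m/\eta$ and inserting into the duality estimate, the gradient term becomes $\tfrac{1}{16\pi}\eta m\|\nabla\varphi\|^2 \le \eta m\|\nabla\varphi\|^2$, while the linear term $\tfrac{m}{|\Omega|}\|\varphi\|_{L^1(\Omega)}$ is converted by Young's inequality into $\tfrac12\eta m\|\varphi\|_{L^1(\Omega)}^2 + \tfrac{m}{2\eta|\Omega|^2}$. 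Collecting the constants into a single $M=M(\Omega)$ (taking $M$ larger than $\tfrac12$, $\tfrac{1}{2|\Omega|^2}$, and the positive part of $\ln(C_\Omega/|\Omega|)$) yields precisely the claimed inequality.

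The main obstacle is conceptual rather than computational: recognizing that the estimate cannot come from a direct pointwise Young splitting (which is defeated by the uncontrollable exponential), and that one must instead pass through the logarithmic/variational form so that the Trudinger--Moser bound is applied \emph{under} a logarithm. A secondary, quantitative point is that the coefficient $1$ of $\eta m\|\nabla\varphi\|^2$ in the statement is not free: it forces the use of the Trudinger--Moser constant $1/(16\pi)<1$ (any admissible constant $\le 1$ would suffice), so one must invoke a sharp enough form of the inequality rather than a crude version with an unspecified large constant.
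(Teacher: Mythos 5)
Your argument is correct and coincides in substance with the standard proof of this result: the paper itself offers no proof, citing the lemma as \cite[Lemma 2.2]{W2020}, and the proof there runs exactly through the entropy/Gibbs duality you use (equivalently, the pointwise Young inequality $xy\le x\ln x+e^{y}$ with an optimally shifted $y$, which is the same computation as your Jensen step) followed by the Trudinger--Moser inequality. The one quantitative slip is the Trudinger--Moser constant: for general $w\in H^1(\Omega)$ without boundary conditions the admissible exponent in $\exp\big(\kappa\|\nabla w\|^2+|\Omega|^{-1}\int_\Omega |w|\,\mathrm{d}x\big)$ is $\kappa=1/(8\pi)$ rather than $1/(16\pi)$ (the latter belongs to the $H^1_0(\Omega)$ case, and the stronger $1/(16\pi)$ form fails for functions concentrating at a boundary point), but, as you yourself note, any $\kappa\le 1$ yields the coefficient $\eta\int_\Omega\sigma\,\mathrm{d}x$ in front of $\|\nabla\varphi\|^2$, so the conclusion is unaffected.
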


In this work, we shall use the following notation
$$
\rho_\ast=\min \lbrace \widetilde{\rho}_1,\widetilde{\rho}_2\rbrace,
\quad \rho^\ast=\max \lbrace \widetilde{\rho}_1,\widetilde{\rho}_2\rbrace.
$$
For any given vectors $\bm{a}, \bm{b}\in \R^2$, the notation $\bm{a}\otimes\bm{b}$ denotes the matrix $(a_i b_j)_{i,j=1}^2$. The symbols $C$, $C_i$ stand for generic positive constants that may even change within the same line. Specific dependence of these constants in terms of the data will be pointed out if necessary.

\subsection{Statement of results}
Throughout this paper, we make the following hypotheses. \smallskip
\begin{itemize}
\item[(H1)] The singular potential $\Psi$ belongs to the class of functions $C\big([-1,1]\big)\cap C^{2}\big((-1,1)\big)$. It can be decomposed into the following form
\begin{equation}
\Psi(r)=\Psi_{0}(r)-\frac{\theta_{0}}{2}r^2,\nonumber
\end{equation}
such that
\begin{equation}
\lim_{r\to \pm 1} \Psi_{0}'(r)=\pm \infty ,\quad \text{and}\ \ \   \Psi_{0}''(r)\ge \theta,\quad \forall\, r\in (-1,1),\nonumber
\end{equation}
where $\theta_0\in \mathbb{R}$ and $\theta$ is a strictly positive constant. In addition, there exists certain $\epsilon_0\in(0,1)$ such that $\Psi_{0}''$ does not decrease in $[1-\epsilon_0,1)$ and does not increase in $(-1,-1+\epsilon_0]$. We make the extension $\Psi_{0}(r)=+\infty$ for any $r\notin[-1,1]$. Without loss of generality, we also set $\Psi_0(0)=\Psi_0'(0)=0$. \smallskip
\item[(H2)] The viscosity function $\nu\in C^{1}(\mathbb{R})$ is globally Lipschitz continuous in $\mathbb{R}$ and
\be
\nu_{*} \leq \nu(r)\leq \nu^*,\quad \forall\, r \in \mathbb{R},\nonumber
\ee
where $\nu_{*}<\nu^*$ are given positive constants. \smallskip
\item[(H3)] The mobility function $m\in C^{1}(\mathbb{R})$ is globally Lipschitz continuous in $\mathbb{R}$ and
\be
m_{*} \leq m(r)\leq m^*,\quad \forall\, r \in \mathbb{R},\nonumber
\ee
where $m_{*}<m^*$ are given positive constants. \smallskip
\item[(H4)] The function $\beta \in C^{2}(\mathbb{R})$ satisfies
\be
\begin{aligned}
&|\beta(r)| \leq \beta^*, &&\quad \forall\, r \in \mathbb{R},
\notag\\
&\beta(r)=0,&&\quad \forall\, r\in (-\infty,-2]\cup [2, +\infty),
\end{aligned}
\ee
where $\beta^*$ is a given positive constant. In particular, the derivatives $\beta'$ and $\beta''$ are bounded in $\mathbb{R}$.
\end{itemize}

\begin{remark}\rm
The physically relevant logarithmic potential \eqref{Log} satisfies the assumption (H1). In addition, one can easily extend the linear viscosity function \eqref{meanr-v} to $\mathbb{R}$ in such a way that it fulfills (H2), see \cite[Remark 2.1]{GMT2019}. Since the singular potential $\Psi$ ensures that $\varphi\in [-1,1]$, the value of $\nu$ outside of $[-1,1]$ is not important and can be chosen appropriately as in (H2). Similarly, the only significant values of $m$, $\beta$ are those for $r\in[-1,1]$. A simple example for $\beta$ is that $\beta(r)=\chi (1-r)$ or $\beta(r)=\chi r$ for $r\in[-1,1]$, with a suitable extension outside $[-1,1]$ as in (H4). This choice appears in the modeling of two-phase flows with chemotaxis and mass transport \cite{GLSS,LW2018}, where the coefficient $\chi\in \mathbb{R}$ characterizes the strength of the chemotactic effect.
\end{remark}

To state the main results, we first reformulate the system \eqref{NSCHc} in a suitable form. Since exact values of the parameter $\varepsilon$ related to the interfacial thickness do not play a role in the subsequent analysis, without loss of generality, we set
$$\varepsilon=1.$$
Besides, a direct calculation (for sufficiently regular solutions $(\vv, \varphi, \mu, \sigma)$ with $\sigma>0$) yields that
\begin{align*}
&w \nabla \sigma
=
\big( \ln \sigma + \beta(\varphi)\big) \nabla \sigma
= \nabla \big(\sigma \ln \sigma -\sigma +\beta(\varphi) \sigma \big)
-\beta'(\varphi) \sigma \nabla \varphi,\\
&\sigma\nabla w = \sigma\nabla \big( \ln \sigma + \beta(\varphi)\big)= \nabla \sigma +\beta'(\varphi)\sigma\nabla\varphi.
\end{align*}
Then, up to a reinterpretation of the pressure (still denoted by $P$ for simplicity), we write the original system \eqref{NSCHc} as
\begin{equation}
 \label{NSCH}
\begin{cases}
\partial_t ( \rho(\varphi)\vv) + \div \big( \vv \otimes ( \rho(\varphi) \vv + \J) \big) - \div \big( 2\nu(\varphi) D \vv \big) + \nabla P = \big(\mu-\beta'(\varphi)  \sigma\big) \nabla \varphi,\\
\div \, \vv=0,\\
\partial_t \varphi +\vv\cdot \nabla \varphi = \div\big(m(\varphi)\nabla \mu\big),\\
\mu= - \Delta \varphi+ \Psi'(\varphi) + \beta'(\varphi) \sigma,\\
\partial_t \sigma + \vv \cdot \nabla \sigma - \Delta \sigma -\div \left( \beta'(\varphi)\sigma \nabla \varphi\right)=0,
\end{cases}
\end{equation}
in $\Omega \times (0,\infty)$,
with
$$
\J= -\rho'(\varphi) m(\varphi) \nabla \mu,\quad \rho(\varphi)= \widetilde{\rho}_1 \dfrac{1-\varphi}{2}+ \widetilde{\rho}_2 \dfrac{1+\varphi}{2},\\
$$
subject to the boundary and initial conditions
\begin{equation}
\label{NSCH-bic}
\begin{cases}
\vv=\mathbf{0}, \quad \partial_\n \varphi= \partial_\n \mu =\partial_\n \sigma=0 \quad &\text{on }  \partial \Omega \times (0,\infty),\\
\vv|_{t=0}=\vv_0, \quad \varphi|_{t=0}=\varphi_0, \quad \sigma|_{t=0}=\sigma_0 \quad &\text{in } \Omega.
\end{cases}
\end{equation}
\begin{remark}\rm
\label{rem:NS-2}
From the Cahn--Hilliard equation for $\varphi$ and \eqref{meanr-v}, we find the continuity equation
\begin{align}
 \partial_t \rho(\varphi) + \div( \rho(\varphi) \vv+ \J )=0,
 \label{mass-bal}
\end{align}
where $\J$ is given by \eqref{Jrhonu}.
With the aid of \eqref{mass-bal}, we can rewrite the momentum equation in \eqref{NSCH} as
\begin{align}
\rho(\varphi)\partial_t \vv +  \big(( \rho(\varphi) \vv + \J) \cdot\nabla \big)\vv - \div \big( 2\nu(\varphi) D \vv \big) + \nabla P = \big(\mu-\beta'(\varphi)  \sigma\big) \nabla \varphi.
\label{mome-new}
\end{align}
\end{remark}
\begin{remark}\rm
Some boundary conditions in \eqref{NSCHc-bic} have been reformulated as well (see \eqref{NSCH-bic}). Since we shall focus on the case of a non-degenerate mobility $m$, the boundary condition $m(\varphi)\partial_\n \mu=0$ reduces to $\partial_\n \mu=0$ on $\partial \Omega\times (0,\infty)$. On the other hand, taking into account the condition $\partial_\n \varphi=0$, we replace $\sigma\partial_\n w=0$ by $\partial_\n \sigma =0$ on $\partial \Omega\times (0,\infty)$.
\end{remark}

In this work, we introduce two classes of ``variational solutions" for the
initial-boundary value problem \eqref{NSCH}--\eqref{NSCH-bic}.
\bd \label{def-weak-1}
(1) \textbf{Finite energy solution}. For any given initial data $(\bm{v}_0,\varphi_0,\sigma_0)$ satisfying
\begin{equation}
\label{init-data}
\begin{split}
&\vv_0 \in \L_\sigma^2(\Omega),\quad \varphi_0 \in H^1(\Omega) \text{ with } \| \varphi_0\|_{L^\infty(\Omega)}\leq 1 \text{ and }
\left|\overline{\varphi_0} \right|<1,
\\
& \sigma_0 \ln \sigma_0 \in L^1(\Omega) \text{ such that } \sigma_0\geq 0  \text{ a.e. in  } \Omega,
\end{split}
\end{equation}
a quadruple $(\bm{v},\varphi,\mu,\sigma)$ is called a \emph{global finite energy solution} to problem \eqref{NSCH}--\eqref{NSCH-bic} in $\Omega\times [0,\infty)$, if it satisfies the regularity properties
\begin{equation}
\label{Weak-reg-1}
\begin{aligned}
&\vv \in L^\infty( 0,\infty; \L_\sigma^2(\Omega))
\cap L^2(0,\infty; \H_{0,\sigma}^1(\Omega)),
\\
&\varphi \in BC_{\mathrm{w}}([0,\infty);H^1(\Omega))\cap L_{\uloc}^2([0,\infty); H_N^2(\Omega))\cap H^{1}_{\uloc}([0,\infty);(H^1(\Omega))'),
\\
&\varphi \in L^\infty(\Omega \times (0,\infty))\ \text{ with }\   |\varphi(x,t)| <1 \ \text{a.e. in } \Omega \times (0,\infty),
\\
&\Psi'(\varphi)\in L^2_{\uloc}([0,\infty); L^2(\Omega)),
\\
&\mu \in L^2_{\uloc}([0,\infty); H^1(\Omega)), \quad \nabla \mu \in L^2(0,\infty; \bm{L}^2(\Omega)),
\\
&\sigma \in BC_{\mathrm{w}}([0,\infty);L^1(\Omega))\cap  L^2_{\uloc}([0,\infty);L^2(\Omega))\cap W^{1,\frac{4}{3}}_{\uloc}([0,\infty); (H_N^2(\Omega))'),
\\
& \sigma^\frac12\in L^2_{\uloc}([0,\infty);H^1(\Omega)), \quad \sigma\in L^\frac{4}{3}_{\uloc}([0,\infty);W^{1,\frac43}(\Omega)),
\\
& \sigma(x,t)\geq 0 \ \ \text{a.e. in } \Omega \times (0,\infty),
\end{aligned}
\end{equation}
moreover, the following equalities hold
\begin{align}
&- \big(\rho(\varphi_0) \vv_0, \bm{\zeta}(0)\big)+ \int_0^\infty\big[-\big( \rho(\varphi) \vv, \partial_t \bm{\zeta} \big)
-\big( \vv \otimes (\rho(\varphi) \vv+\J), \nabla \bm{\zeta} \big)
+ \big( 2\nu(\varphi) D \vv, D \bm{\zeta} \big) \big]\, \d t
\notag\\
&\quad
= \int_0^\infty \big( (\mu-\beta'(\varphi) \sigma) \nabla \varphi,\bm{\zeta} \big) \, \d t
\label{uu-weak}
\end{align}
for any $\bm{\zeta} \in C_0^1([0,\infty);C_{0,\sigma}^\infty(\Omega;\mathbb{R}^2))$, with
\begin{align}
\J= -\frac{\widetilde{\rho}_2-\widetilde{\rho}_1}{2} m(\varphi) \nabla \mu \quad \text{a.e. in } \Omega \times (0,\infty),
\label{J-weak}
\end{align}
and
\begin{align}
\label{phi-weak}
&\langle \partial_t \varphi,\xi \rangle_{(H^1(\Omega))',H^1(\Omega)}
+ ({\bm{v} \cdot \nabla \varphi},\xi)+ (m(\varphi)\nabla \mu,\nabla \xi)
=0,
\end{align}
for any  $\xi \in H^1(\Omega)$, almost everywhere in  $(0,\infty)$,
with
\begin{equation}
\label{mu-def}
\mu= -\Delta \varphi + \Psi'(\varphi) + \beta'(\varphi) \sigma \quad \text{a.e. in } \Omega \times (0,\infty),
\end{equation}
and
\begin{align}
&
\l\partial_t \sigma,\omega\r_{(H^2_N(\Omega))',H^2_N(\Omega)}
-(\sigma\bm{v},\nabla \omega)
-(\sigma, \Delta \omega)
=-(\beta'(\varphi)\sigma\nabla \varphi,\nabla \omega),
  \label{sig-weak1}
\end{align}
for any  $ \omega \in H^2_N(\Omega)$, almost everywhere in  $(0,\infty)$. In addition, the initial conditions are fulfilled
\begin{align}
& \varphi|_{t=0}=\varphi_{0},\quad \sigma|_{t=0}=\sigma_{0},\quad\textrm{a.e. in  } \Omega,	
\label{ini-w1}
\end{align}
and the following energy inequality holds
\begin{align}
&  \mathcal{E}(t)
+ \int_0^t \mathcal{D}(\tau)\,\d \tau\leq  \mathcal{E}(0),
\label{menergy-weak-fi}
\end{align}
for almost all $t>0$, where
\begin{align}
\mathcal{E}(t)
&=\int_{\Omega}\left( \frac{1}{2}\rho(\varphi)|\boldsymbol{v}|^{2}
+\frac{1}{2}|\nabla \varphi|^{2}
+ \Psi(\varphi) +\sigma(\ln \sigma-1)
+ \beta(\varphi) \sigma\right)(t) \, \mathrm{d} x,
\label{total-Energy}
\\
\mathcal{D}(t)
& =\int_{\Omega} \left(2 \nu(\varphi)|D \boldsymbol{v}|^{2}
+m(\varphi)|\nabla \mu|^{2} + \big| 2\nabla \sqrt{\sigma} + \sqrt{\sigma} \nabla \beta(\varphi) \big|^2  \right)(t)\, \mathrm{d} x.
\label{Total-Diss-1}
\end{align}

(2) \textbf{Weak solution}. Assume in addition that $\sigma_0\in L^2(\Omega)$. A quadruple $(\bm{v},\varphi,\mu,\sigma)$ is called a \emph{global weak solution} to problem \eqref{NSCH}--\eqref{NSCH-bic} in $\Omega\times [0,\infty)$, if $(\vv, \varphi, \mu, \sigma)$ is a global finite energy solution and $(\vv, \varphi, \sigma)$ satisfies the following additional regularity properties
\begin{equation}
\label{Weak-reg-1b}
\begin{aligned}
&\vv\in BC_{\mathrm{w}}([0,\infty);\L_\sigma^2(\Omega)),\quad \partial_t\bm{P}(\rho(\varphi)\vv)\in L_{\uloc}^{s}([0,\infty);(D(\bm{S}))'),
\\
&\varphi \in  L_{\uloc}^4([0,\infty); H^2(\Omega))\cap   L_{\uloc}^2([0,\infty),W^{2,q}(\Omega)),
\\
&\Psi'(\varphi)\in L^2_{\uloc}([0,\infty); L^q(\Omega)),
\\
&\sigma \in BC([0,\infty);L^2(\Omega))\cap L_{\uloc}^2([0,\infty); H^1(\Omega))\cap H_{\uloc}^1([0,\infty); (H^1(\Omega))'),
\end{aligned}
\end{equation}
for any $s\in [1,2)$, $q\in [2,\infty)$, as well as the following equalities (cf. \eqref{uu-weak}, \eqref{sig-weak1})
\begin{align}
&\l\partial_t(\rho(\varphi) \vv), \bm{\zeta} \r_{(D(\bm{S}))',D(\bm{S})}
-(  \rho(\varphi) \vv\otimes \vv, \nabla \bm{\zeta} )
-( \vv \otimes \J, \nabla \bm{\zeta} )
+ \big( 2\nu(\varphi) D \vv, D \bm{\zeta} \big)
\notag\\
&\quad
=  \big( (\mu-\beta'(\varphi) \sigma) \nabla \varphi, \bm{\zeta} \big),
\quad \forall\,\bm{\zeta}\in D(\bm{S}),
\label{uu-weak2}
\\
&\l \partial_t \sigma, \xi\r_{(H^1(\Omega))',H^1(\Omega)}
-(\sigma\vv, \nabla \xi)+ (\nabla \sigma, \nabla\xi)
=- \left( \beta'(\varphi)\sigma \nabla \varphi, \nabla \xi\right),
\quad \forall\, \xi \in H^1(\Omega),
\label{sig-weak2}
\end{align}
almost everywhere in $(0,\infty)$. Moreover, the initial condition is fulfilled
\begin{align}
& \vv|_{t=0}=\vv_{0},\quad\textrm{a.e. in  } \Omega.	
\label{ini-w2}
\end{align}
\ed


\begin{remark}\rm
In Definition \ref{def-weak-1}, we distinguish two types of variational solutions according to different regularity properties of the chemical density $\sigma$. The name ``finite energy solution" naturally follows from the energy dissipation property \eqref{menergy-weak-fi} and the requirement on the initial data. Due to the improved regularity properties of the ``weak solution" and \eqref{uu-weak2}, we can verify that $\partial_t\bm{P}(\rho(\varphi)\vv)\in L_{\uloc}^\frac{2q}{q+2}([0,\infty);(\bm{W}^{1,q}_{0,\sigma}(\Omega))')$, for any $q\in (2,\infty)$, so that \eqref{uu-weak2} is also satisfied for every $\bm{\zeta}\in \bm{W}^{1,q}_{0,\sigma}(\Omega)$.
\end{remark}

Our first result reads as follows:
\begin{theorem}[Existence of global finite energy/weak solutions]
\label{WEAK-SOL}
Let $\Omega$ be a bounded domain in $\mathbb{R}^2$ with a boundary $\partial \Omega$ of class $C^4$. Suppose that (H1)--(H4) are satisfied.

(1) For any initial data $(\bm{v}_0,\varphi_0,\sigma_0)$ satisfying
\eqref{init-data}, problem \eqref{NSCH}--\eqref{NSCH-bic} admits a global finite energy solution $(\vv,\varphi,\mu,\sigma)$ in $\Omega\times [0,\infty)$ in the sense of Definition \ref{def-weak-1}-(1).

(2) Assume in addition that $\sigma_0\in L^2(\Omega)$. Problem \eqref{NSCH}--\eqref{NSCH-bic} admits a global weak solution $(\vv,\varphi,\mu,\sigma)$ in $\Omega\times [0,\infty)$ in the sense of Definition \ref{def-weak-1}-(2).
\end{theorem}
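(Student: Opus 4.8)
The plan is to construct solutions through a multi-layer approximation scheme engineered to respect the energy structure, and then to recover the original problem by compactness. First I would regularize the singular convex part $\Psi_0$ by a family of globally defined $C^2$ potentials $\Psi_{0,\delta}$ of polynomial growth with $\Psi_{0,\delta}''\geq\theta$; thanks to the truncated shape of $\beta$ in (H4), this elementary polynomial-type regularization is enough. Since the regularized Cahn--Hilliard dynamics no longer enjoys a maximum principle, $\varphi$ may leave $[-1,1]$ and $\rho(\varphi)$ may change sign, so I would replace $\rho$ by a bounded, strictly positive nonlinear extension $\widehat{\rho}$ and add a correction term $\tfrac12\widehat{R}\vv$ to the momentum balance, where $\widehat{R}=\partial_t\widehat{\rho}(\varphi)+\div(\widehat{\rho}(\varphi)\vv+\J)$ is the continuity residual, so as to preserve a dissipative energy identity; a further $p$-Laplacian term with $p>2$ is added to the momentum equation to gain velocity compactness. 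The resulting regularized system is solved by a semi-Galerkin/fixed-point argument: given a sufficiently smooth $\sigma$, the subsystem for $(\vv,\varphi,\mu)$ is discretized by Faedo--Galerkin in the spirit of \cite{Frigeri2016}; given $(\vv,\varphi)$, the linear parabolic equation for $\sigma$ is solved in the class of classical, strictly positive solutions via semigroup theory and the strong maximum principle. A Schauder fixed-point theorem closes the coupling, and the uniform bounds below make the local solutions global.

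The heart of the argument is a family of a priori estimates uniform in all regularization parameters. I would first reproduce the identity $\tfrac{\d}{\d t}\mathcal{E}+\mathcal{D}=0$ for the regularized problem, the correction term being exactly what cancels the extra contribution of $\partial_t\widehat{\rho}$. Coercivity of $\mathcal{E}$ in \eqref{total-Energy} follows from mass conservation $\int_\Omega\sigma\,\d x=\int_\Omega\sigma_0\,\d x$ and $|\beta|\leq\beta^*$: the non-sign-definite interaction obeys $|\beta(\varphi)\sigma|\leq\beta^*\sigma$, and $\sigma(\ln\sigma-1)-\beta^*\sigma$ is bounded below, so $\mathcal{E}$ controls $\|\vv\|$, $\|\nabla\varphi\|$, $\int_\Omega\Psi(\varphi)$ and the entropy $\int_\Omega\sigma\ln\sigma$. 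The dissipation $\mathcal{D}$ in \eqref{Total-Diss-1} furnishes $\vv\in L^2(\H^1_{0,\sigma})$, $\nabla\mu\in L^2(\bm{L}^2)$, and the entropy production $\int_\Omega|2\nabla\sqrt\sigma+\sqrt\sigma\nabla\beta(\varphi)|^2$. Extracting genuine spatial regularity for $\sigma$ from this last term is the delicate point: writing $\nabla\beta(\varphi)=\beta'(\varphi)\nabla\varphi$ with $|\beta'|$ bounded, I would balance the cross term against $\int_\Omega\sigma|\nabla\varphi|^2$, controlling the latter through the $H^2$-regularity of $\varphi$ (from the elliptic estimate \eqref{es-Ga-H2}-type bound for the Cahn--Hilliard equation, using $\Psi_{0,\delta}''\geq\theta$ and Lemma \ref{lem-mo}) together with the logarithmic interpolation and Trudinger--Moser inequalities of Lemmas \ref{GN-ln} and \ref{TM-var}, and the generalized Young inequality of Lemma \ref{You} for the interaction energy. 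This yields $\nabla\sqrt\sigma\in L^2(\bm{L}^2)$, whence $\sigma\in L^2_{\uloc}(L^2)$ by the 2D Ladyzhenskaya inequality applied to $\sqrt\sigma$, which in turn closes the elliptic estimate to give $\Psi'(\varphi)\in L^2_{\uloc}(L^2)$ and $\varphi\in L^2_{\uloc}(H^2_N)$. All bounds are derived uniformly on intervals $[t,t+1]$, producing the $\uloc$-type regularity in \eqref{Weak-reg-1}.

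With uniform estimates in hand, I would pass to the limit in stages (Galerkin dimension, then the $p$-Laplacian parameter, then the potential regularization and density extension). The key compactness input is strong convergence of $\varphi$ and $\vv$, obtained from the Aubin--Lions--Simon lemma using $\partial_t\varphi\in L^2_{\uloc}((H^1)')$ and a time-derivative bound for $\bm{P}(\widehat{\rho}(\varphi)\vv)$; strong convergence of $\varphi$ then identifies the nonlinear coefficients $\rho(\varphi),\nu(\varphi),m(\varphi),\beta'(\varphi)$, the relative flux $\J$, and the capillary/interaction forcing, while $\nabla\sqrt\sigma\in L^2(\bm{L}^2)$ and mass conservation give the needed compactness for $\sigma$. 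The physical bound $|\varphi|<1$ a.e. is recovered from uniform $L^2_{\uloc}(L^2)$ control of $\Psi_{0,\delta}'(\varphi)$ in the standard way, which simultaneously forces $\widehat{\rho}(\varphi)=\rho(\varphi)$ and eliminates the correction and $p$-Laplacian terms, giving the finite energy solution with the inequality \eqref{menergy-weak-fi} by weak lower semicontinuity. For part (2), assuming $\sigma_0\in L^2(\Omega)$, I would test the $\sigma$-equation with $\sigma$ (the convective term vanishing by $\div\vv=0$) to get $\tfrac12\tfrac{\d}{\d t}\|\sigma\|^2+\|\nabla\sigma\|^2=-(\beta'(\varphi)\sigma\nabla\varphi,\nabla\sigma)$, bound the right-hand side by $\tfrac12\|\nabla\sigma\|^2+C\|\sigma\|_{L^4}^2\|\nabla\varphi\|_{L^4}^2$ using $\|\sigma\|_{L^4}^2\leq C\|\sigma\|\,\|\sigma\|_{H^1}$ and $\|\nabla\varphi\|_{L^4}\leq C\|\varphi\|_{H^2}$, and close by Gronwall against $\varphi\in L^4_{\uloc}(H^2)$; this delivers $\sigma\in BC(L^2)\cap L^2_{\uloc}(H^1)$ and the remaining regularity \eqref{Weak-reg-1b}, and upgrades the momentum balance to the distributional identity \eqref{uu-weak2}.

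I expect the main obstacle to be the closure of the a priori estimate for $\sigma$ in the finite energy case. Because the model contains neither logistic degradation nor a degenerate chemotactic sensitivity, the chemotactic cross-diffusion $\div(\beta'(\varphi)\sigma\nabla\varphi)$ must be tamed entirely by balancing the entropy production against the Cahn--Hilliard-induced $H^2$-regularity of $\varphi$, through the logarithmic Gagliardo--Nirenberg and Trudinger--Moser inequalities; making this balance quantitative and uniform in the regularization is the technically hardest step. A secondary difficulty is the passage to the limit in the inertial term $\partial_t(\rho(\varphi)\vv)$ with unmatched densities, where the relative flux $\J$ and the low temporal regularity of $\rho(\varphi)\vv$ require careful use of the Leray projection together with the time-derivative estimates.
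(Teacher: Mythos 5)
Your proposal follows essentially the same route as the paper: the same regularization (polynomial truncation of $\Psi_0$, bounded extension $\widehat{\rho}$ with the correction $\tfrac12\widehat{R}\vv$, a $p$-Laplacian term), the same semi-Galerkin/Schauder construction, the same energy--entropy estimates closed via Lemmas \ref{You}, \ref{GN-ln} and \ref{TM-var}, and the same staged limit passage. Two points, however, fall short of what the statement actually requires. First, for part (2) the definition demands $\sigma\in BC([0,\infty);L^2(\Omega))$, i.e.\ a bound uniform in time. Your Gronwall argument against $\varphi\in L^4_{\uloc}([0,\infty);H^2(\Omega))$ (which, incidentally, is derived in the paper only \emph{after} the $L^2_{\uloc}(H^1)$ bound on $\sigma$ --- what is available beforehand is the $L^2_{\uloc}(H^2)$ bound on $\varphi$) yields $\|\sigma(t)\|^2\le\|\sigma_0\|^2 e^{Ct}$, which grows exponentially in time. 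The paper removes the growth by combining mass conservation with the interpolation \eqref{int-L2-H1} to trade part of $\|\nabla\sigma\|^2$ for a damping term $\varpi\|\sigma\|^2$, and then exploits $\int_0^\infty\|\nabla\mu\|^2\,\d t<\infty$ (a consequence of the energy identity) in the Gronwall step; see \eqref{sL2-5}--\eqref{sigma-L2}. Relatedly, the quartic term $\|\sigma\|^4$ that appears once $\|\Delta\varphi\|$ is replaced by $1+\|\nabla\mu\|+\|\sigma\|^2$ must be absorbed by a further application of Lemma \ref{GN-ln}: plain Ladyzhenskaya applied to $\sqrt{\sigma}$ produces a coefficient proportional to the conserved mass $\|\sigma_0\|_{L^1(\Omega)}$, which cannot be made small --- this is exactly the critical-mass obstruction of Keller--Segel type, and the logarithmic refinement (using the entropy bound on $\int_\Omega\sigma|\ln\sigma|\,\d x$) is what circumvents it. You cite the right lemma but attribute the decisive step to Ladyzhenskaya, which would not close.

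Second, in the finite energy case you assert that ``$\nabla\sqrt\sigma\in L^2$ and mass conservation give the needed compactness for $\sigma$,'' but this is precisely where a standard Aubin--Lions argument does not apply directly: the spatial regularity is carried by $\sqrt{\sigma^n}$ rather than by $\sigma^n$, and $\partial_t\sigma^n$ is only bounded in $L^{4/3}_{\uloc}([0,\infty);(H^2_N(\Omega))')$. The paper resolves this with Dubinskii's compactness lemma (Lemma \ref{Dubin}) for the semi-normed set $\{f\ge 0:\ f^{1/2}\in H^1(\Omega)\}$ compactly embedded in $L^1(\Omega)$, followed by interpolation to obtain strong convergence in $L^{2-r}(\Omega\times(0,T))$ and a separate argument identifying the weak limit of $2\nabla\sqrt{\sigma^n}+\sqrt{\sigma^n}\nabla\beta(\varphi^n)$ in the energy inequality. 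An alternative would be Aubin--Lions with $\sigma^n$ bounded in $L^{4/3}_{\uloc}([0,\infty);W^{1,4/3}(\Omega))$, but either way this compactness step must be made explicit; as written, your proposal has a gap here.
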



\begin{remark}\rm
The uniqueness of finite energy solutions is unattainable due to the low regularity of the chemical concentration $\sigma$. On the other hand, because of difficulties from the unmatched densities and the non-constant mobility $m(\varphi)$, the uniqueness of weak solutions remains open as well (cf. \cite{AGG2023,CGGG,H}).
\end{remark}

Due to the lack of uniqueness, it is unclear whether every finite energy solution will become a weak solution for positive time. Nevertheless, we can draw the following conclusion:
\begin{corollary}
\label{finite-to-weak}
Let the assumptions in Theorem \ref{WEAK-SOL}-(1) be satisfied. For any initial data $(\bm{v}_0,\varphi_0,\sigma_0)$ satisfying
\eqref{init-data}, problem \eqref{NSCH}--\eqref{NSCH-bic} admits at least one global finite energy solution $(\vv,\varphi,\mu,\sigma)$ in $\Omega\times [0,\infty)$ that becomes a global weak solution for $t>0$.
\end{corollary}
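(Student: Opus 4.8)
The plan is to construct the desired solution as a limit of \emph{weak} solutions associated with truncated initial data for the chemical density, and then to exhibit an instantaneous $L^2$-gain for $\sigma$ that is quantitatively independent of the truncation. Concretely, I would set $\sigma_{0,n}:=\min\{\sigma_0,n\}\in L^\infty(\Omega)\subset L^2(\Omega)$, so that $\sigma_{0,n}\nearrow\sigma_0$, $\overline{\sigma_{0,n}}\to\overline{\sigma_0}$, and $\sigma_{0,n}(\ln\sigma_{0,n}-1)\to\sigma_0(\ln\sigma_0-1)$ in $L^1(\Omega)$ by monotone convergence, while keeping $\vv_0,\varphi_0$ fixed. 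For each $n$, Theorem \ref{WEAK-SOL}-(2) yields a global weak solution $(\vv_n,\varphi_n,\mu_n,\sigma_n)$ with data $(\vv_0,\varphi_0,\sigma_{0,n})$; in particular each $\sigma_n$ enjoys the regularity \eqref{Weak-reg-1b}, so $\sigma_n\in L^2_{\uloc}([0,\infty);H^1(\Omega))$ and may legitimately be used as a test function in its own equation \eqref{sig-weak2}. Since $\mathcal{E}_n(0)\to\mathcal{E}(0)<\infty$, the energy inequality \eqref{menergy-weak-fi} provides bounds in the finite-energy spaces \eqref{Weak-reg-1} that are uniform in $n$; these are exactly the estimates needed to extract (along a subsequence) a limit $(\vv,\varphi,\mu,\sigma)$ which, by the same compactness arguments as in Theorem \ref{WEAK-SOL}-(1), is a global finite energy solution on $[0,\infty)$ with $\sigma|_{t=0}=\sigma_0$. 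In particular the uniform bounds $\sigma_n^{1/2}\in L^2_{\uloc}([0,\infty);H^1(\Omega))$ and $\|\sigma_n\|_{L^1(\Omega)}=\|\sigma_{0,n}\|_{L^1(\Omega)}\le\|\sigma_0\|_{L^1(\Omega)}$ (mass conservation) combined with the $2$D interpolation $\|\sigma_n\|^2=\|\sigma_n^{1/2}\|_{L^4}^4\le C\|\sigma_n\|_{L^1}\|\sigma_n^{1/2}\|_{H^1}^2$ give an $n$-uniform bound $\int_t^{t+\tau}\|\sigma_n\|^2\,\d s\le Y(\tau)$ for every $\tau>0$.

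The heart of the matter is a smoothing estimate on $[\tau,\infty)$ that does \emph{not} see $\|\sigma_{0,n}\|_{L^2}$ (which blows up as $n\to\infty$). Testing \eqref{sig-weak2} with $\xi=\sigma_n$ and using $\div\vv_n=0$ and $\vv_n|_{\partial\Omega}=\mathbf{0}$, together with $|\beta'|\le C$ from (H4), I obtain
\[
\frac12\frac{\d}{\d t}\|\sigma_n\|^2+\|\nabla\sigma_n\|^2
=\int_\Omega \beta'(\varphi_n)\,\sigma_n\,\nabla\varphi_n\cdot\nabla\sigma_n\,\d x
\le \frac12\|\nabla\sigma_n\|^2+C\|\sigma_n\|_{L^4(\Omega)}^2\|\nabla\varphi_n\|_{L^4(\Omega)}^2 .
\]
The Ladyzhenskaya inequality $\|\sigma_n\|_{L^4}^2\le C\|\sigma_n\|\,\|\sigma_n\|_{H^1}$ and Young's inequality absorb a further $\tfrac14\|\nabla\sigma_n\|^2$ and leave
\[
\frac{\d}{\d t}\|\sigma_n\|^2+\tfrac12\|\nabla\sigma_n\|^2
\le C\big(\|\nabla\varphi_n\|_{L^4}^2+\|\nabla\varphi_n\|_{L^4}^4\big)\|\sigma_n\|^2=:a_n(t)\,\|\sigma_n\|^2 .
\]
Since $\|\nabla\varphi_n\|_{L^4}^4\le C\|\nabla\varphi_n\|^2\|\varphi_n\|_{H^2}^2$ in $\R^2$, the uniform bounds $\varphi_n\in L^\infty(0,\infty;H^1(\Omega))\cap L^2_{\uloc}([0,\infty);H_N^2(\Omega))$ give $\int_t^{t+\tau}a_n\,\d s\le A(\tau)$ uniformly in $n$. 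The uniform Gronwall lemma, fed by the bound $\int_t^{t+\tau}\|\sigma_n\|^2\le Y(\tau)$ from the previous paragraph, then yields $\|\sigma_n(s)\|^2\le\big(Y(\tau)/\tau\big)\,\e^{A(\tau)}$ for all $s\ge\tau$, uniformly in $n$; integrating the differential inequality gives an $n$-uniform bound for $\sigma_n$ in $L^2(\tau,\infty;H^1(\Omega))$ as well.

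With $\sigma_n$ bounded uniformly in $L^\infty(\tau,\infty;L^2(\Omega))\cap L^2(\tau,\infty;H^1(\Omega))$, the remaining weak-solution regularities \eqref{Weak-reg-1b} follow on $[\tau,\infty)$ by the bootstrap of Theorem \ref{WEAK-SOL}-(2) and Theorem \ref{regw}: in $\R^2$ one has $\sigma_n\in L^2_{\uloc}((\tau,\infty);L^q(\Omega))$ for every $q<\infty$, elliptic regularity applied to $-\Delta\varphi_n+\Psi'(\varphi_n)=\mu_n-\beta'(\varphi_n)\sigma_n$ upgrades $\varphi_n$ to $L^4_{\uloc}(H^2)\cap L^2_{\uloc}(W^{2,q})$ with $\Psi'(\varphi_n)\in L^2_{\uloc}(L^q)$, and the momentum balance \eqref{uu-weak2} then furnishes $\vv_n\in BC_{\mathrm{w}}([\tau,\infty);\L_\sigma^2(\Omega))$ together with the stated control of $\partial_t\bm{P}(\rho(\varphi_n)\vv_n)$ — all uniformly in $n$ and on $[\tau,\infty)$. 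Passing to the limit and using weak/weak-$\ast$ lower semicontinuity, the finite energy solution constructed in the first paragraph satisfies \eqref{Weak-reg-1b} and \eqref{uu-weak2}--\eqref{sig-weak2} on every $[\tau,\infty)$, i.e.\ it is a global weak solution for $t>0$; since $\tau>0$ is arbitrary, the claim follows. The main obstacle is precisely this $n$-uniform smoothing of $\sigma_n$: the naive energy estimate starts from $\|\sigma_{0,n}\|_{L^2}\to\infty$, and the decisive point is to replace that singular quantity by the finite-energy quantity $\int_t^{t+\tau}\|\sigma_n\|^2$ through the interpolation $\|\sigma_n\|^2\le C\|\sigma_n\|_{L^1}\|\sigma_n^{1/2}\|_{H^1}^2$ and the uniform Gronwall lemma, which is what decouples the positive-time regularity from the possibly infinite $L^2$-norm of the data at $t=0$.
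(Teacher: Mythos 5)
Your proposal is correct in substance and rests on exactly the same key mechanism as the paper's proof: the decisive point in both arguments is that $\sup_{t}\int_t^{t+1}\|\sigma\|^2\,\d s$ is controlled purely by the initial \emph{energy} (via the interpolation of $\|\sigma\|^2$ against $\|\sigma\|_{L^1}$ and $\|\nabla\sqrt{\sigma}\|^2$, the latter coming from the dissipation), so that the uniform Gronwall lemma applied to the differential inequality $\frac{\d}{\d t}\|\sigma\|^2+\frac12\|\nabla\sigma\|^2\leq a(t)\|\sigma\|^2+\dots$ yields $\|\sigma(t)\|^2\leq C/\tau$ for $t\geq\tau$ \emph{independently} of $\|\sigma_0\|_{L^2}$. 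The difference is where you run this estimate. The paper performs it directly on the semi-Galerkin/regularized approximations $\sigma^k$ of Section \ref{ue} (using \eqref{sL2-4} and \eqref{s-L2-est}), so the $\tau$-dependent bounds simply ride along the limit passage already set up in Sections \ref{limit-k}--\ref{proof-thm1} and nothing new has to be justified. You instead introduce a fresh approximating family — weak solutions with truncated data $\sigma_{0,n}=\min\{\sigma_0,n\}$ — and run the estimate on these; this is conceptually cleaner (all manipulations are on genuine solutions, and testing \eqref{sig-weak2} with $\sigma_n$ is legitimate in the class \eqref{Weak-reg-1b}), but it obliges you to re-verify that the limit of this new sequence is a finite energy solution attaining $\sigma_0$ at $t=0$, i.e.\ to redo the compactness and initial-trace argument of Theorem \ref{WEAK-SOL}-(1) with uniform-in-$n$ bounds (in particular the weaker time-derivative bound $\partial_t\sigma_n\in L^{4/3}_{\uloc}((H^2_N(\Omega))')$ near $t=0$, since the $H^1_{\uloc}((H^1)')$ bound is not uniform there). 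You assert this but do not carry it out; it does go through, but it is the price of your route. One small correction: you should not invoke Theorem \ref{regw} for the positive-time regularity, since its hypotheses (H5) and $\beta\in C^3$ are not assumed in the corollary — what you actually need, and what suffices, is only the bootstrap from the proof of Theorem \ref{WEAK-SOL}-(2) restricted to $[\tau,\infty)$, exactly as in the paper.
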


Next, let us introduce the definition of a strong solution.
\begin{definition}
\label{def-strong}
For any initial data satisfying
\begin{equation}
\label{init-data1}
\begin{split}
&\vv_0 \in \H_{0,\sigma}^1(\Omega),\ \  \varphi_0 \in H_N^2(\Omega) \text{ with } \| \varphi_0\|_{L^\infty(\Omega)}\leq 1,
\left|\overline{\varphi_0} \right|<1  \text{ and } -\Delta \varphi_0 +\Psi'(\varphi_0)\in H^1(\Omega),
\\
& \sigma_0 \in H^1(\Omega) \text{ such that } \sigma_0\geq 0  \text{ a.e. in  } \Omega,
\end{split}
\end{equation}
a quadruple $(\bm{v},\varphi,\mu,\sigma)$ is called a \emph{global strong solution} to problem \eqref{NSCH}--\eqref{NSCH-bic} in $\Omega\times [0,\infty)$, if it satisfies the additional regularity properties (cf. \eqref{Weak-reg-1}, \eqref{Weak-reg-1b})
\begin{equation}
\label{strong-reg-2}
\begin{split}
&\vv \in L^\infty(0,\infty; \H^1_{0,\sigma}(\Omega))
\cap L^2_{\mathrm{uloc}}([0,\infty); \H^2(\Omega))\cap H^1_{\mathrm{uloc}}([0,\infty); \L_\sigma^2(\Omega)),\\
&\varphi \in L^\infty(0,\infty;W^{2,q}(\Omega))\cap H^1_{\mathrm{uloc}}([0,\infty);H^1(\Omega)),
\\
&\mu \in L^\infty(0,\infty; H^1(\Omega))\cap L^2_{\mathrm{uloc}}([0,\infty);H^3(\Omega)),
\\
&\Psi'(\varphi)\in L^\infty(0,\infty; L^q(\Omega)),
\\
&\sigma \in L^\infty(0,\infty;H^1(\Omega)) \cap L^2_{\mathrm{uloc}}([0,\infty); H^2(\Omega))\cap H^1_{\mathrm{uloc}}([0,\infty);L^2(\Omega)),
\end{split}
\end{equation}
for any $q \in [2,\infty)$. The strong solution $(\bm{v},\varphi,\mu,\sigma)$ satisfies the system \eqref{NSCH} almost everywhere in $\Omega \times (0,\infty)$, as well as the boundary and initial conditions
\begin{align}
& \vv=\mathbf{0}, \quad \partial_\n \varphi=\partial_\n \mu=\partial_\n \sigma=0 \quad &&\text{a.e. on }  \partial \Omega \times (0,\infty),
\label{boundary-s}\\
& \vv|_{t=0}=\vv_0,\quad \varphi|_{t=0}=\varphi_{0},\quad \sigma|_{t=0}=\sigma_0,\quad &&\textrm{a.e. in  } \Omega.	
\label{ini-s}
\end{align}
\end{definition}

Our second result asserts the existence and uniqueness of a global strong solution. To this aim, we impose the following additional assumption on the singular potential (see \cite{GGG2023}, see also \cite{GMT2019,HW,MZ04} for the special case $\kappa=1$):
\begin{itemize}
\item[(H5)] The singular potential $\Psi_0$ satisfies
\begin{align}
	\Psi_0''(r)\leq Ce^{C|\Psi_0'(r)|^{\kappa}},
\quad\forall\,r\in(-1,1),
\label{addi}
\end{align}
for some constants $C>0$ and $\kappa\in[1,2)$.
\end{itemize}

\begin{theorem}[Existence and uniqueness of a global strong solution]
\label{Reg-SOL}
Let $\Omega$ be a bounded domain in $\mathbb{R}^2$ with a boundary $\partial \Omega$ of class $C^4$. Suppose that (H1)--(H5) are satisfied and, in addition,  $\beta\in C^3(\mathbb{R})$.

(1) For any initial data satisfying \eqref{init-data1},
problem \eqref{NSCH}--\eqref{NSCH-bic} admits a unique global strong solution $(\vv,\varphi,\mu,\sigma)$ in $\Omega\times [0,\infty)$ in the sense of Definition \ref{def-strong}. Moreover, there exists a constant $\delta_1\in (0,1)$ such that
\begin{equation}
  \|\varphi(t)\|_{L^{\infty}(\Omega)} \leq 1-\delta_1,\quad \forall\, t \geq 0,\label{sep1}
\end{equation}
where $\delta_1$ depends on $\|\nabla \vv_0\|$, $\|\varphi_0\|_{H^2(\Omega)}$, $\|\nabla (-\Delta \varphi_0+\Psi'(\varphi_0))\|$, $\|\sigma_0\|_{H^1(\Omega)}$, $\overline{\varphi_0}$, coefficients of the system, and $\Omega$, but it is independent of time.

(2) Assume in addition that $\sigma_0\in L^\infty(\Omega)$, then we have
\begin{align}
\|\sigma(t)\|_{L^\infty(\Omega)}\leq C, \quad \forall\, t\geq 0,
\label{sig-Linfty}
\end{align}
where the positive constant $C$ is independent of time.
\end{theorem}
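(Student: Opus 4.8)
The plan is to prove \eqref{sig-Linfty} by a Moser-type iteration on the $L^p$-norms of $\sigma$, the whole point being that the cross-diffusion flux can be written as $\sigma\nabla\beta(\varphi)$ with a drift $\nabla\beta(\varphi)$ that is bounded in $\L^\infty(\Omega)$ \emph{uniformly in time}. First I would record the structural facts inherited from part (1). Since $\sigma_0\ge 0$, the strong solution satisfies $\sigma\ge 0$ a.e. in $\Omega\times(0,\infty)$. By \eqref{strong-reg-2} we have $\varphi\in L^\infty(0,\infty;W^{2,q}(\Omega))$ for every $q<\infty$; as $W^{2,q}(\Omega)\hookrightarrow C^1(\overline\Omega)$ in dimension two (for $q>2$), this gives $\nabla\varphi\in L^\infty(0,\infty;\L^\infty(\Omega))$ with a bound independent of $t$. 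Since $\beta\in C^3$ with $\beta'$ bounded (H4),
\[
L:=\sup_{t\ge0}\|\beta'(\varphi(t))\nabla\varphi(t)\|_{\L^\infty(\Omega)}=\sup_{t\ge0}\|\nabla\beta(\varphi(t))\|_{\L^\infty(\Omega)}<\infty .
\]
Using $\beta'(\varphi)\sigma\nabla\varphi=\sigma\nabla\beta(\varphi)$, equation \eqref{NSCH}$_5$ reads $\partial_t\sigma+\vv\cdot\nabla\sigma-\Delta\sigma=\div(\sigma\nabla\beta(\varphi))$. The crucial observation is that by keeping the cross-diffusion in divergence form I never have to touch the term $\beta'(\varphi)\Delta\varphi$, which through $\Delta\varphi=\Psi'(\varphi)+\beta'(\varphi)\sigma-\mu$ would hide the supercritical contribution $\beta'(\varphi)^2\sigma$.

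Next, the $L^p$ estimate. For $p\ge2$ I would test the $\sigma$-equation with $\sigma^{p-1}$ (admissible after a routine truncation, since $\sigma\in L^\infty(0,\infty;H^1)\cap L^2_{\mathrm{uloc}}([0,\infty);H^2)$). The convective term drops out, because $\div\vv=0$ and $\vv|_{\partial\Omega}=\mathbf 0$ give $\int_\Omega(\vv\cdot\nabla\sigma)\sigma^{p-1}\,\d x=\tfrac1p\int_\Omega\vv\cdot\nabla(\sigma^p)\,\d x=0$. Integrating the diffusion and cross-diffusion terms by parts (boundary terms vanish by $\partial_\n\sigma=\partial_\n\varphi=0$) and using $\nabla(\sigma^p)=2\sigma^{p/2}\nabla\sigma^{p/2}$,
\[
\frac1p\frac{\d}{\d t}\|\sigma\|_{L^p(\Omega)}^p+\frac{4(p-1)}{p^2}\|\nabla\sigma^{p/2}\|^2
=-\frac{2(p-1)}{p}\int_\Omega \sigma^{p/2}\,\nabla\sigma^{p/2}\cdot\nabla\beta(\varphi)\,\d x .
\]
By Cauchy--Schwarz and the uniform bound $L$, the right-hand side is at most $\tfrac{2(p-1)}{p}L\,\|\sigma^{p/2}\|\,\|\nabla\sigma^{p/2}\|$, so absorbing half of the dissipation through Young's inequality yields
\[
\frac{\d}{\d t}\|\sigma\|_{L^p(\Omega)}^p+\frac{p-1}{p}\|\nabla\sigma^{p/2}\|^2\le C\,p^2\|\sigma^{p/2}\|^2 .
\]
Adding $\tfrac{p-1}{p}\|\sigma^{p/2}\|^2$ to both sides and applying the two-dimensional Gagliardo--Nirenberg inequality $\|w\|^2\le \eta\|w\|_{H^1(\Omega)}^2+C_\eta\|w\|_{L^1(\Omega)}^2$ with $w=\sigma^{p/2}$ and $\eta\sim p^{-3}$ to reabsorb the $H^1$-part, and then using $\frac{p-1}{2p}\|w\|_{H^1}^2\ge \frac14\|\sigma\|_{L^p}^p$ for $p\ge2$, I obtain
\[
\frac{\d}{\d t}\|\sigma\|_{L^p(\Omega)}^p+\tfrac14\|\sigma\|_{L^p(\Omega)}^p\le \Lambda_p\Big(\sup_{\tau\ge0}\|\sigma(\tau)\|_{L^{p/2}(\Omega)}^{p/2}\Big)^2 ,
\]
where $\Lambda_p$ grows only polynomially in $p$ and $\|w\|_{L^1}=\|\sigma\|_{L^{p/2}}^{p/2}$.

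Finally, the iteration. Because the right-hand side above is time-independent, the ODE comparison principle gives, for every $t\ge0$,
\[
\|\sigma(t)\|_{L^p(\Omega)}^p\le \|\sigma_0\|_{L^p(\Omega)}^p+4\Lambda_p\Big(\sup_{\tau\ge0}\|\sigma(\tau)\|_{L^{p/2}(\Omega)}^{p/2}\Big)^2 ,
\]
which is exactly the structure needed for Moser iteration: the initial layer $\|\sigma_0\|_{L^p}^p\le\|\sigma_0\|_{L^\infty}^p|\Omega|$ is controlled by the hypothesis $\sigma_0\in L^\infty(\Omega)$, and the base level is finite and time-uniform since $\sigma\in L^\infty(0,\infty;H^1)\hookrightarrow L^\infty(0,\infty;L^q)$ for all $q<\infty$ by part (1). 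Setting $a_k:=\sup_{t\ge0}\|\sigma(t)\|_{L^{2^k}(\Omega)}^{2^k}$ and iterating $a_k\le\|\sigma_0\|_{L^\infty}^{2^k}|\Omega|+4\Lambda_{2^k}a_{k-1}^2$, taking logarithms and dividing by $2^k$ shows $\limsup_k a_k^{1/2^k}<\infty$ (the polynomial factors contribute a summable series $\sum_k k\,2^{-k}$), whence $\|\sigma(t)\|_{L^\infty(\Omega)}\le C$ uniformly in $t$, which is \eqref{sig-Linfty}.

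The crux, and the main conceptual obstacle, is not the iteration but securing the time-uniform drift bound $L<\infty$: it is precisely what turns an a priori supercritical cross-diffusion (the $\beta'(\varphi)^2\sigma^{p+1}$ one would meet by substituting $\Delta\varphi$) into a subcritical term absorbable by diffusion. This bound rests entirely on the separation property \eqref{sep1} and the attendant $L^\infty(0,\infty;W^{2,q})$-regularity of $\varphi$ from part (1); it is the Cahn--Hilliard regularization advertised in the introduction as the mechanism precluding Keller--Segel-type concentration blow-up. A secondary, purely technical, point is to justify $\sigma^{p-1}$ as a test function and the chain-rule manipulations for the strong solution, which is handled by the regularity in \eqref{strong-reg-2} together with a standard approximation.
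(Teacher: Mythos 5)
Your argument addresses only part (2) of the theorem. For that part it is correct and coincides with the paper's own method: the paper proves \eqref{sig-Linfty} in Corollary \ref{non-blowup} by exactly the same $L^p$ energy estimate (test the $\sigma$-equation with $\sigma^{p-1}$, kill the convection by incompressibility, bound the cross-diffusion term through the time-uniform $\bm{L}^\infty$ bound on $\nabla\varphi$ coming from $\varphi\in L^\infty(0,\infty;W^{2,q}(\Omega))$, absorb half the dissipation) followed by Moser--Alikakos iteration; the paper defers the iteration details to a cited lemma of Tao, whereas you spell out the Gagliardo--Nirenberg interpolation and the recursion $a_k\le \|\sigma_0\|_{L^\infty}^{2^k}|\Omega|+4\Lambda_{2^k}a_{k-1}^2$ explicitly. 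Your observation that keeping the flux in the divergence form $\sigma\nabla\beta(\varphi)$ avoids the supercritical term $\beta'(\varphi)^2\sigma^{p+1}$ is exactly the mechanism the paper exploits.

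The genuine gap is that part (1) --- the existence and uniqueness of the global strong solution and the uniform separation property \eqref{sep1} --- is nowhere proved: you take it as an input (``structural facts inherited from part (1)''), yet it is both the principal assertion of the theorem and the sole source of every hypothesis your iteration needs ($\sigma\ge 0$, $\sigma\in L^\infty(0,\infty;H^1(\Omega))$ as the base level of the iteration, and above all the \emph{time-uniform} bound $\varphi\in L^\infty(0,\infty;W^{2,q}(\Omega))$ that makes $L<\infty$). In the paper this occupies all of Sections \ref{sec:decoup} and \ref{glo-strong}: one starts from the global weak solution of Theorem \ref{WEAK-SOL}-(2) and runs a three-step bootstrap through the decoupled subsystems --- the convection--diffusion equation for $\sigma$ (Propositions \ref{sigma-weak}, \ref{sigma-strong}), the convective Cahn--Hilliard system for $(\varphi,\mu)$ (Propositions \ref{well-pos}, \ref{CH-strong}), and the inhomogeneous Stokes system for $\vv$ (Proposition \ref{exe-NS-for}) --- using at each stage the uniqueness of weak solutions to identify the given weak solution with the constructed strong one, the uniform Gronwall lemma to upgrade the $L^2_{\uloc}$ bounds to genuinely time-independent ones (this is precisely where the uniformity of your constant $L$ is earned), assumption (H5) to derive \eqref{sep1}, and finally an energy estimate on the difference system for uniqueness. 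You correctly identify the time-uniform drift bound as ``the crux,'' but that crux is exactly the content you omit; as written, the proposal proves a conditional statement of the form ``part (1) implies part (2)'' rather than the theorem.
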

\begin{remark}\rm
The growth condition \eqref{addi} ensures the strict separation property \eqref{sep1} for the phase-field $\varphi$. An alternative condition that can also lead to \eqref{sep1} but only involves the first derivative of $\Psi_0$ reads follows (see \cite{Gal2025}): for some $\kappa>1/2$, it holds
\begin{align}
\frac{1}{|\Psi_0'(1-2\delta)|}=O\Big(\frac{1}{|\ln \delta|^{\kappa}}\Big), \quad\frac{1}{|\Psi_0'(-1+2\delta)|}=O\Big(\frac{1}{|\ln \delta|^{\kappa}}\Big),\quad
\text{as}\ \ \delta\rightarrow0^{+}.
\notag
\end{align}
\end{remark}
\begin{remark}\rm
The $L^\infty$-bound \eqref{sig-Linfty} reveals that the so-called Cahn--Hilliard--Keller--Segel type coupling structure in the system \eqref{NSCH} is rather different from that in the classical Keller--Segel system. Within the class of strong solutions (subject to a bounded initial datum $\sigma_0$), the chemical concentration $\sigma$ never blows up in finite time and is instead uniformly bounded for all $t\geq 0$.
\end{remark}

Finally, we establish the propagation of regularity for global weak solutions.
\begin{theorem}[Instantaneous regularity of global weak solutions]
	\label{regw}
Suppose that $\Omega$ is a bounded domain in $\mathbb{R}^2$ with a boundary $\partial \Omega$ of class $C^4$, the assumptions (H1)--(H5) are satisfied and $\beta\in C^3(\mathbb{R})$.
Let $(\vv,\varphi,\mu,\sigma)$ be a global weak solution to problem \eqref{NSCH}--\eqref{NSCH-bic} given by Theorem \ref{WEAK-SOL}-(2).
Then, it becomes a global strong solution for $t>0$. Moreover, for any $\tau>0$, there exists a constant $\delta_2= \delta_2(\tau)\in (0,1)$, such that
\begin{equation}
	\|\varphi(t)\|_{L^{\infty}(\Omega)} \leq 1- \delta_2,\quad \forall\, t \geq \tau,
\label{sep1w}
\end{equation}
where $\delta_2$ depends on $\|\vv_0\|$, $\|\varphi_0\|_{H^1(\Omega)}$, $\|\sigma_0\|$, $\int_\Omega \Psi(\varphi_0) \, \d x$, $\overline{\varphi_0}$, coefficients of the system, $\Omega$, and $\tau$.
\end{theorem}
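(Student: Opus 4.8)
The plan is to show that a global weak solution enters the strong-solution class instantaneously by selecting a regular time slice, invoking the strong well-posedness theory of Theorem \ref{Reg-SOL}, and then identifying the two solutions through a weak--strong uniqueness estimate. The crucial starting observation is that the regularity \eqref{Weak-reg-1}--\eqref{Weak-reg-1b} of a global weak solution is already of ``strong type in $L^2_t$'': one has $\vv\in L^2(0,\infty;\H^1_{0,\sigma}(\Omega))$, $\varphi\in L^2_{\mathrm{uloc}}([0,\infty);W^{2,q}(\Omega))$, $\mu\in L^2_{\mathrm{uloc}}([0,\infty);H^1(\Omega))$ and $\sigma\in L^2_{\mathrm{uloc}}([0,\infty);H^1(\Omega))$. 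Consequently, for almost every $t_0>0$ the instantaneous state $(\vv(t_0),\varphi(t_0),\sigma(t_0))$ has exactly the spatial regularity demanded of strong initial data.

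First I would verify that for almost every $t_0\in(0,\tau)$ the triple $(\vv(t_0),\varphi(t_0),\sigma(t_0))$ satisfies \eqref{init-data1}. Indeed, $\vv(t_0)\in\H^1_{0,\sigma}(\Omega)$, $\varphi(t_0)\in W^{2,q}(\Omega)\hookrightarrow H^2_N(\Omega)$ with $\|\varphi(t_0)\|_{L^\infty(\Omega)}\le 1$ and $\overline{\varphi(t_0)}=\overline{\varphi_0}$ by mass conservation, and $\sigma(t_0)\in H^1(\Omega)$ with $\sigma(t_0)\ge0$. The only nonobvious requirement is $-\Delta\varphi(t_0)+\Psi'(\varphi(t_0))\in H^1(\Omega)$, which follows from \eqref{mu-def} by writing $-\Delta\varphi(t_0)+\Psi'(\varphi(t_0))=\mu(t_0)-\beta'(\varphi(t_0))\sigma(t_0)$: here $\mu(t_0)\in H^1(\Omega)$ for almost every $t_0$, while $\beta'(\varphi(t_0))\sigma(t_0)\in H^1(\Omega)$ since $\beta\in C^3(\mathbb{R})$ has bounded derivatives, $\varphi(t_0)\in W^{2,q}(\Omega)\cap L^\infty(\Omega)$ and $\sigma(t_0)\in H^1(\Omega)\hookrightarrow L^p(\Omega)$ for every finite $p$ in two dimensions. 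Fixing such a $t_0$ and applying Theorem \ref{Reg-SOL}-(1) with initial time $t_0$ produces a unique global strong solution $(\widehat{\vv},\widehat{\varphi},\widehat{\mu},\widehat{\sigma})$ on $[t_0,\infty)$ enjoying the regularity \eqref{strong-reg-2} and the separation property \eqref{sep1}.

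Next I would identify the given weak solution with this strong solution on $[t_0,\infty)$ by a weak--strong uniqueness argument, adapting the energy method used for uniqueness in Theorem \ref{Reg-SOL}. The idea is to test the equations for the differences $\vv-\widehat{\vv}$, $\varphi-\widehat{\varphi}$, $\sigma-\widehat{\sigma}$ in the weak topologies of $\L^2_\sigma(\Omega)$, $V_0^{-1}$ and $(H^1(\Omega))'$ respectively (using the operators $\mathcal{N}$ and $\mathcal{G}_a$ to accommodate the unmatched densities and the nonconstant mobility $m(\varphi)$), and to absorb all nonlinear contributions by means of the $L^\infty_t$ bounds and the strict separation enjoyed by the strong solution. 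A Gronwall argument then forces the differences to vanish, since both solutions share the same data at $t_0$. Because $t_0\in(0,\tau)$ is arbitrary and may be sent to $0^+$, the weak solution coincides with a strong solution on $(0,\infty)$ and therefore fulfils \eqref{strong-reg-2}, which is the instantaneous regularization asserted. I expect this identification to be the main obstacle, precisely because the unmatched densities couple $\varphi$ into the momentum balance through $\rho(\varphi)\vv$ and the relative flux $\J$, so that the difference estimate must be handled in dual norms with the variable-coefficient operator $\mathcal{G}_a$ rather than by a naive $\L^2_\sigma(\Omega)$ test.

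Finally, for the quantitative separation estimate \eqref{sep1w} I would track all constants through the conserved and energetic quantities rather than through the strong norms at $t_0$, which degenerate as $t_0\to0^+$. Starting from the energy inequality \eqref{menergy-weak-fi}, the dissipation bound $\int_0^\infty\mathcal{D}(\tau)\,\d\tau\le\mathcal{E}(0)$ controls the time-averaged norms of $D\vv$, $\nabla\mu$ and $\nabla\sqrt{\sigma}$ by the initial energy. Feeding these into the differential inequalities now available for the (already identified as strong) solution and applying a uniform Gronwall lemma yields bounds for $\|\nabla\vv(t)\|$, $\|\mu(t)\|_{H^1(\Omega)}$, $\|\sigma(t)\|_{H^1(\Omega)}$ and $\|\varphi(t)\|_{W^{2,q}(\Omega)}$ on $[\tau,\infty)$ depending only on $\mathcal{E}(0)$, $\overline{\varphi_0}$, $\tau$, the coefficients and $\Omega$, hence on $\|\vv_0\|$, $\|\varphi_0\|_{H^1(\Omega)}$, $\|\sigma_0\|$, $\int_\Omega\Psi(\varphi_0)\,\d x$ and $\overline{\varphi_0}$. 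The $H^1$-bound on $\mu$ gives an $L^q$-bound on $\Psi'(\varphi)=\mu+\Delta\varphi-\beta'(\varphi)\sigma$ for every finite $q$, and combining this with assumption (H5) produces the strict separation \eqref{sep1w} with a constant $\delta_2(\tau)$ of the claimed dependence. The delicate part here is that the smoothing estimates must be quantified purely through the energy, so the uniform Gronwall scheme has to use the $[0,\tau]$ time-integrated quantities (controlled by $\mathcal{E}(0)$) in place of pointwise initial data.
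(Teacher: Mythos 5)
Your overall skeleton (choose a regular time slice $t_0\in(0,\tau)$, launch a strong solution from it, identify it with the weak solution, then quantify the separation through the energy) matches the spirit of the paper, and your verification that $(\vv(t_0),\varphi(t_0),\sigma(t_0))$ satisfies \eqref{init-data1} for a.e.\ $t_0$ is correct. The quantitative part of your last paragraph (tracking $\delta_2(\tau)$ through $\mathcal{E}(0)$ and the uniform Gronwall lemma rather than through the degenerate strong norms at $t_0$) is also essentially what the paper does.

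The genuine gap is in the identification step. You propose to prove a weak--strong uniqueness theorem for the \emph{full coupled} system on $[t_0,\infty)$, i.e.\ to show that the given weak solution coincides with the strong solution $(\widehat{\vv},\widehat{\varphi},\widehat{\mu},\widehat{\sigma})$ emanating from the same data at $t_0$. No such result is available in the paper, and the paper explicitly records that uniqueness of weak solutions to \eqref{NSCH}--\eqref{NSCH-bic} is open precisely because of the unmatched densities and the non-constant mobility. The uniqueness proof of Theorem \ref{Reg-SOL} is a two-strong-solutions argument: it tests the difference of the Cahn--Hilliard equations with $-\Delta\Phi$ to produce the dissipation $\|\nabla\Delta\Phi\|^2$, and it uses $\varphi_i\in L^\infty(0,T;H^3(\Omega))$, the strict separation of \emph{both} solutions, $\partial_t\vv_2\in L^2(\bm{L}^2)$ tested against $\bm{V}\in \bm{H}^1$, etc. A weak solution only provides $\varphi\in L^2_{\uloc}(W^{2,q})$, $\partial_t\bm{P}(\rho(\varphi)\vv)\in L^s_{\uloc}((D(\bm{S}))')$ with $s<2$, and an energy \emph{inequality}, so this scheme cannot be run with one solution weak; you acknowledge the obstacle but do not resolve it. The paper avoids it entirely by decoupling: it first regards $\sigma^*$ as a weak solution of the linear advection--diffusion problem \eqref{re-di-sigma} with the \emph{weak solution's own} $(\vv^*,\varphi^*)$ as fixed coefficients, constructs the strong solution of that scalar problem from time $\tau_1$, and identifies the two by the (easy) uniqueness of weak solutions for that fixed-coefficient subproblem (Proposition \ref{sigma-weak}); it then repeats this for $(\varphi,\mu)$ via Propositions \ref{well-pos}--\ref{CH-strong} with drift $\vv^*$ and forcing the now-upgraded $\sigma^*$, and finally for $\vv$ via the weak--strong uniqueness of the \emph{decoupled} inhomogeneous Stokes system (Proposition \ref{exe-NS-for}), choosing $\tau_1<\tau_2<\tau_3<\tau$ so that each subsystem's strong theory has the previously upgraded components as admissible input. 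Because in each step both competing solutions solve the \emph{same} fixed-coefficient subproblem, only subsystem uniqueness is needed, never coupled weak--strong uniqueness. To repair your argument you would either have to supply a full coupled weak--strong uniqueness proof (a substantial and possibly false claim here) or restructure the identification component-by-component as the paper does.
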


As a direct consequence of Corollary \ref{finite-to-weak} and Theorem \ref{regw}, we can conclude the following result on the finite energy solution:
\begin{corollary}
Suppose that $\Omega$ is a bounded domain in $\mathbb{R}^2$ with a boundary $\partial \Omega$ of class $C^4$, the assumptions (H1)--(H5) are satisfied and $\beta\in C^3(\mathbb{R})$. For any initial data $(\bm{v}_0,\varphi_0,\sigma_0)$ satisfying
\eqref{init-data}, problem \eqref{NSCH}--\eqref{NSCH-bic} admits at least one global finite energy solution $(\vv,\varphi,\mu,\sigma)$ in $\Omega\times [0,\infty)$ that becomes a global strong solution for $t>0$.
\end{corollary}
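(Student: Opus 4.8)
The plan is to produce the claimed solution by chaining the two preceding results through a restart at a positive time, exploiting the time-translation invariance of the autonomous system \eqref{NSCH}. First I would invoke Corollary~\ref{finite-to-weak} to fix one global finite energy solution $(\vv,\varphi,\mu,\sigma)$ for the given data $(\vv_0,\varphi_0,\sigma_0)$ satisfying \eqref{init-data}, with the additional property that it already belongs to the weak-solution class of Definition~\ref{def-weak-1}-(2) on every interval $[\tau,\infty)$, $\tau>0$. The initial data are attained only in the finite-energy sense, so this is the correct global object; it remains only to upgrade its regularity for $t>0$.

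Next I would carry out the restart. By the finite-energy and weak-solution regularity \eqref{Weak-reg-1}, \eqref{Weak-reg-1b}, for almost every $t_*>0$ the instantaneous state satisfies $\sigma(t_*)\in L^2(\Omega)$ with $\sigma(t_*)\geq 0$ (hence $\sigma(t_*)\ln\sigma(t_*)\in L^1(\Omega)$), $\varphi(t_*)\in H^1(\Omega)$ with $|\varphi(t_*)|\leq 1$ a.e., and $|\overline{\varphi(t_*)}|=|\overline{\varphi_0}|<1$ by mass conservation. Thus the translated quadruple $(\vv,\varphi,\mu,\sigma)(\cdot+t_*)$ is a global weak solution in the sense of Definition~\ref{def-weak-1}-(2) whose initial data meet both \eqref{init-data} and $\sigma_0\in L^2(\Omega)$. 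Applying Theorem~\ref{regw} to this translate shows that it becomes a global strong solution for $t>t_*$, so the strong-solution regularity \eqref{strong-reg-2} holds on $(t_*,\infty)$. Letting $t_*\to 0^+$ along the full-measure set of admissible restart times, this regularity covers every $[\tau,\infty)$, $\tau>0$, and the separation estimate \eqref{sep1w} is inherited as well.

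The delicate point is the scope of Theorem~\ref{regw}, which is stated for weak solutions ``given by Theorem~\ref{WEAK-SOL}-(2)'', whereas our object is furnished by Corollary~\ref{finite-to-weak}. I would resolve this by noting that the bootstrap underlying Theorem~\ref{regw} uses only membership in the weak-solution class of Definition~\ref{def-weak-1}-(2) --- namely the regularity \eqref{Weak-reg-1}, \eqref{Weak-reg-1b}, the weak formulations, the energy inequality, and the decoupled-subsystem estimates --- and never the specific approximation scheme used in the existence proof. Hence it applies verbatim to the translated solution, and the only remaining checks are that the restarted data lie in the admissible class (carried out above via mass conservation, nonnegativity, and the physical bound $|\varphi|\leq 1$) and that each restart yields strong regularity on the full tail $(t_*,\infty)$, so that no interval is lost as $t_*\to 0^+$.
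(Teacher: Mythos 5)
Your proposal is correct and follows exactly the route the paper intends: the corollary is stated as a direct consequence of Corollary \ref{finite-to-weak} (which furnishes a finite energy solution that is a weak solution on every $[\tau,\infty)$, $\tau>0$) combined with Theorem \ref{regw} applied after a restart at a positive time. Your extra care in checking that the restarted data satisfy \eqref{init-data} with $\sigma(t_*)\in L^2(\Omega)$ and that the bootstrap of Theorem \ref{regw} uses only membership in the weak-solution class (not the particular approximation scheme) is exactly the right way to fill in the details the paper leaves implicit.
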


\section{A Regularized System and Its Semi-Galerkin Approximation}
\label{modif-sys}
\setcounter{equation}{0}

In this section, we introduce a regularized system for problem \eqref{NSCH}--\eqref{NSCH-bic}, with a suitable approximation for the singular potential $\Psi$ and a smooth bounded truncation of the linear density function $\rho$. The resulting regularized system will be solved by an appropriate semi-Galerkin scheme. Roughly speaking, we proceed as follows: first, given a finite dimensional Galerkin ansatz $(\uu^k,\psi^k)$ for the fluid velocity and the phase variable ($k\in \mathbb{Z}^+$), we solve the advection-diffusion-reaction equation for the chemical concentration $\sigma^k$; second, with the sufficiently smooth solution $\sigma^k$, we solve the Faedo--Galerkin approximation of the regularized Navier--Stokes/Cahn--Hilliard system and obtain a finite dimensional solution $(\bm{v}^k,\varphi^k)$; finally, we prove the existence of a fixed point for the nonlinear mapping $\mathbf{F}: \mathbf{F}(\bm{u}^k,\psi^k)= (\bm{v}^k, \varphi^k)$
by Schauder's fixed point theorem, which yields a local solution $(\bm{v}^k, \varphi^k, \sigma^k)$ of the semi-Galerkin scheme.

\subsection{The regularized system}
For any given parameter $\epsilon \in (0,1)$, we adopt the following standard approximation of the singular nonlinearity $\Psi_0$ as in, e.g., \cite{GiGrWu2018,GMT2019}:
\be
\Psi_ {0,\epsilon}(r)=
\begin{cases}
 \displaystyle{\sum_{j=0}^2 \frac{1}{j!}}
 \Psi_0^{(j)}(1-\epsilon) \left[r-(1-\epsilon)\right]^j,
 \qquad\qquad \   \forall\,r\geq 1-\epsilon,\\
 \Psi_0(r), \qquad \qquad \qquad \qquad \qquad \qquad \qquad \quad
 \forall\, r\in[-1+\epsilon, 1-\epsilon],\\
 \displaystyle{\sum_{j=0}^2
 \frac{1}{j!}} \Psi_0^{(j)}(-1+\epsilon)\left[ r-(-1+\epsilon)\right]^j,
 \qquad\ \ \ \forall\, r\leq -1+\epsilon.
 \end{cases}
 \label{vPsi}
\ee
Define
$$
\Psi_\epsilon(r)=\Psi_{0,\epsilon}(r)-\frac{\theta_0}{2}r^2.
$$
It follows that $\Psi_{0,\epsilon}, \Psi_{\epsilon}\in C^{2}(\mathbb{R})$. Let $\epsilon_0\in (0,1)$ be the constant given in (H1). There exists $\epsilon_1\in (0,\epsilon_0)$ such that for any $\epsilon\in (0,\epsilon_1]$,
$$
\Psi_0'(-1+\epsilon)\leq -1,\quad \Psi_0'(1-\epsilon)\geq 1,\quad
\Psi_0''(-1+\epsilon)\geq 4|\theta_0|+1,\quad \Psi_0''(1-\epsilon)\geq 4|\theta_0|+1.
$$
and
\begin{align*}
&\Psi_ {0,\epsilon}''(r)\ge \theta,\qquad \Psi_ {\epsilon}(r)\geq -L,\qquad \forall\, r\in \mathbb{R},
\end{align*}
where $L>0$ is a constant independent of $\epsilon$ and $r$.
Moreover, we have
$$
\Psi_{0,\epsilon}(r)\leq \Psi_{0}(r),\quad \forall\, r\in [-1,1] \quad \text{and}\quad
|\Psi_{0,\epsilon}'(r)|\leq |\Psi_{0}'(r)|,\quad \forall\, r\in (-1,1).
$$

Since the Cahn--Hilliard equation is a fourth-order parabolic equation for which the maximum principle does not hold in general, the solution $\varphi$ to the approximate problem with the regularized potential \eqref{vPsi} may not stay in $[-1,1]$ as time evolves. To overcome this difficulty, we follow \cite{Frigeri2016} and replace the linear density function $\rho$
by its $C^2$-extension $\widehat{\rho}:\mathbb{R}\rightarrow \mathbb{R}^{+}$, satisfying
\begin{align}
& \widehat{\rho }(r)=\rho (r),\quad \forall \,r\in [-1,1],\
\label{apprho1} \\
&  \frac12 \rho_{\ast}\leq \widehat{\rho }(r) \leq 2\rho^{\ast},\quad
| \widehat{\rho }^{(j) }(r)| \leq C_*,\quad j=1,2, \quad \forall\, r\in \mathbb{R},
  \label{apprho2}
\end{align}
for some constant $C_*>0$.
\begin{remark}\rm
The above nonlinear modification of $\rho$ leads to a correction to the mass balance equation \eqref{mass-bal}.
More precisely, if the linear density function \eqref{meanr-v} is replaced by  $\widehat{\rho}$, then we have
\begin{align}
 \partial_t \widehat{\rho}(\varphi) + \div( \widehat{\rho}(\varphi) \vv+ \widehat{\J} )= \widehat{R},
 \label{mass-ba2}
\end{align}
where the modified mass flux $\widehat{\J}$ and the reminder term $\widehat{R}$ are given by
\begin{align}
\widehat{\J}=-\widehat{\rho}'(\varphi)m(\varphi)\nabla\mu, \quad \widehat{R}=-m(\varphi)\nabla \widehat{\rho}'(\varphi)\cdot \nabla \mu.
\label{R}
\end{align}
\end{remark}

Next, we approximate the initial datum $(\varphi_0, \sigma_0)$.
For any given function $\varphi_{0}\in H^1(\Omega)$ with $\|  \varphi_{0} \|_{L^{\infty}(\Omega)} \le 1$ and $|\overline{\varphi_{0}}|<1$,
we define $\varphi_{0,n}$ as the unique solution to the Neumann problem
\begin{equation*}
\begin{cases}
 \varphi_{0,n} - \dfrac{1}{n} \Delta \varphi_{0,n}= \left(1-\dfrac{1}{n}\right) \varphi_{0}, \, \ \quad \text{in}\ \Omega,
 \\
  \partial_{\bm{n}}  \varphi_{0,n} =0,  \qquad  \qquad \qquad \qquad \quad \ \ \ \text{on}\ \partial\Omega,
\end{cases}
\end{equation*}
where $n\geq 2$ is an arbitrary integer. As in \cite{GHW1}, we have
$\varphi_{0,n}\in H^2_N(\Omega)\cap H^3(\Omega)$ with $\overline{\varphi_{0,n}} \in \big[-|\overline{\varphi_{0}}|,\,|\overline{\varphi_{0}}|\big]$,
moreover, it holds
\begin{align}
&\|  \varphi_{0,n} \|_{L^\infty(\Omega)} \le 1-\frac{1}{n},
\label{Lvp1k}
\\
&\|\varphi_{0,n}\| \leq \|\varphi_0\|,\quad  \|\nabla \varphi_{0,n}\| \leq \|\nabla \varphi_0\|,\quad
\|\Delta \varphi_{0,n}\|\leq 2n \|\varphi_0\|,
\label{Lvp2k}
\\
&\lim_{n\to \infty}\|\varphi_{0,n}-\varphi_{0}\|_{H^1(\Omega)} =0.
\label{Lvp3k}
\end{align}
For any $\sigma_0$ satisfying $\sigma_0\ln \sigma_0\in L^1(\Omega)$ and $\sigma_0\geq 0$ almost everywhere in $\Omega$, we consider a family of approximations $\{\sigma_{0,n}\}_{n\in \mathbb{Z}^+}$ with the following properties (see, e.g., \cite{ad2003}, \cite[Section 2.2]{W2016})
$$
\sigma_{0,n} \in C^{\infty}_0(\Omega),  \quad \sigma_{0,n}\ge 0 \ \text { in } \Omega,\quad \sigma_{0,n}\not\equiv 0,\quad
\sigma_{0,n} \rightarrow \sigma_0 \text { in }\ L \ln L(\Omega) \ \text { as } n\to \infty.
$$
Here, $L \ln L(\Omega)$ denotes the standard Orlicz space associated with the Young function $(0, \infty) \ni z \mapsto$ $z \ln (1+z)$. Without loss of generality, we assume that
$$  \int_\Omega \sigma_{0,n}\ln \sigma_{0,n}\,\mathrm{d}x \leq \int_\Omega \sigma_{0}\ln \sigma_{0}\,\mathrm{d}x+1,\quad
\forall\,n\in \mathbb{Z}^+.
$$
Thanks to Lemma \ref{You}, $\|\sigma_{0,n}\|_{L^1(\Omega)}$ is also uniformly bounded with respect to $n$.

For any given parameters
\begin{align}
\quad \gamma\in (0,1],
\quad \epsilon\in\left(0,\min\Big\{\epsilon_1, \frac12(1-|\overline{\varphi_0}|)\Big\}\right],
\quad n\in \mathbb{Z}^+\setminus\{1\},
\label{param-1}
\end{align}
let us consider the following regularized problem $(\bm{S}_{\gamma,\epsilon,n})$:
\begin{subequations}
\label{reg.sys}
\begin{alignat}{3}
&\partial_t\big(\widehat{\rho}(\varphi) \vv \big)
+ \div \big(\vv \otimes ( \widehat{\rho}(\varphi) \vv + \widehat{\J}) \big) - \div \big( 2\nu(\varphi) D \vv \big) - \gamma \div \big(|\nabla \vv|^2\nabla \vv\big) + \nabla P
\notag \\
&\quad
= \big(\mu-\beta'(\varphi)  \sigma\big) \nabla \varphi
+ \frac{\widehat{R}}{2}\vv,
\label{reg.1} \\
& \quad \text{with}\quad \widehat{\J}= -\widehat{\rho}'(\varphi) m(\varphi) \nabla \mu,\quad
\widehat{R}= -m(\varphi)\nabla \widehat{\rho}'(\varphi)\cdot \nabla \mu,
\label{reg.1a}\\
&\mathrm{div}\, \bm{v}=0,
\label{reg.1b}\\
&\partial_t\varphi+\bm{v}\cdot\nabla\varphi =\mathrm{div}\big(m(\varphi)\nabla \mu\big),
\label{reg.2} \\
&\mu= - \Delta \varphi + \Psi_{\epsilon}'(\varphi) + \beta'(\varphi)\sigma,
\label{reg.3}\\
&\partial_t\sigma+\bm{v}\cdot\nabla\sigma -\Delta \sigma -  \mathrm{div}
(\beta'(\varphi)\sigma\nabla \varphi)
=0,
\label{reg.4}
\end{alignat}
\end{subequations}
in $\Omega \times (0,\infty)$,
subject to the boundary conditions
\begin{alignat}{3}
&\bm{v}=\mathbf{0},\quad\partial_{\boldsymbol{n}} \varphi
= \partial_{\boldsymbol{n}} \mu
= \partial_{\boldsymbol{n}}\sigma =0,
\quad \textrm{on}\ \partial\Omega\times(0,\infty),
\label{reg.boundary}
\end{alignat}
and the initial conditions
\begin{alignat}{3}
&\bm{v}|_{t=0}=\bm{v}_{0},
\quad \varphi|_{t=0}=\varphi_{0,n},
\quad \sigma|_{t=0}=\sigma_{0,n},
\qquad &\textrm{in}&\ \Omega.
\label{reg.ini}
\end{alignat}
\begin{remark}\rm
The extra term $\frac{\widehat{R}}{2}\vv$ on the right-hand side of \eqref{reg.1} yields a correction to the momentum equation, which enables us to maintain the dissipative structure of the regularized system. For the construction of finite energy/weak solutions, we also require an additional regularization involving the $p$-Laplacian $- \gamma \div \big(|\nabla \vv|^2\nabla \vv\big)$ in \eqref{reg.1}.
\end{remark}

\subsection{The semi-Galerkin scheme}
In what follows, we solve the regularized problem \eqref{reg.sys}--\eqref{reg.ini} by means of a suitable semi-Galerkin scheme. Consider the family of eigenvalues $\{\lambda_i\}_{i=1}^{\infty}$ and the corresponding eigenfunctions $\{\bm{y}_{i}(x)\}_{i=1}^{\infty}$ of the Stokes problem
\be
(\nabla \bm{y}_{i},\nabla \bm{\zeta})=\lambda_{i}(\bm{y}_{i},\bm{\zeta}),\quad  \forall\, \bm{\zeta} \in {\bm{H}^1_{0,\sigma}(\Omega)},\quad \textrm{with}\ \|\bm{y}_{i}\|=1.
\notag
\ee
It is well known that $0<\lambda_1\leq \lambda_2 \leq \cdots \to +\infty$, $\{\bm{y}_{i}\}_{i=1}^{\infty}$ forms a complete orthonormal basis in $\bm{L}^2_{0,\sigma}(\Omega)$ and is orthogonal in $\bm{H}^1_{0,\sigma}(\Omega)$.
Next, we consider the family of eigenvalues $\{\ell_i\}_{i=1}^{\infty}$ and the corresponding eigenfunctions $\{z_{i}(x)\}_{i=1}^{\infty}$ of the Laplacian subject to a homogeneous Neumann boundary condition
\be
(\nabla z_{i},\nabla w)=\ell_{i}(z_{i},w),\quad  \forall\, w \in {H^1(\Omega)}, \quad \textrm{ with }  \|z_{i}\|=1.
\notag
\ee
Then $0=\ell_1<\ell_2 \leq \cdots \to +\infty$, $\{z_{i}\}_{i=1}^{\infty}$ with $z_1=1$ forms a complete orthonormal basis in $L^2(\Omega)$ and is orthogonal in $H^1(\Omega)$.
For every integer $k\geq 1$, we denote the finite-dimensional subspace of $\bm{L}^2_{0,\sigma}(\Omega)$ by
$$ \bm{Y}_{k}:=\textrm{span} \{\bm{y}_{1}(x) ,\cdots,\bm{y}_{k}(x)\}.$$
The orthogonal projection on $\bm{Y}_{k}$ with respect to the inner product in $\bm{L}^2_{0,\sigma}(\Omega)$ is denoted by $\bm{P}_{\bm{Y}_{k}}$. Similarly, we denote the finite-dimensional subspace of $L^2(\Omega)$ by
$$ Z_{k}:=\textrm{span} \{z_{1}(x) ,\cdots,z_{k}(x)\},$$
and the orthogonal projection on $Z_{k}$ with respect to the inner product in $L^2(\Omega)$ is denoted by $\bm{P}_{Z_{k}}$.
We note that $\bigcup_{k=1}^\infty \bm{Y}_{k}$ is dense in $\bm{L}^2_{0,\sigma}(\Omega)$, $\bm{H}^1_{0,\sigma}(\Omega)$ and $D(\bm{S})$, while $\bigcup_{k=1}^\infty Z_{k}$ is dense in $L^2(\Omega)$, $H^1(\Omega)$ and $H_N^2(\Omega)$.
Since $\Omega$ is a $C^4$-domain, we have $\bm{y}_{i}\in \bm{H}^1_{0,\sigma}(\Omega)\cap \bm{H}^4(\Omega)$ and $z_i\in H^2_N(\Omega)\cap H^4(\Omega)$ for all $i\in \mathbb{Z}^+$.
Moreover, for any fixed $k\in \mathbb{Z}^+$, the following inverse inequalities hold
\begin{align*}
&\|\bm{v}\|_{\bm{H}^j(\Omega)}\leq C_k\|\bm{v}\|,\quad \forall\, \bm{v}\in \bm{Y}_k,\quad j=1, 2, 3, 4,
\\
& \|\varphi\|_{H^j(\Omega)}\leq C_k\|\varphi\|,\quad \forall\,\varphi\in Z_k,\quad j=1, 2, 3, 4.
\end{align*}
The symbol $C_k$ denotes a generic positive constant that depends on the approximating parameter $k$.
\medskip

For the initial velocity field $\bm{v}_{0}\in \bm{L}^2_{0,\sigma}(\Omega)$, its finite dimensional approximation $\bm{P}_{\bm{Y}_{k}} \bm{v}_{0}$ satisfies
\begin{align*}
\|\bm{P}_{\bm{Y}_{k}} \bm{v}_{0}\|\leq \|\bm{v}_{0}\|
\quad \text{and} \quad
\lim_{k\to +\infty} \|\bm{P}_{\bm{Y}_{k}} \bm{v}_{0}-  \bm{v}_{0}\|=0.
\end{align*}
Concerning the finite dimensional approximation for the regularized initial datum $\varphi_{0,n}\in H^2_N(\Omega)$, we have  $\bm{P}_{Z_{k}}\varphi_{0,n}\in H^2_N(\Omega)$ and
$$
\lim_{k\to+\infty}\|\bm{P}_{Z_{k}}\varphi_{0,n} -\varphi_{0,n}\|_{H^2(\Omega)}=0,
$$
moreover, for every given $n\geq 2$, there exists an integer $\widehat{k}$ (sufficiently large) such that
\be
\|\bm{P}_{Z_{\widehat{k}}}\varphi_{0,n} -\varphi_{0,n}\|_{C(\overline{\Omega})}\le C\|\bm{P}_{Z_{\widehat{k}}}\varphi_{0,n}-\varphi_{0,n}\|_{H^2(\Omega)} \le \frac{1}{2n},
\notag
\ee
where the positive constant $C$ depends only on $\Omega$. Thus, for all integers $k\geq \widehat{k}$, we have
\be
\label{PZK}
\|\bm{P}_{Z_{k}}\varphi_{0,n}\|_{L^\infty(\Omega)}\le 1-\frac{1}{2n},
\quad   \|\bm{P}_{Z_{k}}\varphi_{0,n}\|_{H^1(\Omega)}
\leq   \|  \varphi_{0,n}\|_{H^1(\Omega)}\leq \|  \varphi_0\|_{H^1(\Omega)}.
\ee

Let $T>0$ be an arbitrarily fixed final time.
For every integer $k\geq \widehat{k}$, we consider the approximate solution $(\bm{v}^k,\varphi^k,\mu^k,\sigma^k)$ to the regularized system \eqref{reg.sys}--\eqref{reg.ini} such that the Galerkin ansatz
\be
\bm{v}^{k}(x,t):=\sum_{i=1}^{k}a_{i}^{k}(t)\bm{y}_{i}(x),
\quad \varphi^{k}(x,t):=\sum_{i=1}^{k}b_{i}^{k}(t)z_{i}(x),
\quad
\mu^{k}(x,t):=\sum_{i=1}^{k}c_{i}^{k}(t)z_{i}(x),
\notag
\ee
satisfy
\begin{align}
&\big(\partial_t  (\widehat{\rho}(\varphi^k) \bm{ v}^{k}),\bm{\zeta}\big) -\big(\vv^k \otimes ( \widehat{\rho}(\varphi^k) \vv^k + \widehat{\J}^k), \nabla \bm{ \zeta}\big)
+\big(  2\nu(\varphi^{k}) D\bm{v}^{k},D\bm{\zeta}\big)
+\gamma \big(|\nabla\vv^k|^2\nabla\vv^k,\nabla \bm{\zeta}\big)
\notag \\
&\quad =\big((\mu^{k} -\beta'(\varphi^k) \sigma^{k})\nabla \varphi^{k},\bm {\zeta}\big)+ \frac12 \big(\widehat{R}^k\vv^k,\bm{\zeta}\big)
\notag\\
&\qquad + \frac12\big(\widehat{\rho}'(\varphi^k)(\bm{I}-\bm{P}_{Z_{k}}) \big((\bm{v}^k\cdot\nabla\varphi^k)-\div \big(m(\varphi^k)\nabla \mu^k\big)\big)\vv^k,\bm{\zeta}\big),\quad \forall\,\bm{\zeta} \in \bm{Y}_{k},
\label{atest.1}\\
&(\partial_t\varphi^k,\xi)+(\bm{v}^k\cdot\nabla\varphi^k,\xi)
=-\big(m(\varphi^k)\nabla \mu^k,\nabla \xi\big),
\quad \forall\, \xi\in Z_{k},
\label{atest.2}\\
& (\mu^k,\xi)=\big(- \Delta \varphi^k + \Psi'_{\epsilon}(\varphi^k)+\beta'(\varphi^k)\sigma^k, \xi\big),
\quad \forall\, \xi\in Z_{k},
 \label{atest.3}
\end{align}
in $(0,T)$, where
\begin{align}
\label{atest.1b}
\widehat{\J}^k= -\widehat{\rho}'(\varphi^k) m(\varphi^k) \nabla \mu^k,\quad
\widehat{R}^k= -m(\varphi^k)\nabla \widehat{\rho}'(\varphi^k)\cdot \nabla \mu^k,
\end{align}
in $\Omega\times(0,T)$.
Moreover, the unknown function $\sigma^k$ satisfies
\begin{align}
&\partial_t\sigma^k+\bm{v}^k\cdot\nabla\sigma^k -\Delta\sigma^k-  \mathrm{div}\big(\beta'(\varphi^k)\sigma^k\nabla\varphi^k\big)
=0, \quad \text{in}\ \Omega\times(0,T).
\label{atest.4}
\end{align}
The solution $(\bm{v}^k,\varphi^k,\mu^k,\sigma^k)$
is subject to the following boundary and initial conditions
\begin{align}
 & \bm{v}^k=\bm{0},\quad \partial_{\bm{n}}\varphi^k=\partial_{\boldsymbol{n}} \mu^k=\partial_{\boldsymbol{n}} \sigma^k=0,&&\quad \textrm{on}\ \partial\Omega\times(0,T),
\label{boundary1}
\\
& \bm{v}^{k}|_{t=0}=\bm{P}_{\bm{Y}_{k}} \bm{v}_{0},\quad \varphi^{k}|_{t=0}=\bm{P}_{Z_{k}}\varphi_{0,n},\quad  \sigma^{k}|_{t=0}=\sigma_{0,n},&&\quad \text{in}\ \Omega.
\label{atest.ini0}
\end{align}

\begin{remark}\rm
Owing to \eqref{atest.2}, we find that
$$
 \partial_t\varphi^k + \bm{P}_{Z_{k}}(\bm{v}^k\cdot\nabla\varphi^k)
=\bm{P}_{Z_{k}}\div \big(m(\varphi^k)\nabla \mu^k\big).
$$
This yields a further correction of the mass balance equation due to the finite dimensional projection (cf. \eqref{mass-ba2}), that is,
\begin{align}
 \partial_t\widehat{\rho}(\varphi^k) + \div(\widehat{\rho}(\varphi^k)\vv^k+\widehat{\J}^k)
=\widehat{R}^k+\widehat{\rho}'(\varphi^k)(\bm{I}-\bm{P}_{Z_{k}})\big((\bm{v}^k\cdot\nabla\varphi^k)-\div \big(m(\varphi^k)\nabla \mu^k\big)\big),
\label{R-app}
\end{align}
where $\bm{I}$ denotes the identity operator. The last term on the right-hand side of Equation \eqref{atest.1} appears for a similar reason.
\end{remark}

The following proposition asserts the local well-posedness of the semi-Galerkin scheme \eqref{atest.1}--\eqref{atest.ini0}.
\bp[Local solvability of the semi-Galerkin scheme]\label{p1}
Suppose that assumptions (H1)--(H4) are satisfied, $T \in (0,+\infty)$, and the initial data $(\bm{v}_0,\varphi_0,\sigma_0)$ satisfy \eqref{init-data}. For every positive integer $k\geq \widehat{k}$, the semi-Galerkin scheme \eqref{atest.1}--\eqref{atest.ini0} admits a unique local solution $(\bm{v}^{k},\varphi^{k},\mu^{k},\sigma^{k})$ on a certain time interval $[0,T_{k}]\subset[0,T]$ satisfying
\begin{align}
&\bm{v}^{k} \in H^1(0,T_{k};\bm{Y}_k),
\quad \varphi^{k} \in H^1(0,T_{k};Z_{k}),
\quad \mu^k\in H^1(0,T_k;Z_{k}),
\notag \\
&\sigma^{k}\in C^{2,1}(\overline{\Omega}\times [0,T_k]),
\quad \sigma^k(x, t) > 0  \ \ \text{in } \ \overline{\Omega}\times (0,T_k].
\notag
\end{align}
The existence time $T_{k}\in (0,T]$ depends on the initial data, $\Omega$, $k$, $\gamma$, $\epsilon$, $n$  and coefficients of the system.
\ep

\begin{proof}
The proof of Proposition \ref{p1} consists of several steps. \medskip

\textbf{Step 1.} Let $\widetilde{M}$ be a sufficiently large constant that satisfies $\widetilde{M}\geq 2(\|\bm{v}_0\|^2+  \|\varphi_{0,n}\|^2  +1)$. The exact value of $\widetilde{M}$ will be determined later. Let $\uu^k$, $\psi^k$ be two given functions
\be
\bm{u}^{k}=\sum_{i=1}^{k}\widetilde{a}_{i}^{k}(t)\bm{y}_{i}(x)\in C^\delta([0,T];\bm{Y}_{k}),\quad \psi^{k}=\sum_{i=1}^{k}\widetilde{b}_{i}^{k}(t)z_{i}(x)\in C^\delta([0,T];Z_{k})
\nonumber
\ee
for some $\delta\in ( 0,1/2)$, which fulfill
\begin{align}
&\widetilde{a}_{i}^{k}(0)=(\bm{v}_{0},\bm{y}_{i}),\quad i=1,\cdots,k,\quad
\text{and}\quad \sup_{t\in [0,T]}  \sum_{i=1}^{k}|\widetilde{a}_{i}^{k}(t)|^{2}\le \widetilde{M},
\nonumber\\
&\widetilde{b}_{i}^{k}(0)=(\varphi_{0,n},z_{i}),\quad i=1,\cdots,k,\quad
\text{and}\quad \sup_{t\in [0,T]}  \sum_{i=1}^{k}|\widetilde{b}_{i}^{k}(t)|^{2}\le \widetilde{M}.
\nonumber
\end{align}
Then $\bm{u}^{k}|_{t=0}=\bm{P}_{\bm{Y}_{k}} \bm{v}_{0}$, $\psi^{k}|_{t=0}=\bm{P}_{Z_{k}}\varphi_{0,n}$ and
\begin{align}
 \sup_{t\in [0,T]}\|\bm{u}^{k}(t)\|^2\leq \widetilde{M},
 \quad
 \sup_{t\in [0,T]}\|\psi^{k}(t)\|^2\leq \widetilde{M}.
 \label{aLL-up}
\end{align}
We first consider the following auxiliary equation for the chemical concentration
\begin{alignat}{3}
&\partial_t\sigma^k+\bm{u}^k\cdot\nabla\sigma^k -\Delta\sigma^k- \mathrm{div}\big(\beta'(\psi^k)\sigma^k\nabla \psi^k\big)=0,
\label{1atest.4}
\end{alignat}
in $\Omega\times (0,T)$,
subject to the boundary and initial conditions
\begin{alignat}{3}
&\partial_{\boldsymbol{n}} \sigma^k =0  \quad \text{on}\ \partial\Omega\times(0,T),
\qquad \sigma^{k}|_{t=0}=\sigma_{0,n} \quad \text{in}\ \Omega.
\label{boundary2}
\end{alignat}
Define the space
\begin{align}
\widehat{X}&=L^{\infty}(0,T;H^1(\Omega)) \cap  L^{2}(0,T;H^2_N(\Omega))\cap H^1(0,T;L^2(\Omega)).
\notag
\end{align}
Then we have
\bl\label{fp}
Given $(\bm{u}^k, \psi^k)$ and $\sigma_{0,n}$ as above, problem \eqref{1atest.4}--\eqref{boundary2} admits a unique classical solution $\sigma^{k}$ in $\overline{\Omega}\times [0,T]$ such that
$$
\sigma^{k}\in C^{2,1}( \overline{\Omega} \times [0,T]),
\quad \sigma^k(x, t) > 0  \ \ \text{in } \ \overline{\Omega}\times (0,T].
$$
Moreover, $\sigma^k$ is bounded in $\widehat{X}$ and the following estimate holds
\begin{align}
&\|\sigma^{k}(t)\|^{2}
 \leq \|\sigma_{0,n}\|^2 \, \mathrm{e}^{Ct(M_1+1)},
 \quad \forall\, t\in [0,T],
\notag
\end{align}
where $M_1=\sup_{t\in[0,T]}\|\psi^k(t)\|^2$, the positive constant $C$ depends on $k$, but it is independent of $M_1$.
\el

Since $\Omega$ belongs to $C^4$, the given functions $(\bm{u}^k,\psi^k)$ have sufficient spatial regularity. The proof of Lemma \ref{fp} then follows an argument analogous to that of \cite[Lemma 3.1]{GHW1}, with minor modifications according to assumption (H4). For brevity, we omit the details here.

Thanks to Lemma \ref{fp}, we can define the following mapping
\begin{align}
\mathbf{F}^k_{1}:\  C^\delta([0,T];\bm{Y}_{k})\times C^\delta([0,T];Z_{k}) \ &\to\ \  \widehat{X}, \notag\\
(\bm{u}^{k},\psi^{k})\ &\mapsto\ \ \sigma^{k},\notag
\end{align}
which is bounded from $C([0,T];\bm{Y}_{k})\times C([0,T];Z_{k})$ to $\widehat{X}$.

Next, we show that $\mathbf{F}^k_{1}$ is continuous with respect to the given data $(\bm{u}^{k},\psi^{k})$ in the topology of $X$, where
\begin{align}
X&= L^{\infty}(0,T;L^2(\Omega)) \cap  L^{2}(0,T;H^1(\Omega)).
\notag
\end{align}
Let $\bm{u}^{k}_{1}$, $\bm{u}^{k}_{2}$ be two given vectorial functions with the same initial value $\bm{P}_{\bm{Y}_{k}} \bm{v}_{0}$, while $\psi^{k}_{1}$, $\psi^{k}_{2}$ be two given scalar functions with the same initial value $\bm{P}_{Z_{k}}\varphi_{0,n}$. Both $(\bm{u}^{k}_{i},\psi^{k}_{i})$, $i=1,2$, satisfy the condition \eqref{aLL-up}.
Let $\sigma^{k}_{i} =\mathbf{F}^k_{1}(\bm{u}^{k}_{i},\psi^{k}_i)$, $i=1,2$, be the two corresponding solutions to problem \eqref{1atest.4}--\eqref{boundary2} determined by Lemma \ref{fp} (with the same initial datum $\sigma_{0,n}$).
We denote their differences by
$$
\bm{U}^k= \bm{u}^k_1-\bm{u}^k_2,\quad \Theta^{k}=\psi_{1}^{k}-\psi_{2}^{k},\quad \Sigma^{k}=\sigma_{1}^{k}-\sigma_{2}^{k},
$$
which fulfill
\begin{align}
	&\partial_t  \Sigma^{k} +{\bm{u}^k_1 \cdot \nabla \Sigma^k} -\Delta \Sigma^k
	  =  -\bm{U}^k \cdot \nabla \sigma^k_2
        + \mathrm{div}\big(\Sigma^k \nabla \beta(\psi_{1}^k)+\sigma_{2}^k \nabla (\beta(\psi_1^k)-\beta(\psi_2^k))\big),
\label{2atest.3}
\end{align}
in $\Omega\times (0,T)$, subject to the boundary and initial conditions
\begin{alignat}{3}
&\partial_{\boldsymbol{n}} \Sigma^k =0  \quad \text{on}\ \partial\Omega\times(0,T),
\qquad \Sigma^{k}|_{t=0}=0 \quad \text{in}\ \Omega.
\label{boundary3}
\end{alignat}
Testing \eqref{2atest.3} by $\Sigma^k $, integrating over $\Omega$, using the fact $\mathrm{div}\,\bm{u}^k_1=\mathrm{div}\,\bm{u}^k_2=0$ and integration by parts, we get
\begin{align}
&\frac{1}{2}\frac{\mathrm{d}}{\mathrm{d}t} \|\Sigma^k\|^2+\|\nabla\Sigma^k\|^2
\notag \\
&\quad =
(\bm{U}^{k} \sigma^{k}_2, \nabla\Sigma^k)
- \big( \Sigma^k \nabla \beta(\psi_{1}^k), \nabla \Sigma^k\big)
- \big( \sigma_{2}^k  \nabla (\beta(\psi_1^k)-\beta(\psi_2^k)),  \nabla \Sigma^k \big)
\notag \\
&\quad =: \sum_{j=1}^3 I_j.
\label{diffsig}
\end{align}
Applying H\"{o}lder's inequality, Young's inequality, and the Sobolev embedding theorem, we can estimate the right-hand side of \eqref{diffsig} as follows
\begin{align}
|I_1|
&\le C\|\bm{U}^{k}\|_{\bm{L}^\infty(\Omega)}\|\sigma^{k}_{2}\|\|\nabla \Sigma^{k}\|\notag\\
&\le C \|\bm{U}^{k}\|_{\bm{H}^2(\Omega)}\|\sigma^{k}_{2}\|\|\nabla\Sigma^k\|\notag\\
& \le  C_k\|\sigma^{k}_{2}\|^2\|\bm{U}^{k}\|^2 +\frac16 \|\nabla\Sigma^k\|^2,
\notag
\end{align}
\be
\begin{aligned}
|I_2|
&\leq \|\Sigma^k\|\|\beta'(\psi_1^k)\|_{L^\infty(\Omega)}   \|\nabla\psi_1^k\|_{\bm{L}^{\infty}(\Omega)} \|\nabla\Sigma^k\|
\\
&\leq C \|\psi_1^k\|_{H^3(\Omega)}^2\|\Sigma^k\|^2
+\frac16\|\nabla\Sigma^k\|^2
\\
&\leq C_k  \|\psi_1^k\|^2\|\Sigma^k\|^2 +\frac16\|\nabla\Sigma^k\|^2,
\end{aligned}
\notag
\ee
and
\be
\begin{aligned}
|I_3| &\leq \|\sigma^k_2\|\left\|\nabla \int_0^1 \beta'(s\psi_1^k+(1-s)\psi_2^k)\Theta^k\,\mathrm{d}s\right\|_{\bm{L}^\infty(\Omega)} \|\nabla\Sigma^k\|
\notag \\
&\leq C \|\sigma^k_2\| \big(\|\nabla\psi_1^k  \|_{\bm{L}^\infty(\Omega)} +\|\nabla\psi_2^k  \|_{\bm{L}^\infty(\Omega)}\big)\|\Theta^k\|_{L^\infty(\Omega)}\|\nabla\Sigma^k\|
\notag \\
&\quad + C  \|\sigma^k_2\|  \|\nabla \Theta^k\|_{\bm{L}^\infty(\Omega)} \|\nabla\Sigma^k\|
\notag \\
&\leq C  \|\sigma^k_2\|^2 \big(1+\|\psi_1^k\|^2_{H^3(\Omega)} +\|\psi_2^k\|^2_{H^3(\Omega)} \big) \| \Theta^k\|_{H^3(\Omega)}^2
+\frac16\|\nabla\Sigma^k\|^2
\\
&\leq C_k \|\sigma^k_2\|^2 \big(1+\|\psi_1^k\|^2 +\|\psi_2^k\|^2 \big) \|\Theta^k\|^2 +\frac16\|\nabla\Sigma^k\|^2.
\end{aligned}
\notag
\ee
Here, we have used the assumption (H4) and the facts that $\bm{u}_i^k$ and $\psi_i^k$, $i=1,2$, are finite dimensional.
Combining the above estimates and integrating \eqref{diffsig} on $[0, t] \subset [0, T] $, we obtain
\be
\begin{aligned}
&  \|\Sigma^k(t) \|^{2}
+\int_0^t  \|\nabla\Sigma^k(s)\|^{2}\,\mathrm{d}s
  \leq C_k \int_0^t \|\Sigma^k(s)\|^{2}\,\mathrm{d}s
+C_k \int_0^t  \left( \|\bm{U}^{k}(s)\|^2 + \| \Theta^k(s)\|^{2} \right) \,\mathrm{d}s,
\label{sig}
\end{aligned}
\ee
where estimates for $\|\sigma_{2}^{k}\|_{L^{\infty}(0,T;L^2(\Omega)) }$,
$\|\psi_{1}^{k}\|_{L^{\infty}(0,T;L^2(\Omega)) }$, $\|\psi_{2}^{k}\|_{L^{\infty}(0,T;L^2(\Omega)) }$
have been used. An application of Gronwall's lemma to \eqref{sig} yields that
\begin{align}
& \|\Sigma^k(t)\|^{2} +\int_0^t \|\nabla\Sigma^k(s)\|^2\, \mathrm{d}s
\le C_T\Big(\sup_{s\in [0,t]}\|\bm{U}^{k}(s)\|^2 +\sup_{s\in [0,t]}\|\Theta^{k}(s)\|^2\Big),
\quad \forall\,t\in[0,T],
\label{auphih1}
\end{align}
where the constant $C_T>0$ depends on $T$ and $k$.
As a consequence, the solution operator $\mathbf{F}^k_{1}$ is continuous with respect to $(\bm{u}^{k},\psi^k)$ as a mapping from $C([0,T];\bm{Y}_{k})\times C([0,T];Z_{k})$ to $X$.
\medskip

\textbf{Step 2.} Given the function $\sigma^{k}=\Phi^k_{1}(\bm{u}^k,\psi^k)$ obtained in Step 1, we now look for the ansatz
\be
\bm{v}^k=\sum_{i=1}^{k}a_{i}^{k}(t)\bm{y}_{i}(x),\quad \varphi^k=\sum_{i=1}^{k}b_{i}^{k}(t)z_{i}(x),\quad  \mu^{k}(x,t):=\sum_{i=1}^{k}c_{i}^{k}(t)z_{i}(x),
\nonumber
\ee
that satisfy the following auxiliary system for the fluid velocity and phase-field variable:
\begin{align}
	& \big(\partial_t (\widehat{\rho}(\varphi^k) \bm{v}^k),\bm{\zeta}\big)
	- \big( ( \bm{v}^k\otimes (\widehat{\rho}(\varphi^k)\bm{v}^k + \widehat{\J}^k),\nabla \bm{ \zeta}\big)
    +\big(  2\nu(\varphi^{k}) D\bm{v}^k, D\bm{\zeta}\big)
    +\gamma \big(|\nabla\vv^k|^2\nabla\vv^k,\nabla \bm{\zeta}\big)
    \notag \\
&\quad =\big((\mu^{k}-\beta'(\varphi^k)\sigma^{k})\nabla \varphi^{k},\bm {\zeta}\big)+ \frac12 \big(\widehat{R}^k\vv^k,\bm{\zeta}\big)
\notag\\
&\qquad + \frac12\big(\widehat{\rho}'(\varphi^k)(\bm{I}-\bm{P}_{Z_{k}}) \big((\bm{v}^k\cdot\nabla\varphi^k)-\div \big(m(\varphi^k)\nabla \mu^k\big)\big)\bm{v}^k,\bm{\zeta}\big),
\quad \forall\, \bm{\zeta} \in \bm{Y}_{k},
	\label{aatest3.c}\\
	&  (\partial_t  \varphi^{k},\xi)+( \bm{v}^{k} \cdot \nabla  \varphi^{k},\xi) +\big(m(\varphi^k)\nabla \mu^{k},\nabla \xi\big)
=0, \quad \forall\, \xi\in Z_{k},
\label{g1.a}\\
& (\mu^k,\xi)
=\big(- \Delta \varphi^k + \Psi'_{\epsilon}(\varphi^k)+\beta'(\varphi^k) \sigma^k, \xi\big),
\quad \forall\, \xi\in Z_{k},
\label{g4.d}
\end{align}
 in $(0,T)$,
where
\begin{align}
\label{atest.1c}
\widehat{\J}^k= -\widehat{\rho}'(\varphi^k) m(\varphi^k) \nabla \mu^k,\quad
\widehat{R}^k= -m(\varphi^k)\nabla \widehat{\rho}'(\varphi^k)\cdot \nabla \mu^k,\quad \text{in}\ \Omega\times(0,T).
\end{align}
The (finite dimensional) solution $(\bm{v}^k,\varphi^k,\mu^k)$
is subject to the boundary and initial conditions
\begin{align}
 & \bm{v}^k=\bm{0},\quad \partial_{\bm{n}}\varphi^k=\partial_{\boldsymbol{n}} \mu^k=0,&&\quad \textrm{on}\ \partial\Omega\times(0,T),
\label{aatest.boundary1}
\\
& \bm{v}^{k}|_{t=0}=\bm{P}_{\bm{Y}_{k}} \bm{v}_{0},\quad \varphi^{k}|_{t=0}=\bm{P}_{Z_{k}}\varphi_{0,n},&&\quad \text{in}\ \Omega.
\label{aatest3.cini}
\end{align}
Then we have the following lemma, whose proof is sketched in the Appendix.
\bl\label{NSSa}
Given $\sigma^k=\mathbf{F}^k_{1}(\bm{u}^k,\psi^k)$, the Faedo--Galerkin scheme \eqref{aatest3.c}--\eqref{aatest3.cini} admits a unique solution   $(\bm{v}^k,\varphi^k,\mu^k)$ on $[0,T]$ such that
$$
\bm{v}^k\in C^1([0,T];\bm{Y}_{k}),\quad
\varphi^k \in C^1([0,T];Z_{k}),\quad
\mu^k\in C^1([0,T];Z_{k}).
$$
Moreover, $(\bm{v}^k,\varphi^k)$ is bounded in $H^1(0,T;\bm{Y}_{k})\times H^1(0,T;Z_{k})$.
\el

Thanks to Lemma \ref{NSSa}, we can define the following mapping, which is determined by the unique solution to problem \eqref{aatest3.c}--\eqref{aatest3.cini}:
\begin{align*}
\mathbf{F}^k_{2}:\quad  \widehat{X} &\to\  H^1(0,T;\bm{Y}_{k})\times H^1(0,T;Z_{k}),
\\
\sigma^{k} &\mapsto\ (\bm{v}^k,\varphi^k).
\end{align*}
Furthermore, $\mathbf{F}^k_2$ is a bounded operator from $\widehat{X}$ to $H^1(0,T;\bm{Y}_{k})\times H^1(0,T;Z_{k})$.

Next, we verify the continuity of $\mathbf{F}^k_2$. Given $\sigma^{k}_{i}\in \widehat{X}$, $i=1,2$, we define the corresponding solutions $(\bm{v}^{k}_{i},\varphi^{k}_{i})=\mathbf{F}^k_2(\sigma^{k}_{i})$, $i=1, 2$, determined by Lemma \ref{NSSa} (with the same initial data $(\bm{P}_{\bm{Y}_{k}} \bm{v}_{0}, \bm{P}_{Z_{k}}\varphi_{0,n})$).
The associated chemical potentials are denoted by $\mu_i^k$, $i=1,2$, respectively. As before, we denote the differences
 $$
 \Sigma^{k}=\sigma_{1}^{k}-\sigma_{2}^{k},\quad
 \bm{V}^{k}=\bm{v}^{k}_{1}-\bm{v}^{k}_{2},\quad
 \Phi^{k}= \varphi_{1}^{k}-\varphi_{2}^{k},\quad
 \Upsilon^{k}=  \mu_{1}^{k}-\mu_{2}^{k}.
 $$
We first observe that for $i=1,2$, \eqref{aatest3.c} can be rewritten as (cf. \eqref{R-app})
\begin{align}
	& \big(  \widehat{\rho}(\varphi_i^k) \partial_t \bm{v_i}^k,\bm{\zeta}\big)
	+\big( \big( (\widehat{\rho}(\varphi_i^k)\bm{v}_i^k + \widehat{\J}_i^k)\cdot \nabla \big)\bm{v}_i^k, \bm{ \zeta}\big)
    +\big(  2\nu(\varphi_i^{k}) D\bm{v}_i^k, D\bm{\zeta}\big)
    +\gamma \big(|\nabla\vv_i^k|^2\nabla\vv_i^k,\nabla \bm{\zeta}\big)
    \notag \\
&\quad =\big((\mu_i^{k}-\beta'(\varphi_i^k)\sigma_i^{k})\nabla \varphi_i^{k},\bm {\zeta}\big)- \frac12 \big(\widehat{R}_i^k\vv_i^k,\bm{\zeta}\big)
\notag\\
&\qquad - \frac12\big(\widehat{\rho}'(\varphi_i^k)(\bm{I}-\bm{P}_{Z_{k}}) \big((\bm{v}_i^k\cdot\nabla\varphi_i^k)-\div \big(m(\varphi_i^k)\nabla \mu_i^k\big)\big)\bm{v}_i^k,\bm{\zeta}\big),
\quad \forall\, \bm{\zeta} \in \bm{Y}_{k},
	\label{aatest3.c2}
\end{align}
Taking the difference of \eqref{aatest3.c} for $\bm{v}^{k}_{i}$, $i=1,2$, and testing the resultant by $\bm{\zeta}=\bm{V}^k$, we get
\begin{align}
& \frac12\frac{\mathrm{d}}{\mathrm{d}t} \big(\widehat{\rho}(\varphi_1^k)\bm{V}^k,\bm{V}^k\big)
+\big(  2\nu(\varphi^{k}_1) D\bm{V}^k, D\bm{V}^k\big)
+\gamma\big(|\nabla \bm{v}^k_1|^2\nabla \bm{V}^k, \nabla\bm{V}^k\big)
\notag \\
&\quad =
- \big((\widehat{\rho}(\varphi_1^k)-\widehat{\rho}(\varphi_2^k))\partial_t \bm{v}_2^k,\bm{V}^k\big)
-\big(\widehat{\rho}(\varphi_1^k)( \bm{V}^k \cdot \nabla)  \bm{v}_2^k,\bm{V}^k\big)
\notag \\
&\qquad
-\big((\widehat{\rho}(\varphi_1^k)-\widehat{\rho}(\varphi_2^k))( \bm{v}_2^k \cdot \nabla)  \bm{v}_2^k,\bm{V}^k\big)
-\big(2(\nu(\varphi^{k}_1)-\nu(\varphi^{k}_2)) D\bm{v}_2^k, \nabla \bm{V}^k\big)
\notag \\
&\qquad
-\gamma\big((|\nabla \bm{v}^k_1|^2-|\nabla \bm{v}^k_2|^2)\nabla \vv^k_2, \nabla\bm{V}^k\big)
+ \big(\widehat{\rho}'(\varphi_1^k) m(\varphi_1^k) (\nabla \Upsilon^k \cdot\nabla) \vv_2^k,\bm{V}^k\big)
\notag\\
&\qquad
+ \big((\widehat{\rho}'(\varphi_1^k) m(\varphi_1^k) -\widehat{\rho}'(\varphi_2^k) m(\varphi_2^k)) (\nabla \mu_2^k \cdot\nabla) \vv_2^k,\bm{V}^k\big)
\notag \\
&\qquad
+ \big(\mu_1^k \nabla \Phi^k, \bm{V}^k\big)
+ \big(\Upsilon^k \nabla \varphi_2^k, \bm{V}^k\big)
-\big(\beta'(\varphi_1^k)\sigma_1^k\nabla \Phi^k, \bm{V}^k\big)
\notag\\
&\qquad
-\big((\beta'(\varphi_1^k)\sigma_1^k-\beta'(\varphi_2^k)\sigma_2^k)\nabla \varphi^k_2, \bm{V}^k\big)
- \frac12 \big( (m(\varphi^k_1)\nabla \widehat{\rho}'(\varphi^k_1)\cdot \nabla \Upsilon^k) \vv^k_2,\bm{V}^k\big)
  \notag \\
&\qquad
- \frac12 \big( ((m(\varphi^k_1)\nabla \widehat{\rho}'(\varphi^k_1)-m(\varphi^k_2)\nabla \widehat{\rho}'(\varphi^k_2))\cdot \nabla \mu^k_2) \vv^k_2,\bm{V}^k\big)
 \notag \\
 &\qquad + \left(\frac12\big(\widehat{\rho}'(\varphi^k_1)(\bm{I}-\bm{P}_{Z_{k}}) \big((\bm{v}_1^k\cdot\nabla\varphi^k_1)-\div \big(m(\varphi^k_1)\nabla \mu^k_1\big)\big)\bm{v}_2^k,\bm{V}^k\big)\right.
 \notag\\
&\qquad\qquad  -\left. \frac12\big(\widehat{\rho}'(\varphi^k_2)(\bm{I}-\bm{P}_{Z_{k}}) \big((\bm{v}^k_2\cdot\nabla\varphi^k_2)-\div \big(m(\varphi^k_2)\nabla \mu^k_2\big)\big)\bm{v}^k_2,\bm{V}^k\big)\right)
\notag\\
&\quad =: \sum_{j=4}^{17}I_j,
\label{diffvm}
\end{align}
where we have used integration by parts and the following identity (cf. \eqref{R-app} for $\varphi_1^k$)
\begin{align*}
& \frac12 \big(\partial_t \widehat{\rho}(\varphi_1^k)\bm{V}^k,\bm{V}^k\big)
- \big(\widehat{\rho}(\varphi_1^k)( \bm{v}_1^k \cdot \nabla)  \bm{V}^k,\bm{V}^k\big)
+ \big(\widehat{\rho}'(\varphi_1^k) m(\varphi_1^k) (\nabla \mu_1^k \cdot\nabla) \bm{V}^k,\bm{V}^k\big)
\\
&\quad = -\frac12 \big( (m(\varphi^k_1)\nabla \widehat{\rho}'(\varphi^k_1)\cdot \nabla \mu^k_1) \bm{V}^k,\bm{V}^k\big)
\notag\\
&\qquad +  \frac12\big(\widehat{\rho}'(\varphi^k_1)(\bm{I}-\bm{P}_{Z_{k}}) \big((\bm{v}^k_1\cdot\nabla\varphi^k_1)-\div \big(m(\varphi^k_1)\nabla \mu^k_1\big)\big)\bm{V}^k,\bm{V}^k\big).
\end{align*}
Let us estimate the right-hand side of \eqref{diffvm} term by term. Using H\"{o}lder's and Young's inequalities, we have
\begin{align*}
|I_4|
&\leq \|\widehat{\rho}(\varphi_1^k)-\widehat{\rho}(\varphi_2^k)\|_{L^\infty(\Omega)} \|\partial_t  \bm{v}_2^k\|\|\bm{V}^k\|
\\
&\leq C\|\Phi^k\|_{L^\infty(\Omega)}\|\partial_t  \bm{v}_2^k\|\|\bm{V}^k\| \\
&\leq \frac{\nu_*}{4}\|D\bm{V}^k\|^2
+ C_k  \|\partial_t  \bm{v}_2^k\|\|\Phi^k\|^2,
\end{align*}
\begin{align}
|I_{5}|
&\le \|\widehat{\rho}(\varphi_1^k)\|_{L^\infty(\Omega)} \|\nabla \bm{v}^{k}_{2}\|_{\bm{L}^\infty(\Omega)}
\|\bm{V}^k\|^2
 \leq C\|\bm{v}^{k}_{2}\|_{\bm{H}^3(\Omega)}\|\bm{V}^k\|^2
 \leq C_k\|\bm{v}^{k}_{2}\| \|\bm{V}^k\|^2,
\notag
\end{align}
\begin{align}
|I_{6}|
&\le \|\widehat{\rho}(\varphi_1^k)-\widehat{\rho}(\varphi_2^k)\|_{L^\infty(\Omega)}\| \bm{v}^{k}_{2}\|_{\bm{L}^6(\Omega)}
\|\nabla \bm{v}^{k}_{2}\|_{\bm{L}^3(\Omega)}\|\bm{V}^{k}\|
\notag\\
&\leq C\|\Phi^k\|_{L^\infty(\Omega)}\| \bm{v}^{k}_{2}\|_{\bm{H}^2(\Omega)}^2\|\bm{V}^{k}\|
\notag \\
&\leq \frac{\nu_*}{4}\|D\bm{V}^k\|^2
+ C_k \| \bm{v}^{k}_{2}\|^4 \|\Phi^k\|^2,\notag
\end{align}
\begin{align}
|I_{7}|
&\le 2\|\nu(\varphi_1^k)-\nu(\varphi_2^k)\|_{L^\infty(\Omega)}\|D \bm{v}^k_2\|\|\nabla \bm{V}^k\|
\notag\\
& \le C\|\Phi^k\|_{L^\infty(\Omega)}  \|D\bm{v}^k_2\|\|D \bm{V}^k\|
\notag\\
& \le \frac{\nu_*}{4}\|D\bm{V}^k\|^2
+ C_k\|\bm{v}^k_2\|^2\|\Phi^{k}\|^2,
\notag
\end{align}
\begin{align}
|I_8|
&\leq \gamma \big(\|\nabla \bm{v}^k_1\|_{\bm{L}^4(\Omega)} +\|\nabla \bm{v}^k_2\|_{\bm{L}^4(\Omega)}\big)\|\nabla \vv^k_2\|_{\bm{L}^4(\Omega)}\| \nabla\bm{V}^k\|^2_{\bm{L}^4(\Omega)}
\notag\\
&\leq C_k\big(\| \bm{v}^k_1\|^2 +\| \bm{v}^k_2\|^2\big) \| \bm{V}^k\|^2.
\notag
\end{align}
To proceed, we need some estimates for the chemical potential.
Taking $\xi=\mu_1^k$ in \eqref{g4.d} for $\mu_1^k$, we find
\begin{align*}
\| \mu_1^k\|^2
&\leq   \| \Delta\varphi_1^k \|  \| \mu_1^k\| + \left( \| \Psi'_\epsilon(\varphi_1^k) \| + \|\beta'(\varphi^k_1)\|_{L^\infty(\Omega)} \| \sigma_1^k\| \right) \| \mu_1^k\|.
\end{align*}
Since for any given $k$ there exists a constant $\widetilde{C}_k>0$ such that $\| \varphi_i^k \|_{C([0,T]; H^2(\Omega))}\leq \widetilde{C}_k$, $i=1,2$, it follows from the Sobolev embedding theorem and the construction of $\Psi_\epsilon'$ that
\begin{equation}
\label{muk-1}
\| \mu_1^k\|\leq  C_k+ C \| \sigma_1^k\|,
\end{equation}
where the positive constant $C_k$ depends on $k$ and $\epsilon$.
The same result holds for $\mu_2^k$. Next, taking $\xi=\Upsilon^k$ in \eqref{g4.d} for the difference $\Upsilon^k$, we easily obtain
\begin{align}
\| \Upsilon^k\|^2
&\leq   \big( \| \Delta \Phi^k \|
+ \| \Psi'_\epsilon(\varphi_1^k) - \Psi'_\epsilon(\varphi_2^k)\|\big) \| \Upsilon^k\|
\notag\\
&\quad +  \big( \|\beta'(\varphi^k_1)\|_{L^\infty(\Omega)}\| \Sigma^k\|
+ \|\beta'(\varphi^k_1)-\beta'(\varphi^k_2)\|_{L^\infty(\Omega)}\| \sigma^k_2\| \big) \| \Upsilon^k\|,
\notag
\end{align}
which entails
\begin{equation}
\label{muk-dif}
\| \Upsilon^k\|\leq  C_k (1+\| \sigma^k_2\|)\| \Phi^k\| + C \| \Sigma^k\|.
\end{equation}
As a consequence, we can deduce that
\begin{align*}
|I_9|
&\leq \|\widehat{\rho}'(\varphi_1^k) m(\varphi_1^k)\|_{L^\infty(\Omega)} \|\nabla \Upsilon ^k\| \|\nabla \vv_2^k\|_{\bm{L}^3(\Omega)} \|\bm{V}^k\|_{\bm{L}^6(\Omega)}
\\
&\leq C_k\big((1+\| \sigma^k_2\|)\| \Phi^k\|
+ \| \Sigma^k\|\big) \|\bm{v}^k_2\| \|\bm{V}^k\|
\\
&\leq C_k(1+\| \sigma^k_2\|^2) (\| \Phi^k\|^2 + \| \Sigma^k\|^2)
+ \|\bm{v}^k_2\|^2 \|\bm{V}^k\|^2,
\end{align*}
\begin{align*}
|I_{10}|&\leq \|\widehat{\rho}'(\varphi_1^k) m(\varphi_1^k) -\widehat{\rho}'(\varphi_2^k) m(\varphi_2^k)\|_{L^\infty(\Omega)} \|\nabla \mu_2^k\| \| \nabla \vv_2^k\|_{\bm{L}^3(\Omega)} \|\bm{V}^k\|_{\bm{L}^6(\Omega)}
\\
&\leq C_k \|\Phi^k\|(1+\|\sigma_2^k\|) \|\bm{v}^k_2\| \|\bm{V}^k\|
\\
&\leq C_k(1+\| \sigma^k_2\|^2) \| \Phi^k\|^2
+ \|\bm{v}^k_2\|^2 \|\bm{V}^k\|^2,
\end{align*}
\begin{align*}
|I_{11}| + |I_{12}|
&\leq  \|\mu_1^k\|\|\nabla \Phi^k\|_{\bm{L}^3(\Omega)} \|\bm{V}^k\|_{\bm{L}^6(\Omega)}
+ \|\Upsilon^k\| \| \nabla \varphi_2^k\|_{\bm{L}^3(\Omega)} \| \bm{V}^k\|_{\bm{L}^6(\Omega)}
\\
&\leq C_k(1+\|\sigma_1^k\|) \|\Phi^k\| \|\bm{V}^k\|
+ C_k \big((1+\| \sigma^k_2\|) \| \Phi^k\|
+ \|\Sigma^k\|\big) \|\varphi_2^k\| \| \bm{V}^k\|
\\
&\leq  C_k(1+\| \sigma^k_1\|^2 + \| \sigma^k_2\|^2)
\| \Phi^k\|^2 + C(1+\|\varphi^k_2\|^2) \|\bm{V}^k\|^2
+ C \|\Sigma^k\|^2,
\end{align*}
\begin{align*}
|I_{13}|
&\leq \|\beta'(\varphi_1^k)\|_{L^\infty}\|\sigma_1^k\|\|\nabla \Phi^k\|_{\bm{L}^3(\Omega)} \| \bm{V}^k\|_{\bm{L}^6(\Omega)}
\\
&\leq C_k\|\sigma_1^k\|\|\Phi^k\|\| \bm{V}^k\|
\\
&\leq C_k\|\sigma_1^k\|^2 \|\Phi^k\|^2
+ \| \bm{V}^k\|^2,
\end{align*}
\begin{align*}
|I_{14}|
&\leq \|\beta'(\varphi_1^k)\sigma_1^k-\beta'(\varphi_2^k)\sigma_2^k\|
\|\nabla \varphi^k_2\|_{\bm{L}^3(\Omega)} \|\bm{V}^k\|_{\bm{L}^6(\Omega)}
\\
&\leq C_k(\|\Sigma^k\| +\|\sigma_2^k\|\|\Phi^k\|) \|\varphi_2^k\|\|\bm{V}^k\|
\\
&\leq C_k\|\Sigma^k\|^2 + C_k\|\varphi_2^k\|^2\|\bm{V}^k\|^2
+ C_k\|\sigma_2^k\|^2\|\Phi^k\|^2,
\end{align*}
\begin{align*}
|I_{15}|
&\leq \|m(\varphi^k_1)\nabla \widehat{\rho}'(\varphi^k_1)\|_{\bm{L}^\infty(\Omega)}
\| \nabla \Upsilon^k\| \| \vv^k_2\|_{\bm{L}^3(\Omega)} \|\bm{V}^k\|_{\bm{L}^6(\Omega)}
\\
&\leq C_k\big((1+\| \sigma^k_2\|)\| \Phi^k\|
+ \| \Sigma^k\|\big) \| \vv^k_2\| \|\bm{V}^k\|
\\
&\leq C_k (1+\| \sigma^k_2\|^2) \| \Phi^k\|^2
+ C_k\|\Sigma^k\|^2 + C_k\| \vv^k_2\|^2 \|\bm{V}^k\|^2,
\end{align*}
\begin{align*}
|I_{16}|
&\leq \|m(\varphi^k_1)\nabla \widehat{\rho}'(\varphi^k_1)
-m(\varphi^k_2)\nabla \widehat{\rho}'(\varphi^k_2)\|_{L^\infty(\Omega)}
\| \nabla \mu^k_2\| \|\vv^k_2\|_{\bm{L}^3(\Omega)} \|\bm{V}^k\|_{\bm{L}^6(\Omega)}
\\
&\leq C_k\|\Phi^k\|(1+\|\sigma_2^k\|)\| \vv^k_2\| \|\bm{V}^k\|
\\
&\leq C_k (1+\| \sigma^k_2\|^2) \| \Phi^k\|^2
+ C_k\| \vv^k_2\|^2 \|\bm{V}^k\|^2,
\end{align*}
and finally, some calculations in a similar fashion yield
\begin{align*}
|I_{17}|
&\leq \|\widehat{\rho}'(\varphi^k_1)\|_{L^\infty(\Omega)}
\big\|(\bm{I}-\bm{P}_{Z_{k}}) \big((\bm{v}_1^k\cdot\nabla\varphi^k_1)-\div \big(m(\varphi^k_1)\nabla \mu^k_1\big)\big)
\notag\\
&\quad -(\bm{I}-\bm{P}_{Z_{k}}) \big((\bm{v}_2^k\cdot\nabla\varphi^k_2)-\div \big(m(\varphi^k_2)\nabla \mu^k_2\big)\big)\big\|
\|\vv_2^k\|_{\bm{L}^3(\Omega)} \|\bm{V}^k\|_{\bm{L}^6(\Omega)}
\notag\\
&\quad + \|\widehat{\rho}'(\varphi^k_1)- \widehat{\rho}'(\varphi^k_2)\|_{L^\infty(\Omega)} \|(\bm{I}-\bm{P}_{Z_{k}}) \big((\bm{v}_2^k\cdot\nabla\varphi^k_2)-\div \big(m(\varphi^k_2)\nabla \mu^k_2\big)\big)\| \|\vv_2^k\|_{\bm{L}^3(\Omega)} \|\bm{V}^k\|_{\bm{L}^6(\Omega)}
\notag\\
&\leq C_k\big( \|\vv^k_1\| \|\Phi^k\|
+ \|\vv^k\|\|\varphi^k_2\|
+ \|\varphi^k_1\|\|\Upsilon^k\|
+ \|\Phi^k\|\|\mu^k_2\|
+ \|\Phi^k\|\|\varphi^k_2\|\|\mu^k_2\|\big)
\|\vv_2^k\| \|\bm{V}^k\|
\notag\\
&\quad + C_k(\|\Upsilon^k\|
+ \|\Phi^k\|\|\mu^k_2\|) \|\vv_2^k\| \|\bm{V}^k\|
+ C_k\|\Phi^k\| \big(\|\vv^k_2\| \|\varphi^k_2\|
+ \|\varphi^k_2\| \|\mu_2^k\|
+ \|\mu^k_2\|\big) \|\vv_2^k\| \|\bm{V}^k\|
\notag\\
&\leq C_k(\|\bm{V}^k\|^2 + \|\Phi^k\|^2 + \|\Sigma^k\|^2\big).
\end{align*}
In the above estimates, we have essentially used the fact that $\bm{v}^k$, $\varphi^k$ and $\mu^k$ are finite dimensional. Hence, all related estimates for higher-order norms depend on the parameter $k$ at this stage.

Next, taking the difference of \eqref{g1.a} for $\varphi^k_i$, we see that
\begin{align}
(\partial_t  \Phi^{k},\xi)
&= -(\Phi^{k}\bm{v}^{k}_{1},\nabla \xi)
- (\varphi^{k}_{2}\bm{V}^{k},\nabla \xi)
- \big(m(\varphi^k_1)\nabla \Upsilon^{k},\nabla \xi\big)
\notag \\
&\quad - \big((m(\varphi^k_1)-m(\varphi^k_2))\nabla \mu^{k}_2,\nabla \xi\big),
\quad \forall\, \xi\in  Z_{k}.
\label{atest111.a}
\end{align}
Since $\overline{\Phi^k}=0$, we can take $\xi=\mathcal{N} \Phi^{k}$ in \eqref{atest111.a} and obtain
\begin{align}
\frac{1}{2}\frac{\mathrm{d}}{\mathrm{d}t} \|\Phi^{k} \|_{V_0^{-1}}^2
&= -\big(\Phi^{k}\bm{v}^{k}_{1},\nabla \mathcal{N} \Phi^{k} \big)
- \big(\varphi^{k}_{2}\bm{V}^{k},\nabla\mathcal{N} \Phi^{k} \big)
- \big(m(\varphi^k_1)\nabla \Upsilon^{k},\nabla \mathcal{N} \Phi^{k} \big)
\notag \\
&\quad
- \big((m(\varphi^k_1)-m(\varphi^k_2))\nabla \mu^{k}_2,\nabla \mathcal{N} \Phi^{k} \big).
\label{vhmna}
\end{align}
The terms on the right-hand side of \eqref{vhmna} can be estimated in a similar manner as above, so we get
\begin{align}
\frac{1}{2}\frac{\mathrm{d}}{\mathrm{d}t} \|\Phi^{k} \|_{V_0^{-1}}^2
&\leq C_k \|\Phi^{k}\|^2
+ C_k \|\Phi^{k}\|_{V_0^{-1}}^2
+ C_k \|\Sigma^k\|^2
+ \frac{\nu_*}{4}\|D\bm{V}^{k}\|^2.
\label{vhmnb}
\end{align}
Furthermore, the interpolation inequality \eqref{inter-1} yields
\begin{align}
\|\Phi^k\|^2
&\leq C_k  \|\Phi^{k} \|_{V_0^{-1}}^2.
\notag
\end{align}
Collecting the above estimates, we can deduce from \eqref{diffvm} and \eqref{vhmnb} that
\begin{align}
& \frac{1}{2}\frac{\mathrm{d}}{\mathrm{d}t}
\left(\big(\widehat{\rho}(\varphi_1^k)\bm{V}^k,\bm{V}^k\big)
+ \|\Phi^{k} \|_{V_0^{-1}}^2 \right)
\le \frac{C_k}{\rho_*}\left(\big(\widehat{\rho}(\varphi_1^k)\bm{V}^k,\bm{V}^k\big)
+ \|\Phi^{k} \|_{V_0^{-1}}^2 \right)
+ C_k \|\Sigma^k\|^2.
\label{astarphi}
\end{align}
Applying Gronwall's lemma to \eqref{astarphi}, and recalling that  $\bm{V}^k|_{t=0}=\bm{0}$, $\Phi^k|_{t=0}=0$, we obtain
\begin{align}
&\big(\widehat{\rho}(\varphi_1^k(t))\bm{V}^k(t),\bm{V}^k(t)\big)
+ \|\Phi^{k}(t) \|_{V_0^{-1}}^2
 \le  C_k e^{C_k t} \int_0^t\| \Sigma^k(s)\|^2\,\mathrm{d}s,
 \quad \forall\, t\in [0,T].
\label{aum}
\end{align}
Since both $\bm{V}^k$ and $\Xi^k$ are finite dimensional and $\widehat{\rho}(\varphi_1^k)\geq \rho_*$, we conclude from \eqref{aum} that the solution operator $\mathbf{F}^k_2$ is continuous as a mapping from $X$ to $C([0,T];\bm{Y}_{k})\times C([0,T];Z_{k})$.
\medskip

\textbf{Step 3.} Define the composite mapping $\mathbf{F}^k:=\mathbf{F}_{2}^k\circ \mathbf{F}_{1}^k $ as
\begin{align*}
\mathbf{F}^k:\ \ C^\delta([0,T];\bm{Y}_{k}) \times C^\delta([0,T];Z_{k}) \ &\ \to H^1(0,T;\bm{Y}_{k})\times H^1(0,T;Z_{k}),
\\
(\bm{u}^k,\psi^k)\ &\ \mapsto(\bm{v}^{k},\varphi^k).
\end{align*}
From the compactness of $H^1(0,T;\bm{Y}_{k})$ in
$C^\delta([0,T];\bm{Y}_{k})$ and $H^1(0,T;Z_{k})$ in
$C^\delta([0,T];Z_{k})$ (recalling that $\bm{Y}_{k}$ and $Z_{k}$ are finite-dimensional), we find that $\mathbf{F}^k$ is a compact operator from $C^\delta([0,T];\bm{Y}_{k}) \times C^\delta([0,T];Z_{k})$ into itself.
On the other hand, it follows from the continuous dependence estimates \eqref{auphih1} and \eqref{aum} that
\be
\begin{aligned}
&\sup_{t\in [0,T]}\|\bm{v}_1^{k}(t)-\bm{v}^{k}_2(t)\| +
\sup_{t\in [0,T]}\|\varphi_1^{k}(t)-\varphi^{k}_2(t)\|
\\
&\quad \leq C_T\left(\sup_{t\in [0,T]}\|\bm{u}_1^{k}(t)-\bm{u}_2^{k}(t)\|
+\sup_{t\in [0,T]}\|\psi_1^{k}(t)-\psi_2^{k}(t)\|\right).
\end{aligned}
\notag
\ee
Due to the boundedness of $(\bm{v}^k_i,\varphi^k_i)$ in $H^1(0,T;\bm{Y}_{k})\times H^1(0,T;Z_{k})$, $i=1,2$, we can conclude by interpolation that $\mathbf{F}^k$ is a continuous operator from $C^\delta([0,T];\bm{Y}_{k}) \times C^\delta([0,T];Z_{k})$ into itself.

Take
$$
\widetilde{M}= 2\mathrm{e}( C_0+ 1) +2C_{0,k} + 2(\|\bm{v}_0\|^2+  \|\varphi_{0,n}\|^2  +1),
$$
where the positive constants $C_0$, $C_{0,k}$ are given in \eqref{iniC0} and \eqref{me2}, respectively.
According to Lemma \ref{fp} and estimate \eqref{auvm1}, there exists a sufficiently small time $T_*\in (0,T]$ depending on $\widetilde{M}$ such that
\begin{equation}
\|\bm{v}^{k}(t)\|^2+ \|\varphi^{k}(t)\|^2
\leq
2 \mathrm{e} ( C_0 + 1 ) + 2 C_{0,k}<\widetilde{M},\qquad \forall\, t\in [0,T_*].
\notag
\end{equation}
Define
\be
\begin{aligned}
\bm{B}_k&=\Big\{(\bm{u}^k,\psi^k)\in C^\delta([0,T_*];\bm{Y}_{k}) \times C^\delta([0,T_*];Z_{k})\ \Big|\ \sup_{t\in[0,T_*]}\|\bm{u}^k(t)\|^2\leq \widetilde{M},\\
& \qquad  \sup_{t\in[0,T_*]}\|\psi^k(t)\|^2\leq \widetilde{M},\ \bm{u}^{k}(0)= \bm{P}_{\bm{Y}_{k}} \bm{v}_{0},\ \psi^{k}(0)= \bm{P}_{Z_{k}} \varphi_{0,n}\Big\},
\end{aligned}
\notag
\ee
which is a closed convex set in $C^\delta([0,T_*];\bm{Y}_{k}) \times C^\delta([0,T_*];Z_{k})$. Then for any $(\bm{u}^k,\psi^k)\in \bm{B}_k$, we find that
$$
(\bm{v}^k,\varphi^k)=\mathbf{F}^k(\bm{u}^k,\psi^k)\in H^1(0,T_*;\bm{Y}_{k})\times H^1(0,T_*;Z_{k})\subset\subset C^\delta([0,T_*];\bm{Y}_{k}) \times C^\delta([0,T_*];Z_{k}),
$$
and the pair $(\bm{v}^k,\varphi^k)$ satisfies
\begin{align}
\sup_{t\in[0,T_{*}]}\|\bm{v}^k(t)\|^2 \le\widetilde{M},\quad \sup_{t\in[0,T_{*}]}\|\varphi^k(t)\|^2 \le\widetilde{M}.\notag
\end{align}
As a result, it holds $(\bm{v}^k,\varphi^k)\in \bm{B}_k$.

We now recall the classical Schauder fixed-point theorem:
\bl\label{SH}
Assume that $\bm{K}$ is a closed convex set in a Banach space
$\mathcal{B}$. Let $\mathcal{T}$ be a continuous mapping of $\bm{K}$ into itself that satisfies that the image $\mathcal{T}\bm{K}$ is precompact.
Then there exists a fixed point in $\bm{K}$ for $\mathcal{T}$.
\el
Applying Lemma \ref{SH}, we can conclude that for the small time $T_*\in (0,T]$ chosen above, the mapping $\mathbf{F}^k$ admits a fixed point $(\bm{v}^k,\varphi^k)$ in the set $\bm{B}_k$. Then $\sigma^k$ can be determined by $(\bm{v}^k,\varphi^k)$ as in Lemma \ref{fp}. Subsequently, $\mu^k$ is determined by \eqref{atest.3}.
This gives a local solution $(\bm{v}^k,\varphi^k,\mu^k,\sigma^k)$ to the semi-Galerkin scheme \eqref{atest.1}--\eqref{atest.ini0} in the interval $[0,T_*]$. The uniqueness of the approximate solution $(\bm{v}^k,\varphi^k,\mu^k,\sigma^k)$ can be established by the standard energy method, using the facts that $\vv^k$, $\varphi^k$, $\mu^k$ are finite dimensional and $\sigma^k$ is sufficiently smooth. Thus, we omit the details here.

The proof of Proposition \ref{p1} is complete.
\end{proof}

\section{Existence of Global Finite Energy/Weak Solutions}
\label{EXIST-WEAK}
\setcounter{equation}{0}

In this section, we prove Theorem \ref{WEAK-SOL} and Corollary \ref{finite-to-weak} on the existence of a global finite energy/weak solution.

\subsection{Uniform estimates}\label{ue}
We derive several estimates for the approximate solutions $\left\{ (\bm{v}^k,\varphi^k, \mu^k,\sigma^k) \right\}$ to the semi-Galerkin scheme \eqref{atest.1}--\eqref{atest.ini0} obtained in Proposition \ref{p1}, which are uniform with respect to the approximating parameters $k$, $\epsilon$, $\gamma$, $n$ and $t\in [0,T]$.
\medskip

\textbf{Step 1. Conservation of mass.} Taking $\xi=1$ in \eqref{atest.2}, we obtain
\begin{equation}
\label{cons-mass1}
\overline{\varphi^k}(t)= \frac{1}{|\Omega|} \int_{\Omega} \varphi^k(x,t) \, \d x= \frac{1}{|\Omega|} \int_{\Omega} \varphi_{0,n}(x) \, \d x=\overline{\varphi_{0,n}}, \quad \forall \, t \in [0,T_k].
\end{equation}
Similarly, multiplying \eqref{atest.4} by $1$ and integrating over $\Omega$, we get
\begin{equation}
\label{cons-mass2}
\| \sigma^k(t)\|_{L^1(\Omega)}= \int_{\Omega} \sigma^k(x,t) \, \d x=  \int_{\Omega} \sigma_{0,n}(x) \, \d x =\| \sigma_{0,n}\|_{L^1(\Omega)}, \quad \forall \, t \in [0,T_k].
\end{equation}

\textbf{Step 2. Estimates from the basic energy law.}
Thanks to Proposition \ref{p1}, we are allowed to take $\bm{\zeta}=\boldsymbol{v}^k$ in \eqref{atest.1}. Then, using the modified mass balance equation \eqref{R-app}, we find
\begin{align}
&\frac{\d}{\d t} \int_\Omega \frac12\widehat{\rho}(\varphi^k)|\vv^k|^2\,\d x
+\int_\Omega  2\nu(\varphi^{k}) |D\bm{v}^{k}|^2\,\d x
+\gamma \int_\Omega |\nabla\vv^k|^4\,\d x
\notag \\
&\quad =\int_\Omega (\mu^{k} -\beta'(\varphi^k) \sigma^{k})\nabla \varphi^{k}\cdot \vv^k\, \d x.
\label{BEL-k-1}
\end{align}
Next, choosing $\xi= \mu^k$ in \eqref{atest.2} and $\xi= \partial_t\varphi^k$ in \eqref{atest.3}, respectively, we can deduce that
\begin{align}
& \frac{\d}{\d t} \int_\Omega \left(\frac{1}{2}|\nabla \varphi^k|^2+ \Psi_\epsilon(\varphi^k)\right)\,\d x + \int_\Omega \sigma^k\partial_t \beta(\varphi^k)  \,\d x
+ \int_\Omega (\vv^k\cdot\nabla \varphi^k)\mu^k\, \d x
\notag\\
&\quad +\int_\Omega m(\varphi^k)|\nabla \mu^k|^2\,\d x=0.
\label{BEL-k-2}
\end{align}
Finally, testing \eqref{atest.4} by $\ln \sigma^k+\beta(\varphi^k)$, using integration by parts and the fact $\mathrm{div}\, \vv^k=0$, we have
\begin{align}
& \frac{\d}{\d t} \int_\Omega  \sigma^k (\ln \sigma^k -1)\,\d x
+ \int_\Omega (\partial_t \sigma^k) \beta(\varphi^k)\,\d x
- \int_\Omega (\vv^k\cdot \nabla \varphi^k) \sigma^k \beta'(\varphi^k) \,\d x
\notag\\
& \quad +
\int_\Omega \sigma^k|\nabla (\ln \sigma^k +\beta(\varphi^k)|^2\,\d x=0.
\label{BEL-k-3}
\end{align}
Adding \eqref{BEL-k-1}--\eqref{BEL-k-3} together, we obtain the following energy identity
\begin{align}
& \frac{\mathrm{d}}{\mathrm{d}t}\widehat{\mathcal{E}}^k(t)
+ \widehat{\mathcal{D}}^k(t)
 =0,\quad \text{for a.e.}\ t\in(0,T_k),
\label{menergy}
\end{align}
where
\begin{align*}
\widehat{\mathcal{E}}^k(t)
&=\int_{\Omega}\left( \frac{1}{2}\widehat{\rho}(\varphi^k)|\boldsymbol{v}^k|^{2}
+\frac{1}{2}|\nabla \varphi^k|^{2}
+ \Psi_\epsilon(\varphi^k) +\sigma^k(\ln \sigma^k-1)
+ \beta(\varphi^k) \sigma^k\right) \, \mathrm{d} x,
\\
\widehat{\mathcal{D}}^k(t)
& =\int_{\Omega} \left(2 \nu(\varphi^k)|D \boldsymbol{v}^k|^{2}
+\gamma|\nabla \vv^k|^4
+m(\varphi^k)|\nabla \mu^k|^{2}
+ \sigma^k|\nabla(\ln \sigma^k+ \beta(\varphi^k))|^{2}\right)\, \mathrm{d} x.
\end{align*}

Using (H4) and taking $a=2\beta^*$, $b=\sigma^k-1$ in Lemma \ref{You}, we observe that
\begin{align}
\left|\int_\Omega \beta(\varphi^k) \sigma^k\,\mathrm{d}x\right|
&\leq \beta^*\int_{\{0\leq \sigma^k\leq 1\}} \sigma^k\,\mathrm{d}x
+ \beta^*\int_{\{\sigma^k\geq 1\}} 1\,\mathrm{d}x
+ \beta^*\int_{\{\sigma^k\geq 1\}} (\sigma^k-1)\,\mathrm{d}x
\notag \\
&\leq 2\beta^*|\Omega|
+ \frac{1}{2}\int_{\{\sigma^k\geq 1\}} \left(e^{2\beta^*}-2\beta^*-1 + \sigma^k
(\ln \sigma^k-1)+1\right)\mathrm{d}x
\notag\\
&\leq 2\beta^*|\Omega| + \frac{1}{2}e^{2\beta^*}|\Omega|
+ \frac{1}{2}\int_{\{\sigma^k\geq 1\}} \sigma^k
\ln \sigma^k\,\mathrm{d}x.
\label{es-cross1}
\end{align}
Then from (H4), the construction of the regularized initial data and \eqref{es-cross1} (applied to $\bm{P}_{Z_{k}} \varphi_{0,n}$ and $\sigma_{0,n}$), we can control the initial approximate energy as follows
\begin{align}
\widehat{\mathcal{E}}^k(0)
&=
\int_{\Omega}\left(\frac{1}{2}\widehat{\rho}(\bm{P}_{Z_{k}} \varphi_{0,n})|\bm{P}_{\bm{Y}_{k}} \bm{v}_{0}|^{2}
+\frac{1}{2}|\nabla \bm{P}_{Z_{k}}\varphi_{0,n}|^{2} +\Psi_\epsilon(\bm{P}_{Z_{k}}\varphi_{0,n})
 \right) \mathrm{d}x
\notag \\
&\quad +
\int_{\Omega}\big(
\sigma_{0,n}(\ln \sigma_{0,n}-1)
+ \beta(\bm{P}_{Z_{k}}\varphi_{0,n})\sigma_{0,n}\big) \mathrm{d}x
\notag \\
&\leq \rho^*\|\bm{v}_{0}\|^{2}
+ \frac12\|\varphi_{0,n}\|_{H^1(\Omega)}^2 + |\Omega|\max_{r\in[-1,1]}|\Psi_0(r)|
 + \frac{3}{2} \int_\Omega \sigma_0\ln \sigma_0\, \mathrm{d}x + C(\beta^*)  |\Omega|
\notag \\
&\le C\Big(\rho^*, \beta^*, \|  \bm{v}_{0}\|, \|\varphi_{0}\|_{H^1(\Omega)}, \max_{r\in[-1,1]}|\Psi_0(r)|,
\int_\Omega \sigma_0\ln \sigma_0\, \mathrm{d}x, \Omega\Big)
\notag\\
&=: \mathcal{E}_0,
\label{iniE0}
\end{align}
where the upper bound $\mathcal{E}_0$ is independent of the approximating parameters $k$, $\gamma$, $\epsilon$, $n$.
On the other hand, from (H1), the construction of $\Psi_\epsilon$ and \eqref{es-cross1}, we also find that the total energy $\mathcal{E}^k(t)$ is uniformly semi-coercive from below, that is,
\begin{align}
&\widehat{\mathcal{E}}^k(t)\geq \frac12 \int_{\Omega}\left( \rho_*|\boldsymbol{v}^k|^{2}
+ |\nabla \varphi^k|^{2}
+\sigma^k\ln \sigma^k
\right) (t) \,\mathrm{d} x -C_*,
\label{Lowerbd-2}
\end{align}
where the constant $C_*>0$ is independent of $k$, $\gamma$, $\epsilon$, $n$ and time $t$.

Integrating \eqref{menergy} with respect to time, using \eqref{iniE0}, \eqref{Lowerbd-2}, we can deduce the following uniform bounds
\begin{align}
&\|\bm{v}^k\|_{L^{\infty}(0, T_k ; \bm{L}^2(\Omega))}
+\|\varphi^k\|_{L^{\infty}(0, T_k ; H^1(\Omega))}
\notag \\
&\quad +\|\sigma^k \ln \sigma^k\|_{L^{\infty}(0, T_k ; L^{1}(\Omega))} +\|\Psi_{0,\epsilon}(\varphi^k)\|_{L^{\infty}(0, T_k; L^{1}(\Omega))}
\leq C,
\label{energy-a}
\end{align}
and
\begin{align}
&\|\bm{v}^k\|_{L^{2}(0, T_k ; \bm{H}^1(\Omega))}
+ \gamma^\frac14\|\nabla \bm{v}^k\|_{L^{4}(0, T_k ; \bm{L}^4(\Omega))}
+\|\nabla \mu^k\|_{L^{2}(0, T_k ; \bm{L}^{2}(\Omega))}
\notag\\
&\quad + \big\| (\sigma^k)^\frac{1}{2}\nabla(\ln \sigma^k +\beta(\varphi^k))\big\|_{L^{2}(0, T_k; \bm{L}^{2}(\Omega))}
\leq C,
\label{energy-b}
\end{align}
where the constant $C>0$ depends on $\mathcal{E}_0$, $\Omega$ and coefficients of the system, but it is independent of $k$, $\gamma$, $\epsilon$, $n$ and time $t$.

\begin{remark}\rm
\label{app-glosol}
For any $k\geq \widehat{k}$, the estimates \eqref{energy-a}, \eqref{energy-b} allow us to extend the (unique) local solution
$(\bm{v}^k,\varphi^k,\mu^k,\sigma^k)$
from $[0,T_k]$ to the whole interval $[0,\infty)$. This yields a unique global solution at the approximate level. Moreover, the estimates \eqref{energy-a}, \eqref{energy-b} hold on $[0,\infty)$, and the constant $C>0$ is independent of $k$, $\gamma$, $\epsilon$, $n$ as well as time $t$.
\end{remark}

\textbf{Step 3. Estimate of the chemical potential $\mu^k$.}
Thanks to Remark \ref{app-glosol}, hereafter we shall work with the global approximate solution $(\bm{v}^k,\varphi^k,\mu^k,\sigma^k)$ that is defined in $[0,\infty)$.

The construction of $\Psi_{0,\epsilon}'$ implies that   $\Psi_{0,\epsilon}'(r)=\Psi_0'(r)$ for $r\in [-1+\epsilon,1-\epsilon]$. Hence, thanks to \cite[Proposition A.1]{MZ04} and the mass conservation property \eqref{cons-mass1}, we deduce that
$$
|\Psi_{0,\epsilon}'(r)|\leq c_1 \Psi_{0,\epsilon}'(r)\big( r - \overline{\varphi^k}\big)+c_2,
\qquad \forall\, r\in [-1+\epsilon,\,1-\epsilon],
$$
where the positive constants $c_1$, $c_2$ depend on $\overline{\varphi_{0,n}}$ (and indeed only on $\overline{\varphi_{0}}$ since $\overline{\varphi_{0,n}} \in \big[-|\overline{\varphi_{0}}|,\,|\overline{\varphi_{0}}|\big]$), but not on $k$, $\gamma$, $\epsilon$, $n$.
In addition, we have
$$
\Psi_{0,\epsilon}'(r)\big(r-\overline{\varphi^k}\big)\geq \left(1-\epsilon-\overline{\varphi^k}\right)\Psi_{0,\epsilon}'(r) \geq \frac12(1-|\overline{\varphi_{0,n}}|)\Psi_{0,\epsilon}'(r),\quad \forall\, r\geq 1-\epsilon,
$$
and
$$
\Psi_{0,\epsilon}'(r)\big(r-\overline{\varphi^k}\big)\geq \left(-1+\epsilon-\overline{\varphi^k}\right)\Psi_{0,\epsilon}'(r)\geq -\frac12(1-|\overline{\varphi_{0,n}}|)\Psi_{0,\epsilon}'(r),\quad \forall\, r\leq -1+\epsilon.
$$
Thus, there exist two positive constants $\widetilde{c}_1$, $\widetilde{c}_2$ depending on $\overline{\varphi_{0}}$, but not on $k$, $\gamma$, $\epsilon$, $n$ such that
\begin{align}
& \|\Psi_{0,\epsilon}'(\varphi^k)\|_{L^1(\Omega)}\leq \widetilde{c}_1 \int_\Omega \Psi_{0,\epsilon}'(\varphi^k)\big(\varphi^k-\overline{\varphi^k}\big)\,\mathrm{d}x +\widetilde{c}_2.
\label{Psi-L1}
\end{align}
Next, we apply Lemma \ref{TM-var} (with $\eta=1$) together with the estimates \eqref{cons-mass2}, \eqref{energy-a} to conclude
\begin{align}
\left|\int_{\Omega}  \sigma^k \varphi^k\, \mathrm{d}x\right|
 &\leq   \int_{\Omega} \sigma^k\big(\ln \sigma^k - \ln \overline{\sigma_{0,n}}\big)\mathrm{d}x + \|\sigma_{0,n}\|_{L^1(\Omega)}\|\nabla \varphi^k\|^2
 \notag \\
&\quad + M  \|\sigma_{0,n}\|_{L^1(\Omega)} \|\varphi^k\|_{L^1(\Omega)}^2 + M\|\sigma_{0,n}\|_{L^1(\Omega)}
\notag\\
&\leq C,
\label{Lowerbd-1}
\end{align}
where the constant $C>0$ depends on $\mathcal{E}_0$, $\Omega$, but not on $k$, $\gamma$, $\epsilon$, $n$.
Taking $\xi=\varphi^k-\overline{\varphi^k}$ in \eqref{atest.3}, we find that
\begin{align}
&   \|\nabla \varphi^k\|^2+ \int_\Omega \Psi_{0,\epsilon}'(\varphi^k) (\varphi^k-\overline{\varphi^k})\,\mathrm{d}x
\notag \\
&\quad = \int_\Omega \big(\mu^k-\overline{\mu^k}\big) \big( \varphi^k-\overline{\varphi^k}\big) \,\mathrm{d}x
+ \theta_0\|\varphi^k-\overline{\varphi^k}\|^2
- \int_\Omega \beta'(\varphi^k)\sigma^k(\varphi^k-\overline{\varphi^k})\,\mathrm{d}x
\notag \\
&\quad \leq C\|\nabla \mu^k\|\|\nabla \varphi^k\|+ C|\theta_0|\|\nabla \varphi^k\|^2
+C,
\label{Psi-L2}
\end{align}
where in the last line we have used (H4), \eqref{Lowerbd-1}, and the Poincar\'{e}--Wirtinger inequality. On the other hand, testing \eqref{atest.3} by $\xi=1$ yields
\begin{align}
|\overline{\mu^{k}}|
&=
\frac{1}{|\Omega|} \left| \int_\Omega \Psi'_\epsilon(\varphi^{k}) \,\mathrm{d}x + \int_\Omega \beta'(\varphi^k)\sigma^{k} \,\mathrm{d}x \right|
\notag \\
 & \le C\|\Psi'_\epsilon(\varphi^{k})\|_{L^1(\Omega)}
 +C \int_\Omega \sigma^{k}(\ln \sigma^k-1)\,\mathrm{d}x+C.
\label{average valuea}
\end{align}
Then from \eqref{energy-a}--\eqref{average valuea} and the Poincar\'{e}--Wirtinger inequality, we can conclude
that %
\begin{align}
\sup_{t\geq 0}\|\Psi'_\epsilon(\varphi^{k})\|_{L^2(t,t+1;L^1(\Omega))}\leq C,
\label{Psid-1}
\end{align}
and
\begin{equation}
\sup_{t\geq 0} \|  \mu^{k} \|_{L^{2}(t,t+1;H^1(\Omega))}\le C,
\label{mu}
\end{equation}
where the constant $C>0$ is independent of $k$, $\gamma$, $\epsilon$, $n$.
\medskip

\textbf{Step 4. Sobolev estimates of $(\varphi^k,\sigma^k)$, Part I.}
A direct calculation yields
\begin{align}
&
\int_\Omega \sigma^k \big| \nabla (\ln \sigma^k +\beta(\varphi^k))\big|^2 \, \d x
\notag \\
&\quad =\int_\Omega \frac{|\nabla \sigma^k|^2}{\sigma^k} \, \d x
+ \int_\Omega |\beta'(\varphi^k)|^2 \sigma^k |\nabla \varphi^k|^2 \, \d x
+\int_\Omega 2 \beta'(\varphi^k) \nabla \sigma^k \cdot \nabla \varphi^k \, \d x
\notag \\
&\quad =\int_\Omega 4 \big|\nabla \sqrt{\sigma^k}\big|^2 \, \d x
+ \int_\Omega |\beta'(\varphi^k)|^2 \sigma^k |\nabla \varphi^k|^2 \, \d x
-\int_{\Omega} 2 \beta'(\varphi^k) \sigma^k \, \Delta \varphi^k \, \d x
\notag \\
&\qquad -\int_{\Omega} 2 \beta''(\varphi^k)  \sigma^k |\nabla  \varphi^k|^2 \, \d x.
\label{dis-1}
\end{align}
We now estimate the last two terms on the right-hand side of \eqref{dis-1}. Using (H4), \eqref{energy-a}, the Sobolev embedding theorem and the elliptic regularity theory, we obtain
\begin{align}
&\left|\int_{\Omega} 2 \beta'(\varphi^k) \sigma^k \, \Delta \varphi^k \, \d x \right| +
\left|\int_{\Omega} 2 \beta''(\varphi^k)  \sigma^k |\nabla  \varphi^k|^2 \, \d x \right|
\notag \\
&\quad \leq 2 \|\beta'(\varphi^k)\|_{L^\infty(\Omega)}\|\sigma^k\|\| \Delta \varphi^k\| + 2 \|\beta''(\varphi^k)\|_{L^\infty(\Omega)}  \|\sigma^k\| \|\nabla  \varphi^k\|_{\bm{L}^4(\Omega)}^2
\notag\\
&\quad \leq C\|\sigma^k\|(\| \Delta \varphi^k\| + \|\nabla  \varphi^k\| \|\nabla  \varphi^k\|_{\bm{H}^1(\Omega)})
\notag\\
&\quad \leq C\|\sigma^k\|\| \Delta \varphi^k\|.
\label{dis-1b}
\end{align}
Testing \eqref{atest.3} by $\xi=-\Delta \varphi^k$, we have
\begin{align}
&\| \Delta \varphi^k\|^2 + \int_{\Omega}\Psi_{0,\epsilon}''(\varphi^k)|\nabla \varphi^k|^2 \, \d x
\notag\\
& \quad =
\int_{\Omega} \nabla \mu^k \cdot \nabla \varphi^k \, \d x
+ \theta_0  \int_{\Omega} |\nabla \varphi^k|^2 \, \d x
 + \int_{\Omega} \beta'(\varphi^k)\sigma^k \Delta \varphi^k  \, \d x
\notag \\
&\quad \leq \|\nabla \mu^k\|\| \nabla \varphi^k\| + |\theta_0| \|\nabla \varphi^k\|^2 + \frac12 \|\Delta\varphi^k\|^2 + C \|\sigma^k\|^2,
\label{phi-H2-b}
\end{align}
which together with \eqref{energy-a} yields
\begin{equation}
\label{phi-H2-est}
\frac12 \| \Delta \varphi^k\|^2  + \int_{\Omega}\Psi_{0,\epsilon}''(\varphi^k)|\nabla \varphi^k|^2 \, \d x
\leq
C \left( 1+ \| \nabla \mu^k\| + \|\sigma^k\|^2\right).
\end{equation}
Combining \eqref{dis-1}, \eqref{dis-1b} and \eqref{phi-H2-est}, we end up with
\begin{equation}
\label{dis-2}
\begin{split}
\int_\Omega  \big|\nabla \sqrt{\sigma^k}\big|^2 \, \d x
\leq \frac14\int_\Omega \sigma^k \big| \nabla (\ln \sigma^k+\beta(\varphi^k))\big|^2 \, \d x
+ C\left(1+\| \nabla \mu^k\|^2 + \|\sigma^k\|^2\right).
\end{split}
\end{equation}
Now, we are left to control the crucial term $\|\sigma^k\|$. To this end, an application of Lemma \ref{GN-ln} with $u=\sqrt{\sigma^k}$, $q=4$, $r=2$, $\alpha=\frac12$ leads to
\begin{align}
\| \sigma^k\|^2
= \big\| \sqrt{\sigma^k}\big\|_{L^4(\Omega)}^4
&\leq \eta \big\| \nabla \sqrt{\sigma^k}\big\|^{2}
\big\| \sqrt{\sigma^k} \ln^\frac12 \sqrt{\sigma^k}\big\|^2
+C \big\|  \sqrt{\sigma^k} \big\|^4 +C_\eta
\notag \\
&= \eta \big\| \nabla \sqrt{\sigma^k}\big\|^{2}
\int_{\Omega} \big|\sigma^k \ln \sigma^k\big| \, \d x
+C \left( \int_{\Omega} \sigma^k \, \d x \right)^2 +C_\eta.
\notag
\end{align}
Thus, in light of \eqref{cons-mass2} and \eqref{energy-a}, we infer from the above inequality that
\begin{equation}
\label{s-L2}
\| \sigma^k\|^2
\leq C\eta \big\| \nabla \sqrt{\sigma^k}\big\|^{2}
+ C_\eta,
\end{equation}
where $C>0$ is independent of $k$, $\gamma$, $\epsilon$, $n$ and $t$. Choosing $\eta$ sufficiently small in \eqref{s-L2}, and using \eqref{dis-2}, we arrive at
\begin{equation}
\label{dis-3}
\begin{split}
\big\|\nabla \sqrt{\sigma^k}\big\|^2
\leq \int_\Omega \sigma^k \big| \nabla (\ln \sigma^k+\beta(\varphi^k))\big|^2 \, \d x
+ C \big(1+ \|\nabla \mu^k\|^2\big).
\end{split}
\end{equation}
Owing to \eqref{energy-b}, an integration of \eqref{dis-3} in time yields
\begin{equation}
\label{dis-4}
\sup_{t\geq 0}\big\| \nabla \sqrt{\sigma^k} \big\|_{L^2(t,t+1; \bm{L}^2(\Omega))}
\leq C,
\end{equation}
where $C>0$ is independent of $k$, $\gamma$, $\epsilon$, $n$ and $t$.
This enables us to deduce from \eqref{s-L2} that
\begin{equation}
\label{s-L2-est}
\sup_{t\geq 0}\| \sigma^k \|_{L^2(t,t+1; L^2(\Omega))} \leq C.
\end{equation}
The above estimate combined with \eqref{energy-a} and \eqref{phi-H2-est} further entails that
\begin{equation}
\label{phi-h-2}
\sup_{t\geq 0}\| \varphi^k\|_{L^2(t,t+1; H^2(\Omega))}
\leq C.
\end{equation}
Finally, we can apply \eqref{energy-a} and the growth property of $\Psi_{\epsilon}$ to conclude that
\begin{align}
	\sup_{t\geq 0}\|\Psi'_{\epsilon}(\varphi^k)\|_{L^2(t,t+1;L^2(\Omega))}\leq C,
	\label{Psip0b}
\end{align}
where $C>0$ depends on $\epsilon$, $\mathcal{E}_0$, $\Omega$ and the coefficients of the system, but it is independent of $k$, $\gamma$, $n$.

\medskip

\textbf{Step 5. Sobolev estimates of $(\varphi^k, \sigma^k)$, Part II.}
We proceed to derive refined estimates with the help of additional information from $\|\sigma_{0,n}\|$. Multiplying \eqref{atest.4} by $\sigma^k$ and integrating over $\Omega$, after integration by parts, we get
\begin{equation}
\label{sL2-1}
\frac12 \ddt \|\sigma^k \|^2 + \|\nabla \sigma^k \|^2
=-\int_{\Omega} \beta'(\varphi^k) \sigma^k \nabla \varphi^k \cdot \nabla \sigma^k \, \d x.
\end{equation}
It follows from H\"{o}lder's inequality, the Gagliardo--Nirenberg inequality, and Young's inequality that
\begin{align}
 \left|-\int_{\Omega} \beta'(\varphi^k) \sigma^k \nabla \varphi^k \cdot \nabla \sigma^k \, \d x\right|
& \leq  \|\beta'(\varphi^k)\|_{L^\infty(\Omega)}
\|\sigma^k\|_{L^4(\Omega)}\|\nabla \varphi^k\|_{\bm{L}^4(\Omega)}\|\nabla \sigma^k\|
 \notag \\
& \leq C \big(\|\sigma^k\| + \|\sigma^k\|^\frac12\|\nabla\sigma^k\|^\frac12\big)\|\Delta \varphi^k\|^\frac12\|\nabla \varphi^k\|^\frac12 \|\nabla \sigma^k\|
\notag\\
& \leq  \frac{1}{2}\|\nabla \sigma^k\|^2
+ C \big(1+\|\Delta \varphi^k\|^2\big)\|\sigma^k\|^2
\notag \\
&\leq \frac{1}{2}\|\nabla \sigma^k\|^2
+ C \big(1+\|\nabla \mu^k\|)\|\sigma^k\|^2 + C\|\sigma^k\|^4,
\label{sigL2-1}
\end{align}
where we have used (H4), \eqref{energy-a}, \eqref{phi-H2-est}. Hence, from \eqref{sL2-1} and \eqref{sigL2-1} we infer that
\begin{equation}
\label{sL2-2}
  \ddt \|\sigma^k \|^2 + \|\nabla \sigma^k \|^2
\leq C \big(1+\|\nabla \mu^k\|\big) \|\sigma^k\|^2
+ C\|\sigma^k\|^4.
\end{equation}
It remains to control the right-hand side of \eqref{sL2-2}.
Applying Lemma \ref{GN-ln} to $\sigma^k$ with the choice of parameters $q=2$, $r=1$, $\alpha=1$ and using \eqref{cons-mass2}, \eqref{energy-a}, we obtain
\begin{equation}
\label{sL3-3}
\| \sigma^k\|^2
\leq \eta \| \nabla \sigma^k\|\|\sigma^k\ln \sigma^k\|_{L^1(\Omega)}+ C\|\sigma^k\|^2_{L^1(\Omega)}+C_\eta.
\end{equation}
Choosing $\eta>0$ in \eqref{sL3-3} sufficiently small ($\eta$ is independent of the approximating parameters and time $t$), and inserting the resultant into \eqref{sL2-2}, we deduce from Young's inequality that
\begin{equation}
\label{sL2-4}
\begin{split}
  \ddt \|\sigma^k \|^2 + \frac12 \|\nabla \sigma^k \|^2
\leq C \left( 1+ \| \nabla \mu^k\|^2 \right) \| \sigma^k\|^2 +C.
\end{split}
\end{equation}
Recalling the following interpolation inequality
\begin{align}
\| u\|\leq C \| u\|_{L^1(\Omega)}^\frac12 \| \nabla u\|^\frac12 + C \| u\|_{L^1(\Omega)}, \quad \forall \, u \in H^1(\Omega),
\label{int-L2-H1}
\end{align}
applying \eqref{cons-mass2} and Young's inequality, we then arrive at
\begin{equation}
\label{sL2-5}
\begin{split}
 \ddt \|\sigma^k \|^2 + \frac14 \|\nabla \sigma^k \|^2
\leq C\| \nabla \mu^k\|^2 \| \sigma^k\|^2 +C,
\end{split}
\end{equation}
where the constant $C>0$ only depends on the parameters of the system, $\mathcal{E}_0$ and $\Omega$.
Thanks to the Poincar\'{e}--Wirtinger inequality and \eqref{cons-mass2}, we can rewrite \eqref{sL2-5} as follows
\begin{equation}
\label{sL2-6}
\begin{split}
\ddt \|\sigma^k \|^2 + \left( \varpi - C \|\nabla \mu^k\|^2 \right) \| \sigma^k\|^2
\leq C,
\end{split}
\end{equation}
where the constant $\varpi>0$ only depends on $\Omega$.
An application of Gronwall's lemma to \eqref{sL2-6} yields
\begin{align}
\| \sigma^k(t)\|^2
&\leq \| \sigma_{0,n}\|^2 \, {\rm exp} \left(C \int_0^t \| \nabla \mu^k(\tau)\|^2 \, \d \tau - \varpi t \right)
\notag \\
&\quad + C \int_{0}^t {\rm exp} \left(  C \int_\tau^t \| \nabla \mu^k(s)\|^2 \, \d s - \varpi (t-\tau) \right) \, \d \tau,
\quad \forall \, t \geq 0.
\label{sigma-L2-0}
\end{align}
In light of the following estimate
\begin{equation}
\label{mu-int}
\int_{0}^\infty \| \nabla \mu^k (\tau)\|^2 \, \d \tau \leq C,
\end{equation}
which is a consequence of \eqref{menergy}, \eqref{iniE0} and \eqref{Lowerbd-2}, we find that
\begin{align}
\| \sigma^k(t)\|^2
&\leq \| \sigma_{0,n}\|^2 \, {\rm exp} \left( C-\varpi t \right)
+ C \int_{0}^t {\rm exp} \left(  C- \varpi (t-\tau) \right) \, \d \tau
\notag \\
&\leq \| \sigma_{0,n}\|^2 \, {\rm exp} \left( C -\varpi t  \right)
+ \frac{C}{\varpi} {\rm exp} \left(  C \right)
, \quad \forall \, t \geq 0.
\label{sigma-L2}
\end{align}
Next, integrating \eqref{sL2-5} in time on $(t,t+1)$ for any $t \geq 0$, we get
\begin{equation}
\int_t^{t+1} \| \nabla \sigma^k(\tau)\|^2 \, \d \tau
\leq 4 \max_{ \tau\in [t,t+1]} \| \sigma^k(\tau)\|^2
\left( 1+C \int_{t}^{t+1} \| \nabla \mu^k(\tau)\|^2 \, \d \tau\right) +C.
\notag
\end{equation}
Hence, by \eqref{mu-int} and \eqref{sigma-L2}, we have
\begin{equation}
\label{sigma-H1}
\sup_{t \geq 0}
\int_t^{t+1} \| \nabla \sigma^k(\tau)\|^2 \, \d \tau
\leq  C \left( \| \sigma_{0,n}\|^2  {\rm exp} (C) + \frac{C}{\varpi} {\rm exp} (C) \right) +C.
\end{equation}
The estimates \eqref{sigma-L2} and \eqref{sigma-H1} entail that
$$
\sigma^k\ \ \text{is uniformly bounded in}\ \ L^\infty(0,\infty; L^2(\Omega))\cap L_{\uloc}^2([0,\infty); H^1(\Omega)).
$$
By interpolation, we easily find that $\sigma^k$ is uniformly bounded in $L^4_{\uloc}([0,\infty);L^4(\Omega))$. Finally, recalling \eqref{phi-H2-b} and exploiting \eqref{int-L2-H1} to $\sigma^k$, we get
\begin{align}
&\| \Delta \varphi^k\|^2 + \int_{\Omega}\Psi_{0,\epsilon}''(\varphi^k)|\nabla \varphi^k|^2 \, \d x
\notag\\
& \quad \leq \|\nabla \mu^k\|\| \nabla \varphi^k\| + |\theta_0| \|\nabla \varphi^k\|^2 + \frac12 \|\Delta\varphi^k\|^2 + C \|\nabla \sigma^k\|.
\label{phi-H2-bb}
\end{align}
Thus, from \eqref{energy-a}, \eqref{energy-b} and \eqref{sigma-H1}, we infer that
\begin{equation}
\label{phi-L4H2}
\sup_{t\geq 0} \| \varphi^k\|_{L^4(t,t+1; H^2(\Omega))}\leq C.
\end{equation}

\textbf{Step 6. Estimates on time derivatives.} In what follows, we derive estimates for the time derivative of $\bm{P}_{\bm{Y}_{k}} (\widehat{\rho}(\varphi^k)\vv^k)$, $\varphi^k$ and $\sigma^k$.

The estimate for $\partial_t\varphi^k$ is straightforward.
By the Sobolev embedding $H^2(\Omega)\hookrightarrow L^\infty(\Omega)$ and the estimates \eqref{energy-a}, \eqref{phi-h-2}, we see that
\begin{equation}
\sup _{t\geq 0}\int_t^{t+1} \|  \varphi^{k}(\tau)\bm{v}^{k}(\tau) \|^2\,\mathrm{d}\tau
\leq \|\bm{v}^{k}\|_{L^{\infty}(0,\infty;\bm{L}^2(\Omega))}^2 \sup _{t\geq 0}\int_t^{t+1} \|  \varphi^{k}(\tau)\|_{H^2(\Omega)}^2 \,\mathrm{d}\tau  \le C.
\nonumber
\end{equation}
This combined with \eqref{atest.2}, \eqref{energy-a}, \eqref{energy-b} and (H3) yields
\begin{equation}
\sup _{t\geq 0}\|\partial_{t}\varphi^{k}\|_{L^{2}(t,t+1;(H^1(\Omega))')} \le C.
\label{phimt}
\end{equation}

Next, we estimate $\bm{P}_{\bm{Y}_{k}} \partial_t(\widehat{\rho}(\varphi^k)\vv^k)$.
From \eqref{atest.1}, for any $\bm{\zeta}\in D(\bm{S})$, we have
\begin{align}
&\l\bm{P}_{\bm{Y}_{k}} \partial_t(\widehat{\rho}(\varphi^k)\vv^k),\bm{\zeta} \r_{D(\bm{S})',D(\bm{S})}
= (\partial_t(\widehat{\rho}(\varphi^k)\vv^k),\bm{P}_{\bm{Y}_{k}} \bm{\zeta} )\notag\\
&\quad = \big(\vv^k \otimes ( \widehat{\rho}(\varphi^k) \vv^k
+ \widehat{\J}^k), \nabla \bm{P}_{\bm{Y}_{k}}\bm{ \zeta}\big)
-\big( 2\nu(\varphi^{k}) D\bm{v}^{k},D\bm{P}_{\bm{Y}_{k}}\bm{\zeta}\big)
\notag\\
&\qquad -\gamma \big(|\nabla\vv^k|^2\nabla\vv^k,\nabla \bm{P}_{\bm{Y}_{k}}\bm{\zeta}\big)
+ \big((\mu^{k} -\beta'(\varphi^k) \sigma^{k})\nabla \varphi^{k},\bm{P}_{\bm{Y}_{k}}\bm {\zeta}\big)
+ \frac12 \big(\widehat{R}^k\vv^k,\bm{P}_{\bm{Y}_{k}}\bm{\zeta}\big)
\notag\\
&\qquad + \frac12\big(\widehat{\rho}'(\varphi^k)(\bm{I}-\bm{P}_{Z_{k}}) \big((\bm{v}^k\cdot\nabla\varphi^k)-\div \big(m(\varphi^k)\nabla \mu^k\big)\big)\vv^k,\bm{P}_{\bm{Y}_{k}}\bm{\zeta}\big).
\label{rhovt-1}
\end{align}
The right-hand side of \eqref{rhovt-1} can be estimated as follows
\begin{align*}
&\big|\big(\vv^k \otimes ( \widehat{\rho}(\varphi^k) \vv^k + \widehat{\J}^k), \nabla \bm{P}_{\bm{Y}_{k}}\bm{ \zeta}\big)\big|
\\
&\quad \leq \|\widehat{\rho}(\varphi^k)\|_{L^\infty(\Omega)} \|\vv^k\|_{\bm{L}^4(\Omega)}^2\|\nabla \bm{P}_{\bm{Y}_{k}}\bm{ \zeta}\|
\notag\\
&\qquad + \|\widehat{\rho}'(\varphi^k) m(\varphi^k)\|_{L^\infty(\Omega)} \| \nabla \mu^k\|\|\vv^k\|_{\bm{L}^4(\Omega)} \|\nabla \bm{P}_{\bm{Y}_{k}}\bm{ \zeta}\|_{\bm{L}^4(\Omega)}
\\
&\quad \leq C\|\nabla \vv^k\| \|\bm{\zeta}\|_{\bm{H}^1(\Omega)}
+ C  \| \nabla \mu^k\|\|\nabla \vv^k\|^\frac{1}{2} \|\bm{\zeta}\|_{\bm{H}^2(\Omega)},
\end{align*}
%
\begin{align*}
\big|\big( 2\nu(\varphi^{k}) D\bm{v}^{k}, D\bm{P}_{\bm{Y}_{k}}\bm{\zeta}\big)\big|
&\leq 2\|\nu(\varphi^{k})\|_{L^\infty(\Omega)}\|\nabla \bm{v}^{k}\|\|\bm{P}_{\bm{Y}_{k}}\bm{\zeta}\|_{\bm{H}^1(\Omega)}
\\
&\leq C\|\nabla \bm{v}^{k}\|\|\bm{\zeta}\|_{\bm{H}^1(\Omega)},
\end{align*}
\begin{align*}
\gamma \big|\big(|\nabla\vv^k|^2\nabla\vv^k,\nabla \bm{P}_{\bm{Y}_{k}}\bm{\zeta}\big)\big|
& \leq C\gamma^\frac14\big(\gamma \|\nabla\vv^k\|_{\bm{L}^4(\Omega)}^4\big)^\frac34 \|\bm{P}_{\bm{Y}_{k}}\nabla \bm{\zeta}\|_{\bm{L}^4(\Omega)}
\\
&\leq C\gamma^\frac14\big(\gamma \|\nabla\vv^k\|_{\bm{L}^4(\Omega)}^4\big)^\frac34 \|\bm{\zeta}\|_{\bm{H}^2(\Omega)},
\end{align*}
\begin{align*}
&\big|\big((\mu^{k} -\beta'(\varphi^k) \sigma^{k})\nabla \varphi^{k},\bm{P}_{\bm{Y}_{k}}\bm {\zeta}\big)\big|
\\
&\quad \leq \|\mu^k\|_{L^4(\Omega)}\|\nabla \varphi^k\|\|\bm{P}_{\bm{Y}_{k}}\bm{\zeta}  \|_{\bm{L}^4(\Omega)}
+ \|\beta'(\varphi^k)\|_{L^\infty(\Omega)}\|\sigma^k\|\|\nabla \varphi^k\|\|\bm{P}_{\bm{Y}_{k}}\bm{\zeta}  \|_{\bm{L}^\infty(\Omega)}
\\
&\quad \leq C\|\mu^k\|_{H^1(\Omega)}\|\bm{\zeta} \|_{\bm{H}^1(\Omega)}
+ C \|\sigma^k\|\|\bm{\zeta}\|_{\bm{H}^2(\Omega)},
\end{align*}
\begin{align}
\left|\frac12 \big(\widehat{R}^k\vv^k,\bm{P}_{\bm{Y}_{k}}\bm{\zeta}\big)\right|
& \leq  \|m(\varphi^k) \widehat{\rho}''(\varphi^k)\|_{L^\infty(\Omega)} \|\nabla  \varphi^k\|_{\bm{L}^4(\Omega)}\|  \nabla \mu^k\|\|\vv^k\|_{\bm{L}^4(\Omega)}\|\bm{P}_{\bm{Y}_{k}}\bm{\zeta}  \|_{\bm{L}^\infty(\Omega)}
\notag \\
&\leq C\|\varphi^k\|_{H^2(\Omega)}^\frac12 \|  \nabla \mu^k\|\|\nabla \vv^k\|^\frac12
\|\bm{\zeta}  \|_{\bm{H}^2(\Omega)},
\label{es-Rhat}
\end{align}
\begin{align}
&\left|\frac12\big(\widehat{\rho}'(\varphi^k)(\bm{I}-\bm{P}_{Z_{k}}) \big((\bm{v}^k\cdot\nabla\varphi^k)-\div \big(m(\varphi^k)\nabla \mu^k\big)\big)\vv^k,\bm{P}_{\bm{Y}_{k}}\bm{\zeta}\big)\right|
\notag\\
&\quad \leq \frac12 \|\widehat{\rho}'(\varphi^k)\|_{L^\infty(\Omega)} \|\bm{v}^k\cdot\nabla\varphi^k \| \|\vv^k\| \|\bm{P}_{\bm{Y}_{k}}\bm{\zeta}  \|_{\bm{L}^\infty(\Omega)}
\notag\\
&\qquad + \frac12 \| \div \big(m(\varphi^k)\nabla \mu^k\big) \|_{(H^1(\Omega))'} \|\widehat{\rho}'(\varphi^k)\vv^k\cdot \bm{P}_{\bm{Y}_{k}}\bm{\zeta}  \|_{H^1(\Omega)}
\notag\\
&\quad \leq C  \|\bm{v}^k\|^\frac32\|\nabla \vv^k\|^\frac12 \|\nabla \varphi^k\|^\frac12\|\varphi^k\|_{H^2(\Omega)}^\frac12 \|\bm{P}_{\bm{Y}_{k}}\bm{\zeta}\|_{\bm{H}^2(\Omega)}
\notag\\
&\qquad +\|m(\varphi^k)\|_{L^\infty(\Omega)}\|\nabla \mu^k\|\|\widehat{\rho}'(\varphi^k)\|_{L^\infty(\Omega)}
\|\vv^k\|\| \bm{P}_{\bm{Y}_{k}}\bm{\zeta}  \|_{\bm{L}^\infty(\Omega)}
\notag\\
&\qquad +\|m(\varphi^k)\|_{L^\infty(\Omega)}\|\nabla \mu^k\|
\|\rho''(\varphi^k)\|_{L^\infty(\Omega)}\|\nabla \varphi^k\|_{\bm{L}^4(\Omega)}\|\vv^k\|_{\bm{L}^4(\Omega)}
\| \bm{P}_{\bm{Y}_{k}}\bm{\zeta}  \|_{\bm{L}^\infty(\Omega)}
\notag\\
&\qquad + \|m(\varphi^k)\|_{L^\infty(\Omega)}\|\nabla \mu^k\|
\|\rho'(\varphi^k)\|_{L^\infty(\Omega)} \|\nabla \vv^k\|
\| \bm{P}_{\bm{Y}_{k}}\bm{\zeta}  \|_{\bm{L}^\infty(\Omega)}
\notag\\
&\qquad + \|m(\varphi^k)\|_{L^\infty(\Omega)}\|\nabla \mu^k\|
\|\rho'(\varphi^k)\|_{L^\infty(\Omega)} \| \vv^k\|_{\bm{L}^4(\Omega)}
\|\nabla \bm{P}_{\bm{Y}_{k}}\bm{\zeta}  \|_{\bm{L}^4(\Omega)}
\notag\\
&\quad \leq C \|\nabla \vv^k\|^\frac12 \|\varphi^k\|_{H^2(\Omega)}^\frac12 \|\bm{\zeta}\|_{\bm{H}^2(\Omega)} + C \|\nabla \mu^k\|\| \bm{\zeta}  \|_{\bm{H}^2(\Omega)}
\notag\\
&\qquad + C \|\nabla \mu^k\|
\|\varphi^k\|_{H^2(\Omega)}^\frac12 \|\nabla \vv^k\|^\frac12
\|\bm{\zeta}  \|_{\bm{H}^2(\Omega)}
+ C\|\nabla \mu^k\|   \|\nabla \vv^k\| \|\bm{\zeta}  \|_{\bm{H}^2(\Omega)}
\notag\\
&\qquad + C\|\nabla \mu^k\| \| \nabla \vv^k\|^\frac12
\|\bm{\zeta}  \|_{\bm{H}^2(\Omega)}.
\label{es-proj}
\end{align}
Collecting the above estimates, we can deduce from \eqref{energy-b}, \eqref{s-L2-est} and \eqref{phi-L4H2} that
\be
 \sup_{t\geq 0}\| \bm{P}_{\bm{Y}_{k}}\partial_{t} (\widehat{\rho}(\varphi^k)\bm{v}^{k})\|_{ L^{\frac{8}{7}}(t,t+1;(D(\bm{S}))')}
 \le C,
 \label{vmt2d}
\ee
where $C>0$ depends on $\gamma$, $\mathcal{E}_0$, $\|\sigma_{0,n}\|$, $\Omega$, $T$,  but it is independent of $k$, $\epsilon$. In particular, the $p$-Laplacian regularization has been used to handle \eqref{es-proj}.

Finally, we estimate $\partial_t\sigma^k$. Multiplying \eqref{atest.4} by $\zeta\in H^1(\Omega)$ and integrating over $\Omega$, we have
\begin{equation*}
\begin{split}
\l \partial_t \sigma^k, \zeta \r_{(H^1(\Omega))',H^1(\Omega)}
&=
\int_{\Omega} \sigma^k \vv^k \cdot \nabla \zeta \, \d x - \int_{\Omega} \nabla \sigma^k \cdot \nabla \zeta \, \d x
-\int_{\Omega} \beta'(\varphi^k)\sigma^k \nabla \varphi^k \cdot \nabla \zeta \, \d x
\\
& \leq
\| \sigma^k\|_{L^4(\Omega)} \| \vv^k\|_{\bm{L}^4(\Omega)}  \| \nabla \zeta\|
+ \| \nabla \sigma^k\| \| \nabla \zeta\|\notag\\
&\quad  + \|\beta'(\varphi^k)\|_{L^\infty(\Omega)} \| \sigma^k\|_{L^4(\Omega)}  \|\nabla \varphi^k\|_{\bm{L}^4(\Omega)}
\| \nabla \zeta\|,
\end{split}
\end{equation*}
which combined with the Ladyzhenskaya inequality and
\eqref{energy-a}, \eqref{sigma-L2} yields
\begin{align}
\| \partial_t \sigma^k\|_{(H^1(\Omega))'}
& \leq \| \sigma^k\|_{L^4(\Omega)} \| \vv^k\|_{\bm{L}^4(\Omega)}
+ \| \nabla \sigma^k\|  + C\| \sigma^k\|_{L^4(\Omega)}  \|\nabla \varphi^k\|_{\bm{L}^4(\Omega)}
\notag \\
&\leq C \| \sigma^k\|^\frac12 \| \sigma^k\|_{H^1(\Omega)}^\frac12
\| \vv^k\|^\frac12 \| \nabla \vv^k\|^\frac12
 + \| \nabla \sigma^k\|
 \notag \\
&\quad +C \| \sigma^k\|^\frac12 \| \sigma^k\|_{H^1(\Omega)}^\frac12  \|\nabla \varphi^k\|^\frac12 \| \nabla\varphi^k\|_{\bm{H}^1(\Omega)}^\frac12
\notag \\
&\leq C  \big(\| \sigma^k\|_{H^1(\Omega)} + \| \nabla \vv^k\| +  \| \varphi^k\|_{H^2(\Omega)}\big).
\notag
\end{align}
Owing to \eqref{energy-b}, \eqref{phi-h-2} and \eqref{sigma-H1}, we can  conclude
\begin{equation}
\label{sigma_t}
\sup_{t\geq 0}\| \partial_t \sigma^k\|_{L^2(t,t+1; (H^1(\Omega))')}
\leq C,
\end{equation}
where $C>0$ depends on $\|\sigma_{0,n}\|$, $\mathcal{E}_0$, $\Omega$, $T$, but is independent of $k$, $\gamma$, $\epsilon$.

\subsection{Passage to the limit}
\label{limit-k}
We are ready to pass to the limit as $k \to \infty$ in the semi-Galerkin scheme \eqref{atest.1}--\eqref{atest.ini0} and establish the existence of a global weak solution $(\bm{v}^{\gamma,\epsilon,n},\varphi^{\gamma,\epsilon,n}, \mu^{\gamma,\epsilon,n}, \sigma^{\gamma,\epsilon,n})$ to the regularized problem $(\bm{S}_{\gamma,\epsilon,n})$. For simplicity of notation, we denote the solution  $(\bm{v}^{\gamma,\epsilon,n},\varphi^{\gamma,\epsilon,n}, \mu^{\gamma,\epsilon,n}, \sigma^{\gamma,\epsilon,n})$ by
$(\bm{v}^{\sharp},\varphi^{\sharp}, \mu^{\sharp}, \sigma^{\sharp})$.

\bp\label{p2}
Let the parameters $\gamma$, $\epsilon$, $n$ be fixed as in \eqref{param-1}. Suppose that (H1)--(H4) are satisfied and the initial data $(\bm{v}_0,\varphi_0,\sigma_0)$ are given as in Theorem \ref{WEAK-SOL}. The regularized problem \eqref{reg.sys}--\eqref{reg.ini} admits a global weak solution
$(\bm{v}^{\sharp},\varphi^{\sharp}, \mu^{\sharp}, \sigma^{\sharp})$
 in $\Omega\times [0,\infty)$ such that
\begin{align}
&\bm{v}^{\sharp} \in L^{\infty}(0,\infty;\bm{L}^2_{0,\sigma}(\Omega)) \cap L^{2}(0,\infty;\bm{H}^1_{0,\sigma}(\Omega)),
\notag\\
& \gamma^\frac14 \nabla \vv^{\sharp} \in L^4(0,\infty;\bm{L}^4(\Omega)),
\qquad \bm{P}\partial_{t} (\widehat{\rho}(\varphi^{\sharp})\bm{v}^{\sharp})\in  L^{\frac{8}{7}}_{\mathrm{uloc}}([0,\infty);(D(\bm{S}))'),
\notag \\
&\varphi^{\sharp} \in BC_\mathrm{w}([0,\infty);H^1(\Omega))\cap L^{4}_{\mathrm{uloc}}([0,\infty);H^2_{N}(\Omega)) \cap H^{1}_{\mathrm{uloc}}([0,\infty);(H^1(\Omega))'),
\notag \\
&\mu^{\sharp} \in   L^{2}_{\mathrm{uloc}}([0,\infty);H^1(\Omega)),\qquad \ \, \nabla\mu^{\sharp} \in   L^{2}(0,\infty;\bm{L}^2(\Omega)),
\notag\\
& \Psi_{\epsilon}(\varphi^{\sharp})\in L^\infty(0,\infty;L^1(\Omega)),\qquad \Psi_{\epsilon}'(\varphi^{\sharp})\in L^2_{\mathrm{uloc}}([0,\infty);L^2(\Omega)),
\notag \\
&\sigma^{\sharp} \in BC([0, \infty); L^{2}(\Omega))\cap L^2_{\mathrm{uloc}}([0,\infty);H^1(\Omega))\cap H^1_{\mathrm{uloc}}([0,\infty);(H^1(\Omega))'),
\notag \\
&
(\sigma^{\sharp})^\frac12 \in L^2_{\mathrm{uloc}}([0,\infty);H^1(\Omega)),
\notag \\
&\sigma^{\sharp}(x, t) \geq 0 \quad \text {a.e. in}\ \Omega\times(0,\infty).
\notag
\end{align}
%
and the following identities hold
\begin{subequations}
\begin{alignat}{3}
& \big\langle\partial_t  (\widehat{\rho}(\varphi^{\sharp}) \bm{ v}^{\sharp}),\bm{\zeta} \big\rangle_{(D(\bm{S}))',D(\bm{S})}
 - (\widehat{\rho}(\varphi^{\sharp}) \bm{v}^{\sharp}  \otimes\bm{ v}^{\sharp} ,D\bm{ \zeta})
 - \big( \bm{v}^{\sharp}\otimes \widehat{\J}^{\sharp}, \nabla \bm{\zeta}\big)
&& \notag \\
& \qquad
+\big(2\nu(\varphi^{\sharp} ) D\bm{v}^{\sharp} ,D\bm{ \zeta}\big)
+ \gamma\big(|\nabla \bm{v}^{\sharp}|^2\nabla \bm{v}^{\sharp}, \nabla \bm{ \zeta}\big) && \notag \\
& \quad=\big((\mu^{\sharp}
-\beta'(\varphi^{\sharp})\sigma^{\sharp} ) \nabla \varphi^{\sharp} , \bm {\zeta}\big)
+ \frac12\big(\widehat{R}^{\sharp} \vv^{\sharp} ,\bm{\zeta} \big),
\quad&& \textrm{a.e. in }(0,\infty), &\label{test3.c-reg}
\\
& \quad \text{with}\quad \widehat{\J}^{\sharp}
= -\widehat{\rho}'(\varphi^{\sharp}) m(\varphi^{\sharp}) \nabla \mu^{\sharp},\quad
\widehat{R}^{\sharp}
= -m(\varphi^{\sharp})\nabla \widehat{\rho}'(\varphi^{\sharp})\cdot \nabla \mu^{\sharp},
\notag \\
& \langle \partial_t \varphi^{\sharp} ,\xi \rangle_{(H^1(\Omega))',H^1(\Omega)}
-\big(\varphi^{\sharp}\bm{v}^{\sharp},  \nabla\xi\big)
+ \big(m(\varphi^{\sharp} )\nabla \mu^{\sharp} ,\nabla \xi\big) =0,\quad && \textrm{a.e. in }(0,\infty),&\label{test1.a-reg}
\\
& \mu^{\sharp} = - \Delta \varphi^{\sharp} +\Psi_{\epsilon}'(\varphi^{\sharp})
+\beta'(\varphi^{\sharp}) \sigma^{\sharp}, \quad && \textrm{a.e. in }\Omega\times(0,\infty),&\label{test4.d-reg}
\\
& \langle \partial_t\sigma^{\sharp},\xi \rangle_{(H^1(\Omega))',H^1(\Omega)}
- (\sigma^{\sharp}\bm{v}^{\sharp},\nabla \xi)
+ (\nabla\sigma^{\sharp} , \nabla \xi)
&& \notag\\
&\quad = - (\beta'(\varphi^{\sharp})\sigma^{\sharp} \nabla \varphi^{\sharp},\nabla \xi),
&& \textrm{a.e. in }(0,\infty)& \label{test2.b-reg}
\end{alignat}
\end{subequations}
 for all test functions $\bm {\zeta} \in D(\bm{S})$, $\xi\in H^1(\Omega)$. The initial conditions are fulfilled
\begin{align}
&\bm{v}^{\sharp} |_{t=0}=\bm{v}_{0},\quad \varphi^{\sharp} |_{t=0}=\varphi_{0,n},\quad
\sigma^{\sharp} |_{t=0}=\sigma_{0,n},
\quad\textrm{a.e. in } \Omega.	
\notag
\end{align}
Moreover, the following energy inequality holds
\begin{align}
&  \widehat{\mathcal{E}}^\sharp(t)
+ \int_0^t \widehat{\mathcal{D}}^\sharp(\tau)\,\d \tau\leq  \widehat{\mathcal{E}}^\sharp(0),
\label{menergy-b}
\end{align}
for almost all $t>0$, where
\begin{align*}
\widehat{\mathcal{E}}^\sharp(t)
&=\int_{\Omega}\left( \frac{1}{2}\widehat{\rho}(\varphi^\sharp)|\boldsymbol{v}^\sharp|^{2}
+\frac{1}{2}|\nabla \varphi^\sharp|^{2}
+ \Psi_\epsilon(\varphi^\sharp) +\sigma^\sharp(\ln \sigma^\sharp-1)
+ \beta(\varphi^\sharp) \sigma^\sharp\right)(t) \, \mathrm{d} x,
\\
\widehat{\mathcal{D}}^\sharp(t)
& =\int_{\Omega} \left(2 \nu(\varphi^\sharp)|D \boldsymbol{v}^\sharp|^{2}
+ \gamma|\nabla \vv^\sharp|^4
+ m(\varphi^\sharp)|\nabla \mu^\sharp|^{2}
+ \big| 2\nabla \sqrt{\sigma^\sharp} + \sqrt{\sigma^\sharp} \nabla \beta(\varphi^\sharp) \big|^2
\right)(t)\, \mathrm{d} x.
\end{align*}
\ep
\begin{proof}
We have shown that for every integer $k\geq \widehat{k}$, the semi-Galerkin scheme \eqref{atest.1}--\eqref{atest.ini0} admits a unique global solution $(\bm{v}^{k},\varphi^{k},\mu^{k},\sigma^{k})$ in $\Omega \times [0,\infty)$ with estimates that are independent of the approximation parameter $k$. In particular, the uniform estimates \eqref{energy-a}, \eqref{energy-b}, \eqref{mu},
 \eqref{Psip0b}, \eqref{sigma-L2}, \eqref{sigma-H1}, \eqref{phi-L4H2}, \eqref{phimt}, \eqref{vmt2d}, \eqref{sigma_t} are sufficient for us to apply theorems of weak compactness and the Aubin--Lions--Simon lemma (see \cite{si87}) to extract a suitable subsequence that approaches a limit
$(\bm{v}^{\sharp},\varphi^{\sharp}, \mu^{\sharp}, \sigma^{\sharp})$
 in corresponding topologies as $k\to \infty$ in $[0,T]$ for arbitrary fixed $T>0$. With sufficient information on $\sigma^k$ (based on the assumption that $\sigma_{0,n}\in L^2(\Omega)$), the convergence in \eqref{atest.4} is standard, while the convergence for the Navier--Stokes/Cahn--Hilliard part \eqref{atest.1}--\eqref{atest.1b} follows a similar argument for \cite[Lemma 3]{Frigeri2016} (see a corrigendum in \cite[Remark 4.3]{Frigeri2021}).
 In particular, the contribution of the last term on the right-hand side of \eqref{atest.1} converges to zero, and the convergence of $\gamma |\nabla \bm{v}^{k}|^2\nabla \bm{v}^{k}$ is a consequence of the well-known Minty's trick for monotone operators.
 Due to the lack of uniqueness for $(\bm{v}^{\sharp},\varphi^{\sharp}, \mu^{\sharp}, \sigma^{\sharp})$, in order to construct a global weak solution on the whole interval $[0,\infty)$, we shall combine the standard compactness argument with a diagonal extraction process as in \cite[Chapter V, Section 1.3.6]{Bo13} for the three-dimensional Navier--Stokes system.

 Integrating \eqref{menergy} over $[0,t]$ for any $t>0$ gives the energy equality
 \begin{align}
 \widehat{\mathcal{E}}^k(t)+ \int_0^t \widehat{\mathcal{D}}^k(\tau)\,\d \tau=\widehat{\mathcal{E}}^k(0),\quad \forall\, t>0.
 \label{app-BEL2}
 \end{align}
 %
Using the weak/strong convergence results, weak lower-semicontinuity of norms, and Fatou's lemma, we can derive the energy inequality \eqref{menergy-b} from \eqref{app-BEL2} following an argument similar to that in \cite[Section 4.1]{Frigeri2016}. Here, we present only the convergence of $\int_\Omega \sigma^k|\nabla(\ln \sigma^k+ \beta(\varphi^k))|^{2}\, \mathrm{d} x$. The convergence results mentioned in the following should be understood in the sense of a subsequence.
In view of \eqref{dis-1}, we write
\begin{align*}
& \int_\Omega  \sigma^k \big|\nabla(\ln \sigma^k+ \beta(\varphi^k))\big|^{2}\, \mathrm{d} x
= \int_\Omega \left| 2\nabla \sqrt{\sigma^k} + \sqrt{\sigma^k} \nabla \beta(\varphi^k) \right|^2 \, \mathrm{d}x
\notag\\
&\quad =
\int_\Omega 4\big|\nabla \sqrt{\sigma^k}\big|^2 \, \d x
+ \int_\Omega |\beta'(\varphi^k)|^2 \sigma^k |\nabla \varphi^k|^2 \, \d x
+ \int_{\Omega} 2 \nabla \sigma^k \cdot \nabla \beta(\varphi^k) \, \d x.
\end{align*}
Since $\sigma^k$ is uniformly bounded in $L^2(0,T;H^1(\Omega))$ and $\partial_t \sigma^k$ is uniformly bounded in $L^2(0,T;(H^1(\Omega))')$ for any $T>0$, we infer from the Aubin--Lions lemma that
\begin{align*}
&\sigma^k \rightarrow \sigma^{\sharp} \quad \text{strongly in } L^2(0,T; L^2(\Omega)) \text{ and  a.e. in } \Omega \times (0,T).
\end{align*}
The pointwise convergence of $\sigma^k$ combined with \eqref{dis-4} yields
\begin{align*}
&\sqrt{\sigma^k} \rightharpoonup \sqrt{\sigma^{\sharp}}\quad \text{weakly in } L^2(0,T; H^1(\Omega)),
\end{align*}
which enables us to conclude (cf. \cite[Proof of Lemma 4.1]{W2016})
\begin{align}
\int_0^T \!\! \int_\Omega  \big|\nabla \sqrt{\sigma^{\sharp}} \big|^2  \, \d x \d t
\leq \liminf_{k \to \infty}
\int_0^T \!\! \int_\Omega \big|\nabla \sqrt{\sigma^{k}} \big|^2 \, \d x \d t.
\label{Dis-Sig-1}
\end{align}
Since $\varphi^k$ is uniformly bounded in $L^4(0,T; H^2(\Omega))$ and $\partial_t\varphi^k$ is uniformly bounded in $L^2(0,T; (H^1(\Omega))')$, we infer from the Aubin--Lions lemma that
$$
\varphi^k \to \varphi^{\sharp} \quad \text{strongly in } L^4(0,T; W^{1,4}(\Omega))\text{ and  a.e. in } \Omega \times (0,T),
$$
and thus
$$
|\nabla \varphi^k|^2 \to |\nabla \varphi^{\sharp}|^2 \quad \text{strongly in } L^2(0,T; L^2(\Omega)).
$$
Since $\beta'$, $\beta''$ are bounded, we can further deduce that
$$
\beta'(\varphi^k) \rightarrow \beta'(\varphi^{\sharp}) \quad \text{strongly in } L^4(0,T; L^4(\Omega)),
$$
as well as
$$
|\beta'( \varphi^k)|^2 \to |\beta'( \varphi^{\sharp})|^2 \quad \text{strongly in } L^4(0,T; L^4(\Omega)).
$$
These observations combined with the strong convergence of $\sigma^k$ in $L^2(0,T; L^2(\Omega))$ and the boundedness of $\sigma^{\sharp}$ in $L^4(0,T;L^4(\Omega))$ yield
\begin{align}
&\left|\int_0^T\!\! \int_\Omega |\beta'(\varphi^k)|^2 \sigma^k |\nabla \varphi^k|^2 \, \d x \d t
- \int_0^T \!\! \int_\Omega |\beta'(\varphi^{\sharp})|^2 \sigma^{\sharp} |\nabla \varphi^{\sharp}|^2 \, \d x\d t\right|
 \notag \\
&\quad \leq \left|\int_0^T\!\! \int_\Omega |\beta'(\varphi^k)|^2 (\sigma^k-\sigma^{\sharp} )  |\nabla \varphi^k|^2 \, \d x \d t \right|
+
\left|\int_0^T\!\! \int_\Omega (|\beta'(\varphi^k)|^2 - |\beta'(\varphi^{\sharp})|^2) \sigma^{\sharp} |\nabla \varphi^k|^2 \, \d x \d t\right|
\notag \\
&\qquad
+ \left| \int_0^T \!\! \int_\Omega |\beta'(\varphi^{\sharp})|^2 \sigma^{\sharp} (|\nabla \varphi^k|^2-|\nabla \varphi^{\sharp}|^2) \, \d x\d t\right|
\to 0.
\label{Dis-Sig-2}
\end{align}
On the other hand, from the weak convergence of $\sigma^k$ in $L^2(0,T; H^1(\Omega))$ we infer that
\begin{align}
&\left|\int_0^T\!\! \int_{\Omega} 2 \nabla \sigma^k \cdot \nabla \beta(\varphi^k) \, \d x\d t- \int_0^T\!\! \int_{\Omega} 2 \nabla \sigma^{\sharp} \cdot \nabla \beta(\varphi^{\sharp}) \, \d x\d t\right|
\notag \\
&\quad \leq \left|\int_0^T\!\! \int_{\Omega} 2 (\nabla \sigma^k -\nabla \sigma^{\sharp} )\cdot \nabla\varphi^{\sharp}  \beta'(\varphi^{\sharp}) \, \d x\d t \right|
+ \left|\int_0^T\!\! \int_{\Omega} 2 \nabla \sigma^k \cdot \nabla \varphi^k(\beta'(\varphi^k)- \beta'(\varphi^{\sharp}))  \, \d x\d t\right|
\notag \\
&\qquad
+ \left|\int_0^T\!\! \int_{\Omega} 2 \nabla \sigma^k \cdot (\nabla \varphi^k- \nabla \varphi^{\sharp}) \beta'(\varphi^{\sharp}) \, \d x\d t\right|
\to 0.
\label{Dis-Sig-3}
\end{align}
From \eqref{Dis-Sig-1}--\eqref{Dis-Sig-3}, we can conclude that
$$
\int_0^T \!\! \int_\Omega \left| 2\nabla \sqrt{\sigma^{\sharp}} + \sqrt{\sigma^{\sharp}} \nabla \beta(\varphi^{\sharp}) \right|^2 \, \mathrm{d}x\d t
\leq
\liminf_{k \to \infty} \int_0^T\!\!  \int_\Omega \left| 2\nabla \sqrt{\sigma^k} + \sqrt{\sigma^k} \nabla \beta(\varphi^k) \right|^2 \, \mathrm{d}x\d t,
$$
for any $T>0$. The remainder of the limiting procedure is standard, and we omit the details.

The proof of Proposition \ref{p2} is complete.
\end{proof}

\subsection{Completion of the Proof of Theorem \ref{WEAK-SOL}}
\label{proof-thm1}
We shall pass to the limit as $\gamma, \epsilon \to 0$ and $n\to \infty$ simultaneously. For this purpose, we take $\gamma=\epsilon=\frac{1}{n}$ (for sufficiently large integers $n$) and simply denote the approximate solutions $(\bm{v}^{\gamma,\epsilon,n},\varphi^{\gamma,\epsilon,n}, \mu^{\gamma,\epsilon,n}, \sigma^{\gamma,\epsilon,n})$ obtained in Proposition \ref{p2} by $(\bm{v}^{n},\varphi^{n},\mu^{n}, \sigma^{n})$.
\medskip

\textbf{Part 1.} Let us first consider the case with $\sigma_0\in L^2(\Omega)$.

For $\sigma_0\in L^2(\Omega)$, we can find a family of approximations $\{\sigma_{0,n}\}_{n\in \mathbb{Z}^+}$, with the following properties
$$
\sigma_{0,n} \in C^{\infty}_0(\Omega),  \quad \sigma_{0,n}\ge 0 \ \text { in } \Omega,\quad \sigma_{0,n}\not\equiv 0,\quad
\sigma_{0,n} \rightarrow \sigma_0\ \text { in }\ L^2(\Omega) \ \text { as } n\to \infty.
$$
It easily follows that (for sufficiently large $n$)
\begin{align}
-\mathrm{e}^{-1}|\Omega|\leq \int_\Omega \sigma_{0,n}\ln \sigma_{0,n}\,\d x\leq \|\sigma_{0,n}\|^2 \leq \|\sigma_{0}\|^2 +1.
\label{bd-ini-sigL1}
\end{align}
Hence, from the energy inequality \eqref{menergy-b} and using arguments similar to those in Section \ref{ue}, we can maintain uniform estimates \eqref{energy-a}, \eqref{energy-b}, \eqref{mu}, \eqref{sigma-L2}, \eqref{sigma-H1}, \eqref{phi-L4H2}, \eqref{phimt}, \eqref{vmt2d}, \eqref{sigma_t} for $(\bm{v}^{n},\varphi^{n},\mu^{n}, \sigma^{n})$, except \eqref{Psip0b} (depending on $\epsilon$). To recover \eqref{vmt2d}, we note that at this stage, since we have already taken the limit as $k\to \infty$, it is unnecessary to handle a highly nonlinear term like in \eqref{es-proj}, that is,
$$
\frac12\big(\widehat{\rho}'(\varphi^k)(\bm{I}-\bm{P}_{Z_{k}}) \big((\bm{v}^k\cdot\nabla\varphi^k)-\div \big(m(\varphi^k)\nabla \mu^k\big)\big)\vv^k,\bm{P}_{\bm{Y}_{k}}\bm{\zeta}\big),
$$
for which the regularization involving the $p$-Laplacian term was essentially used (depending on $\gamma$). On the other hand, the uniform estimate for $\|\Psi_\epsilon'(\varphi^n)\|_{L^2(0,T; L^2(\Omega))}$ follows a direct comparison in \eqref{test4.d-reg}. The remaining part of the limiting procedure as $n\to \infty$ can be performed analogously as in \cite[Sections 4.2, 4.3]{Frigeri2016} (with a corrigendum in \cite[Remark 4.3]{Frigeri2021}) for the Navier--Stokes part, in \cite{DD95,MT16} for the Cahn--Hilliard part, and in \cite{Lan2016} for the $\sigma^n$-equation. Denote the limit functions by $(\vv,\varphi,\mu,\sigma)$. In particular, we have
 $$
 \Psi_{1/n}(\varphi^n)\to \Psi(\varphi),\ \ \Psi'_{1/n}(\varphi^n)\to \Psi'(\varphi)\quad \text{a.e. in}\ \Omega\times(0,T),
 $$
 and
 $$ \varphi\in L^\infty(\Omega\times(0,T)),\qquad
 |\varphi(x,t)|<1\quad  \text{for a.a.}\ (x,t)\in  \Omega\times (0,T),
 $$
 for any $T>0$, where the latter is a consequence of the singular nature of $\Psi$ at $\pm 1$. This physical bound for $\varphi$ combined the definition of $\widehat{\rho}$ yields that
 $$
 \widehat{\rho}(\varphi)=\rho(\varphi).
 $$
 As a result, the artificial term $\widehat{R}$ in the modified momentum balance equation (cf. \eqref{reg.1}, \eqref{reg.1a}) vanishes after passing to the limit as $n\to \infty$, that is, $\widehat{R}=0$. In addition, from the estimate
 \begin{align}
\left(\frac{1}{n}\right)^\frac14\|\nabla \bm{v}^n\|_{L^{4}(0, T ; \bm{L}^4(\Omega))} \leq C,
\label{p4-estimate}
 \end{align}
 where $C>0$ is independent of $n$, we see that for all $\bm{\zeta}\in L^4(0,T;\bm{L}^4(\Omega))$, it holds
 $$
 \left|\int_0^T\left(\frac{1}{n}\right)\big(|\nabla \bm{v}^n|^2\nabla \bm{v}^n,\bm{\zeta}\big)\,\d t\right|
 \leq C\left(\frac{1}{n}\right)^\frac14\|\bm{\zeta}\|_{L^4(0,T;\bm{L}^4(\Omega))}\to 0,\quad \text{as}\ n\to \infty.
 $$
 Then, a direct comparison in \eqref{uu-weak2} further yields $\partial_t\bm{P}(\rho(\varphi)\vv)\in L_{\uloc}^{s}([0,\infty);(D(\bm{S}))')$, for any $s\in [1,2)$. Next, using the facts $\mu\in L^2_{\uloc}([0,\infty);H^1(\Omega))$ and $\beta'(\varphi)\sigma\in L^2_{\uloc}([0,\infty);H^1(\Omega))$, we can apply the classical result for the Cahn--Hilliard equation with a singular potential to conclude (see, e.g., \cite{GiGrWu2018})
\begin{align}
\sup_{t\geq 0}\|\varphi\|_{L^2(t,t+1;W^{2,q}(\Omega))}
+\sup_{t\geq 0}\|\Psi'(\varphi)\|_{L^2(t,t+1;L^q(\Omega))}
\leq C,\quad \forall\, q\in [2,\infty).
\label{es-vhi-W2q}
\end{align}
The construction of a global weak solution on the whole interval $[0,\infty)$ follows a diagonal extraction process as in \cite{Bo13} due to the lack of uniqueness.
Finally, using the weak/strong convergence results, weak lower-semicontinuity of norms and Fatou's lemma, we can obtain the energy inequality \eqref{menergy-weak-fi} from \eqref{menergy-b}, cf. the proof of Proposition \ref{p2}.

The proof of Theorem \ref{WEAK-SOL}-(2) is complete.
 \medskip

\textbf{Part 2.} Now we study the more involved case with only $\sigma_0\ln \sigma_0\in L^1(\Omega)$.

The uniform estimates obtained in Step 5 of Section \ref{ue} are no longer valid for $(\varphi^{n},\sigma^{n})$. Thus, we lose the estimates corresponding to \eqref{sigma-L2}, \eqref{sigma-H1}, \eqref{phi-L4H2}, and the estimates of time derivatives as in \eqref{vmt2d}, \eqref{sigma_t}.

On the other hand, an examination of the argument in Step 4 of Section \ref{ue} allows us to maintain some weaker estimates. For instance, we still have
\begin{align}
 \sup_{t\geq 0}\|\sigma^n\|_{L^2(t,t+1;L^2(\Omega))}\leq C, \quad
 \sup_{t\geq 0}\|(\sigma^n)^\frac12\|_{L^2(t,t+1;H^1(\Omega))}\leq C, \quad
 \sup_{t\geq 0}\| \varphi^n\|_{L^2(t,t+1; H^2(\Omega))}\leq C,
 \label{lown-phi-sig}
\end{align}
which are uniform with respect to $n$. Then, a comparison in \eqref{test3.c-reg} yields (cf. the treatment for \eqref{rhovt-1})
\be
 \sup_{t\geq 0}\| \partial_{t} \bm{P}(\widehat{\rho}(\varphi^n)\bm{v}^{n})\|_{ L^{1}(t,t+1;(D(\bm{S}))')}
 \le C.
 \label{vmt2dn}
\ee
Here, the main modification is due to \eqref{es-Rhat} (keeping in mind that the term as in \eqref{es-proj} simply vanishes after taking $k\to \infty$), because we can only use \eqref{phi-h-2} for $\varphi^n$ instead of \eqref{phi-L4H2}. On the other hand, since
\begin{align*}
\|\nabla (\widehat{\rho}(\varphi^n)\vv^n)\|\leq  \|\widehat{\rho}(\varphi^n)\|_{L^\infty(\Omega)}\|\nabla \vv^n\| +
\|\widehat{\rho}'(\varphi^n)\|_{L^\infty(\Omega)}\|\nabla \varphi^n\|_{\bm{L}^4(\Omega)} \|\vv^n\|_{\bm{L}^4(\Omega)},
\end{align*}
we can conclude that $\widehat{\rho}(\varphi^n)\vv^n$ and thus $\bm{P}(\widehat{\rho}(\varphi^n)\vv^n)$ are uniformly bounded in $L^2(t,t+1;\bm{H}^1(\Omega))$ for all $t\geq 0$. This fact, combined with \eqref{vmt2dn} and the Aubin--Lions--Simon lemma, is sufficient for the strong compactness of $\bm{P}(\widehat{\rho}(\varphi^n)\bm{v}^{n})$ and then $\bm{v}^n$ (cf. \cite[Section 4.1]{Frigeri2021}). For the convenience of the readers, we sketch the proof here. First, applying the Aubin--Lions--Simon lemma, we get
\begin{equation*}
    \bm{P}(\widehat{\rho}(\varphi^n)\bm{v}^{n})\to \bm{w} \quad \text{strongly in} \ \ L^2(0,T;\bm{L}^2(\Omega)),
\end{equation*}
for any fixed $T>0$, where $\bm{w}\in L^2(0,T;\bm{H}^1_{0,\sigma}(\Omega))$. Next, from the easy facts
\begin{align*}
& \bm{v}^n \rightharpoonup \bm{v}\quad \text{weakly in}\ \ L^4(0,T;\bm{L}^4(\Omega)),\\
& \varphi^n\to \varphi \quad \text{strongly in}\ \ L^2(0,T;H^1(\Omega))\ \ \text{and a.e. in } \Omega\times (0,T),\\
& \widehat{\rho}(\varphi^n) \to \widehat{\rho}(\varphi) \quad \text{strongly in}\ \ L^4(0,T;L^4(\Omega)),
\end{align*}
we can verify that
\begin{align*}
    & \bm{P}(\widehat{\rho}(\varphi^n)\bm{v}^{n})\rightharpoonup \bm{P}(\widehat{\rho}(\varphi)\bm{v}) \quad \text{weakly in} \ \ L^2(0,T;\bm{L}^2(\Omega)),\\
    & \sqrt{\widehat{\rho}(\varphi^n)}\bm{v}^{n}\rightharpoonup  \sqrt{\widehat{\rho}(\varphi)}\bm{v} \quad \text{weakly in} \ \ L^2(0,T;\bm{L}^2(\Omega)).
\end{align*}
Hence, $\bm{w}=\bm{P}(\widehat{\rho}(\varphi)\bm{v})$ holds due to the uniqueness of the limit. Then using the same argument as in \cite[Section 5.1]{ADG2013}, we find
\begin{align*}
\int_0^T\int_\Omega \widehat{\rho}(\varphi^n)|\bm{v}^{n}|^2\,\mathrm{d}x\mathrm{d}t
& =\int_0^T\int_\Omega
\bm{P}(\widehat{\rho}(\varphi^n)\bm{v}^{n})\cdot \bm{v}^{n}\,\mathrm{d}x\mathrm{d}t
\\
& \to \int_0^T\int_\Omega
\bm{P}(\widehat{\rho}(\varphi)\bm{v})\cdot \bm{v}\,\mathrm{d}x\mathrm{d}t = \int_0^T\int_\Omega \widehat{\rho}(\varphi)|\bm{v}|^2\,\mathrm{d}x\mathrm{d}t,
\end{align*}
which combined with the uniform convexity of   $L^2(0,T;\bm{L}^2(\Omega))$
yields
$$
\sqrt{\widehat{\rho}(\varphi^n)}\bm{v}^{n}\to  \sqrt{\widehat{\rho}(\varphi)}\bm{v} \quad \text{strongly in} \ \ L^2(0,T;\bm{L}^2(\Omega)).
$$
Combining the convergence results above and the uniform positivity property of $\widehat{\rho}(\varphi^n)$ (see \eqref{apprho2}), we can further conclude
$$
\bm{v}^{n}= \frac{1}{\sqrt{\widehat{\rho}(\varphi^n)}} \Big(\sqrt{\widehat{\rho}(\varphi^n)}\bm{v}^{n}\Big) \to \frac{1}{\sqrt{\widehat{\rho}(\varphi)}} \Big(\sqrt{\widehat{\rho}(\varphi)}\bm{v}\Big)=\bm{v}\quad \text{strongly in}\ \ L^2(0,T;\bm{L}^2(\Omega)).
$$
We also note that \eqref{vmt2dn} is not enough to guarantee a weak limit for $\partial_{t} \bm{P}(\widehat{\rho}(\varphi^n)\bm{v}^{n})$. This motivates a weaker formulation of the momentum balance equation for the fluid velocity field $\vv$ (see \eqref{uu-weak}).

In the following, we say more words about the convergence involving $\sigma^n$. A comparison in \eqref{test2.b-reg} yields
\begin{align}
 \|\partial_{t}\sigma^{n}\|_{(H^2_N(\Omega))'}
& \leq C \|\bm{v}^n\|_{\bm{L}^4(\Omega)} \|\sigma^n\|+ C\|\sigma^n\|
+ C\|\beta'(\varphi^n)\|_{L^\infty(\Omega)} \| \sigma^n\| \| \nabla \varphi^n\|_{\bm{L}^4(\Omega)}
\notag \\
& \leq C \big(1+ \|\bm{v}^n\|^\frac12 \|\bm{v}^n\|_{\bm{H}^1(\Omega)}^\frac12+ \|\nabla \varphi^n\|^\frac12 \|\nabla \varphi^n\|_{\bm{H}^1(\Omega)}^\frac12 \big) \|\sigma^n\|
\notag
\end{align}
which implies
\begin{align}
 \sup_{t\geq 0}\|\partial_{t}\sigma^{n}\|_{L^\frac{4}{3}(t,t+1;(H^2_N(\Omega))')}\leq C,
 \label{sigmatn}
\end{align}
where $C>0$ is independent of $n$. To show the strong convergence of $\sigma^n$, we apply a compactness theorem due to Dubinskii (see, e.g., \cite{Dubin1965,BS2012}):
\begin{lemma}\label{Dubin}
Let $T>0$. Suppose that $B_0$ is a semi-normed set
that is compactly embedded into a Banach space $B$, which is continuously embedded into
another Banach space $B_1$. Then, for any $1\leq p, q<\infty$, the following embedding is compact:
$$
\big\{f\ \big|\ f\in L^p(0,T;B_0),\ \partial_t f\in L^q(0,T;B_1)\big\}\ \hookrightarrow \ L^p (0,T; B).
$$
\end{lemma}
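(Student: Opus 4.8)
The plan is to deduce the result from the standard machinery for compactness in Bochner spaces (in the spirit of the Aubin--Lions--Simon lemma \cite{si87}), modified to accommodate the fact that $B_0$ is only a semi-normed set — a cone equipped with a positively homogeneous functional $[\cdot]_{B_0}$ — rather than a genuine Banach space; this is exactly the framework of \cite{Dubin1965,BS2012}. Concretely, I would show that every sequence $(f_k)$ bounded in $L^p(0,T;B_0)$ (meaning $\int_0^T [f_k(t)]_{B_0}^p\,\d t \le R$) with $(\partial_t f_k)$ bounded in $L^q(0,T;B_1)$ is relatively compact in $L^p(0,T;B)$, via a generalized Kolmogorov--Riesz--Fr\'echet criterion.

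First I would establish the interpolation (Ehrling) inequality: because $B_0 \hookrightarrow\hookrightarrow B \hookrightarrow B_1$, for every $\eta>0$ there is $C_\eta>0$ with
\[
\|u\|_B \le \eta\,[u]_{B_0} + C_\eta\,\|u\|_{B_1}, \qquad \forall\, u\in B_0.
\]
This follows by contradiction: a normalized violating sequence $u_j$ with $\|u_j\|_B=1$ would have $[u_j]_{B_0}$ bounded, hence (by the compact embedding) a subsequence converging in $B$, while $\|u_j\|_{B_1}\to 0$ would force the limit to vanish, contradicting $\|u_j\|_B=1$. Second, the bound on the time derivative controls time translations in the weaker space $B_1$: writing $f_k(t+h)-f_k(t)=\int_0^h \partial_t f_k(t+s)\,\d s$ and applying Minkowski's integral inequality gives
\[
\|f_k(\cdot+h)-f_k\|_{L^q(0,T-h;B_1)} \le h\,\|\partial_t f_k\|_{L^q(0,T;B_1)} \le Rh,
\]
uniformly in $k$ and for all $q\in[1,\infty)$. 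Precompactness of the local time averages $\tfrac1\delta\int_t^{t+\delta} f_k(s)\,\d s$ in $B$ is, in turn, a consequence of the semi-norm bound together with the compact embedding $B_0\hookrightarrow\hookrightarrow B$.

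The main obstacle is the final combination step, where the semi-normed (merely positively homogeneous, non-subadditive) structure of $B_0$ prevents the usual Simon argument: one cannot bound $[f_k(\cdot+h)-f_k]_{B_0}$ by $[f_k(\cdot+h)]_{B_0}+[f_k]_{B_0}$, since $B_0$ need not be closed under differences and $[\cdot]_{B_0}$ is not subadditive. The remedy, following \cite{BS2012}, is to avoid taking seminorms of time-differences altogether and instead establish equicontinuity of translations directly in $B$, i.e. $\sup_k\|f_k(\cdot+h)-f_k\|_{L^p(0,T-h;B)}\to 0$ as $h\to 0$, by a contradiction/compactness argument built only on the two separate estimates $\int_0^T[f_k(t)]_{B_0}^p\,\d t\le R$ and the $B_1$-translation bound above: via Fatou's lemma one extracts times at which $(f_k(t))_k$ is seminorm-bounded, hence $B$-precompact, and then upgrades $B_1$-continuity of the limit to $B$-convergence through the Ehrling inequality.

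Once $B$-equicontinuity of translations and precompactness of the averages are both in hand, the generalized Kolmogorov--Riesz criterion yields relative compactness in $L^p(0,T;B)$, completing the proof. Minor endpoint issues (working on $(0,T)$ rather than on $\mathbb{R}$) are handled, as usual, by extension/reflection in the time variable, and the two parameter ranges $q=1$ and $q>1$ require no separate treatment since the translation estimate above is uniform across $q\in[1,\infty)$.
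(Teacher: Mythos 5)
The paper does not prove Lemma \ref{Dubin}: it is quoted as a known result with a pointer to \cite{Dubin1965,BS2012}, so there is no in-paper argument to compare yours against, and I can only assess your sketch on its own terms. Its architecture is the right one (a seminormed-cone Ehrling inequality, translation estimates in $B_1$ from the derivative bound, precompactness of time averages, and Simon's criterion \cite{si87}), and you correctly isolate the one place where the linear Aubin--Lions--Simon proof breaks: $B_0$ is only a cone carrying a positively homogeneous functional, so $f(t+h)-f(t)$ need not lie in $B_0$ and $[\cdot]_{B_0}$ is not subadditive.

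Your resolution of that obstruction is where the gap sits. The passage ``via Fatou's lemma one extracts times at which $(f_k(t))_k$ is seminorm-bounded \dots and then upgrades $B_1$-continuity of the limit to $B$-convergence through the Ehrling inequality'' is not an argument: it ends by applying the one-point Ehrling inequality to a difference $f_k(t)-f(t)$, which is exactly the move you had just ruled out, and a pointwise-in-$t$ extraction does not by itself yield the uniform-in-$k$ smallness of $\|f_k(\cdot+h)-f_k\|_{L^p(0,T-h;B)}$ that Simon's criterion demands. The missing ingredient --- and the actual content of \cite{BS2012} --- is a \emph{two-point} Ehrling inequality on the cone: for every $\eta>0$ there is $C_\eta$ such that $\|u-v\|_B\le \eta\big([u]_{B_0}+[v]_{B_0}\big)+C_\eta\,\|u-v\|_{B_1}$ for all $u,v\in B_0$. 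It is proved by the same contradiction scheme you use for the one-point version, except that the violating pair $(u_j,v_j)$ is normalized by $\lambda_j=\|u_j-v_j\|_B$; positive homogeneity keeps $u_j/\lambda_j$ and $v_j/\lambda_j$ in the cone with seminorms bounded by $1/\eta$, so both sequences are precompact in $B$ and the limits coincide in $B_1$, hence in $B$, contradicting $\|u_j/\lambda_j-v_j/\lambda_j\|_B=1$. Applied with $u=f_k(t+h)$, $v=f_k(t)$ and integrated in $t$, this inequality combines the $L^p$ bound on $[f_k(\cdot)]_{B_0}$ with your $B_1$-translation estimate to give the required equicontinuity in $L^p(0,T-h;B)$ directly, without ever forming differences inside $B_0$. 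Without it (or an equivalent substitute) your final combination step does not close. A further minor caveat: your claim that the ranges of $q$ need no separate treatment is too quick --- converting the $L^q(0,T-h;B_1)$ translation bound into an $L^p(0,T-h;B_1)$ one is immediate only for $p\le q$ (which covers the paper's application, $p=1$, $q=4/3$); for $p>q$ an additional truncation or equi-integrability argument is needed.
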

Define
$$
B=L^1(\Omega),\quad B_0=\big\{f\in B\ \big|\ f\geq 0 \ \  \text{a.e. in}\ \Omega,\ f^\frac12\in H^1(\Omega)\big\},\quad B_1=(H_N^2(\Omega))',
$$
where $B_0$ is a semi-normed space with the semi-norm
$$
[f]_{B_0}=\|f\|_{L^1(\Omega)}+\|\nabla f^\frac12\|^2.
$$
The continuity of the embedding $B\hookrightarrow B_1$ is obvious, and the compactness of the embedding $B_0\hookrightarrow B$ has been verified in \cite{BLS2017}. It follows from \eqref{lown-phi-sig} and \eqref{sigmatn} that
$$
\sigma^n\ \text{is uniformly bounded in}\  \ \big\{f\ \big|\ f\in L^1(0,T;B_0),\ \partial_t f\in L^\frac{4}{3}(0,T;B_1)\big\}.
$$
Thus, we can apply Lemma \ref{Dubin} to conclude
$$
\sigma^n \to \sigma \quad\text{strongly in}\ L^1(0,T; L^1(\Omega)) \cong L^1(\Omega\times(0,T)),
$$
as $n\to \infty$ (up to a subsequence). This yields
$\sigma^n \to \sigma$ almost everywhere in $\Omega\times(0,T)$. Using \eqref{lown-phi-sig} and interpolation, we also have
\begin{align}
\sigma^n \to \sigma \quad\text{strongly in}\ L^{2-r}(\Omega\times(0,T)),\quad \forall\,r\in (0,1).
\label{conv-sig-Lr}
\end{align}
In addition, from the facts $\sigma\ln \sigma \in  L^\infty(0,\infty; L^1(\Omega))$, $\partial_t \sigma\in L^\frac{4}{3}_{\mathrm{uloc}}([0,\infty);(H^2_N(\Omega))')$, we can apply the argument in \cite[Section 12]{BLS2017} to obtain $\sigma\in BC_\mathrm{w}([0,\infty);L^1(\Omega))$.

Keeping these modifications in mind, we can take the limit as $n\to \infty$ to establish the existence of a global finite energy solution $(\vv,\varphi, \mu, \sigma)$ on $[0,\infty)$ again through a diagonal extraction process as in \cite{Bo13}. Finally, using the weak/strong convergence results, weak lower-semicontinuity of norms, and Fatou's lemma, we can recover the energy inequality \eqref{menergy-weak-fi} from \eqref{menergy-b}.
However, a different treatment for the dissipation term $\int_0^t\!\int_\Omega \big| 2\nabla \sqrt{\sigma^n} + \sqrt{\sigma^n} \nabla \beta(\varphi^n) \big|^2 \,\d x\d \tau$ is required here, compared to the proof of Proposition \ref{p2}.
For any $T>0$, we infer from \eqref{menergy-b} that $\int_0^T\!\int_\Omega \big| 2\nabla \sqrt{\sigma^n} + \sqrt{\sigma^n} \nabla \beta(\varphi^n) \big|^2 \,\d x\d t$ is uniformly bounded with respect to $n$. This implies
$$
2\nabla \sqrt{\sigma^n} + \sqrt{\sigma^n} \nabla \beta(\varphi^n) \rightharpoonup \bm{g}
\quad \text{weakly in}\ L^2(0,T;\bm{L}^2(\Omega)),
$$
for some $\bm{g}\in L^2(0,T;\bm{L}^2(\Omega))$, and as a consequence,
\begin{align}
\int_0^T\!\!\int_\Omega |\bm{g}|^2\,\d x\d t\leq
\liminf_{n\to \infty} \int_0^T \!\!\int_\Omega \big| 2\nabla \sqrt{\sigma^n} + \sqrt{\sigma^n} \nabla \beta(\varphi^n) \big|^2 \,\d x\d t.
\label{inf-sigma-n}
\end{align}
Next, we observe that
\begin{align*}
\int_0^T\!\!\int_\Omega \big|\sqrt{\sigma^n}-\sqrt{\sigma}\big|^3\,\d x\d t &\leq \int_0^T\!\!\int_\Omega |\sigma^n- \sigma |\big|\sqrt{\sigma^n}-\sqrt{\sigma}\big| \,\d x\d t
\\
&\leq \left(\int_0^T\!\!\int_\Omega |\sigma^n- \sigma |^\frac32\,\d x\d t\right)^\frac23\left(\int_0^T\!\!\int_\Omega \big|\sqrt{\sigma^n}-\sqrt{\sigma}\big|^3\,\d x\d t\right)^\frac13.
\end{align*}
This together with \eqref{conv-sig-Lr} (taking $r=1/2$) yields
\begin{align}
\sqrt{\sigma^n}\to \sqrt{\sigma}\quad \text{strongly in } L^3(0,T;L^3(\Omega)).
\label{conv-sig-sL3}
\end{align}
In addition, from the strong convergence $\varphi^n\to \varphi$ in $L^2(0,T;H^1(\Omega))\cap L^4(0,T;L^4(\Omega))$, (H4) on $\beta$ and the boundedness of $\nabla \varphi^n$ in $L^4(0,T;\bm{L}^4(\Omega))$, we can deduce that
\begin{align*}
&\int_0^T\!\!\int_\Omega |\nabla \beta(\varphi^n)-\nabla \beta(\varphi)|^2\,\d x\d t
\\
&\quad \leq 2\int_0^T\!\!\int_\Omega |\beta'(\varphi^n)-\beta'(\varphi)|^2|\nabla \varphi^n|^2\,\d x\d t
+ 2\int_0^T\!\!\int_\Omega |\beta'(\varphi)|^2|\nabla \varphi^n-\nabla \varphi|^2\,\d x\d t
\to 0.
\end{align*}
Thus, we get
\begin{align*}
\sqrt{\sigma^n} \nabla \beta(\varphi^n) \to \sqrt{\sigma} \nabla \beta(\varphi) \quad
\text{strongly in } L^\frac65(0,T;L^\frac65(\Omega)).
\end{align*}
On the other hand, it follows from \eqref{lown-phi-sig} and \eqref{conv-sig-sL3} that
$$
\sqrt{\sigma^n} \rightharpoonup \sqrt{\sigma}
\quad \text{weakly in } L^2(0,T;H^1(\Omega)).
$$
As a consequence, we can identify $\bm{g}= 2\nabla \sqrt{\sigma} + \sqrt{\sigma} \nabla \beta(\varphi)$,
which combined with \eqref{inf-sigma-n} yields
\begin{align}
\int_0^T\!\!\int_\Omega \big|2\nabla \sqrt{\sigma} + \sqrt{\sigma} \nabla \beta(\varphi)\big|^2\,\d x\d t\leq
\liminf_{n\to \infty} \int_0^T \!\!\int_\Omega \big| 2\nabla \sqrt{\sigma^n} + \sqrt{\sigma^n} \nabla \beta(\varphi^n) \big|^2 \,\d x\d t.
\notag
\end{align}
The proof of Theorem \ref{WEAK-SOL}-(1) is complete.
\hfill $\square$

\subsection{Proof of Corollary \ref{finite-to-weak}}
We recall that the approximate solution $\sigma^{k}$ considered in Section \ref{ue} satisfies
$$
\sup_{t\geq 0}\int_t^{t+1} \|\sigma^k(\tau) \|^2 \, \d \tau\leq C,
\quad
\sup_{t\geq 0}\int_t^{t+1} \|\nabla \mu^k(\tau) \|^2 \, \d \tau\leq C,
\quad \forall \, k \geq \widehat{k}.
$$
Thus, an application of the uniform Gronwall lemma (see \cite[Chapter III, Lemma 1.1]{Temam}) to \eqref{sL2-4} implies that
$$
\|\sigma^k(t+\tau)\|^2 \leq \frac{C}{\tau},\quad \forall\, t \geq 0,\quad \forall\, \tau\in (0,1],
$$
where $C>0$ depends on $\mathcal{E}_0$, $\Omega$ and coefficients of the system, but it is independent of $k$, $\gamma$, $\epsilon$, $n$ (and also $\|\sigma_{0,n}\|$), $t$ and $\tau$. Based on this fact, we find
\begin{align}
&\sup_{t\geq \tau} \|\sigma^k\|_{L^2(t,t+1; H^1(\Omega))}\leq C(\tau),
\quad
\sup_{t\geq \tau}\|\varphi^k\|_{L^4(t,t+1; H^2(\Omega))}\leq C(\tau),
\notag
\end{align}
where the positive constant $C(\tau)$ depends on $\tau$, $\mathcal{E}_0$, $\Omega$ and the coefficients of the system, but it is independent of $k$, $\gamma$, $\epsilon$, $n$ (and also $\|\sigma_{0,n}\|$), $t$. Moreover, $C(\tau)$ explodes as $\tau\to 0^+$.
Taking $k\to \infty$, the same properties hold for the approximate solutions obtained in Proposition \ref{p2}.
Since $\tau\in (0,1]$ is arbitrary, repeating the procedure in Section \ref{proof-thm1}, we can obtain a global finite energy solution defined on $[0,\infty)$ that becomes a global weak solution for $t>0$.

The proof of Corollary \ref{finite-to-weak} is complete.
 \hfill $\square$

\section{Regularity Results for Auxiliary Decoupled Systems}
\label{sec:decoup}
\setcounter{equation}{0}

In this section, we investigate some auxiliary problems related to the coupled system \eqref{NSCH}--\eqref{NSCH-bic}. They will play a crucial role in the
sequel in establishing the global strong well-posedness of problem \eqref{NSCH}--\eqref{NSCH-bic} and the propagation of regularity for global weak solutions.

\subsection{Diffusion equation with divergence-free drift}
\label{sec:reg-sigma}

Let $\vv$, $\varphi$ be two given functions with suitable regularity properties. Consider the following diffusion equation with convection:
\begin{alignat}{3}
&\partial_t\sigma +\vv\cdot\nabla\sigma -\Delta\sigma
=  \mathrm{div}\big(\beta'(\varphi)\sigma \nabla \varphi\big),
\quad \text{in}\ \Omega\times (0,\infty),
\label{re-di-sigma}
\end{alignat}
equipped with the following boundary and initial conditions
\begin{alignat}{3}
&\partial_{\boldsymbol{n}} \sigma =0  \quad \text{on}\ \partial\Omega\times(0,\infty),
\qquad \sigma|_{t=0}=\sigma_{0} \quad \text{in}\ \Omega.
\label{re-di-sigma-bdini}
\end{alignat}

First, we prove the existence and uniqueness of a global weak solution to problem
\eqref{re-di-sigma}--\eqref{re-di-sigma-bdini}.
\begin{proposition}
\label{sigma-weak}
Let $\Omega$ be a bounded domain in $\mathbb{R}^2$ with a $C^2$ boundary.
Assume that (H4) is satisfied and
$$
\vv \in L^{\infty}(0,\infty;\L^2_{\sigma}(\Omega))\cap L^2(0,\infty;\H^1_{0,\sigma}(\Omega)),\quad
\varphi \in L^\infty(0,\infty;H^1(\Omega))\cap L^2_{\mathrm{uloc}}([0,\infty);H^2_N(\Omega)).
$$
Then, for any initial datum $\sigma_0 \in L^2(\Omega)$ satisfying $\sigma_0 \geq 0$ almost everywhere in $\Omega$, problem \eqref{re-di-sigma}--\eqref{re-di-sigma-bdini} admits a unique global weak  solution $\sigma$ in $\Omega\times [0,\infty)$ such that
\begin{equation}
\label{REG1-w}
\sigma \in L^{\infty}(0,T;L^2(\Omega)) \cap L^2(0,T; H^1(\Omega))\cap H^1(0,T;(H^1(\Omega))'),
\end{equation}
for any $T>0$,  with $\sigma(x,t)\geq 0$ almost everywhere in $\Omega \times (0,\infty)$.
Moreover, it holds
\begin{align}
&\l \partial_t \sigma, \xi\r_{(H^1(\Omega))',H^1(\Omega)}
-(\sigma\vv, \nabla \xi)+ (\nabla \sigma, \nabla\xi)
+ \left( \beta'(\varphi)\sigma \nabla \varphi, \nabla \xi\right)=0,
\quad \forall\, \xi \in H^1(\Omega),
\label{sig-weak2-w}
\end{align}
almost everywhere in $(0,\infty)$, and  $\sigma|_{t=0}=\sigma_{0}$ almost everywhere in $\Omega$.
\end{proposition}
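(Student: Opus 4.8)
The plan is to construct the solution by a Faedo--Galerkin scheme based on the eigenfunctions $\{z_i\}$ of the Neumann Laplacian introduced in Section~\ref{modif-sys}, and then to pass to the limit by standard compactness. Since equation~\eqref{re-di-sigma} is \emph{linear} in the unknown $\sigma$ (the given data $\vv$, $\varphi$ enter only as coefficients), the heart of the matter is a single dissipative energy estimate; once it is available, existence, uniqueness, nonnegativity and the regularity~\eqref{REG1-w} all follow from variants of the same computation. First I would set up the approximate problem for $\sigma^m=\sum_{i=1}^m c_i^m(t)z_i$ solving the projected version of~\eqref{re-di-sigma}, which is a linear Carathéodory ODE system with locally integrable coefficients and hence admits a local solution; the a priori bounds below make it global.

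For the basic estimate I would test~\eqref{sig-weak2-w} by $\sigma$ (respectively $\sigma^m$). The crucial simplification is that the convective term drops out: since $\div\,\vv=0$ and $\vv\cdot\n=0$ on $\partial\Omega$, integration by parts gives $(\sigma\vv,\nabla\sigma)=\tfrac12\int_\Omega\vv\cdot\nabla(\sigma^2)\,\d x=0$. Thus
\begin{align}
\frac12\ddt\|\sigma\|^2+\|\nabla\sigma\|^2=-\big(\beta'(\varphi)\sigma\nabla\varphi,\nabla\sigma\big).
\notag
\end{align}
The right-hand side is controlled using (H4) together with the two-dimensional interpolation inequalities $\|\sigma\|_{L^4(\Omega)}\le C\|\sigma\|^{1/2}\|\sigma\|_{H^1(\Omega)}^{1/2}$ and $\|\nabla\varphi\|_{\bm{L}^4(\Omega)}\le C\|\nabla\varphi\|^{1/2}\|\varphi\|_{H^2(\Omega)}^{1/2}$: after Young's inequality the dissipation $\|\nabla\sigma\|^2$ is absorbed and, using the bound on $\|\nabla\varphi\|$ coming from $\varphi\in L^\infty(0,\infty;H^1(\Omega))$, there remains
\begin{align}
\ddt\|\sigma\|^2+\|\nabla\sigma\|^2\le C\big(1+\|\varphi\|_{H^2(\Omega)}^2\big)\|\sigma\|^2.
\notag
\end{align}
Because $\varphi\in L^2_{\mathrm{uloc}}([0,\infty);H^2_N(\Omega))$, the coefficient $1+\|\varphi\|_{H^2(\Omega)}^2$ lies in $L^1(0,T)$ for every $T>0$, so Gronwall's lemma yields $\sigma\in L^\infty(0,T;L^2(\Omega))\cap L^2(0,T;H^1(\Omega))$. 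A duality estimate in~\eqref{sig-weak2-w}, combined with $\vv\in L^2(0,T;\H^1_{0,\sigma}(\Omega))$ and the $\bm{L}^4$ bounds above, then gives $\partial_t\sigma\in L^2(0,T;(H^1(\Omega))')$; the Aubin--Lions--Simon lemma provides the compactness to pass to the limit $m\to\infty$ and identify a weak solution with the regularity~\eqref{REG1-w}. The embedding $L^2(0,T;H^1(\Omega))\cap H^1(0,T;(H^1(\Omega))')\hookrightarrow C([0,T];L^2(\Omega))$ makes the initial trace meaningful, so $\sigma|_{t=0}=\sigma_0$ is attained in $L^2(\Omega)$.

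Nonnegativity is obtained at the level of the limit solution by testing~\eqref{sig-weak2-w} with $\sigma^-:=\min\{\sigma,0\}\in L^2(0,T;H^1(\Omega))$, which is admissible thanks to $\sigma\in L^2(0,T;H^1(\Omega))\cap H^1(0,T;(H^1(\Omega))')$. Using $\nabla\sigma^-=\nabla\sigma\,\mathbf{1}_{\{\sigma<0\}}$ and the vanishing of the convective term as above, one finds
\begin{align}
\frac12\ddt\|\sigma^-\|^2+\|\nabla\sigma^-\|^2=-\big(\beta'(\varphi)\sigma^-\nabla\varphi,\nabla\sigma^-\big),
\notag
\end{align}
and the identical estimate gives $\ddt\|\sigma^-\|^2\le C(1+\|\varphi\|_{H^2(\Omega)}^2)\|\sigma^-\|^2$; since $\sigma_0\ge0$ forces $\sigma^-|_{t=0}=0$, Gronwall's lemma yields $\sigma^-\equiv0$, i.e.\ $\sigma\ge0$ a.e. Uniqueness follows from the same device applied to the difference $\Sigma=\sigma_1-\sigma_2$ of two solutions: by linearity $\Sigma$ solves the homogeneous problem with $\Sigma|_{t=0}=0$, and testing by $\Sigma$ produces $\ddt\|\Sigma\|^2\le C(1+\|\varphi\|_{H^2(\Omega)}^2)\|\Sigma\|^2$, whence $\Sigma\equiv0$.

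The only genuinely delicate point is the cross-diffusion term $(\beta'(\varphi)\sigma\nabla\varphi,\nabla\sigma)$, a product of three factors sitting at the borderline of the available regularity; the absorption works precisely because of the two-dimensional Sobolev structure (in three dimensions the $\bm{L}^4$ interpolation would be supercritical and this direct argument would fail). A secondary technical issue is that, as $\varphi$ is only $L^2_{\mathrm{uloc}}$ in $H^2$, the Gronwall coefficient is merely locally integrable in time; this is harmless for the local-in-time bounds~\eqref{REG1-w}, but it is the reason the conclusion is phrased on arbitrary finite intervals $[0,T]$ rather than with uniform-in-time constants.
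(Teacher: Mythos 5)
Your proposal is correct, and its analytical core — the testing by $\sigma$ with the convective term vanishing by incompressibility, the control of the cross-diffusion term via the two-dimensional interpolations $\|\sigma\|_{L^4}\le C\|\sigma\|^{1/2}\|\sigma\|_{H^1}^{1/2}$ and $\|\nabla\varphi\|_{\bm L^4}\le C\|\nabla\varphi\|^{1/2}\|\varphi\|_{H^2}^{1/2}$, the resulting Gronwall inequality with coefficient $C(1+\|\varphi\|_{H^2}^2)\in L^1_{\mathrm{loc}}$, the duality bound for $\partial_t\sigma$, and the uniqueness argument for the difference — coincides with the paper's proof. The only genuine divergence is in the construction and in how nonnegativity is obtained: the paper builds smooth (classical) approximate solutions as in its Lemma 3.1, for which nonnegativity comes from the strong maximum principle and is inherited by the limit, whereas you use a Faedo--Galerkin scheme and recover $\sigma\ge 0$ a posteriori by the Stampacchia truncation $\sigma^-=\min\{\sigma,0\}$ at the limit level. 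Both routes work; yours is more self-contained (it does not invoke parabolic semigroup theory or an external lemma), but it silently uses the generalized chain rule $\tfrac12\tfrac{\d}{\d t}\|\sigma^-\|^2=\langle\partial_t\sigma,\sigma^-\rangle$ for $\sigma\in L^2(0,T;H^1)\cap H^1(0,T;(H^1)')$ — standard, but worth citing explicitly since $\sigma^-$ is not an admissible test function at the Galerkin level, which is precisely why the truncation must be performed after passing to the limit, as you correctly do.
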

\begin{proof}
The existence of a nonnegative global weak solution follows from the construction  of smooth approximate solutions (see \cite[Lemma 3.1]{GHW1}), the \textit{a priori} estimates (see below) combined with the compactness method.
Testing \eqref{sig-weak2-w} with $1$, we get
\begin{align}
\int_\Omega\sigma(t)\,\mathrm{d}x=\int_\Omega \sigma_0\,\mathrm{d}x,\quad \forall\, t\geq 0.
\label{conv-sig}
\end{align}
Next, a modification of the argument in Section 4.1 (see Step 5) yields that
\begin{equation}
\label{sL2-2-b}
  \ddt \|\sigma \|^2 + \|\nabla \sigma \|^2
\leq C \left(1+\|\varphi\|_{H^2(\Omega)}^2 \right) \|\sigma\|^2.
\end{equation}
By Gronwall's lemma, we have
\begin{align}
\|\sigma\|_{L^\infty(0,T;L^2(\Omega))}\leq K_2,\quad \|\sigma\|_{L^2(0,T;H^1(\Omega))}\leq K_2,
\label{es-sigL2}
\end{align}
for any $T>0$, where $K_2>0$ depends on $\|\sigma_0\|$, $\|\varphi\|_{L^2(0,T;H^2(\Omega))}$, $T$. Moreover, similar to \eqref{sigma_t}, from the assumptions on $\vv$ and $\varphi$, by a comparison in \eqref{sig-weak2-w}, we can deduce that $\partial_t\sigma \in L^2(0,T;(H^1(\Omega))')$.

Now we prove the uniqueness. Let $\sigma_1$ and $\sigma_2$ be two weak solutions to problem \eqref{re-di-sigma}--\eqref{re-di-sigma-bdini}, corresponding to the initial data $\sigma_{0,1}$ and $\sigma_{0,2}$, respectively. Define the difference
$\widetilde{\sigma}=\sigma_1-\sigma_2$, which solves
\begin{align}
	\label{sigma-weak1}
	\l \partial_t \widetilde{\sigma},\xi\r_{(H^1(\Omega))',H^1(\Omega)}
- ( \vv \widetilde{\sigma},\nabla \xi )
	+(\nabla\widetilde{\sigma},\nabla \xi) + (\beta'(\varphi)\widetilde{\sigma}\nabla \varphi, \nabla \xi )=0,
\quad \forall\, \xi\in H^1(\Omega),
\end{align}
almost everywhere in $(0,T)$ for any fixed $T>0$. Taking $\xi= \widetilde{\sigma}$ in \eqref{sigma-weak1}, after integration by parts, we get
\begin{align}
\frac{1}{2}\ddt \|\widetilde{\sigma}\|^2+\|\nabla \widetilde{\sigma}\|^2
&= - (\beta'(\varphi)\widetilde{\sigma}\nabla \varphi,\nabla \widetilde{\sigma})
\notag\\
&\leq \|\beta'(\varphi)\|_{L^\infty(\Omega)}
\|\nabla \varphi\|_{\bm{L}^4(\Omega)}
\|\widetilde{\sigma}\|_{L^4(\Omega)}\|\nabla \widetilde{\sigma}\|
\notag\\
&\leq C\|\nabla \varphi\|^\frac12\|\varphi\|_{H^2(\Omega)}^\frac12\|\widetilde{\sigma}\|^\frac12
\|\nabla \widetilde{\sigma}\|^\frac32
\notag\\
&\leq \frac12 \|\nabla \widetilde{\sigma}\|^2 +C \|\varphi\|_{H^2(\Omega)}^2\|\widetilde{\sigma}\|^2.
\label{sig-uni}
\end{align}
An application of Gronwall's lemma yields the continuous dependence estimate
$$
\|\sigma_1(t)-\sigma_2(t)\|^2
\leq \|\sigma_{0,1}-\sigma_{0,2}\|^2 \mathrm{exp}\left(C\int_0^t\|\varphi(s)\|_{H^2(\Omega)}^2\,\mathrm{d}s\right), \quad \forall\, t\in [0,T],
$$
which guarantees the uniqueness of the weak solution.

The proof of Proposition \ref{sigma-weak} is complete.
\end{proof}

The following proposition yields the strong well-posedness of problem
\eqref{re-di-sigma}--\eqref{re-di-sigma-bdini}.
\begin{proposition}
\label{sigma-strong}
Let $\Omega$ be a bounded domain in $\mathbb{R}^2$ with a $C^2$ boundary. Assume that (H4) is satisfied and
\begin{align*}
&\vv \in L^{\infty}(0,\infty;\L^2_{\sigma}(\Omega))\cap L^2(0,\infty;\H^1_{0,\sigma}(\Omega)),
\\
&\varphi \in L^\infty(0,\infty;H^1(\Omega))\cap L^2_{\mathrm{uloc}}([0,\infty);W^{2,q}(\Omega)\cap H^2_N(\Omega)),
\end{align*}
for some $q>2$. Then for any initial datum $\sigma_0 \in H^1(\Omega)$ satisfying $\sigma_0 \geq 0$ almost everywhere in $\Omega$, problem
\eqref{re-di-sigma}--\eqref{re-di-sigma-bdini} admits a unique global strong solution $\sigma$ in $\Omega\times [0,\infty)$ such that
\begin{equation}
\label{REG1}
\begin{split}
\sigma \in L^{\infty}(0,T;H^1(\Omega)) \cap L^2(0,T; H^2(\Omega))\cap H^1(0,T;L^2(\Omega)),
\end{split}
\end{equation}
for any $T>0$, with $\sigma(x,t)\geq 0$ almost everywhere in $\Omega \times (0,\infty)$,
which satisfies \eqref{re-di-sigma} almost everywhere in $\Omega\times(0,\infty)$. Moreover, we have $\partial_{\boldsymbol{n}} \sigma =0$ almost everywhere on $\partial\Omega\times(0,\infty)$ and  $\sigma|_{t=0}=\sigma_{0}$ almost everywhere in $\Omega$.
\end{proposition}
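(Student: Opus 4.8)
The plan is to upgrade the weak solution furnished by Proposition \ref{sigma-weak} to a strong one, exploiting the stronger hypotheses $\sigma_0\in H^1(\Omega)$ and $\varphi\in L^2_{\mathrm{uloc}}([0,\infty);W^{2,q}(\Omega))$ with $q>2$. Since a strong solution is in particular a weak one, existence of a unique nonnegative weak solution $\sigma$ (with the mass identity and the continuous dependence estimate) is already guaranteed by Proposition \ref{sigma-weak}; both nonnegativity and uniqueness are therefore inherited, and it remains only to establish the additional regularity \eqref{REG1}. I would argue at the level of a smooth approximating family (e.g. obtained as in \cite[Lemma 3.1]{GHW1} by mollifying the data and the coefficients), derive the estimates below uniformly, and pass to the limit; all integrations by parts are justified at the approximate level and the bounds survive under weak lower semicontinuity.

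The first stage is an $L^p$ (Moser-type) estimate. Testing \eqref{re-di-sigma} with $\sigma^{p-1}$ for a fixed but large $p$, using $\Div\vv=0$ to annihilate the convective term and integrating the cross-diffusion term by parts, I obtain
\begin{align}
\frac1p\frac{\d}{\d t}\|\sigma\|_{L^p(\Omega)}^p + \frac{4(p-1)}{p^2}\big\|\nabla \sigma^{p/2}\big\|^2
= \frac{p-1}{p}\int_\Omega \sigma^p\big(\beta''(\varphi)|\nabla\varphi|^2 + \beta'(\varphi)\Delta\varphi\big)\,\d x. \notag
\end{align}
The crucial point is the borderline integrability in time: the embedding $W^{2,q}(\Omega)\hookrightarrow W^{1,\infty}(\Omega)$ (valid in two dimensions precisely because $q>2$) controls the first term by $\|\varphi\|_{W^{2,q}(\Omega)}^2\,\|\sigma\|_{L^p(\Omega)}^p$, while H\"older's inequality together with a Gagliardo--Nirenberg estimate for $\sigma^{p/2}$ bounds the second term by $\varepsilon\|\nabla\sigma^{p/2}\|^2 + C\|\varphi\|_{W^{2,q}(\Omega)}^{q'}\|\sigma\|_{L^p(\Omega)}^p$, where $q'=q/(q-1)<2$. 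Since $\|\varphi\|_{W^{2,q}(\Omega)}\in L^2_{\mathrm{uloc}}$, both $\|\varphi\|_{W^{2,q}(\Omega)}^2$ and $\|\varphi\|_{W^{2,q}(\Omega)}^{q'}$ lie in $L^1_{\mathrm{uloc}}$, so Gronwall's lemma yields $\sigma\in L^\infty_{\mathrm{uloc}}([0,\infty);L^p(\Omega))$ for every $p<\infty$.

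With the $L^p$-bound in hand, the second stage is the $H^2$ estimate. Testing \eqref{re-di-sigma} with $-\Delta\sigma$ (legitimate since $\partial_\n\sigma=0$) gives
\begin{align}
\frac12\frac{\d}{\d t}\|\nabla\sigma\|^2 + \|\Delta\sigma\|^2
= \int_\Omega (\vv\cdot\nabla\sigma)\,\Delta\sigma\,\d x
- \int_\Omega \Div\big(\beta'(\varphi)\sigma\nabla\varphi\big)\,\Delta\sigma\,\d x. \notag
\end{align}
I would control the convective term by $\|\vv\|_{\bm{L}^4(\Omega)}\|\nabla\sigma\|_{\bm{L}^4(\Omega)}\|\Delta\sigma\|$ and close it with Ladyzhenskaya's inequality, the elliptic estimate $\|\sigma\|_{H^2(\Omega)}\le C(\|\Delta\sigma\|+\|\sigma\|)$ and Young's inequality, producing the locally integrable Gronwall coefficient $\|\nabla\vv\|^2$. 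Expanding the divergence into $\beta''(\varphi)|\nabla\varphi|^2\sigma$, $\beta'(\varphi)\nabla\sigma\cdot\nabla\varphi$ and $\beta'(\varphi)\sigma\Delta\varphi$, I would estimate the first and third using the $L^p$-bound from Stage~1 together with $\nabla\varphi\in L^\infty$ and $\Delta\varphi\in L^q$, arranging by H\"older/interpolation that the power of $\|\varphi\|_{W^{2,q}(\Omega)}$ never exceeds $2$ and hence stays in $L^1_{\mathrm{uloc}}$; the gradient term contributes the coefficient $\|\varphi\|_{W^{2,q}(\Omega)}^2\|\nabla\sigma\|^2$. After absorbing all $\|\Delta\sigma\|^2$ contributions, Gronwall's lemma gives $\sigma\in L^\infty_{\mathrm{uloc}}([0,\infty);H^1(\Omega))\cap L^2_{\mathrm{uloc}}([0,\infty);H^2(\Omega))$. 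A direct comparison then places $\partial_t\sigma = \Delta\sigma - \vv\cdot\nabla\sigma + \Div(\beta'(\varphi)\sigma\nabla\varphi)$ in $L^2_{\mathrm{uloc}}(L^2(\Omega))$, so $\sigma\in H^1_{\mathrm{uloc}}([0,\infty);L^2(\Omega))$, and $\partial_\n\sigma=0$ a.e. is recovered from the $H^2$-regularity via the weak formulation \eqref{sig-weak2-w}.

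The main obstacle is exactly the cross-diffusion term $\Div(\beta'(\varphi)\sigma\nabla\varphi)$: with $\varphi$ only square-integrable in time with values in $W^{2,q}$, a naive $H^2$-estimate produces a Gronwall coefficient carrying a supercritical power of $\|\varphi\|_{W^{2,q}(\Omega)}$ (namely $4$), which fails to be time-integrable. The two-stage scheme --- first an $L^p$ bound on $\sigma$, then the $H^2$ estimate --- combined with the borderline embedding $W^{2,q}\hookrightarrow W^{1,\infty}$ and the fact that $q>2$ forces the conjugate exponent $q'<2$, is precisely what brings every $\varphi$-power down to the integrable range $\le 2$ and makes the argument close.
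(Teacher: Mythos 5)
Your argument is correct, but it takes a longer route than the paper, and the extra machinery is driven by a misdiagnosis. The paper proves \eqref{REG1} in a single stage: it tests \eqref{re-di-sigma} with $-\Delta\sigma$ and estimates the cross-diffusion term directly as
\[
\Big|\int_{\Omega}\div\big(\beta'(\varphi)\sigma\nabla\varphi\big)\Delta\sigma\,\d x\Big|
\leq C\|\nabla\sigma\|\,\|\varphi\|_{W^{2,q}(\Omega)}\|\Delta\sigma\|
+ C\|\sigma\|_{H^1(\Omega)}\big(\|\nabla\varphi\|+1\big)\|\varphi\|_{W^{2,q}(\Omega)}\|\Delta\sigma\|,
\]
using $\|\sigma\|_{L^{2q/(q-2)}(\Omega)}\leq C\|\sigma\|_{H^1(\Omega)}$ (two-dimensional Sobolev embedding), the Gagliardo--Nirenberg bound $\|\nabla\varphi\|_{\bm{L}^{2q}(\Omega)}^2\leq C(\|\nabla\varphi\|+1)\|\varphi\|_{W^{2,q}(\Omega)}$, and mass conservation to replace $\|\sigma\|_{H^1(\Omega)}$ by $\|\nabla\sigma\|+\overline{\sigma_0}$. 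Since $\|\sigma\|_{L^{2q/(q-2)}}$ enters \emph{linearly} in $\|\nabla\sigma\|$, Young's inequality against $\|\Delta\sigma\|$ produces the Gronwall coefficient $\|\nabla\vv\|^2+\|\varphi\|_{W^{2,q}(\Omega)}^2\in L^1_{\uloc}$ --- there is no supercritical power $4$ in the ``naive'' estimate, contrary to the claim motivating your two-stage scheme. (A power $4$ appears only if one crudely bounds $\|\nabla\varphi\|_{\bm{L}^{2q}}^2$ by $\|\nabla\varphi\|_{\bm{L}^\infty}^2\lesssim\|\varphi\|_{W^{2,q}}^2$; the cure is the interpolation above, not an $L^p$ bound on $\sigma$, which does not touch the power of $\|\varphi\|_{W^{2,q}}$ carried by that term.) Your Stage~1 (the Moser-type $L^p$ estimate via testing with $\sigma^{p-1}$) is therefore superfluous for this proposition, though it is itself correct and closes under the stated hypotheses ($q'<2$ is indeed what makes the $\Delta\varphi$ contribution integrable in time); the paper reserves that computation for Corollary \ref{non-blowup}, where it feeds the Moser--Alikakos iteration under the stronger hypothesis $\varphi\in L^\infty(0,\infty;W^{2,q}(\Omega))$. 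Your Stage~2 then coincides with the paper's argument once you replace ``$\nabla\varphi\in L^\infty$'' by the interpolation bound for the $|\nabla\varphi|^2\sigma$ term, as you in fact allow for; the treatment of the convective term, the comparison argument for $\partial_t\sigma\in L^2_{\uloc}(L^2(\Omega))$, and the inheritance of uniqueness and nonnegativity from Proposition \ref{sigma-weak} all match the paper.
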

\begin{proof}
We only derive the necessary higher-order \textit{a priori} estimates.
Multiplying \eqref{re-di-sigma} by $-\Delta\sigma$ and integrating over $\Omega$, we obtain
\begin{equation}
\begin{aligned}
\frac{1}{2}\frac{\mathrm{d}}{\mathrm{d} t}\|\nabla \sigma\|^2+\left\|\Delta\sigma\right\|^2
&=\int_{\Omega}\vv \cdot \nabla \sigma\Delta\sigma\,\mathrm{d}x
- \int_{\Omega}\mathrm{div}\big(\beta'(\varphi)\sigma \nabla \varphi\big) \Delta\sigma \,\mathrm{d}x.
\end{aligned}
\notag
\end{equation}
Using H\"older's inequality, Young's inequality, the Sobolev embedding theorem, the Gagliardo--Nirenberg inequality, the Poincar\'e--Wirtinger inequality and \eqref{conv-sig}, we have
\begin{equation*}
\begin{aligned}
\left|\int_{\Omega}\vv \cdot \nabla \sigma \Delta\sigma \,\mathrm{d}x\right|
&\le \|\vv \|_{\L^4(\Omega)}\|\nabla \sigma\|_{\bm{L}^4(\Omega)}\|\Delta\sigma\|\\
&\le \frac18\|\Delta\sigma\|^2 + C\|\vv \|\|\vv \|_{\bm{H}^1(\Omega)}
\|\nabla \sigma\| (\|\Delta\sigma\| + \overline{\sigma_0}) \\
&\le \frac14\|\Delta\sigma\|^2 + C\|\nabla \vv \|^2\|\nabla \sigma\|^2+C,
\end{aligned}
\end{equation*}
and
\begin{equation*}
\begin{aligned}
&\left|\int_{\Omega}\mathrm{div}\big(\beta'(\varphi)\sigma \nabla \varphi\big) \Delta\sigma \,\mathrm{d}x\right|
\\
&\quad\le \|\beta'(\varphi)\|_{L^\infty(\Omega)}\|\nabla \sigma\|\|\nabla \varphi\|_{\bm{L}^\infty(\Omega)}\|\Delta\sigma\|
+ \|\beta''(\varphi)\|_{L^\infty(\Omega)}\|\sigma\|_{L^\frac{2q}{q-2}(\Omega)} \|\nabla \varphi\|^2_{\bm{L}^{2q}(\Omega)}\|\Delta\sigma\|
\\
&\qquad +\|\beta'(\varphi)\|_{L^\infty(\Omega)}\|\sigma\|_{L^\frac{2q}{q-2}(\Omega)} \|\Delta \varphi\|_{L^q(\Omega)}\|\Delta\sigma\| \\
&\quad \leq C \|\nabla \sigma\|\|\varphi\|_{W^{2,q}(\Omega)}\|\Delta \sigma\|
+ C
\| \sigma \|_{H^1(\Omega)}
(\| \nabla \varphi\| +1)
 \| \varphi\|_{W^{2,q}(\Omega)} \| \Delta \sigma\|
\\
&\quad\le \frac14\|\Delta\sigma\|^2
+ C\|\varphi\|_{W^{2,q}(\Omega)}^2(\|\nabla \sigma\|^2+1).
\end{aligned}
\end{equation*}
From the above estimates, we can derive the following differential inequality
\begin{equation}
\begin{aligned}
\frac{\mathrm{d}}{\mathrm{d} t} \|\nabla \sigma\|^2 + \|\Delta\sigma\|^2
&\le C\big(\|\nabla \vv \|^2+\|\varphi\|_{W^{2,q}(\Omega)}^2\big) \|\nabla \sigma\|^2
+ C\big(\|\varphi\|_{W^{2,q}(\Omega)}^2+1\big).
\end{aligned}
\label{es-sigH0}
\end{equation}
Applying Gronwall's lemma and the Poincar\'{e}--Wirtinger inequality, we obtain
\begin{align}
\|\sigma\|_{L^\infty(0,T;H^1(\Omega))}\leq K_3,\quad \|\sigma\|_{L^2(0,T;H^2(\Omega))}\leq K_3,
\label{es-sigH1}
\end{align}
for any $T>0$, where the constant $K_3>0$ depends on $\|\sigma_0\|_{H^1(\Omega)}$,
$\|\vv\|_{L^\infty(0,\infty;\L^2_{\sigma}(\Omega))}$, $\|\vv\|_{L^2(0,\infty;\H^1_{0,\sigma}(\Omega))}$,
$\|\varphi \|_{L^\infty(0,T;H^1(\Omega))}$, $\|\varphi\|_{L^2(0,T;W^{2,q}(\Omega))}$, and $T$. Finally, by a comparison in the equation \eqref{re-di-sigma}, it is straightforward to check that $\partial_t\sigma \in L^2(0,T;L^2(\Omega))$.

The proof of Proposition \ref{sigma-strong} is complete.
\end{proof}

Finally, we derive an $L^\infty$-estimate of $\sigma$ under some additional assumptions on $\sigma_0$ and $\varphi$.

\begin{corollary}
\label{non-blowup}
Let the assumptions in Proposition \ref{sigma-strong} be satisfied. Assume in addition,
$$\sigma_0\in L^\infty(\Omega)\quad \text{and}\quad \varphi\in L^\infty(0,\infty;W^{2,q}(\Omega))\quad \text{for some }q>2.$$
Then, we have
\begin{align}
\|\sigma\|_{L^\infty(0,\infty;L^\infty(\Omega))}\leq C,
\label{sig-Linf}
\end{align}
where $C>0$ depends on $\|\sigma_0\|_{L^\infty(\Omega)}$, $\|\varphi\|_{L^\infty(0,\infty;W^{2,q}(\Omega))}$ and $\Omega$.
\end{corollary}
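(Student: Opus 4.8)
The plan is to establish the uniform-in-time $L^\infty$ bound by an Alikakos--Moser iteration on the $L^p$ norms of $\sigma$, exploiting two structural features of \eqref{re-di-sigma}: the drift $\vv$ is divergence-free, so the convective term drops out of every $L^p$ estimate, and the cross-diffusion term can be integrated by parts into a reaction term whose coefficient lies in $L^q(\Omega)$ with $q>2$. First I would set
$$
g:=\beta''(\varphi)|\nabla\varphi|^2+\beta'(\varphi)\Delta\varphi,\qquad\text{so that}\qquad \div\big(\beta'(\varphi)\sigma\nabla\varphi\big)=\beta'(\varphi)\nabla\varphi\cdot\nabla\sigma+g\,\sigma .
$$
Since $\beta',\beta''$ are bounded by (H4) and $\varphi\in L^\infty(0,\infty;W^{2,q}(\Omega))$ with $q>2$, the two-dimensional embedding $W^{2,q}(\Omega)\hookrightarrow W^{1,\infty}(\Omega)$ gives $\nabla\varphi\in L^\infty(0,\infty;\bm{L}^\infty(\Omega))$, whence $G:=\esssup_{t\ge0}\|g(t)\|_{L^q(\Omega)}<\infty$ depends only on $\|\varphi\|_{L^\infty(0,\infty;W^{2,q}(\Omega))}$, the bounds on $\beta$, and $\Omega$.

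Second, the $L^p$ differential inequality. For $p\ge2$, testing \eqref{re-di-sigma} with $\sigma^{p-1}$ (legitimate since Proposition \ref{sigma-strong} yields $\sigma\in L^2(0,T;H^2(\Omega))\hookrightarrow L^2(0,T;L^\infty(\Omega))$ for every $T$, so a truncation of the power nonlinearity justifies the computation), using $\div\vv=0$ and integrating by parts gives, with $w:=\sigma^{p/2}$,
$$
\frac1p\ddt\|\sigma\|_{L^p(\Omega)}^p+\frac{4(p-1)}{p^2}\|\nabla w\|^2=\frac{p-1}{p}\int_\Omega g\,\sigma^p\,\d x .
$$
I would bound the reaction term by $\tfrac{p-1}{p}\|g\|_{L^q(\Omega)}\|w\|_{L^{2q'}(\Omega)}^2$ with $q'=q/(q-1)$, then invoke the Gagliardo--Nirenberg inequality $\|w\|_{L^{2q'}(\Omega)}^2\le C\|\nabla w\|^{2/q}\|w\|_{L^2(\Omega)}^{2-2/q}+C\|w\|_{L^2(\Omega)}^2$ (the exponent $2/q<2$ is subcritical). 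Two applications of Young's inequality, one absorbing a controlled fraction of $\|\nabla w\|^2$ into the reaction and a second converting the surviving $\|w\|_{L^2(\Omega)}^2=\|\sigma\|_{L^p(\Omega)}^p$ back into the dissipation at the price of the lower-level term $\|w\|_{L^1(\Omega)}^2=\|\sigma\|_{L^{p/2}(\Omega)}^p$, combined with the reverse bound $\|\nabla w\|^2\ge2\|w\|_{L^2(\Omega)}^2-C_\Omega\|w\|_{L^1(\Omega)}^2$ to manufacture coercivity, would produce
$$
\ddt\|\sigma\|_{L^p(\Omega)}^p+\|\sigma\|_{L^p(\Omega)}^p\le B_p\,\|\sigma\|_{L^{p/2}(\Omega)}^{p},
$$
where the dissipation coefficient stays bounded below (one keeps $\tfrac{2(p-1)}{p}\ge1$) while $B_p=O\big(p^{2q/(q-1)}\big)$ grows only polynomially.

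Third, the iteration. Writing $p_k=2^{k}$ and $M_k:=\sup_{t\ge0}\|\sigma(t)\|_{L^{p_k}(\Omega)}^{p_k}$, the inequality above and an integrating-factor comparison give $M_k\le\max\{\|\sigma_0\|_{L^{p_k}(\Omega)}^{p_k},\,B_{p_k}M_{k-1}^2\}$. Since $\|\sigma_0\|_{L^{p_k}(\Omega)}^{p_k}\le\|\sigma_0\|_{L^\infty(\Omega)}^{p_k}|\Omega|$ (this is where $\sigma_0\in L^\infty(\Omega)$ enters) and $B_{p_k}\le C\,a^{k}$ with $a=2^{2q/(q-1)}$, passing to $Z_k:=2^{-k}\log M_k$ converts the recursion into $Z_k\le\max\{C_0,\,Z_{k-1}+2^{-k}(k\log a+\log C)\}$; the summability of $\sum_k k\,2^{-k}$ bounds $Z_k$ uniformly in $k$, i.e. $M_k^{1/p_k}=\sup_{t\ge0}\|\sigma(t)\|_{L^{p_k}(\Omega)}$ remains bounded. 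Letting $k\to\infty$ yields $\esssup_{t\ge0}\|\sigma(t)\|_{L^\infty(\Omega)}\le C$ with $C=C\big(\|\sigma_0\|_{L^\infty(\Omega)},\|\varphi\|_{L^\infty(0,\infty;W^{2,q}(\Omega))},\Omega\big)$, which is \eqref{sig-Linf}.

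The main obstacle I anticipate is the bookkeeping in the second step: the two Young parameters must be chosen as explicit functions of $p$ so that simultaneously the coefficient in front of $\|\sigma\|_{L^p(\Omega)}^p$ stays bounded below by a positive constant \emph{independent of $p$} and the constant $B_p$ multiplying the lower-level norm grows at most polynomially. Both are indispensable for the logarithmic recursion to close; a careless absorption would leave a $p$-degenerating dissipation coefficient and destroy the geometric estimate $B_{p_k}\le Ca^k$. The only other point requiring care is the rigorous justification of testing with $\sigma^{p-1}$ and of the integrations by parts, which is handled by the $H^2$-regularity from Proposition \ref{sigma-strong} together with a standard truncation argument.
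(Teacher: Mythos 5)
Your proposal is correct and follows essentially the same route as the paper: testing \eqref{re-di-sigma} with $\sigma^{p-1}$, deriving an $L^p$ differential inequality whose dissipation coefficient stays bounded below while the reaction coefficient grows only polynomially in $p$, and closing with the Alikakos--Moser iteration (which the paper delegates to \cite[Lemma 3.2]{Tao11} rather than writing out). The only cosmetic difference is that the paper bounds the cross-diffusion term directly as $\frac{2(p-1)}{p}\|\beta'(\varphi)\|_{L^\infty(\Omega)}\|\nabla\varphi\|_{\bm{L}^\infty(\Omega)}\|\sigma^{p/2}\|\|\nabla\sigma^{p/2}\|$ and absorbs half into the dissipation, arriving at the simpler right-hand side $Cp(p-1)\|\sigma\|_{L^p(\Omega)}^p$ without your extra integration by parts into the reaction term $g\sigma$ and the ensuing Gagliardo--Nirenberg step.
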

\begin{proof}
For any $p\geq 2$, multiplying \eqref{re-di-sigma} by $\sigma^{p-1}$ and integrating over $\Omega$, exploiting the incompressibility condition and the no-slip boundary condition for $\vv$, we find
\begin{equation}
\frac{1}{p} \ddt \int_\Omega \sigma^p \, \d x +\frac{4(p-1)}{p^2}  \int_{\Omega}|\nabla \sigma^\frac{p}{2}|^2 \, \d x
= -(p-1) \int_{\Omega}\beta'(\varphi) \sigma^{p-1} \nabla \varphi \cdot \nabla \sigma \, \d x.
\notag
\end{equation}
Observing that
\begin{align*}
& \left|(p-1) \int_{\Omega} \beta'(\varphi)  \sigma^{p-1} \nabla \varphi \cdot \nabla \sigma \, \d x \right|
\notag \\
& \quad \leq \frac{2(p-1)}{p} \|\beta'(\varphi)\|_{L^\infty(\Omega)}\|\nabla \varphi\|_{\bm{L}^\infty(\Omega)}
\big\| \sigma^{\frac{p}{2}}\big\|
\big\|\nabla \sigma^\frac{p}{2}\big\|
\\
&\quad \leq \frac{2(p-1)}{p^2}  \int_{\Omega}|\nabla \sigma^\frac{p}{2}|^2 \, \d x
+ \frac{p-1}{2} \|\beta'(\varphi)\|_{L^\infty(\Omega)}^2\|\nabla \varphi\|_{\bm{L}^\infty(\Omega)}^2
\int_\Omega  \sigma^p\,\mathrm{d}x,
\end{align*}
and using the Sobolev embedding theorem $W^{2,q}(\Omega)\hookrightarrow W^{1,\infty}(\Omega)$ for $q>2$, we get
\begin{align}
 \ddt \int_\Omega \sigma^p \, \d x
+\frac{2(p-1)}{p}  \int_{\Omega}|\nabla \sigma^\frac{p}{2}|^2 \, \d x
& \leq Cp(p-1)\int_\Omega  \sigma^p\,\mathrm{d}x,
\label{sig-Linf1}
\end{align}
where the constant $C>0$ is independent of the exponent $p$. This allows us to apply the Moser--Alikakos iteration technique \cite{Ali79}. By the same argument as that for \cite[Lemma 3.2]{Tao11}, we can deduce from differential inequality \eqref{sig-Linf1} that the $L^\infty$-estimate \eqref{sig-Linf} holds. The details are left to the interested reader.
\end{proof}

\subsection{Cahn--Hilliard equation with divergence-free drift and conservative forcing}
\label{sec:reg-CH}

Let $\vv$ and $\sigma$ be two given functions with suitable regularity properties. We consider the following Cahn--Hilliard equation with divergence-free drift and conservative forcing
\begin{equation}
\begin{cases}
\label{CH}
\partial_t \varphi+ \vv \cdot \nabla \varphi = \div(m(\varphi) \nabla \mu), \\
\mu= -\Delta \varphi+ \Psi'(\varphi) +  \beta'(\varphi) \sigma,
\end{cases}
\quad \text{ in } \Omega\times (0,\infty),
\end{equation}
subject to the boundary and initial conditions
\begin{equation}
\label{bcic}
\begin{cases}
\partial_\n \varphi=\partial_\n \mu=0 \quad &\text{ on } \partial \Omega \times (0,\infty),\\
\varphi|_{t=0}=\varphi_0\quad &\text{ in } \Omega.
\end{cases}
\end{equation}

First, we establish the existence and uniqueness of a global weak solution to problem  \eqref{CH}--\eqref{bcic}. This is a generalization of \cite[Theorem 6]{Abels2009} (see also \cite[Remark 2.2]{AGG2023}) and \cite[Theorem 1.2 - (A), (B)]{CGGG}.

\begin{proposition}
\label{well-pos}
Let $\Omega$ be a bounded domain in $\mathbb{R}^2$ with a $C^2$ boundary. Assume that (H1), (H3) and (H4) are satisfied, and
$$
\vv \in  L^2(0,\infty;\H^1_{0,\sigma}(\Omega))\quad \text{and}\quad
\sigma \in L^\infty(0,\infty,L^2(\Omega))\cap L_{\uloc}^2([0,\infty); H^1(\Omega)).
$$
Then, for any initial datum $\varphi_0\in H^1(\Omega)$ with $\| \varphi_0\|_{L^\infty(\Omega)}\leq 1$ and $\left|\overline{\varphi_0}\right|<1$, problem \eqref{CH}--\eqref{bcic} admits a unique global weak solution $(\varphi,\mu)$ in $\Omega\times (0,\infty)$ such that
\begin{enumerate}
\item The weak solution satisfies
\begin{align*}
&\varphi \in  L^\infty(0,T ; H^1(\Omega))\cap L^4(0,T;H^2(\Omega))\cap L^2(0,T;W^{2,q}(\Omega)),\\
&\varphi \in L^{\infty}(\Omega\times (0,\infty))\ \text{ such that }  |\varphi(x,t)|<1  \ \text{a.e. in }\Omega\times (0,\infty),\\
&\partial_t \varphi \in L^2(0,T; V_0^{-1}),\\
& \mu \in L^2(0,T;H^1(\Omega)),
\quad\Psi_0'(\varphi)\in L^2(0,T;L^q(\Omega)),
\end{align*}
 for any $q\in [2,\infty)$ and $T>0$.
 \smallskip
\item The weak solution solves \eqref{CH} in a variational sense as follows:
\begin{align}
\label{e2}
\l \partial_t \varphi,\xi\r_{(H^1(\Omega))',H^1(\Omega)}
+ ( \vv\cdot \nabla \varphi,\xi)
+ (m(\varphi)\nabla \mu, \nabla \xi) =0,
\quad \forall \, \xi \in H^1(\Omega), \ \text{a.e. in } (0,\infty),
\end{align}
with
$$\mu=-\Delta \varphi + \Psi'(\varphi) + \beta'(\varphi) \sigma\quad \text{a.e. in}\ \ \Omega\times (0,\infty).
$$
Besides, $\partial_\n \varphi=0$ holds almost everywhere on $\partial\Omega\times(0,\infty)$ and
$\varphi|_{t=0}=\varphi_0$ holds almost everywhere in $\Omega$.
\smallskip
\item The weak solution satisfies the following energy equality
\begin{align}
& E_{\mathrm{free}}(\varphi(t))
+ \int_0^t\!\!\int_\Omega
m(\varphi)| \nabla \mu|^2 \,\d x \d s
\notag \\
&\quad =  E_{\mathrm{free}}(\varphi_0) - \int_{0}^t \big(\vv \cdot \nabla \varphi, \mu-\beta'(\varphi) \sigma\big) \, \d s
+  \int_{0}^t \big(m(\varphi)\nabla \mu, \nabla (\beta'(\varphi)\sigma)\big) \, \d s,
\label{EI}
\end{align}
for every $t\geq 0$, where
$$
E_{\mathrm{free}}(\varphi)=\frac12\|\nabla \varphi\|^2+\int_\Omega \Psi(\varphi)\,\mathrm{d}x.
$$
\end{enumerate}
\end{proposition}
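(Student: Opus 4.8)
The plan is to construct the solution by a regularization of the potential, and then to establish uniqueness by an energy method adapted to the variable mobility. For the existence part, I would replace the singular potential $\Psi$ by the regularized family $\Psi_\epsilon$ introduced in \eqref{vPsi}, so that the approximate problem features a globally defined $C^2$ nonlinearity together with the non-degenerate mobility $m$ from (H3). For each fixed $\epsilon>0$, a solution $(\varphi_\epsilon,\mu_\epsilon)$ can be produced by a Faedo--Galerkin scheme in the eigenbasis $\{z_i\}$ of the Neumann Laplacian, treating the divergence-free drift $\vv\cdot\nabla\varphi$ and the conservative forcing $\beta'(\varphi)\sigma$ as lower-order terms; this is a minor adaptation of \cite[Theorem 6]{Abels2009} and \cite[Theorem 1.2]{CGGG}. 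The substance of the argument lies in deriving $\epsilon$-uniform bounds and passing to the limit.

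The chain of estimates follows closely Steps 3--5 of Section~\ref{ue}. Testing the first equation of the regularized system with $\mu_\epsilon$ and the second with $\partial_t\varphi_\epsilon$, and exploiting the identity $\mu_\epsilon-\beta'(\varphi_\epsilon)\sigma=-\Delta\varphi_\epsilon+\Psi_\epsilon'(\varphi_\epsilon)$, produces the basic energy balance
\[
\frac{\d}{\d t}\Big(\tfrac12\|\nabla\varphi_\epsilon\|^2+\int_\Omega\Psi_\epsilon(\varphi_\epsilon)\,\d x\Big)+\int_\Omega m(\varphi_\epsilon)|\nabla\mu_\epsilon|^2\,\d x=\big(m(\varphi_\epsilon)\nabla\mu_\epsilon,\nabla(\beta'(\varphi_\epsilon)\sigma)\big)-\big(\vv\cdot\nabla\varphi_\epsilon,\mu_\epsilon-\beta'(\varphi_\epsilon)\sigma\big).
\]
The drift term is handled by the divergence-free condition: the $\Psi_\epsilon'$ contribution vanishes after integration by parts, while the remainder is rewritten as $-(\varphi_\epsilon\vv,\nabla\mu_\epsilon)$ and absorbed using $\vv\in L^2(\bm{L}^2)$ and $m_*\|\nabla\mu_\epsilon\|^2\le\int_\Omega m(\varphi_\epsilon)|\nabla\mu_\epsilon|^2\,\d x$. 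The forcing term is controlled through $\nabla(\beta'(\varphi_\epsilon)\sigma)=\beta''(\varphi_\epsilon)\sigma\nabla\varphi_\epsilon+\beta'(\varphi_\epsilon)\nabla\sigma$, (H4), the two-dimensional Gagliardo--Nirenberg inequality (which gives $\sigma\in L^4(L^4)$ from $\sigma\in L^\infty(L^2)\cap L^2_{\mathrm{uloc}}(H^1)$), and Young's inequality. This yields uniform bounds $\varphi_\epsilon\in L^\infty(0,T;H^1)$ and $\nabla\mu_\epsilon\in L^2(0,T;\bm{L}^2)$. Mass conservation $\overline{\varphi_\epsilon}=\overline{\varphi_0}$ together with \cite[Proposition A.1]{MZ04} then controls $\|\Psi_\epsilon'(\varphi_\epsilon)\|_{L^1}$, hence $\overline{\mu_\epsilon}$ and $\mu_\epsilon\in L^2(0,T;H^1)$; testing the chemical-potential equation with $-\Delta\varphi_\epsilon$ as in \eqref{phi-H2-bb} gives $\varphi_\epsilon\in L^4(0,T;H^2)$, and the elliptic bootstrap for the singular Cahn--Hilliard operator (cf. \eqref{es-vhi-W2q}) upgrades this to $\varphi_\epsilon\in L^2(0,T;W^{2,q})$ with $\Psi_0'(\varphi_\epsilon)\in L^2(0,T;L^q)$ for every $q<\infty$. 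Passing to the limit $\epsilon\to0$ by compactness, the pointwise limits of $\Psi_\epsilon(\varphi_\epsilon)$ and $\Psi_\epsilon'(\varphi_\epsilon)$ combined with the uniform $L^\infty(L^1)$ bound on $\Psi_\epsilon(\varphi_\epsilon)$ force $|\varphi(x,t)|<1$ almost everywhere, and the energy equality \eqref{EI} is recovered once all its terms are shown to be integrable, which follows from $\varphi\in L^4(H^2)$ and $\sigma\in L^\infty(L^2)\cap L^2_{\mathrm{uloc}}(H^1)$.

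For uniqueness, let $(\varphi_1,\mu_1)$ and $(\varphi_2,\mu_2)$ be two solutions with the same initial datum and set $\tilde\varphi=\varphi_1-\varphi_2$, which has zero mean. Because the mobility is $\varphi$-dependent, the clean gradient-flow structure in $V_0^{-1}$ is lost, and I would instead pair the difference equation with the variable-coefficient potential $\mathcal{G}_{\varphi_1}\tilde\varphi$ furnished by Lemma~\ref{lem-mo}, using the norm equivalence \eqref{equiv-norm-h-1} and the symmetry \eqref{AUTO} to extract a coercive time-derivative term. Splitting $m(\varphi_1)\nabla\mu_1-m(\varphi_2)\nabla\mu_2=m(\varphi_1)\nabla(\mu_1-\mu_2)+(m(\varphi_1)-m(\varphi_2))\nabla\mu_2$, the principal part reproduces the convexity contribution $\int_\Omega(\Psi_0'(\varphi_1)-\Psi_0'(\varphi_2))\tilde\varphi\,\d x\ge0$ and the $H^1$-seminorm of $\tilde\varphi$, while the mobility mismatch, the drift $\vv\cdot\nabla\tilde\varphi$, and the forcing difference $(\beta'(\varphi_1)-\beta'(\varphi_2))\sigma$ are estimated via the Lipschitz bounds in (H3)--(H4), two-dimensional interpolation, and the already-established $L^4(H^2)$ regularity of the solutions. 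A Gronwall argument in the weighted $V_0^{-1}$-norm then yields the continuous-dependence estimate and, in particular, uniqueness.

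The hard part is precisely this uniqueness step. The $\varphi$-dependent mobility generates the term $\div\big((m(\varphi_1)-m(\varphi_2))\nabla\mu_2\big)$, whose control requires the spatial $H^2$-regularity of the solutions and forces one to work with the time-dependent operator $\mathcal{G}_{\varphi_1}$, thereby producing an additional contribution from $\partial_t\varphi_1$ that must be absorbed carefully; this is the reason the drift and the $\sigma$-forcing cannot be dismissed as harmless perturbations. A secondary but nontrivial point is verifying the integrability of every term in \eqref{EI}, most delicately $\nabla(\beta'(\varphi)\sigma)\in L^2(0,T;\bm{L}^2)$, which hinges on combining the $L^4(H^2)$ bound on $\varphi$ with the $L^2(H^1)\cap L^\infty(L^2)$ regularity of $\sigma$.
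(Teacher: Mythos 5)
Your proposal follows essentially the same route as the paper for both parts: the existence estimates reproduce the paper's chain (energy balance from testing with $\mu$ and $\partial_t\varphi$, the elliptic bound for $\|\Delta\varphi\|$ as in \eqref{Dephi-L2}, the $L^1$ control of $\Psi_0'$ via mass conservation and \cite[Proposition A.1]{MZ04}, the mean of $\mu$, the $W^{2,q}$ elliptic bootstrap, and the $L^4(H^2)$ upgrade), and the uniqueness argument via $\mathcal{G}_{\varphi_1}\widetilde{\varphi}$, the identities \eqref{equiv-norm-h-1}--\eqref{AUTO}, the convexity of $\Psi_0$, and the \cite{CGGG} treatment of $\l \partial_t\widetilde{\varphi},\mathcal{G}_{\varphi_1}\widetilde{\varphi}\r$ is exactly the paper's. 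You also correctly identify the mobility mismatch and the $\partial_t\varphi_1$ contribution as the genuinely delicate points.

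One step does not close as written. You run the a priori estimates at the level of the $\epsilon$-regularized potential and claim to absorb the drift contribution $-(\varphi_\epsilon\vv,\nabla\mu_\epsilon)$ ``using $\vv\in L^2(\bm{L}^2)$''; pairing $\|\varphi_\epsilon\|_{L^\infty(\Omega)}\|\vv\|\|\nabla\mu_\epsilon\|$ requires the bound $\|\varphi_\epsilon\|_{L^\infty(\Omega)}\leq 1$, which is precisely what the fourth-order regularized problem does \emph{not} provide (this is the point of Remark \ref{rem-Linf}). The repair is minor: either estimate $|(\varphi_\epsilon\vv,\nabla\mu_\epsilon)|\leq\|\varphi_\epsilon\|_{L^4(\Omega)}\|\vv\|_{\bm{L}^4(\Omega)}\|\nabla\mu_\epsilon\|$, using the hypothesis $\vv\in L^2(0,\infty;\H^1_{0,\sigma}(\Omega))\hookrightarrow L^2(\bm{L}^4)$ together with $\|\varphi_\epsilon\|_{L^4}\leq C\|\varphi_\epsilon\|_{H^1}$ inside the Gronwall loop, or adopt the paper's device of first approximating the data $(\vv,\sigma)$ by smooth functions while keeping the singular potential, so that $|\varphi|\leq 1$ is available when the estimate \eqref{es-vvmu} is performed, and only then passing to the limit in the data. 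With either fix the argument is complete.
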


\begin{proof}
The existence of a global weak solution to problem \eqref{CH}--\eqref{bcic} can be proven by regularizing the singular potential and applying a suitable Faedo--Galerkin approximation scheme analogous to that in Section \ref{modif-sys} (or, alternatively, by exploiting the viscous regularization as in \cite{AGG2023}). We refer to, e.g.,  \cite[Lemma 2.4]{Sch07} for further details of the approximating procedure, where the simplified case with $\vv=\mathbf{0}$, $\sigma=0$ was treated. In the following, we only derive necessary formal {\it a priori} estimates.

Integrating \eqref{CH}$_1$ over $\Omega$, using the incompressibility and the no-slip boundary condition of the velocity field $\vv$, we get
\begin{equation}
\label{cons-mass}
\int_{\Omega} \varphi(t) \, \d x= \int_{\Omega} \varphi_{0} \, \d x, \quad \forall \, t \geq 0.
\end{equation}
Multiplying \eqref{CH}$_1$ by
$\mu - \beta'(\varphi) \sigma$, integrating over $\Omega$ and exploiting the definition of $\mu$, we find
\begin{align}
&
\ddt  \int_{\Omega} \Big( \frac12 |\nabla \varphi|^2 + \Psi(\varphi)\Big) \, \d x
+ \int_{\Omega}  m(\varphi) |\nabla \mu|^2\,\d x
\notag\\
&\quad = - \int_{\Omega} (\vv \cdot \nabla \varphi) (\mu-\beta'(\varphi) \sigma) \, \d x
+  \int_{\Omega} m(\varphi) \nabla \mu \cdot \nabla (\beta'(\varphi) \sigma)  \, \d x,
\label{CH-mu}
\end{align}
for almost every $t>0$. To proceed, we assume the {\it a priori} estimate
\begin{align}
\|\varphi\|_{L^\infty(0,\infty;L^\infty(\Omega))}\leq 1.
\label{phi-Lif}
\end{align}
which can be guaranteed by the singularity of $\Psi$, see Remark \ref{rem-Linf} below for further comments.
By the Gagliardo--Nirenberg inequality, we have
\begin{align}
\|\nabla (\beta'(\varphi) \sigma)\|
& \leq \|\beta'(\varphi)\|_{L^\infty(\Omega)}\|\nabla \sigma\|
+ \|\beta''(\varphi)\|_{L^\infty(\Omega)}\|\nabla \varphi\|_{\bm{L}^4(\Omega)}
\|\sigma\|_{L^4(\Omega)}
\notag\\
&\leq C\|\nabla \sigma\|+ C \|\nabla \varphi\|^\frac12
\|\Delta \varphi\|^\frac12
\|\sigma\|^\frac12(\|\nabla \sigma\|+\|\sigma\|)^\frac12.
\label{es-beta-sigH1}
\end{align}
Then, exploiting \eqref{phi-Lif}, \eqref{es-beta-sigH1}, (H4), the assumptions on $(\vv, \sigma)$ and  Young's inequality, we see that
\begin{align}
& \left|
\int_{\Omega} (\vv \cdot \nabla \varphi) (\mu - \beta'(\varphi) \sigma)\, \d x \right|
\notag \\
&\quad \leq \left| \int_{\Omega} (\vv \cdot \nabla  \mu)  \varphi \, \d x\right|
+ \left|\int_\Omega \varphi \vv \cdot \nabla (\beta'(\varphi) \sigma) \, \d x\right|
\notag\\
&\quad \leq \| \varphi\|_{L^\infty(\Omega)}\|\vv\| \| \nabla  \mu\|
+ C\|\varphi\|_{L^\infty(\Omega)}\|\vv\| \|\nabla  \sigma\|
\notag\\
&\qquad
+  C \|\varphi\|_{L^\infty(\Omega)}\|\vv\|\|\nabla \varphi\|^\frac12
\|\Delta \varphi\|^\frac12
\|\sigma\|^\frac12(\|\nabla \sigma\|+\|\sigma\|)^\frac12
\notag\\
&\quad \leq \frac{m_*}{8}\|\nabla \mu\|^2+ C\|\vv\|^2 + C\|\nabla \varphi\|
\|\Delta \varphi\|
\|\sigma\|(\|\nabla \sigma\|+\|\sigma\|) + C\|\nabla \sigma\|^2
\notag\\
&\quad \leq  \|\Delta \varphi\|^2 + \frac{m_*}{8}\|\nabla \mu\|^2+ C(\|\vv\|^2+\|\nabla \sigma\|^2)
 +C  (\|\sigma\|^2 \|\nabla \sigma\|^2 +\|\sigma\|^4) \|\nabla \varphi\|^2.
\label{es-vvmu}
\end{align}
Using \eqref{es-beta-sigH1}, by a similar argument for \eqref{es-vvmu}, we get
\begin{align}
& \left|\int_{\Omega} m(\varphi) \nabla \mu \cdot \nabla (\beta'(\varphi) \sigma)  \, \d x\right|
 \leq \|m(\varphi)\|_{L^\infty(\Omega)}\|\nabla \mu\| \|\nabla (\beta'(\varphi) \sigma)\|
\notag\\
&\quad \leq \|\Delta\varphi\|^2 + \frac{m_*}{8}\|\nabla \mu\|^2 +C\|\nabla \sigma\|^2
+C  (\|\sigma\|^2 \|\nabla \sigma\|^2 + \|\sigma\|^4) \|\nabla \varphi\|^2.
\label{es-mmu}
\end{align}
In order to estimate $\|\Delta\varphi\|$, let us consider the following elliptic problem
\begin{equation}
\begin{cases}
-\Delta \varphi + \Psi_0(\varphi)= \mu+\theta_0\varphi-\beta'(\varphi)\sigma,&\quad \text{in}\ \Omega\times(0,T),\\
\partial_{\bm{n}}\varphi=0,&\quad \text{on}\ \partial\Omega\times(0,T).
\end{cases}
\label{vphi-ellip}
\end{equation}
Testing \eqref{vphi-ellip}$_1$ by $-\Delta \varphi$ yields that
\begin{align}
\|\Delta \varphi\|^2
+\int_{\Omega}\Psi_0''(\varphi)|\nabla \varphi|^2 \, \d x
& = \int_{\Omega} \nabla \mu \cdot \nabla \varphi \, \d x
+ \theta_0 \int_{\Omega}  |\nabla \varphi|^2 \, \d x
+  \int_\Omega \nabla (\beta'(\varphi)\sigma) \cdot \nabla \varphi \, \d x
\notag \\
&\leq \|\nabla \mu\|\|\nabla \varphi\|+ |\theta_0| \|\nabla \varphi\|^2
+ \|\beta'(\varphi)\|_{L^\infty(\Omega)}\|\nabla \sigma\|\|\nabla \varphi\|
\notag\\
&\quad + \|\beta''(\varphi)\|_{L^\infty(\Omega)}\|\nabla \varphi\|_{\bm{L}^4(\Omega)}\|\sigma\|_{L^4(\Omega)}\|\nabla \varphi\|
\notag\\
&\leq \frac{m_*}{16} \|\nabla \mu\|^2+ C\|\nabla \varphi\|^2+ C\|\nabla \sigma\|^2
\notag\\
&\quad + C\|\nabla \varphi\|^\frac32 \|\Delta \varphi\|^\frac12
\|\sigma\|^\frac12 (\|\nabla \sigma\|+\|\sigma\|)^\frac12
\notag\\
&\leq \frac12\|\Delta\varphi\|^2+ \frac{m_*}{16}\|\nabla \mu\|^2+ C\|\nabla \sigma\|^2\notag\\
&\quad + C(1+\|\sigma\|^4 +  \|\sigma\|^2\|\nabla \sigma\|^2)\|\nabla \varphi\|^2.
\label{Dephi-L2}
\end{align}
Combining \eqref{es-vvmu}, \eqref{es-mmu}, \eqref{Dephi-L2} and the assumption $\sigma \in L^\infty(0,\infty,L^2(\Omega))$, we deduce from \eqref{CH-mu} and (H1) that
\begin{align}
&\ddt   \int_{\Omega} \Big( \frac12 |\nabla \varphi|^2 + \Psi(\varphi)\Big) \, \d x
+ \frac12 \int_{\Omega}  m(\varphi) |\nabla \mu|^2 \, \d x
\notag\\
&\quad \leq  C(1+\|\nabla \sigma\|^2)  \int_{\Omega} \Big( \frac12 |\nabla \varphi|^2 + \Psi(\varphi)\Big) \, \d x  + C(1+ \| \vv\|^2 + \| \nabla \sigma\|^2).
\label{EE-CH}
\end{align}
An application of Gronwall's lemma yields
\begin{equation}
\label{EE-2}
\begin{split}
&\sup_{t \in [0,T]} \left(\frac12 \|\nabla \varphi(t)\|^2+\int_\Omega\Psi(\varphi(t))\,\mathrm{d}x\right)
+ \frac{m_*}{2}\int_0^T \|\nabla \mu(s)\|^2 \, \d s
 \leq C,
\end{split}
\end{equation}
for any $T>0$, where the positive constant $C$ depends on $E_{\text{free}}(\varphi_0)$, $\| \vv\|_{L^2(0,\infty;\bm{H}^1_{0,\sigma(\Omega)})}$,
$\| \nabla \sigma \|_{L^2(0,T;\bm{L}^2(\Omega))}$, $\|  \sigma \|_{L^\infty(0,\infty;L^2(\Omega))}$, $\theta$, $\theta_0$, $\Omega$ and $T$. Combining \eqref{cons-mass} and \eqref{EE-2}, we obtain
\begin{equation}
\label{EE-3}
\| \varphi \|_{L^\infty(0,T;H^1(\Omega))}\leq C_0, \quad
\| \nabla \mu\|_{L^2(0,T;\bm{L}^2(\Omega))}\leq C_0, \quad \forall \, T>0,
\end{equation}
where the positive constant $C_0$ depends on $E_{\text{free}}(\varphi_0)$, $|\overline{\varphi_0}|$, $\theta$, $\theta_0$, $\Omega $, $\| \vv\|_{L^2(0,\infty;\bm{H}^1_{0,\sigma(\Omega)})}$,
$\| \nabla \sigma \|_{L^2(0,T;\bm{L}^2(\Omega))}$, $\|  \sigma \|_{L^\infty(0,\infty;L^2(\Omega))}$, and $T$.

Recall the well-known inequality (see, for instance, \cite{MZ04})
\begin{equation}
\label{F'-L1}
\int_{\Omega} \left|\Psi_0'(\varphi) \right| \, \d x \leq C_1 \int_{\Omega}\Psi_0'(\varphi) \left( \varphi-\overline{\varphi_{0}} \right) \, \d x+ C_2,
\end{equation}
where $C_1>0$ depends only on  $\overline{\varphi_{0}}\in (-1,1)$ and $C_2>0$ depends only on $\theta$ and $\overline{\varphi_{0}}$. Multiplying \eqref{CH}$_2$ by $\varphi - \overline{\varphi_{0}}$ (cf. \eqref{cons-mass}), we find
\begin{align*}
& \int_{\Omega} |\nabla \varphi|^2 \, \d x
+
\int_{\Omega}\Psi_0'(\varphi) \left( \varphi -\overline{\varphi_{0}} \right) \, \d x
\\
&\quad = \int_{\Omega} (\mu-\overline{\mu}) \left( \varphi -\overline{\varphi_{0}} \right) \, \d x + \theta_0 \int_{\Omega} \varphi \left( \varphi -\overline{\varphi_{0}} \right) \, \d x
-  \int_{\Omega} \beta'(\varphi)\sigma (\varphi-\overline{\varphi_0}) \, \d x.
\end{align*}
Then by the Poincar\'{e}--Wirtinger inequality, \eqref{F'-L1} and (H3), we reach
\begin{equation}
\label{F'-L1e}
 \int_{\Omega} \left|\Psi_0'(\varphi) \right| \, \d x  \leq
C \|\nabla \varphi\|\left(\| \nabla \mu\|+\|\varphi\| + \| \sigma\|\right) +C,
\end{equation}
where $C$ depends only on $\theta_0$, $C_1$, $C_2$ and $\Omega$. Since $\overline{\mu}= \overline{\Psi_0'(\varphi)}- \theta_0 \overline{\varphi}+  \overline{\beta'(\varphi)\sigma}$, we infer from \eqref{cons-mass} and \eqref{F'-L1e} that
$$
|\overline{\mu}|\leq C \|\nabla \varphi\|\left(\| \nabla \mu\|+\|\varphi\| + \| \sigma\|\right)+C.
$$
Applying the Poincar\'{e}--Wirtinger inequality again, we have
\begin{equation}
\label{mu-H1e}
\| \mu\|_{H^1(\Omega)}\leq
 C \|\nabla \varphi\|\left(\| \nabla \mu\|+\|\varphi\| + \| \sigma\|\right)+C\|\nabla \mu\|+C,
\end{equation}
which together with \eqref{EE-3} yields that  $\|\mu\|_{L^2(0,T;H^1(\Omega))}$ is bounded for any fixed $T>0$.

Noticing that $\|\beta'(\varphi)\sigma\|_{L^q(\Omega)}\leq \|\beta'(\varphi)\|_{L^\infty(\Omega)}\|\sigma\|_{L^q(\Omega)}$ for any $q\geq 2$. Applying \cite[Lemma A.1]{CG2020} to problem \eqref{vphi-ellip}, we infer from \eqref{EE-3}, \eqref{mu-H1e} and the elliptic regularity theory that
\begin{equation}
\label{phi2-p}
\| \varphi\|_{W^{2,q}(\Omega)}+ \|\Psi_0'(\varphi)\|_{L^q(\Omega)}\leq C\left( 1+ \| \nabla\mu\|  +  \|  \sigma\|_{L^q(\Omega)} \right),
\end{equation}
for any $q\geq 2$.
The positive constant $C$ in \eqref{phi2-p} depends on $q$, $\overline{\varphi_0}$, $\theta$, $\theta_0$, $\Omega$, $C_0$, $C_1, C_2$.
Thus, we can deduce from \eqref{mu-H1e}, \eqref{phi2-p} that
\begin{align}
\label{phi-w2p}
\|\varphi\|_{L^2(0,T;W^{2,q}(\Omega))}\leq C_3,\quad \|\Psi_0'(\varphi)\|_{L^2(0,T;L^q(\Omega))}\leq C_3,\quad \forall\, T>0.
\end{align}
Besides, exploiting the proof of \eqref{Dephi-L2} and using the estimate \eqref{EE-3}, we find
\begin{align}
\|\Delta \varphi\|^2
&\leq C\|\nabla \mu\|
  + C(1+\|\nabla \sigma\|)
  + C\|\Delta \varphi\|^\frac12(1+\|\nabla \sigma\|)^\frac12
\notag\\
&\leq  \frac12 \|\Delta\varphi\|^2+ C \left( 1+ \| \nabla \mu\|+ \| \nabla \sigma\|\right),
\label{Dephi-L2b}
\end{align}
which implies
\begin{align}
\label{phi-l4H2}
\|\varphi\|_{L^4(0,T;H^{2}(\Omega))}\leq C_4.
\end{align}
In addition, a comparison argument in \eqref{CH}$_1$ yields
\begin{equation}
\label{phit-est}
\| \partial_t \varphi\|_{(H^1(\Omega))'}
\leq \|\varphi\|_{L^4(\Omega)} \| \vv \|_{\bm{L}^4(\Omega)}
+ \|m(\varphi)\|_{L^\infty(\Omega)}\|\nabla \mu \|,
\end{equation}
so that
\begin{align}
\label{phit-2}
\| \partial_t \varphi\|_{L^2(0,T;(H^1(\Omega))')}\leq C_5.
\end{align}

Next, we prove the uniqueness of the weak solution. Let $(\varphi_1,\mu_1)$ and $(\varphi_2,\mu_2)$ be two weak solutions to problem \eqref{CH}--\eqref{bcic}, corresponding to the initial data $\varphi_{0,1}$ and $\varphi_{0,2}$ satisfying $\overline{\varphi_{0,1}}=\overline{\varphi_{0,2}}$.
We define the differences $\widetilde{\varphi}=\varphi_1-\varphi_2$, $\widetilde{\mu}=\mu_1-\mu_2$, which solve
\begin{align}
&\l \partial_t \widetilde{\varphi},\xi\r_{(H^1(\Omega))',H^1(\Omega)}
+ (m(\varphi_1)\nabla \widetilde{\mu}, \nabla \xi)
\notag\\
&\quad = - ( \vv\cdot \nabla \widetilde{\varphi},\xi)
-((m(\varphi_1)-m(\varphi_2))\nabla \mu_2, \nabla \xi),
\quad  \forall \, \xi \in H^1(\Omega), \ \text{a.e. in } (0,\infty),
\label{bphi-d}
\end{align}
where
\begin{align}
&\widetilde{\mu}=-\Delta \widetilde{\varphi} + \Psi'(\varphi_1)-\Psi'(\varphi_2) + (\beta'(\varphi_1)-\beta'(\varphi_2))\sigma.
\label{bmu-d}
\end{align}
Taking  $\xi= \mathcal{G}_{\varphi_1}\widetilde{\varphi}$ in \eqref{bphi-d}, we find
\begin{align}
&\l \partial_t \widetilde{\varphi},\mathcal{G}_{\varphi_1}\widetilde{\varphi} \r_{(H^1(\Omega))',H^1(\Omega)}
+ \|\nabla \widetilde{\varphi}\|^2 + (\Psi_0'(\varphi_1)-\Psi_0'(\varphi_2),\widetilde{\varphi})
\notag \\
&\quad = \theta_0 \|\widetilde{\varphi}\|^2
+ (\widetilde{\varphi}\vv,\nabla \mathcal{G}_{\varphi_1}\widetilde{\varphi}) -((m(\varphi_1)-m(\varphi_2))\nabla \mu_2, \nabla \mathcal{G}_{\varphi_1}\widetilde{\varphi})
\notag\\
&\qquad -((\beta'(\varphi_1)-\beta'(\varphi_2))\sigma,\widetilde{\varphi}).
\label{varp-dif}
\end{align}
The four terms on the right-hand side of \eqref{varp-dif} can be estimated as follows
\begin{align}
\theta_0 \|\widetilde{\varphi}\|^2 \leq \frac{1}{12}\|\nabla \widetilde{\varphi}\|^2+ C\|\nabla \mathcal{G}_{\varphi_1}\widetilde{\varphi}\|^2,
\notag
\end{align}
\begin{align}
\left|(\widetilde{\varphi}\vv,\nabla \mathcal{G}_{\varphi_1}\widetilde{\varphi})\right|
&\leq \|\widetilde{\varphi}\|_{L^4(\Omega)}\|\vv\|_{\bm{L}^4(\Omega)}\|\nabla \mathcal{G}_{\varphi_1}\widetilde{\varphi}\|
\notag \\
&\leq \frac{1}{12}\|\nabla \widetilde{\varphi}\|^2 + C\|\nabla \vv\|^2 \|\nabla \mathcal{G}_{\varphi_1}\widetilde{\varphi}\|^2,
\notag
\end{align}
\begin{align}
&\left|((m(\varphi_1)-m(\varphi_2))\nabla \mu_2, \nabla \mathcal{G}_{\varphi_1}\widetilde{\varphi})\right|
 \notag \\
&\quad \leq \frac{1}{12}\|\nabla \widetilde{\varphi}\|^2
+C\left(\|\nabla \mu_2\|^2+ \|\varphi_1\|_{H^2(\Omega)}^4 \right) \|\nabla \mathcal{G}_{\varphi_1}\widetilde{\varphi}\|^2,
\notag
\end{align}
\begin{align}
 \left|((\beta'(\varphi_1)-\beta'(\varphi_2))\sigma,\widetilde{\varphi})\right|
 & \leq C\|\sigma\|_{L^4(\Omega)}\|\widetilde{\varphi}\| \|\widetilde{\varphi}\|_{L^4(\Omega)}
 \notag\\
 &\leq C\|\sigma\|_{L^4(\Omega)}
 \|\nabla \widetilde{\varphi}\|^\frac54 \|\nabla \mathcal{G}_{\varphi_1}\widetilde{\varphi}\|^\frac34
 \notag\\
 &\leq \frac{1}{12}\|\nabla \widetilde{\varphi}\|^2 + C\left(1+\|\sigma\|_{H^1(\Omega)}^2 \right) \|\nabla \mathcal{G}_{\varphi_1}\widetilde{\varphi}\|^2,
 \notag
\end{align}
where the third estimate follows from \cite[(3.51)]{CGGG}. We now handle the term $\l \partial_t \widetilde{\varphi},\mathcal{G}_{\varphi_1}\widetilde{\varphi} \r_{(H^1(\Omega))',H^1(\Omega)}$ on the left-hand side. Recalling \cite[(3.16)]{CGGG}, we have
\begin{align}
& \l \partial_t \widetilde{\varphi},\mathcal{G}_{\varphi_1}\widetilde{\varphi} \r_{(H^1(\Omega))',H^1(\Omega)}
= \int_\Omega (\mathcal{G}_{\varphi_1} \partial_t \widetilde{\varphi})  \,  \widetilde{\varphi}  \, \d x
\notag \\
&\qquad = \ddt \frac{1}{2} (\mathcal{G}_{\varphi_1} \widetilde{\varphi}, \widetilde{\varphi})
+\frac12 \int_\Omega \nabla \mathcal{N} \partial_t \varphi_1 \cdot m''(\varphi_1) \nabla \varphi_1 \left| \nabla \mathcal{G}_{\varphi_1} \widetilde{\varphi} \right|^2 \, \d x
\notag \\
&\qquad\quad  +  \int_\Omega \nabla \mathcal{N} \partial_t \varphi_1 \cdot  m'(\varphi_1) \left( D^2  \mathcal{G}_{\varphi_1} \widetilde{\varphi} \nabla \mathcal{G}_{\varphi_1} \widetilde{\varphi} \right) \, \d x,
\label{IP-time}
\end{align}
almost everywhere in $(0,\infty)$. Here, $D^2 f$ denotes the Hessian of a given scalar function $f$. We note that
$$
(\mathcal{G}_{\varphi_1} \widetilde{\varphi}, \widetilde{\varphi})^\frac12
= \big\|\sqrt{m(\varphi_1)}\nabla \mathcal{G}_{\varphi_1} \widetilde{\varphi}\big\|
$$
is a norm in $ V^{-1}_{(0)}$, which is equivalent to
$ \|\nabla \mathcal{N} \widetilde{\varphi}\|$.
The second and third terms on the right-hand side of \eqref{IP-time} have been estimated in \cite[(3.47), (3.50)]{CGGG} such that
\begin{align}
& \left|\frac12 \int_\Omega \nabla \mathcal{N} \partial_t \varphi_1 \cdot m''(\varphi_1) \nabla \varphi_1 \left| \nabla \mathcal{G}_{\varphi_1} \widetilde{\varphi} \right|^2 \, \d x\right|
\notag\\
&\quad\leq \frac{1}{12} \| \nabla \widetilde{\varphi}\|^2
 + C \left( \|\partial_t \varphi_1\|_{V^{-1}_{(0)}}^2+
 \|\varphi_1\|_{H^2(\Omega)}^4 \right)\|\nabla \mathcal{G}_{\varphi_1} \widetilde{\varphi}\|^2,
 \notag
\end{align}
and
\begin{align}
&\left|\int_\Omega \nabla \mathcal{N} \partial_t \varphi_1 \cdot  m'(\varphi_1) \left( D^2  \mathcal{G}_{\varphi_1} \widetilde{\varphi} \nabla \mathcal{G}_{\varphi_1} \widetilde{\varphi} \right) \, \d x\right|
\notag \\
&\quad \leq \frac{1}{12} \|\nabla \widetilde{\varphi}\|^2
 + C \left( \|\partial_t \varphi_1\|_{ V^{-1}_{(0)}}^2 +
 \| \varphi_1\|_{H^2(\Omega)}^4\right)
\|\nabla \mathcal{G}_{\varphi_1} \widetilde{\varphi}\|^2,
\notag
\end{align}
with some minor adjustments in the coefficients here due to Young's inequality.  Recalling that $\| \nabla \mathcal{G}_{\varphi_1} \widetilde{\varphi}\|$ and $\|\sqrt{m(\varphi_1)} \nabla \mathcal{G}_{\varphi_1} \widetilde{\varphi}\|$ are equivalent norms, combining the above estimates and using the convexity of $\Psi_0$, we can deduce from \eqref{varp-dif} that
\begin{equation}
\ddt  \big\|\sqrt{m(\varphi_1)} \nabla \mathcal{G}_{\varphi_1} \widetilde{\varphi}\big\|^2
+  \|\nabla \widetilde{\varphi}\|^2
\leq h_1(t) \big\|\sqrt{m(\varphi_1)} \nabla \mathcal{G}_{\varphi_1} \widetilde{\varphi}\big\|^2,
\notag
\end{equation}
where
$$
h_1(\cdot)= C \Big(1+
\|\nabla \mu_2\|^2
+ \|\partial_t \varphi_1\|_{V^{-1}_{(0)}}^2 +
 \|\varphi_1\|_{H^2(\Omega)}^4 + \|\nabla \vv\|^2+ \|\sigma\|^2_{H^1(\Omega)}\Big) \in L^1(0,T), \quad \forall \, T>0.
$$
An application of Gronwall's lemma entails that
\begin{equation}
\big\|\sqrt{m(\varphi_1)} \nabla  \mathcal{G}_{\varphi_1} ( \varphi_1(t)-\varphi_2(t))\big\|^2
\leq \big\|\sqrt{m(\varphi_1)} \nabla  \mathcal{G}_{\varphi_1}  ( \varphi_{0,1}-\varphi_{0,2})\big\|^2
\mathrm{e}^{\int_0^t h(s)\, \d s}, \quad \forall \, t \geq 0.
\notag
\end{equation}
Hence, if $\varphi_{0,1}=\varphi_{0,2}$, then we get the uniqueness of a weak solution.

Finally, we note that the regularity of $(\varphi, \mu)$ allows us to test \eqref{e2} with $\xi = \mu-\beta'(\varphi)\sigma$. Integrating the resultant on $[0,t]$ gives the energy identity \eqref{EI}.

The proof of Proposition \ref{well-pos} is complete.
\end{proof}
\begin{remark}\rm
\label{rem-Linf}
The $L^\infty$-estimate \eqref{phi-Lif} is not available at the level of Galerkin approximation (along with a regularization of the singular potential $\Psi$). However, we can first prove the existence of a (unique) weak solution $(\varphi^n, \mu^n)$ under sufficiently smooth given data $(\vv^n,\sigma^n)$ that approximate $(\vv, \sigma)$ (cf. the estimate \eqref{es-vvmu} where \eqref{phi-Lif} was essentially used). The approximate solution $\varphi^n$ satisfies \eqref{phi-Lif} due to the singular nature of $\Psi$. Then the associated estimates independent of $n$ can be recovered from the energy identity \eqref{EI} for $\varphi^n$. Afterwards, we can pass to the limit as $n\to \infty$. By the compactness argument, we can obtain the existence of a global weak solution to the original problem.
\end{remark}

The following result presents the existence and uniqueness of a global strong solution under additional assumptions on $(\vv,\sigma)$.

\begin{proposition}
\label{CH-strong}
Let $\Omega$ be a bounded domain in $\mathbb{R}^2$ with a $C^3$ boundary. Assume that (H1), (H3) and (H4) are satisfied.
Let
\begin{align*}
&\vv \in L^\infty(0, \infty;\L^2_\sigma(\Omega))
\cap L^2(0,\infty;\H^1_{0,\sigma}(\Omega)),\\
&\sigma \in  L^\infty(0,T; L^q(\Omega))\cap L^2(0,T; H^1(\Omega))\cap H^1(0,T;L^2(\Omega))
\end{align*}
for some $q>2$ and any $T>0$, with the additional property $\sigma(\cdot,0)\in H^1(\Omega)$.
Then, for any initial datum $\varphi_0 \in H^2_N(\Omega)$ with
$\| \varphi_0\|_{L^\infty(\Omega)}\leq 1$, $\left|\overline{\varphi_0}\right|<1$ and $\mu_0:=-\Delta \varphi_0+\Psi'(\varphi_0) \in H^1(\Omega)$,  problem  \eqref{CH}--\eqref{bcic} admits a unique global strong solution in $\Omega\times [0,\infty)$, which satisfies
\begin{equation}
\label{REG}
\begin{split}
&\varphi \in L^\infty(0,T;W^{2,q}(\Omega)),
\\
&\partial_t \varphi \in L^2(0,T;H^1(\Omega)) \cap  L^\infty(0,T; (H^1(\Omega))'),
\\
&\varphi \in L^{\infty}(\Omega\times (0,\infty)) \ \text{ with } \ |\varphi(x,t)|<1
\ \text{a.e. in } \Omega\times (0,\infty),
\\
&\mu \in L^{\infty}(0,T; H^1(\Omega))\cap L^4(0,T;H^2(\Omega)),
\quad\Psi_0'(\varphi) \in L^\infty(0,T; L^q(\Omega)),
\end{split}
\end{equation}
for any $T>0$.
The strong solution satisfies \eqref{CH} almost everywhere in $\Omega \times (0,\infty)$, the boundary conditions $\partial_\n \varphi=\partial_\n \mu=0$ almost everywhere on
$\partial\Omega\times(0,\infty)$, and the initial condition $\varphi|_{t=0}=\varphi_0$ almost everywhere in $\Omega$.
%
%
In addition, if $\sigma \in L^\infty(0,T; H^1(\Omega))$, then we have $\varphi\in L^\infty(0,T;H^3(\Omega))$ and there exists a constant $\widetilde{\delta}_1=\widetilde{\delta}_1(T)\in (0,1)$ such that
\be
\|\varphi(t)\|_{L^{\infty}(\Omega)} \leq 1-\widetilde{\delta}_1,\quad \forall\, t \in[0,T],\label{sep1-ch}
\ee
where $\widetilde{\delta}_1$ depends on  $\|\varphi_0\|_{H^2(\Omega)}$, $\|\nabla (-\Delta \varphi_0+\Psi'(\varphi_0))\|$, $\|\nabla \vv\|_{L^2(0,T;\L^2(\Omega))}$, $\|\sigma\|_{L^\infty(0,T;H^1(\Omega))}$, $\overline{\varphi_0}$, coefficients of the system, $\Omega$ and $T$.
\end{proposition}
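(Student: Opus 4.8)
Since the existence and uniqueness of a global weak solution are already guaranteed by Proposition \ref{well-pos}, the task reduces to deriving the higher-order \emph{a priori} estimates that upgrade $(\varphi,\mu)$ to a strong solution; these can be justified rigorously on the Faedo--Galerkin/regularized level (as in Remark \ref{rem-Linf}) and then passed to the limit. The first observation is that the coupling term behaves as a controllable lower-order source: writing $g:=\beta'(\varphi)\sigma$, the bound \eqref{es-beta-sigH1} together with (H4) and the hypotheses $\sigma\in L^2(0,T;H^1(\Omega))\cap L^\infty(0,T;L^q(\Omega))$ shows $g\in L^2(0,T;H^1(\Omega))$, while $\partial_t g=\beta''(\varphi)\sigma\,\partial_t\varphi+\beta'(\varphi)\partial_t\sigma$ is controlled using $\partial_t\sigma\in L^2(0,T;L^2(\Omega))$ (from $\sigma\in H^1(0,T;L^2(\Omega))$) together with the bound on $\partial_t\varphi$ obtained below. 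In this way the problem falls within the framework of \cite{CGGG} (generalizing \cite{Abels2009}), with $g$ threaded through as a perturbation.

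The central estimate is the $L^\infty(0,T;H^1(\Omega))$ bound for $\mu$ together with $\partial_t\varphi\in L^2(0,T;H^1(\Omega))$. Following the scheme of \cite{CGGG} for the convective Cahn--Hilliard equation with non-constant mobility, I would test the approximate equation with $\partial_t\mu$ (equivalently, differentiate \eqref{e2} and pair it with a test function built from the solution operator $\mathcal{G}_{\varphi}$ so as to accommodate the variable mobility $m(\varphi)$). Since $\partial_t\mu=-\Delta\partial_t\varphi+\Psi''(\varphi)\partial_t\varphi+\partial_t g$, this produces a differential inequality of the form
\begin{equation*}
\frac{\d}{\d t}\big\|\sqrt{m(\varphi)}\,\nabla\mu\big\|^2+\|\nabla\partial_t\varphi\|^2+\int_\Omega \Psi_0''(\varphi)\,|\partial_t\varphi|^2\,\d x\leq \mathcal{R}(t),
\end{equation*}
in which the strict convexity $\Psi_0''\geq\theta$ from (H1) supplies the coercive control of the singular term, and $\mathcal{R}$ collects the contributions of the convection $\vv\cdot\nabla\varphi$, the mobility variation $m'(\varphi)\partial_t\varphi\,\nabla\mu$, and the forcing $\partial_t g$. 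These are estimated by the H\"older, Young and Gagliardo--Nirenberg inequalities using $\vv\in L^\infty(0,\infty;\L^2_\sigma(\Omega))\cap L^2(0,\infty;\H^1_{0,\sigma}(\Omega))$, the available bound $\varphi\in L^4(0,T;H^2(\Omega))$, and the control of $\partial_t\sigma$. A Gronwall argument then yields $\mu\in L^\infty(0,T;H^1(\Omega))$ and $\partial_t\varphi\in L^2(0,T;H^1(\Omega))$. With these in hand, the elliptic estimate \eqref{phi2-p} for problem \eqref{vphi-ellip} promotes $\varphi$ to $L^\infty(0,T;W^{2,q}(\Omega))$ with $\Psi_0'(\varphi)\in L^\infty(0,T;L^q(\Omega))$; a comparison in \eqref{CH}$_1$ gives $\partial_t\varphi\in L^\infty(0,T;(H^1(\Omega))')$ (using $\nabla\varphi\in L^\infty(0,T;\bm{L}^\infty(\Omega))$ from $W^{2,q}\hookrightarrow W^{1,\infty}$); and elliptic regularity for the $\mu$-identity yields $\mu\in L^4(0,T;H^2(\Omega))$.

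For the last assertion, the additional hypothesis $\sigma\in L^\infty(0,T;H^1(\Omega))$ upgrades the forcing to $g\in L^\infty(0,T;H^1(\Omega))\hookrightarrow L^\infty(0,T;L^p(\Omega))$ for every $p<\infty$, so the right-hand side $h:=\mu+\theta_0\varphi-g$ of $-\Delta\varphi+\Psi_0'(\varphi)=h$ lies in $L^\infty(0,T;L^p(\Omega))$ for all $p<\infty$, whence $\Psi_0'(\varphi)\in L^\infty(0,T;L^p(\Omega))$ for all such $p$. The finite-time strict separation \eqref{sep1-ch} then follows from the two-dimensional separation technique developed in \cite{GGG2023,Gal2025}: a De Giorgi--type iteration on the level sets of $\varphi$ near $\pm1$, driven by the blow-up of $\Psi_0'$ at the endpoints and the monotonicity of $\Psi_0''$ near $\pm1$ from (H1), together with the logarithmic (Brezis--Gallou\"et/Trudinger--Moser) two-dimensional embedding. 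It is exactly here that the planar setting is essential and that no exponential growth condition such as (H5) is required, at the cost of a constant $\widetilde{\delta}_1(T)$ that deteriorates as $T\to\infty$. Once separation holds, $\Psi''(\varphi)$ is bounded, so $\Psi''(\varphi)\nabla\varphi\in L^\infty(0,T;\bm{L}^2(\Omega))$; differentiating the $\mu$-identity and using $\nabla\mu\in L^\infty(0,T;\bm{L}^2(\Omega))$ and $g\in L^\infty(0,T;H^1(\Omega))$ then gives $\nabla\Delta\varphi\in L^\infty(0,T;\bm{L}^2(\Omega))$, i.e. $\varphi\in L^\infty(0,T;H^3(\Omega))$.

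The hard part will be the central differential inequality: reconciling the singular potential with both the convection $\vv\cdot\nabla\varphi$ and the non-constant mobility $m(\varphi)$. Since neither $\vv$ nor its time derivative enjoys an $L^\infty$ bound, the dangerous contribution $\int_\Omega \Psi_0''(\varphi)\,(\vv\cdot\nabla\varphi)\,\partial_t\varphi\,\d x$ cannot be handled by bounding $\Psi_0''$ pointwise; instead it must be absorbed, via Cauchy--Schwarz, against the coercive term $\int_\Omega \Psi_0''(\varphi)|\partial_t\varphi|^2\,\d x$ on the left-hand side, leaving the weighted quantity $\int_\Omega \Psi_0''(\varphi)|\vv\cdot\nabla\varphi|^2\,\d x$ to be controlled through interpolation. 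This is precisely the technical core inherited from \cite{CGGG}, and the new forcing $\beta'(\varphi)\sigma$ must be carried through it without spoiling the coercive structure.
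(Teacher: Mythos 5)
Your overall skeleton — test the $\varphi$-equation with $\partial_t\mu$, close a differential inequality for $\|\sqrt{m(\varphi)}\nabla\mu\|^2$ by Gronwall, then bootstrap via the elliptic estimate \eqref{phi2-p} to $W^{2,q}$, separation, and $H^3$ — is the same as the paper's. But the step you single out as ``the technical core'' is handled in a way that does not work. You propose to split $\int_\Omega \Psi_0''(\varphi)(\vv\cdot\nabla\varphi)\partial_t\varphi\,\d x$ by Cauchy--Schwarz, absorbing half into $\int_\Omega\Psi_0''(\varphi)|\partial_t\varphi|^2\,\d x$ and controlling the remainder $\int_\Omega\Psi_0''(\varphi)|\vv\cdot\nabla\varphi|^2\,\d x$ ``through interpolation.'' At this stage of the argument there is no bound on $\Psi_0''(\varphi)$ in any Lebesgue space (separation has not yet been proved, and (H5) is not being invoked here), and $\vv$ is not in $L^\infty$; the quantity $\int_\Omega\Psi_0''(\varphi)|\vv|^2|\nabla\varphi|^2\,\d x$ is therefore not controllable by the available norms, and no such term appears in \cite{CGGG} (which has no convection). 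The paper avoids it entirely: since $\Psi_0''(\varphi)\nabla\varphi=\nabla\Psi_0'(\varphi)$ and $\div\,\vv=0$, one integrates by parts to obtain $-\int_\Omega(\vv\cdot\nabla\partial_t\varphi)\Psi_0'(\varphi)\,\d x$, which is bounded by $\|\vv\|_{\bm{L}^{2q/(q-2)}(\Omega)}\|\nabla\partial_t\varphi\|\,\|\Psi_0'(\varphi)\|_{L^q(\Omega)}$ together with $\|\Psi_0'(\varphi)\|_{L^q(\Omega)}\leq C(1+\|\nabla\mu\|)$ from \eqref{phi2p}; the term $\int_\Omega\Psi_0''(\varphi)|\partial_t\varphi|^2\,\d x$ is never used for absorption — it is simply nonnegative and discarded. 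You should replace your absorption argument by this first-derivative reduction.

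Two further points. First, you list the mobility contribution $\tfrac12\int_\Omega m'(\varphi)\partial_t\varphi|\nabla\mu|^2\,\d x$ among the terms in $\mathcal{R}(t)$ without explaining how it closes; in the paper this is the place where the $H^2$-estimate of $\mu$ is needed, obtained not from the identity $\mu=-\Delta\varphi+\Psi'(\varphi)+\beta'(\varphi)\sigma$ but from the elliptic problem $\mu-\overline{\mu}=-\mathcal{G}_\varphi(\partial_t\varphi+\vv\cdot\nabla\varphi)$ via \eqref{es-Ga-H2} (see \eqref{CH-b}--\eqref{mu-H2a}); this is also the source of $\mu\in L^4(0,T;H^2(\Omega))$. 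Second, for the separation property \eqref{sep1-ch} the paper invokes the growth condition (H5) and the argument of \cite[Section 3]{GGG2023} (note that Theorem \ref{Reg-SOL}, where this proposition is applied, assumes (H5)); your claim that a De Giorgi iteration yields separation with no growth condition is a genuinely different and stronger assertion in the spirit of \cite{Gal2025}, but as written it is asserted rather than proved, and it is not the route taken here.
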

\begin{proof}
We extend the arguments devised in \cite[Theorem 2.4]{AGG2023} (with a constant mobility and a divergence-free drift, without the conservative forcing term) and \cite[Section 4]{CGGG} (with a variable mobility, without the divergence-free drift and the conservative forcing term).

Multiplying \eqref{CH}$_1$ by $\partial_t \mu$ and integrating over $\Omega$, we have
$$
\frac12 \ddt \int_\Omega m(\varphi) |\nabla \mu |^2\,\mathrm{d}x
+ \int_{\Omega} \partial_t \varphi \, \partial_t \mu \,\d x + \int_{\Omega} (\vv \cdot \nabla \varphi) \, \partial_t \mu \, \d x
=\frac12\int_\Omega m'(\varphi)\partial_t\varphi|\nabla \mu|^2\,\mathrm{d}x.
$$
By definition of $\mu$, we observe that
\begin{align}
\int_{\Omega} \partial_t \varphi \, \partial_t \mu \,\d x
&= \int_{\Omega} |\nabla \partial_t \varphi|^2 \, \d x + \int_{\Omega}\Psi_0''(\varphi) |\partial_t \varphi |^2 \, \d x- \theta_0 \int_{\Omega}  |\partial_t \varphi |^2 \, \d x
+ \int_\Omega \partial_t (\beta'(\varphi)\sigma)  \partial_t \varphi \,\mathrm{d}x.
\notag
\end{align}
Exploiting the incompressibility and the no-slip boundary condition of the velocity field, after integration by parts, we find
\begin{align*}
\int_{\Omega} (\vv \cdot \nabla \varphi) \, \partial_t \mu \, \d x
&= \int_{\Omega} (\vv \cdot \nabla \varphi)  \left(-\Delta \partial_t \varphi +\Psi_0''(\varphi)\partial_t \varphi -\theta_0 \partial_t  \varphi + \partial_t (\beta'(\varphi)\sigma)
\right) \, \d x\\
&=  \int_{\Omega} \nabla \left( \vv \cdot \nabla \varphi\right) \cdot \nabla \partial_t \varphi \, \d x
- \underbrace{\int_{\partial \Omega} (\vv \cdot \nabla \varphi) (\nabla \partial_t \varphi \cdot \n) \, \d S}_{=0}
\\
&\quad  + \int_{\Omega} \vv \cdot \left(\Psi_0''(\varphi) \nabla \varphi \right) \partial_t \varphi \, \d x
- \theta_0 \int_{\Omega} \vv \cdot \nabla \varphi \, \partial_t \varphi \, \d x
\\
&\quad
+ \int_\Omega \partial_t (\beta'(\varphi)\sigma)  \vv \cdot \nabla \varphi  \,\mathrm{d}x
\\
&=
 \int_{\Omega} \left( \nabla \vv^T \nabla \varphi \right) \cdot \nabla \partial_t \varphi \, \d x
 +
\int_{\Omega} \left( \nabla^2 \varphi \, \vv \right) \cdot \nabla \partial_t \varphi \, \d x
\\
&\quad -
\int_{\Omega} (\vv \cdot \nabla \partial_t \varphi) \Psi_0'(\varphi) \, \d x
+ \theta_0 \int_{\Omega} (\vv \cdot \nabla \partial_t \varphi) \varphi \, \d x
+  \int_\Omega \partial_t (\beta'(\varphi)\sigma)  \vv \cdot \nabla \varphi  \,\mathrm{d}x.
\end{align*}
Then, we arrive at
\begin{align}
& \frac12 \ddt \int_\Omega m(\varphi) |\nabla \mu |^2\,\mathrm{d}x
+ \int_{\Omega} |\nabla \partial_t \varphi|^2 \, \d x + \int_{\Omega}\Psi_0''(\varphi) |\partial_t \varphi |^2 \, \d x
\notag \\
&\quad =\theta_0 \int_{\Omega}  |\partial_t \varphi |^2 \, \d x
- \int_{\Omega} \left( \nabla \vv^T \nabla \varphi \right) \cdot \nabla \partial_t \varphi \, \d x -\int_{\Omega} \left( \nabla^2 \varphi \, \vv \right) \cdot \nabla \partial_t \varphi \, \d x
\notag \\
&\qquad +
\int_{\Omega} (\vv \cdot \nabla \partial_t \varphi) \Psi_0'(\varphi) \, \d x
- \theta_0 \int_{\Omega} (\vv \cdot \nabla \partial_t \varphi) \varphi \, \d x
-  \int_\Omega \partial_t (\beta'(\varphi)\sigma)(\partial_t \varphi+ \vv \cdot \nabla \varphi)\,\mathrm{d}x
\notag \\
&\qquad + \frac12\int_\Omega m'(\varphi)\partial_t\varphi|\nabla \mu|^2\,\mathrm{d}x.
\label{diff-eq}
\end{align}
Combining \eqref{mu-H1e}, \eqref{phi2-p} with the assumption $\sigma \in L^\infty(0,T;L^q(\Omega))$ for
some given $q>2$ and any $T>0$, we observe that
\begin{equation}
\label{phi2p}
\| \varphi\|_{W^{2,q}(\Omega)}+ \|\Psi_0'(\varphi)\|_{L^q(\Omega)}
\leq  C_q \left( 1+\| \nabla \mu \|\right), \quad \text{for a.a. } t \in [0,T].
\end{equation}
Now we estimate the terms on the right-hand side of \eqref{diff-eq}. Recalling that $\partial_t \varphi$ is mean-free, thanks to the Poincar\'{e}--Wirtinger inequality, Young's inequality, \eqref{EE-3} and \eqref{phit-est}, we find
\begin{align*}
|\theta_0| \int_{\Omega}  |\partial_t \varphi |^2 \, \d x
&\leq C \| \nabla \partial_t \varphi\| \| \partial_t \varphi\|_{(H^1(\Omega))'}\\
&\leq \frac{1}{16} \| \nabla \partial_t \varphi\|^2
+ C  \| \nabla \mu\|^2+ C\| \nabla \vv\|^2.
\end{align*}
Exploiting \eqref{phi2p} and the Sobolev embedding theorem (with $q>2$), we infer that (cf.  \cite{AGG2023})
\begin{align*}
\left| \int_{\Omega} \left( \nabla \vv^T \nabla \varphi \right) \cdot \nabla \partial_t \varphi \, \d x \right|
&\leq \| \nabla \vv\| \| \nabla \varphi\|_{\bm{L}^\infty(\Omega)} \| \nabla \partial_t \varphi\|\\
& \leq \frac{1}{16} \| \nabla \partial_t \varphi\|^2 +
C \| \nabla \vv\|^2 \| \varphi \|_{W^{2,q}(\Omega)}^2\\
& \leq \frac{1}{16} \| \nabla \partial_t \varphi\|^2 +
C \| \nabla \vv\|^2  \| \nabla \mu\|^2 +C \| \nabla \vv\|^2,
\end{align*}
\begin{align*}
\left|
\int_{\Omega} \left( \nabla^2 \varphi \, \vv \right) \cdot \nabla \partial_t \varphi \, \d x \right|
&\leq \| \varphi\|_{W^{2,q}(\Omega)} \| \vv\|_{\bm{L}^{\frac{2q}{q-2}}(\Omega)} \| \nabla \partial_t \varphi\| \\
& \leq  C\left( 1+  \| \nabla \mu \|  \right) \| \nabla \vv\|  \| \nabla \partial_t \varphi\| \\
&\leq \frac{1}{16} \| \nabla \partial_t \varphi\| ^2  +
C \| \nabla \vv\| ^2   \| \nabla \mu \| ^2+ C \| \nabla \vv\| ^2,
\end{align*}
\begin{align*}
\left| \int_{\Omega} (\vv \cdot \nabla \partial_t \varphi) \Psi_0'(\varphi) \, \d x \right|
&\leq \| \vv\|_{\bm{L}^\frac{2q}{q-2}(\Omega)} \| \nabla \partial_t \varphi\|  \|\Psi_0'(\varphi)\|_{L^q(\Omega)}
\\
& \le\frac{1}{16} \| \nabla \partial_t \varphi\|^2
+ C \| \nabla \vv\|^2  \| \nabla \mu \|^2+ C \| \nabla \vv\|^2.
\end{align*}
Next, we obtain from \eqref{EE-3}, \eqref{phit-est} and \eqref{phi2p} that
\begin{align}
&\left|  \int_\Omega \partial_t (\beta'(\varphi)\sigma)(\partial_t \varphi+ \vv \cdot \nabla \varphi)\,\mathrm{d}x\right|
\notag \\
&\quad \leq C\|\beta'(\varphi)\|_{L^\infty(\Omega)}\|\partial_t \sigma \|
 (\|\partial_t \varphi\| +\| \vv \cdot \nabla \varphi\|)
\notag\\
&\qquad +C\|\beta''(\varphi)\|_{L^\infty(\Omega)} \|\partial_t\varphi\|_{L^\frac{2q}{q-2}(\Omega)} \|\sigma\|_{L^q(\Omega)}
(\|\partial_t \varphi\| +\| \vv \cdot \nabla \varphi\|)
\notag\\
&\quad \leq \frac{1}{32}\|\nabla \partial_t \varphi\|^2+ C(1+\|\sigma\|_{L^q(\Omega)}^2)\|\partial_t\varphi\|^2
+ C\|\partial_t\sigma\|^2
\notag\\
&\qquad + C(1+\|\sigma\|_{L^q(\Omega)}^2) \|\vv\|_{\bm{L}^4(\Omega)}^2\|\nabla \varphi\|_{\bm{L}^4(\Omega)}^2
\notag\\
&\quad \leq \frac{1}{16}\|\nabla \partial_t \varphi\|^2
+ C \left( \| \vv\|_{\bm{L}^4(\Omega)}^2+ \| \nabla \mu\|^2 \right)
+ C\|\partial_t \sigma\|^2
+ C\|\nabla \vv\|^2\|\varphi\|_{W^{1,4}(\Omega)}^2
\notag\\
&\quad \leq \frac{1}{16}\|\nabla \partial_t \varphi\|^2
+ C\|\nabla \vv\|^2 \|\nabla \mu\|^2
+ C(\|\nabla \vv\|^2+\|\nabla \mu\|^2+\|\partial_t \sigma\|^2),
\notag
\end{align}
while using \eqref{phi-Lif}, we get
\begin{align*}
\left| \theta_0 \int_{\Omega} (\vv \cdot \nabla \partial_t \varphi) \varphi \, \d x\right|
&\leq C \| \vv\| \| \nabla \partial_t \varphi\| \| \varphi\|_{L^\infty(\Omega)} \\
&\leq \frac{1}{16} \| \nabla \partial_t \varphi\|^2 +
C \| \vv\|^2.
\notag
\end{align*}
The positive constants $C$ in all the above estimates may depend on the parameters of the system, such as $\theta_0$, $\theta$, $\Omega$, $\overline{\varphi_0}$, and $T>0$ (note that we have used \eqref{EE-3} and  $\|\sigma\|_{L^\infty(0,T;L^q(\Omega))}$).
Finally, we treat the last term on the right-hand side of \eqref{diff-eq}. Observe that
\begin{align}
\left| \frac12\int_\Omega m'(\varphi)\partial_t\varphi|\nabla \mu|^2\,\mathrm{d}x\right|
& \leq C\|\partial_t\varphi\|\|\nabla \mu\|_{\bm{L}^4(\Omega)}^2
\notag\\
&\leq C \| \nabla \partial_t \varphi\|^\frac12 \| \partial_t \varphi\|_{(H^1(\Omega))'}^\frac12
\|\nabla \mu\| \|\nabla \mu\|_{H^1(\Omega)}.
\label{es-lasta}
\end{align}
Thus, it remains to estimate $\|\nabla \mu\|_{H^1(\Omega)}$.
Using the idea in \cite{CGGG}, we rewrite \eqref{CH}$_1$ as
\begin{align}
\mu-\overline{\mu}= - \mathcal{G}_\varphi (\partial_t\varphi+\bm{v}\cdot\nabla \varphi),
\label{CH-b}
\end{align}
where $\overline{\mu}= \overline{\Psi_0'(\varphi)}- \theta_0 \overline{\varphi}+  \overline{\beta'(\varphi)\sigma}$. Applying the elliptic estimate \eqref{es-Ga-H2}, we find
\begin{align}
\|\mu-\overline{\mu}\|_{H^2(\Omega)}
&
\leq C\big(\|\nabla \varphi\|\|\varphi\|_{H^2(\Omega)}\|\nabla \mathcal{G}_\varphi (\partial_t\varphi+\bm{v}\cdot\nabla \varphi)\|+ \| \partial_t\varphi+\bm{v}\cdot\nabla \varphi\|\big)
\notag\\
&\leq C\|\nabla \varphi\| \|\varphi\|_{H^2(\Omega)}\|\nabla\mu\|
+ C\|\partial_t\varphi\|_{(H^1(\Omega))'}^\frac12\|\nabla \partial_t\varphi\|^\frac12 \notag\\
&\quad +C\|\bm{v}\|_{\bm{L}^4(\Omega)}\|\nabla \varphi\|_{\bm{L}^4(\Omega)}.
\label{mu-H2a}
\end{align}
Using \eqref{EE-3}, \eqref{phit-est}, \eqref{mu-H2a}, and the assumption $\vv \in L^\infty(0, \infty;\L^2_\sigma(\Omega))$,
we infer from \eqref{es-lasta} that
\begin{align}
& \left| \frac12\int_\Omega m'(\varphi)\partial_t\varphi|\nabla \mu|^2\,\mathrm{d}x\right|
\notag \\
& \quad \leq C\|\nabla \partial_t\varphi\|^\frac12 (\|\bm{v}\|_{\bm{L}^4(\Omega)}+\|\nabla \mu\|)^\frac12 \|\nabla \mu\|^2 \|\varphi\|_{H^2(\Omega)}  \notag\\
&\qquad +C \|\nabla \partial_t\varphi\|  (\|\bm{v}\|_{\bm{L}^4(\Omega)}+\|\nabla \mu\|)  \|\nabla \mu\| \notag \\
&\qquad +C \|\nabla \partial_t\varphi\|^\frac12 (\|\nabla \bm{v}\|^\frac12 +\|\nabla \mu\|)^\frac12 \|\nabla \mu\| \|\nabla \bm{v}\|^\frac12 \|\varphi\|_{H^2(\Omega)}
\notag\\
&\quad \leq \frac{1}{16} \|\nabla \partial_t\varphi\|^2 + C(1+\|\nabla \bm{v}\|^2+ \|\varphi\|_{H^2(\Omega)}^4+\|\nabla \mu\|^2)\|\nabla \mu\|^2 + C\|\varphi\|_{H^2(\Omega)}^4.
\end{align}
Collecting the above estimates, we can deduce from \eqref{Dephi-L2b} and \eqref{diff-eq} that
\begin{equation}
 \label{DI-alpha}
\begin{split}
 &  \ddt \big\|\sqrt{m(\varphi)}\nabla \mu\big\|^2
 +  \|\nabla \partial_t \varphi\|^2
   \leq  h_2(t)  \big\|\sqrt{m(\varphi)}\nabla \mu\big\|^2 + h_3(t),
\end{split}
\end{equation}
for almost every $t \in [0,T]$, with $T>0$ being arbitrary, where
\begin{align*}
& h_2(\cdot) = C\big(1+ \|\sqrt{m(\varphi)}\nabla \mu\|^2+\|\nabla \vv\|^2+\|\nabla \sigma\|^2\big)\in L^1(0,T),
\\
& h_3(\cdot) = C\big(1+\|\nabla \bm{v}\|^2 +\|\nabla \sigma\|^2 +\|\partial_t\sigma\|^2\big)\in L^1(0,T).
\end{align*}
An application of Gronwall's lemma entails that
\begin{equation}
\label{nmu-H1}
\begin{split}
\sup_{0\leq t \leq T} \|\nabla \mu(t)\|^2
&\leq \frac{1}{m_*}
  \left(m^*\|\nabla \mu(0)\|^2 + \int_0^T  h_3(s)\,\mathrm{d}s\right)e^{\int_0^T h_2(s)\, \d s},
\end{split}
\end{equation}
where we have used (H3). Noticing that
\begin{align}
\| \nabla \mu(0)\|_{L^2(\Omega)}
& = \| \nabla ( \mu_0 + \beta'(\varphi_0) \sigma(0))\|
\notag\\
&\leq  \| \nabla \mu_0\|
+   \|\beta'(\varphi_0)\|_{L^\infty(\Omega)}\|\nabla  \sigma(0) \|
 +\|\beta''(\varphi_0)\|_{L^\infty(\Omega)}\|\nabla \varphi_0\|_{\bm{L}^4(\Omega)}\|\sigma(0)\|_{L^4(\Omega)}
\notag\\
&\leq \| \nabla \mu_0\| + C(\|\varphi_0\|_{H^2(\Omega)}+1)\|\sigma(0)\|_{H^1(\Omega)}
\notag\\
&\leq \| \nabla \mu_0\| + C(1+\|\nabla \mu_0\|+\|\nabla \sigma(0)\|)^\frac12\| \sigma(0)\|_{H^1(\Omega)},
\notag
\end{align}
where the constant $C>0$ depends on $\|\sigma_0\|$ and $\|\varphi_0\|_{H^1(\Omega)}$.
Therefore, we find
\begin{align}
\sup_{0\leq t \leq T}
\| \nabla \mu(t)\|_{L^2(\Omega)}^2
&\leq C
\left( \| \nabla \mu_0\|^2 + C(1+\|\nabla \mu_0\|+\|\nabla \sigma(0)\|) \| \sigma(0)\|_{H^1(\Omega)}^2 \right)e^{\int_0^T h_2(s)\, \d s}
\notag \\
& \quad
+  C e^{\int_0^T h_2(s)\, \d s}\int_0^T  h_3(s)\,\mathrm{d}s
\notag \\
&=: \mathcal{F}(T).
\label{nmu-H1-2}
\end{align}
Integrating \eqref{DI-alpha} on $[0,T]$, and exploiting \eqref{nmu-H1-2}, we further obtain
\begin{align}
\int_0^T \| \nabla \partial_t \varphi(s)\|^2 \, \d s
&\leq  C
\left( \| \nabla \mu_0\|^2 + C(1+\|\nabla \mu_0\|+\|\nabla \sigma(0)\|) \| \sigma(0)\|_{H^1(\Omega)}^2 \right)
\notag \\
&\quad +C \mathcal{F}(T) \int_0^T h_2(s)\, \d s + \int_0^T  h_3(s)\,\d s.
\label{E2-alpha}
\end{align}
Thus, we can conclude
\begin{equation}
\label{EST1}
\| \nabla \mu\|_{L^\infty(0,T; \bm{L}^2(\Omega))} \leq C,
\quad
\| \nabla \partial_t \varphi\|_{L^2(0,T;\bm{L}^2(\Omega))}\leq C,
\end{equation}
for any $T>0$. Then it follows from \eqref{mu-H1e}, \eqref{phi2p} and \eqref{EST1}  that
\begin{align}
& \|  \mu\|_{L^\infty(0,T; H^1(\Omega))} \leq C,
 \label{EST2-a}\\
& \| \varphi\|_{L^\infty(0,T; W^{2,q}(\Omega))}\leq C,
\quad
\|\Psi_0'(\varphi)\|_{L^\infty(0,T;L^q(\Omega))} \leq C.
\label{EST2-b}
\end{align}
By a comparison in \eqref{phit-est}, we also obtain
\begin{align}
\|\partial_t \varphi\|_{L^\infty(0,T; (H^1(\Omega))')}\leq C.
\label{EST2-c}
\end{align}
As a consequence, from \eqref{mu-H2a} and \eqref{EST2-a}--\eqref{EST2-c}, we can deduce that
$$
\int_0^T \|\mu(t)-\overline{\mu(t)}\|_{H^2(\Omega)}^4\,\mathrm{d}t
\leq CT+ C \int_0^T\big(\|\nabla \partial_t(t)\varphi\|^2 + \|\nabla \bm{v}(t)\|^2\big)\,\mathrm{d}t\leq C,
$$
that is, $\mu\in L^4(0,T;H^2(\Omega))$ for any $T>0$.

Consider the elliptic problem \eqref{vphi-ellip}. Using \eqref{es-beta-sigH1}, \eqref{Dephi-L2b}, \eqref{EST2-a}, \eqref{EST2-b} and (H5), we can apply the same argument as in \cite[Section 3]{GGG2023} to conclude \eqref{sep1-ch}.
This strict separation property holds for all $t\in [0,T]$, because of the continuity $\varphi\in C([0,T]; C(\overline{\Omega}))$ due to \eqref{EST1}, \eqref{EST2-b}, the Aubin--Lions--Simon lemma and the Sobolev embedding theorem.
Moreover, it enables us to gain more regularity of the strong solution. Since $\Psi\in C^2([-1+\delta_1,\, 1-\delta_1])$, we can deduce from \eqref{EST2-b} that $\Psi' (\varphi )\in L^\infty (0 ,T ;H^1(\Omega ))$. Applying the classical elliptic estimate to the elliptic equation $\eqref{CH}_2$, we further get
\begin{equation}
	\varphi \in L^\infty (0 ,T ;H^3 (\Omega )).
	\label{phih3}
\end{equation}

The proof of Proposition \ref{CH-strong} is complete.
\end{proof}

\subsection{Inhomogeneous Stokes system with forcing}
Let $\uu$, $\varphi$, $\mu$, $\sigma$ be four given functions with suitable regularity properties. We consider the following Stokes system with a forcing term (cf. Remark \ref{rem:NS-2})
\begin{equation}
 \label{NS-for}
\begin{cases}
\rho(\varphi)\partial_t \vv +  \big(( \rho(\varphi) \uu+ \J) \cdot\nabla \big)\vv - \div \big( 2\nu(\varphi) D \vv \big) + \nabla P = \big(\mu-\beta'(\varphi)  \sigma\big) \nabla \varphi,\\
\qquad \text{with} \ \ \J= -\rho'(\varphi) m(\varphi) \nabla \mu,\quad \rho(\varphi)= \widetilde{\rho}_1 \dfrac{1-\varphi}{2}+ \widetilde{\rho}_2 \dfrac{1+\varphi}{2},
\\
\div \, \vv=0,
\end{cases}
\end{equation}
in $\Omega \times (0,\infty)$, subject to the boundary and initial conditions
\begin{equation}
\label{NS-for-bic}
\begin{cases}
\vv=\mathbf{0},  \quad &\text{on }  \partial \Omega \times (0,\infty),\\
\vv|_{t=0}=\vv_0, \quad &\text{in } \Omega.
\end{cases}
\end{equation}

\begin{proposition}
\label{exe-NS-for}
Let $\Omega$ be a bounded domain in $\mathbb{R}^2$ with a $C^3$ boundary. Suppose that assumptions (H1)--(H4) are satisfied. Let
\begin{align*}
&\uu \in L^\infty(0,\infty;\L^2_{\sigma} (\Omega))\cap L^2(0,\infty;\H^1_{0,\sigma}(\Omega)),
\\
&\sigma \in L^\infty(0,\infty;H^1(\Omega)) \cap L^2_{\mathrm{uloc}}([0,\infty); H^2(\Omega))\cap H^1_{\mathrm{uloc}}([0,\infty);L^2(\Omega)).
\end{align*}
Moreover, we assume that $(\varphi,\mu)$ is determined by Proposition \ref{CH-strong} with the functions $(\uu,\sigma)$ given above as drift and conservative forcing terms.\smallskip

(1) For any initial datum $\vv_0 \in \L^2_{\sigma}(\Omega)$,  problem  \eqref{NS-for}--\eqref{NS-for-bic} admits a global weak solution $\vv$ in $\Omega\times [0,\infty)$, which satisfies
$$
\vv \in L^\infty(0,T; \L^2_{\sigma}(\Omega))
\cap L^2(0,T; \H^1_{0,\sigma}(\Omega)),\quad \bm{P}\partial_t(\rho(\varphi)\vv)\in L^2(0,T; (\H^1_{0,\sigma}(\Omega))'),
$$
for any $T>0$. The solution $\bm{v}$ satisfies
\begin{align}
&\l\partial_t(\rho(\varphi) \vv), \bm{\zeta} \r_{(\H^1_{0,\sigma}(\Omega))',\H^1_{0,\sigma}(\Omega))}
-(  \rho(\varphi) \uu\otimes \vv, \nabla \bm{\zeta} )
-( \vv \otimes \J, \nabla \bm{\zeta} )
+ \big( 2\nu(\varphi) D \vv, D \bm{\zeta} \big)
\notag\\
&\quad
=  \big( (\mu-\beta'(\varphi) \sigma) \nabla \varphi, \bm{\zeta} \big),
\quad \forall\,\bm{\zeta}\in \H^1_{0,\sigma}(\Omega),
\notag
\end{align}
almost everywhere in $\Omega \times (0,\infty)$, the boundary condition $\vv=\mathbf{0}$ almost everywhere on $\partial \Omega \times (0,\infty)$ and the initial condition $\vv|_{t=0}=\vv_0$
 almost everywhere in $\Omega$.
 \smallskip

(2) For any initial datum $\vv_0 \in \H^1_{0,\sigma}(\Omega)$,  problem  \eqref{NS-for}--\eqref{NS-for-bic} admits a unique global strong solution $\vv$ in $\Omega\times [0,\infty)$, which satisfies
$$
\vv \in L^\infty(0,T; \H^1_{0,\sigma}(\Omega))
\cap L^2(0,T; \H^2(\Omega))\cap H^1(0,T; \L_\sigma^2(\Omega)),
$$
for any $T>0$.
The solution $\bm{v}$ satisfies the system \eqref{NS-for} almost everywhere in $\Omega \times (0,\infty)$, the boundary condition $\vv=\mathbf{0}$ almost everywhere on $\partial \Omega \times (0,\infty)$ and the initial condition $\vv|_{t=0}=\vv_0$ almost everywhere in $\Omega$.
\end{proposition}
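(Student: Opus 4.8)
The proof combines a Faedo--Galerkin approximation for part (1) with additional higher-order estimates for part (2), and exploits two structural features of the system. First, problem \eqref{NS-for}--\eqref{NS-for-bic} is \emph{linear} in the unknown $\vv$, since the advecting field $\uu$ is prescribed. Second, the pair $(\varphi,\mu)$ delivered by Proposition \ref{CH-strong} obeys the continuity equation
\[
\partial_t\rho(\varphi)+\div\big(\rho(\varphi)\uu+\J\big)=0 \quad\text{a.e. in }\Omega\times(0,\infty),\qquad \J=-\rho'(\varphi)m(\varphi)\nabla\mu,
\]
which is the analogue of \eqref{mass-bal} with drift $\uu$. Because $|\varphi|\le 1$ forces $\rho_*\le\rho(\varphi)\le\rho^*$ and $\nu_*\le\nu(\varphi)\le\nu^*$, the density stays uniformly positive and the dissipation is coercive throughout.

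For part (1), I would set up a Galerkin scheme in the solenoidal basis $\{\bm{y}_i\}$ of Section \ref{modif-sys}. The projected momentum equation is an ODE system whose time-dependent mass matrix $\big(\int_\Omega\rho(\varphi)\bm{y}_i\cdot\bm{y}_j\,\d x\big)_{i,j}$ is symmetric and uniformly positive definite (as $\rho(\varphi)\ge\rho_*$) with continuous entries, so a Carath\'eodory argument yields local solvability. Testing with $\vv$ and using the continuity equation to cancel $-\tfrac12\int_\Omega\partial_t\rho(\varphi)|\vv|^2\,\d x$ against the skew-symmetric contribution of $\big((\rho(\varphi)\uu+\J)\cdot\nabla\big)\vv$, I obtain the energy identity
\[
\frac12\ddt\int_\Omega\rho(\varphi)|\vv|^2\,\d x+\int_\Omega 2\nu(\varphi)|D\vv|^2\,\d x=\int_\Omega\big(\mu-\beta'(\varphi)\sigma\big)\nabla\varphi\cdot\vv\,\d x.
\]
The forcing is bounded by $\|\mu-\beta'(\varphi)\sigma\|_{\L^4(\Omega)}\|\nabla\varphi\|_{\L^4(\Omega)}\|\vv\|$ via the two-dimensional Ladyzhenskaya inequality, using $\mu,\sigma\in L^\infty(0,\infty;H^1(\Omega))$ and $\varphi\in L^\infty(0,\infty;W^{2,q}(\Omega))$ from \eqref{REG}; Korn's and Gronwall's inequalities then give the uniform bound $\vv\in L^\infty(0,T;\L^2_\sigma)\cap L^2(0,T;\H^1_{0,\sigma})$ and global existence at the discrete level. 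A comparison in the weak formulation, with $(\rho(\varphi)\uu\otimes\vv,\nabla\bm{\zeta})$ estimated by $L^4$--interpolation and $(\vv\otimes\J,\nabla\bm{\zeta})$ by $\nabla\mu\in L^4(0,T;\L^4)$ (from $\mu\in L^4(0,T;H^2)$), produces $\bm{P}\partial_t(\rho(\varphi)\vv)\in L^2(0,T;(\H^1_{0,\sigma})')$, after which the Aubin--Lions--Simon lemma \cite{si87} supplies the compactness to pass to the limit.

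For the strong solution in part (2), the plan is to test \eqref{mome-new} (with $\uu$ in the drift) by $\partial_t\vv$. The inertial term gives $\int_\Omega\rho(\varphi)|\partial_t\vv|^2\,\d x\ge\rho_*\|\partial_t\vv\|^2$, and the viscous term produces $\ddt\int_\Omega\nu(\varphi)|D\vv|^2\,\d x$ up to the remainder $-\int_\Omega\nu'(\varphi)\partial_t\varphi\,|D\vv|^2\,\d x$. The convective term and the forcing, both paired with $\partial_t\vv$, are absorbed into $\rho_*\|\partial_t\vv\|^2$ using $\nabla\varphi\in L^\infty(0,T;\L^\infty)$ (from $W^{2,q}\hookrightarrow C^1$), the forcing $(\mu-\beta'(\varphi)\sigma)\nabla\varphi\in L^\infty(0,T;\L^2)$, and two-dimensional interpolation. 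The delicate remainder is controlled by $\|\partial_t\varphi\|_{\L^p}\|D\vv\|_{\L^{2p'}}^2$ with $\partial_t\varphi\in L^2(0,T;H^1)\hookrightarrow L^2(0,T;\L^p)$ from Proposition \ref{CH-strong} and a Gagliardo--Nirenberg interpolation of $\|D\vv\|_{\L^{2p'}}$ between $\|\nabla\vv\|$ and $\|\vv\|_{\H^2}$; the top-order factor is reabsorbed through the stationary estimate below, so that Gronwall yields $\vv\in L^\infty(0,T;\H^1_{0,\sigma})$ and $\partial_t\vv\in L^2(0,T;\L^2_\sigma)$. Reading the equation at a.e.\ fixed time as the stationary Stokes system $-\div(2\nu(\varphi)D\vv)+\nabla P=\bm{G}$, with
\[
\bm{G}:=\big(\mu-\beta'(\varphi)\sigma\big)\nabla\varphi-\rho(\varphi)\partial_t\vv-\big((\rho(\varphi)\uu+\J)\cdot\nabla\big)\vv\in L^2(0,T;\L^2),
\]
and invoking $\nu(\varphi)\in L^\infty(0,T;W^{1,\infty})$ (since $\nu\in C^1$ and $\varphi\in L^\infty(0,T;W^{2,q})\hookrightarrow C^1$), the regularity theory for the Stokes operator with variable viscosity (cf.\ \cite{AGG2023,Gior2021}) then gives $\vv\in L^2(0,T;\H^2)$ together with $P\in L^2(0,T;H^1)$.

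Uniqueness is immediate from the linearity in $\vv$: the difference of two solutions satisfies the same system with vanishing forcing, drift data and initial datum, so testing with it and applying Gronwall forces it to vanish. I expect the main obstacle to be the $\partial_t\vv$--estimate in part (2): the genuinely variable, time-dependent density and viscosity prevent the $\partial_t\vv$--test from collapsing to a clean $\ddt\|\nabla\vv\|^2$ identity, and one must simultaneously tame the critical two-dimensional term $\int_\Omega\nu'(\varphi)\partial_t\varphi\,|D\vv|^2\,\d x$---which couples the time-derivative bound to the $\H^2$--bound and thus demands a careful bootstrap---and the contribution of $\J=-\rho'(\varphi)m(\varphi)\nabla\mu$, relying on the full strength of $\mu\in L^\infty(0,T;H^1)\cap L^4(0,T;H^2)$ and $\varphi\in L^\infty(0,T;W^{2,q})$ furnished by Proposition \ref{CH-strong}.
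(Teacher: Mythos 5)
Your proposal is correct and follows essentially the same route as the paper: a Galerkin construction with the basic energy law obtained by cancelling $\tfrac12\int_\Omega\partial_t\rho(\varphi)|\vv|^2\,\d x$ against the transport term via the continuity equation, a comparison argument for $\bm{P}\partial_t(\rho(\varphi)\vv)$, and for part (2) the $\partial_t\vv$-test combined with the stationary Stokes estimate with variable viscosity (the paper cites \cite[Lemma 4]{Abels2009}) to absorb the critical terms $\int_\Omega\nu'(\varphi)\partial_t\varphi|D\vv|^2\,\d x$ and the $\J$-contribution. The only divergence is uniqueness: your linearity-plus-Gronwall argument for two strong solutions suffices for the statement as given, whereas the paper establishes the stronger weak--strong uniqueness (energy inequality for the weak solution versus energy equality for the strong one, following \cite{AGG2023}), which is what is actually reused in the bootstrap of Sections \ref{glo-strong} and \ref{Propa-reg}.
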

\begin{proof}
\textbf{Existence.} The proof for the existence part is based on a Faedo--Galerkin approximation scheme similar to that in \cite{Gior2021}. In the following, we only perform formal \textit{a priori} estimates.

Testing the first equation in \eqref{NS-for} by $\vv$,  using \eqref{CH} and integration by parts, we get
\begin{align}
&\frac12\frac{\d}{\d t} \int_\Omega  \rho (\varphi)|\vv|^2\,\d x
+\int_\Omega  2\nu(\varphi) |D\bm{v}|^2\,\d x
=-\int_\Omega \varphi \nabla (\mu -\beta'(\varphi) \sigma)\cdot \vv\, \d x.
\label{BEL-NS-1}
\end{align}
In view of \eqref{es-beta-sigH1}, it holds
\begin{align}
\left|-\int_\Omega \varphi \nabla (\mu -\beta'(\varphi) \sigma)\cdot \vv\, \d x\right|
& \leq \|\varphi\|_{L^\infty(\Omega)}(\|\nabla \mu\|+ \|\nabla (\beta'(\varphi)\sigma)\|) \|\vv\|
\notag\\
& \leq C(\|\nabla \mu\|+\|\varphi\|_{H^2(\Omega)}+ \|\sigma\|_{H^1(\Omega)})\|\vv\|
\notag \\
&\leq \frac{\nu_*}{2}\|\nabla \vv\|^2
+ C(\|\nabla \mu\|^2+\|\varphi\|_{H^2(\Omega)}^2+ \|\sigma\|_{H^1(\Omega)}^2), \notag
\end{align}
where the constant $C>0$ depends on $\|\varphi\|_{L^\infty(0,T;H^1(\Omega))}$, $\|\sigma\|_{L^\infty(0,T;L^2(\Omega))}$.
Thus, an application of Gronwall's lemma to \eqref{BEL-NS-1} yields
\begin{align}
\|\vv(t)\|^2+\int_0^t\|\nabla \vv(s)\|^2\,\d s\leq C,
\quad \forall\, t\in [0,T],
\label{NS-low}
\end{align}
where $C>0$ depends on $\|\vv_0\|$, $T$, $\nu_*$, $\rho_*$, $\rho^*$, $\|\mu\|_{L^\infty(0,T;H^1(\Omega))}$, $\|\varphi\|_{L^\infty(0,T;H^2(\Omega))}$, $\|\sigma\|_{L^\infty(0,T;H^1(\Omega))}$.
Besides, from \eqref{NS-for} and the regularity properties of $(\uu, \varphi, \mu, \sigma)$, we can verify that
\begin{align}
\|\bm{P}\partial_t(\rho(\varphi)\vv)\|_{L^2(0,T;(\H^1_{0,\sigma}(\Omega))')}\leq C.
\label{NS-low2}
\end{align}

We proceed to derive higher-order estimates. Testing \eqref{NS-for} by $\partial_t\vv$, we obtain
\begin{align}
&\frac{\d}{\d t}\int_\Omega \nu(\varphi)|D\vv|^2\,\d x + \int_\Omega \rho(\varphi)|\partial_t \vv|^2\,\d x
\notag \\
&\quad = -\int_\Omega ( \rho(\varphi)\uu \cdot\nabla)\vv\cdot \partial_t \vv\,\d x
-\int_\Omega (  \J  \cdot\nabla )\vv\cdot \partial_t \vv\,\d x
\notag\\
&\qquad
+ 2\int_\Omega \nu'(\varphi)\partial_t\varphi|D\vv|^2\,\d x
-\int_\Omega \varphi \nabla (\mu -\beta'(\varphi) \sigma)\cdot \partial_t \vv\, \d x.
\label{NS-high1}
\end{align}
Using the Ladyzhenskaya inequality and Korn's inequality, we have
\begin{align}
\left|\int_\Omega ( \rho(\varphi)\uu \cdot\nabla)\vv\cdot \partial_t \vv\,\d x \right|
&\leq  \rho^* \|\uu\|_{\L^4(\Omega)}\|\nabla\vv\|_{\L^4(\Omega)}\|\partial_t\vv\|
\notag\\
&\leq C \|\uu\|^\frac12\|\nabla \uu\|^\frac12\|\nabla \vv\|^\frac12\|\vv\|_{\H^2(\Omega)}^\frac12
\|\partial_t\vv\|
\notag\\
&\leq \frac{\rho_*}{8}\|\partial_t\vv\|^2 + \eta \|\vv\|_{\H^2(\Omega)}^2+ C_\eta\|\nabla \uu\|^2\|\nabla \vv\|^2.
\notag
\end{align}
In a similar manner, we get
\begin{align}
\left|\int_\Omega (  \J  \cdot\nabla )\vv\cdot \partial_t \vv\,\d x \right|
&\leq  \|\rho'(\varphi)\|_{L^\infty(\Omega)}\|m(\varphi)\|_{L^\infty(\Omega)}\|\nabla \mu\|_{\L^4(\Omega)}\|\nabla \vv\|_{\L^4(\Omega)}\|\partial_t\vv\|
\notag\\
&\leq C\|\nabla \mu\|^\frac12 \|\mu\|_{H^2(\Omega)}^\frac12\|\nabla \vv\|^\frac12\|\vv\|_{\H^2(\Omega)}^\frac12
\|\partial_t\vv\|
\notag\\
&\leq \frac{\rho_*}{8}\|\partial_t\vv\|^2
+ \eta \|\vv\|_{\H^2(\Omega)}^2+ C_\eta\|\mu\|_{H^2(\Omega)}^2\|\nabla \vv\|^2,
\notag
\end{align}
\begin{align}
\left|2\int_\Omega \nu'(\varphi)\partial_t\varphi|D\vv|^2\,\d x\right|
&\leq \|\nu'(\varphi)\|_{L^\infty(\Omega)}\|\partial_t\varphi\|_{\L^4(\Omega)}\|D\vv\|_{\L^4(\Omega)}\|D\vv\|
\notag\\
&\leq \eta \|\vv\|_{\H^2(\Omega)}^2+ C_\eta\|\nabla \partial_t\varphi\|^2\|\nabla \vv\|^2,
\notag
\end{align}
\begin{align}
   \left| \int_\Omega \varphi \nabla (\mu -\beta'(\varphi) \sigma)\cdot \partial_t \vv\, \d x\right|
   &\leq \|\varphi\|_{L^\infty(\Omega)}(\|\nabla \mu\|+\|\nabla (\beta'(\varphi) \sigma)\|)\|\partial_t \vv\|
   \notag\\
   & \leq C(\|\nabla \mu\|+\|\varphi\|_{H^2(\Omega)} + \|\sigma\|_{H^1(\Omega)})\|\partial_t \vv\|
\notag\\
&\leq \frac{\rho_*}{8}\|\partial_t\vv\|^2  + C(\|\nabla \mu\|^2
+\|\varphi\|_{H^2(\Omega)}^2+ \|\sigma\|_{H^1(\Omega)}^2).
\notag
\end{align}
Here, the small constant $\eta\in (0,1)$ will be determined later.
Exploiting the regularity theory of the Stokes equation with concentration-dependent viscosity
(see, e.g., \cite[Lemma 4]{Abels2009}) and using Young's inequality, we have
\begin{align}
\|\vv\|_{\H^2(\Omega)}
&\leq C\|\partial_t\vv\|+ C \|\rho(\varphi)\|_{L^\infty(\Omega)}\|\uu\|_{\L^4(\Omega)}\|\nabla \vv\|_{\L^4(\Omega)}
\notag\\
&\quad + C\|\rho'(\varphi)\|_{L^\infty(\Omega)}\|m(\varphi)\|_{L^\infty(\Omega)}\|\nabla \mu\|_{\L^4(\Omega)}\|\nabla \vv\|_{\L^4(\Omega)}
\notag\\
&\quad + C\|\varphi\|_{L^\infty(\Omega)}(\|\nabla \mu\|+\|\nabla (\beta'(\varphi)\sigma)\|),
\notag
\end{align}
which combined with the Ladyzhenskaya inequality and  Young's inequality yields
\begin{align}
\|\vv\|_{\H^2(\Omega)}^2
&\leq C\|\partial_t\vv\|^2
+ C\|\nabla \uu\|^2\|\nabla \vv\|^2 + C \|\mu\|_{H^2(\Omega)}^2\|\nabla \vv\|^2
\notag \\
&\quad + C(\|\nabla \mu\|^2+\|\varphi\|_{H^2(\Omega)}^2+ \|\sigma\|_{H^1(\Omega)}^2).
\label{NS-es-H2}
\end{align}
Combining the above estimates and taking $\eta>0$ sufficiently small, we can deduce from
\eqref{NS-high1} that
\begin{align}
&\frac{\d}{\d t}\int_\Omega \nu(\varphi)|D\vv|^2\,\d x + \frac{\rho_*}{2}\int_\Omega |\partial_t \vv|^2\,\d x
\notag \\
&\quad  \leq  C(\|\nabla \uu\|^2+ \|\mu\|_{H^2(\Omega)}^2+ \|\nabla \partial_t\varphi\|^2)\int_\Omega \nu(\varphi)|D\vv|^2\,\d x
\notag \\
&\qquad + C(\|\nabla \mu\|^2+\|\varphi\|_{H^2(\Omega)}^2+ \|\sigma\|_{H^1(\Omega)}^2).
\label{NS-high2}
\end{align}
Then, by Gronwall's lemma and \eqref{NS-es-H2}, we conclude that
\begin{align}
\|\vv(t)\|_{\H^1(\Omega)}^2+\int_0^t \big(\|\partial_t\vv(s)\|^2+ \|\vv(s)\|^2_{\H^2(\Omega)}\big)\,\d s\leq C,\quad \forall\, t\in [0,T],
\label{NS-high3}
\end{align}
where $C>0$ depends on $\|\nabla \vv_0\|$, $T$, $\rho_*$, $\rho^*$, $\nu_*$, $\nu^*$, $\|\mu\|_{L^\infty(0,T;H^1(\Omega))}$,
$\|\mu\|_{L^2(0,T;H^2(\Omega))}$, $\|\varphi\|_{L^\infty(0,T;H^2(\Omega))}$,
$\|\partial_t \varphi\|_{L^2(0,T;H^1(\Omega))}$,
$\|\sigma\|_{L^\infty(0,T;H^1(\Omega))}$.

The above estimates enable us to construct a weak as well as a strong solution on $[0,\infty)$.
On the other hand, uniqueness of the strong solution is a consequence of the following weak-strong uniqueness result.
\smallskip

\textbf{Weak-strong uniqueness.} Assume that $\vv_1$ (resp. $\vv_2$) is a weak (resp. strong) solution to problem \eqref{NS-for}--\eqref{NS-for-bic} with the given data $(\uu,\varphi,\mu,\sigma)$, and both solutions satisfy the same initial data $\vv_0$. Let $T>0$. It is straightforward to check that $\vv_1$ satisfies the energy inequality
\begin{align}
&\frac12  \int_\Omega  \rho (\varphi(t))|\vv_1(t)|^2\,\d x
+ \int_0^t \int_\Omega  2\nu(\varphi(s)) |D\bm{v}_1(s)|^2\,\d x\mathrm{d}s
\notag\\
&\quad \leq \frac12  \int_\Omega  \rho (\varphi(0))|\vv_0|^2\,\d x+  \int_0^t\int_\Omega  (\mu(s) -\beta'(\varphi(s)) \sigma(s))\nabla\varphi (s)\cdot \vv_1(s)\, \d x\mathrm{d}s,
\notag
\end{align}
while $\vv_2$ satisfies the energy equality
\begin{align}
&\frac12  \int_\Omega  \rho (\varphi(t))|\vv_2(t)|^2\,\d x
+ \int_0^t \int_\Omega  2\nu(\varphi(s)) |D\bm{v}_2(s)|^2\,\d x\mathrm{d}s
\notag\\
&\quad
= \frac12  \int_\Omega  \rho (\varphi(0))|\vv_0|^2\,\d x+ \int_0^t\int_\Omega   (\mu(s) -\beta'(\varphi(s)) \sigma(s))\nabla\varphi(s) \cdot \vv_2(s)\, \d x\mathrm{d}s,
\notag
\end{align}
for all $t\in [0,T]$.
On the other hand, the regularity properties of $\vv_1$ and $\vv_2$ allow us to take $\bm{\zeta}=\vv_2$ in the weak formulation of $\vv_1$ and test the equation for $\vv_2$ by $\vv_1$. Taking these facts into account, following the same argument as in the proof of \cite[Theorem 4.1]{AGG2023}, we arrive at
\begin{align}
&\int_\Omega \frac12\rho(\varphi(t))|\vv_1(t)-\vv_2(t)|^2\,\d x\d s
+ \int_0^t\int_\Omega 2\nu(\varphi(s))|D(\vv_1(s)-\vv_2(s))|^2\,\d x\d s
\leq 0, \quad \forall\, t\in [0,T].
\notag
\end{align}
Hence, it easily follows that $\vv_1(t)=\vv_2(t)$ on $[0,T]$.

The proof of Proposition \ref{exe-NS-for} is complete.
\end{proof}

\section{Existence and Uniqueness of a Global Strong Solution}
\label{glo-strong}
\setcounter{equation}{0}
This section is devoted to the proof of Theorem \ref{Reg-SOL} on the existence and uniqueness of a global strong solution.

\subsection{Existence}
\label{exe-glo-strong}
Under the assumption of Theorem \ref{Reg-SOL}, problem \eqref{NSCH}--\eqref{NSCH-bic} admits a global weak solution $(\vv^*, \varphi^*, \mu^*, \sigma^*)$ in $\Omega\times [0,\infty)$ thanks to Theorem \ref{WEAK-SOL}-(2). Our aim is to prove that this weak solution is actually a strong one.

The proof is based on a bootstrap-type argument.
\smallskip

\textbf{Step 1.} Applying Proposition \ref{sigma-strong}, we find that problem \eqref{re-di-sigma}--\eqref{re-di-sigma-bdini} with $\vv=\vv^*$, $\varphi=\varphi^*$ and the same initial datum $\sigma_0$ admits a unique global strong solution in $\Omega\times [0,\infty)$, which is denoted by $\sigma^\natural$. Since $\sigma^*$ is a global weak solution to the same problem, then by the uniqueness of weak solutions due to Proposition \ref{sigma-weak}, we have $\sigma^*=\sigma^\natural$ on $[0,\infty)$, noticing that $T>0$ is arbitrary therein.

Next, from the facts
$$
\sigma^* \in L^\infty(0,\infty; L^2(\Omega)) \cap L^2_{\uloc}([0,\infty);H^1(\Omega)),\quad \varphi^* \in L_{\uloc}^2([0,\infty),W^{2,q}(\Omega)),
$$
for any $q\in [2,\infty)$, we can apply \eqref{es-sigH0}, the uniform Gronwall lemma combined with Gronwall's lemma to obtain the improved estimate (cf. \eqref{es-sigH1})
\begin{align}
\|\sigma^\natural\|_{L^\infty(0,\infty;H^1(\Omega))}\leq C,\quad \sup_{t\geq 0}\|\sigma^\natural\|_{L^2(t,t+1;H^2(\Omega))}\leq C,
\label{es-sigH1-i}
\end{align}
which further yields $\sigma^\natural\in H^1_{\uloc}([0,\infty);L^2(\Omega))$.
Here, the property $\sigma^* \in L^\infty(0,\infty; L^2(\Omega))$ is crucial, as it guarantees that the constant $C$ does not depend on time.
\smallskip

\textbf{Step 2.} Applying Proposition \ref{CH-strong}, we find that problem \eqref{CH}--\eqref{bcic} with $\vv=\vv^*$, $\sigma=\sigma^*$ (keeping in mind that now  $\sigma^*=\sigma^\natural$, i.e., a strong one) and the initial datum $\varphi_0$ admits a unique global strong solution in $\Omega\times [0,\infty)$, which is denoted by $(\varphi^\natural,\mu^\natural)$.
Since $(\varphi^*,\mu^*)$ is a global weak solution to the same problem, by the uniqueness of weak solutions due to Proposition \ref{well-pos}, we have $(\varphi^*,\mu^*)=(\varphi^\natural,\mu^\natural)$ on $[0,\infty)$.

With the aid of the uniform in time estimates for $\bm{v}^*$, $\varphi^*$, $\mu^*$ and the estimate \eqref{es-sigH1-i} obtained in the previous step, we can use \eqref{DI-alpha}, in which the positive constant $C$ is now independent of time, the uniform Gronwall lemma combined with Gronwall's lemma to obtain the following improved estimates (cf. \eqref{EST1})
\begin{equation}
\label{nmu-H1-i}
\begin{split}
\|\nabla \mu^\natural\|_{L^\infty(0,\infty;\bm{L}^2(\Omega))}\leq C, \quad \sup_{t\geq 0}\|\nabla \partial_t\varphi^\natural\|_{L^2(t,t+1;\bm{L}^2(\Omega))}\leq C,
\end{split}
\end{equation}
which further imply
\begin{align}
& \| \mu^\natural\|_{L^\infty(0,\infty;H^1(\Omega))}\leq C,
\quad \sup_{t\geq 0} \|  \mu^\natural\|_{L^4(t,t+1; H^2(\Omega))} \leq C,
\label{impro-1} \\
& \| \varphi^\natural\|_{L^\infty(0,\infty; W^{2,q}(\Omega))}\leq C,
\quad \|\Psi'(\varphi^\natural)\|_{L^\infty(0,\infty;L^q(\Omega))}\leq C,
\label{impro-2}
\end{align}
for any $q\in [2,\infty)$. With the above uniform-in-time estimates, we can find a constant $\delta_1\in (0,1)$ independent of time such that (cf. \eqref{sep1-ch})
\be
\|\varphi^\natural(t)\|_{L^{\infty}(\Omega)} \leq 1- \delta_1,\quad \forall\, t \geq 0.
\notag
\ee
As a consequence, this gives (cf. \eqref{phih3})
\begin{equation}
\|\Psi' (\varphi^\natural )\|_{L^\infty (0 ,\infty ;H^1(\Omega ))}\leq C, \quad
\|\varphi^\natural\|_{L^\infty (0,\infty ;H^3(\Omega ))}\leq C.
\label{phih3-i}
\end{equation}
Moreover, if $\sigma_0\in L^\infty(\Omega)$, using \eqref{impro-2} and Corollary \ref{non-blowup}, we can further obtain
\begin{align}
\|\sigma^\natural(t)\|_{L^\infty(\Omega)}\leq C,\quad \forall\, t\geq 0.
\end{align}

\textbf{Step 3.} From Proposition \ref{exe-NS-for}-(2), we find that problem \eqref{NS-for}--\eqref{NS-for-bic} with $\uu=\vv^*$, $\varphi=\varphi^*$, $\mu=\mu^*$, $\sigma=\sigma^*$ (note that $(\varphi^*, \mu^*, \sigma^*)= (\varphi^\natural, \mu^\natural, \sigma^\natural)$ is actually a strong solution satisfying the improved estimates obtained above) and the initial datum $\vv_0$ admits a unique global strong solution in $\Omega\times [0,\infty)$, which is denoted by $\vv^\natural$. Since $\vv^*$ is a global weak solution to the same problem, by the weak-strong uniqueness result shown in the proof of Proposition \ref{exe-NS-for}, we find $\vv^*=\vv^\natural$ on $[0,\infty)$. Moreover, using the estimates \eqref{nmu-H1-i}, \eqref{impro-1}, \eqref{impro-2}, we can apply \eqref{NS-high2}, in which the positive constant $C$ is now independent of time, the uniform Gronwall lemma combined with Gronwall's lemma to obtain (cf. \eqref{NS-high3})
\begin{align}
\begin{split}
&\|\vv^\natural(t)\|_{L^\infty(0,\infty;\H^1(\Omega))}\leq C,
\\
&\sup_{t\geq 0} \,(\|\partial_t\vv^\natural(t)\|_{L^2(t,t+1;\bm{L}^2(\Omega))}
+ \|\vv^\natural(t)\|_{L^2(t,t+1;\H^2(\Omega))})\leq C.
\end{split}
\label{vv-hi-str}
\end{align}

In summary, we have shown that $(\vv^*,\varphi^*, \mu^*, \sigma^*)= (\vv^\natural,\varphi^\natural, \mu^\natural, \sigma^\natural)$ on $[0,\infty)$.
By the construction of $(\vv^\natural,\varphi^\natural, \mu^\natural, \sigma^\natural)$, $(\vv^*,\varphi^*, \mu^*, \sigma^*)$ is indeed a global strong solution in the sense of Definition \ref{def-strong}. Hence, this establishes the existence of a global strong solution to problem \eqref{NSCH}--\eqref{NSCH-bic} on the whole interval $[0,\infty)$.
\hfill $\square$

\subsection{Uniqueness}
Let $(\vv_i,\varphi_i,\mu_i,\sigma_i)$, $i=1,2$, be two global strong solutions to problem \eqref{NSCH}--\eqref{NSCH-bic} subject to the initial data $(\vv_{0,i},\varphi_{0,i},\sigma_{0,i})$. We denote the differences
\begin{align*}
& \bm{V}=\vv_1-\vv_2,\quad P=P_1-P_2,
\quad \Phi=\varphi_1-\varphi_2,
\quad \Upsilon= \mu_1-\mu_2,
\quad \Sigma=\sigma_1-\sigma_2,\\
&\bm{V}_0=\vv_{0,1}-\vv_{0,2},
\quad \Phi_0=\varphi_{0,1}-\varphi_{0,2},
\quad \Sigma_0=\sigma_{0,1}-\sigma_{0,2}.
\end{align*}
Then it holds
\begin{equation}
 \label{di-NSCH}
\begin{cases}
\rho(\varphi_1) \partial_t \bm{V}
+ (\rho(\varphi_1)-\rho(\varphi_2))\partial_t\vv_2
+ \big(\rho(\varphi_1)(\vv_1\cdot\nabla)\vv_1-\rho(\varphi_2)(\vv_2\cdot\nabla)\vv_2\big)
\\
\qquad
 - \big(
\rho'(\varphi_1)m(\varphi_1)(\nabla \mu_1\cdot\nabla) \vv_1-
\rho'(\varphi_2)m(\varphi_2)(\nabla \mu_2\cdot\nabla) \vv_2
\big)
- \div \big( 2\nu(\varphi_1) D \bm{V} \big)
\\
\qquad - \div \big( 2(\nu(\varphi_1)-\nu(\varphi_2)) D \vv_2 \big)
+ \nabla \widetilde{P}
\\
\quad = -\div(\nabla \varphi_1\otimes \nabla \Phi
+ \nabla \Phi\otimes \nabla \varphi_2)
-\big(\sigma_1  \nabla \beta(\varphi_1) - \sigma_2  \nabla \beta(\varphi_2)\big),
\\
\div \, \bm{V} =0,
\\
\partial_t \Phi + \vv_1\cdot \nabla \Phi
+ \bm{V} \cdot \nabla \varphi_2
=
\div\big(m(\varphi_1)\nabla \mu_1\big)-\div\big(m(\varphi_2)\nabla \mu_2\big),
\\
\Upsilon = - \Delta \Phi+ \Psi'(\varphi_1)-\Psi'(\varphi_2) + \beta'(\varphi_1) \sigma_1-\beta'(\varphi_2) \sigma_2,
\\
\partial_t \Sigma + \vv_1 \cdot \nabla \Sigma
+ \bm{V} \cdot \nabla \sigma_2 - \Delta \Sigma
= \div \left( \beta'(\varphi_1)\sigma_1 \nabla \varphi_1\right) -\div \left( \beta'(\varphi_2)\sigma_2 \nabla \varphi_2\right),
\end{cases}
\end{equation}
almost everywhere in $\Omega \times (0,\infty)$, subject to the boundary and initial conditions
\begin{equation}
\label{di-NSCH-bic}
\begin{cases}
\bm{V}=\mathbf{0}, \quad \partial_\n \Phi=\partial_\n \Upsilon=\partial_\n \Sigma=0 \quad &\text{a.e. on }  \partial \Omega \times (0,\infty),\\
\bm{V}|_{t=0}=\bm{V}_0, \quad \Phi|_{t=0}=\Phi_0, \quad \Sigma|_{t=0}=\Sigma_0 \quad
&\text{a.e. in } \Omega.
\end{cases}
\end{equation}
In \eqref{di-NSCH}$_1$, we have denoted the modified pressure by
$$
\widetilde{P}=P - \left(\frac{1}{2}|\nabla \varphi_1|^2 +\Psi(\varphi_1)\right)+ \left(\frac{1}{2}|\nabla \varphi_2|^2 +\Psi(\varphi_2)\right).
$$
Testing \eqref{di-NSCH}$_1$ by $\bm{V}$, we have
\begin{align}
& \frac12\frac{\d}{\d t} \int_\Omega \rho(\varphi_1) |\bm{V}|^2\,\d x
+ 2\int_\Omega \nu(\varphi_1)|D \bm{V}|^2\,\d x
\notag\\
&\quad = - \int_\Omega (\rho(\varphi_1)-\rho(\varphi_2))\partial_t\vv_2\cdot \bm{V}\,\d x
- \int_\Omega \rho(\varphi_1)(\bm{V}\cdot\nabla)\vv_2\cdot\bm{V}\,\d x
\notag\\
&\qquad -\int_\Omega (\rho(\varphi_1)-\rho(\varphi_2))(\vv_2\cdot\nabla)\vv_2\cdot\bm{V}\, \d x
\notag\\
&\qquad +\int_\Omega \big((\rho(\varphi_1)m(\varphi_1)\nabla \mu_1-\rho(\varphi_2)m(\varphi_2)\nabla \mu_2)\cdot\nabla\big)\vv_2\cdot \bm{V}\,\d x
\notag \\
&\qquad -2\int_\Omega(\nu(\varphi_1)-\nu(\varphi_2))D\vv_2: \nabla \bm{V}\,\d x
+ \int_\Omega (\nabla \varphi_1\otimes \nabla \Phi+ \nabla \Phi\otimes \nabla \varphi_2):\nabla \bm{V}\, \d x
\notag \\
&\qquad -\int_\Omega \big(  \sigma_1  \nabla \beta(\varphi_1)- \sigma_2  \nabla \beta(\varphi_2)) \cdot \bm{V}\, \d x
\notag\\
&\quad =:\sum_{i=1}^{7}Z_i.
\label{di-NS}
\end{align}
The terms $Z_i$, $i=1,2,3,5,6$ can be estimated exactly the same as in \cite[Section 6]{Gior2021}, thus we obtain
\begin{align}
& |Z_1|+|Z_2|+|Z_3|+|Z_5|+|Z_6|
\notag\\
&\quad \leq \frac{\nu_*}{6}\|D\bm{V}\|^2 +
C\big(\|\partial_t\vv_2\|^2+ \|\vv_2\|^2_{\H^2(\Omega)}\big)\big(\|\bm{V}\|^2+ \|\Phi\|_{H^1(\Omega)}^2\big).
\notag
\end{align}
Concerning $Z_4$, we find
\begin{align*}
Z_4&= \int_\Omega \rho(\varphi_1)m(\varphi_1)(\nabla\Upsilon \cdot\nabla)\vv_2\cdot \bm{V}\,\d x
+ \int_\Omega \rho(\varphi_1)(m(\varphi_1)-m(\varphi_2))(\nabla \mu_2\cdot\nabla)\vv_2\cdot\bm{V}\,\d x
\notag\\
&\quad + \int_\Omega (\rho(\varphi_1)-\rho(\varphi_2))m(\varphi_2)(\nabla \mu_2\cdot\nabla)\vv_2\cdot\bm{V}\,\d x
\notag\\
&=: Z_{4}^{(1)}+ Z_{4}^{(2)}+Z_{4}^{(3)}.
\notag
\end{align*}
We note that
\begin{align}
\|\nabla \Upsilon\|
&\leq \|\nabla \Delta\Phi \|+ \|\nabla (\Psi'(\varphi_1)-\Psi'(\varphi_2))\|+ \|\nabla (\beta'(\varphi_1) \sigma_1-\beta'(\varphi_2) \sigma_2)\|
\notag\\
&\leq \|\nabla \Delta \Phi \| + \|\Psi''(\varphi_1)\|_{L^\infty(\Omega)}\|\nabla \Phi\|
\notag\\
&\quad +\left\|\int_0^1\Psi'''(s\varphi_1+(1-s)\varphi_2)\Phi\,\d s\right\|_{\L^4(\Omega)}\|\nabla \varphi_2\|_{\L^4(\Omega)}
\notag\\
&\quad +\|\beta'(\varphi_1)\|_{L^\infty(\Omega)}\|\nabla \Sigma\|
+ \left\|\int_0^1\beta''(s\varphi_1+(1-s)\varphi_2)\Phi\,\d s\right\|_{\L^4(\Omega)}\|\nabla \sigma_2\|_{\L^4(\Omega)}
\notag\\
&\quad + \|\beta''(\varphi_1)\|_{L^\infty(\Omega)}\|\nabla \varphi_1\|_{\L^4(\Omega)}\|\Sigma\|_{L^4(\Omega)}
+ \|\beta''(\varphi_1)\|_{L^\infty(\Omega)}\|\nabla \Phi\|\|\sigma_2\|_{L^\infty(\Omega)}
\notag\\
&\quad + \left\|\int_0^1\beta'''(s\varphi_1+(1-s)\varphi_2)\Phi\,\d s\right\|_{\L^4(\Omega)} \|\nabla \varphi_2\|_{\L^4(\Omega)}\|\sigma_2\|_{L^\infty(\Omega)}
\notag\\
&\leq \|\nabla \Delta\Phi \| + C\Big(1+\|\sigma_2\|_{H^2(\Omega)}^\frac12\Big)\|\Phi\|_{H^1(\Omega)} + C_*\|\nabla\Sigma\| + C\|\Sigma\|,
\label{dif-mu}
\end{align}
where
$$C_*=\max_{r\in [-1,1]}|\beta'(r)|+1,$$
and the positive constant $C$ depends on the strict separation property of $\varphi_1$, $\varphi_2$ on $[0,T]$, $\|\varphi_1\|_{L^\infty(0,T;H^3(\Omega))}$, $\|\varphi_2\|_{L^\infty(0,T;H^3(\Omega))}$, $\|\sigma_2\|_{L^\infty(0,T;H^1(\Omega))}$.
Hence, it holds
\begin{align*}
 |Z_4^{(1)}|
 &\leq \|\rho(\varphi_1)\|_{L^\infty(\Omega)}\|m(\varphi_1)\|_{L^\infty(\Omega)} \|\nabla\Upsilon\| \|\nabla\vv_2\|_{\L^4(\Omega)} \|\bm{V}\|_{\L^4(\Omega)}
 \notag\\
 &\leq  C\|\nabla \Upsilon\| \|\vv_2\|_{\H^2(\Omega)}^\frac12 \|\bm{V}\|^\frac12 \|\nabla \bm{V}\|^\frac12
 \notag\\
 &\leq \frac{\nu_*}{6}\|D\bm{V}\|^2
 + \frac{m_*}{12} \|\nabla \Delta \Phi\|^2
 + \frac{1}{2}\|\nabla \Sigma\|^2
 +C \|\vv_2\|_{\H^2(\Omega)}^2\|\bm{V}\|^2
 \notag\\
 &\quad +C\big(1+\|\sigma_2\|_{H^2(\Omega)}\big) \|\Phi\|_{H^1(\Omega)}^2
 + C\|\Sigma\|^2,
\end{align*}
where $C>0$ also depends on $\|\vv_2\|_{L^\infty(0,T;\H^1(\Omega))}$.
Next, we infer from Agmon's inequality that
\begin{align*}
  |Z_4^{(2)}| &\leq  \|\rho(\varphi_1)\|_{L^\infty(\Omega)} \left\|\int_0^1m'(s\varphi_1+(1-s)\varphi_2)\Phi\,\d s\right\|_{L^\infty(\Omega)}\|\nabla \mu_2\|\|\nabla \vv_2\|_{\L^4(\Omega)}\|\bm{V}\|_{\L^4(\Omega)}
  \notag\\
  &\leq C \big(\|\Phi\|_{H^1(\Omega)}^\frac34 \|\nabla \Delta \Phi\|^\frac14
  + \|\Phi\|_{H^1(\Omega)}\big) \|\vv_2\|_{\H^2(\Omega)}^\frac12 \|\bm{V}\|^\frac12 \|\nabla\bm{V}\|^\frac12
  \notag\\
  &\leq \frac{\nu_*}{6}\|D\bm{V}\|^2
  + \frac{m_*}{12} \|\nabla \Delta \Phi\|^2
  + C \|\vv_2\|_{\H^2(\Omega)}^2\|\bm{V}\|^2+ C\|\Phi\|_{H^1(\Omega)}^2,
  \notag
\end{align*}
and
\begin{align*}
|Z_4^{(3)}|&\leq   \|m(\varphi_2)\|_{L^\infty(\Omega)} \left\|\int_0^1\rho'(s\varphi_1+(1-s)\varphi_2)\Phi\,\d s\right\|_{L^\infty(\Omega)}\|\nabla \mu_2\|\|\nabla \vv_2\|_{\L^4(\Omega)}\|\bm{V}\|_{\L^4(\Omega)}
  \notag\\
   &\leq \frac{\nu_*}{6}\|D\bm{V}\|^2
   + \frac{m_*}{12} \|\nabla \Delta \Phi\|^2
  + C \|\vv_2\|_{\H^2(\Omega)}^2 \|\bm{V}\|^2
  + C\|\Phi\|_{H^1(\Omega)}^2.
  \notag
\end{align*}
For $Z_7$, we deduce that
\begin{align*}
|Z_7|&\leq \|\beta'(\varphi_1)\|_{L^\infty(\Omega)}
\|\nabla \varphi_1\|_{\L^4(\Omega)}\|\Sigma\|_{L^4(\Omega)} \|\bm{V}\|
+ \|\beta'(\varphi_1)\|_{L^\infty(\Omega)} \|\nabla \Phi\| \|\sigma_2\|_{L^\infty(\Omega)} \|\bm{V}\|
\notag\\
&\quad + \left\|\int_0^1\beta''(s\varphi_1+(1-s)\varphi_2)\Phi\,\d s\right\|_{L^4(\Omega)}
\|\nabla \varphi_2\| \|\sigma_2\|_{L^\infty(\Omega)} \|\bm{V}\|
\notag\\
&\leq \frac{1}{2}\|\nabla \Sigma\|^2
+ C\|\bm{V}\|^2
+ C \|\sigma_2\|_{H^2(\Omega)} \|\Phi\|_{H^1(\Omega)}^2
 + C\|\Sigma\|^2.
\end{align*}
Testing the third equation in \eqref{di-NSCH} by $-\Delta \Phi$, after integration by parts, we get
\begin{align}
&  \frac12\frac{\d}{\d t} \|\nabla \Phi\|^2 +\int_\Omega m(\varphi_1)|\nabla \Delta \Phi|^2\,\d x
\notag\\
&\quad = -\int_\Omega \nabla (\vv_1\cdot \nabla \Phi)\cdot \nabla  \Phi\, \d x
- \int_\Omega \nabla (\bm{V}\cdot\nabla \varphi_2)\cdot \nabla \Phi\, \d x
\notag\\
&\qquad
+  \int_\Omega  m(\varphi_1) \nabla (\Psi'(\varphi_1)-\Psi'(\varphi_2))
\cdot \nabla \Delta \Phi\, \d x
\notag\\
&\qquad
+ \int_\Omega m(\varphi_1) \nabla (\beta'(\varphi_1) \sigma_1-\beta'(\varphi_2) \sigma_2)\cdot \nabla \Delta \Phi\,\d x
\notag\\
&\qquad
 + \int_\Omega (m(\varphi_1)-m(\varphi_2))\nabla \mu_2\cdot \nabla \Delta \Phi\, \d x
\notag\\
&\quad =:\sum_{i=8}^{12} Z_i.
\label{di-vphi}
\end{align}
We recall that $Z_8$, $Z_9$ can be estimated in the same way as in \cite[Section 6]{Gior2021} such that
\begin{align}
|Z_8|+|Z_9|
&\leq
\frac{\nu_*}{6}\|D\bm{V}\|^2
+ \frac{m_*}{12} \|\nabla \Delta \Phi\|^2
+ C\|\Phi\|_{H^1(\Omega)}^2.
\notag
\end{align}
Using a similar argument for \eqref{dif-mu} together with Young's inequality, we have
\begin{align}
|Z_{10}|+|Z_{11}|
&
\leq \|m(\varphi_1)\|_{L^\infty(\Omega)} \big(\|\nabla (\Psi'(\varphi_1)-\Psi'(\varphi_2))\|
  + \|\nabla (\beta'(\varphi_1) \sigma_1-\beta'(\varphi_2) \sigma_2)\|\big)\|\nabla \Delta \Phi\|
\notag\\
&\quad \leq \frac{m_*}{12} \|\nabla \Delta \Phi\|^2
+ C_{**}\|\nabla \Sigma\|^2
+ C\big(1+\|\sigma_2\|_{H^2(\Omega)}\big) \|\Phi\|_{H^1(\Omega)}^2 + C\|\Sigma\|^2,
\notag
\end{align}
where
$$
C_{**}=\frac{8(m^*C_*)^2}{m_*}.
$$
Concerning $Z_{12}$, it holds
\begin{align}
 |Z_{12}|&\leq   \left\|\int_0^1 m'(s\varphi_1+(1-s)\varphi_2)\Phi\,\d s\right\|_{L^\infty(\Omega)}\|\nabla \mu_2\|\|\nabla \Delta \Phi\|
 \notag\\
 &\leq C\|\Phi\|_{H^2(\Omega)}\|\nabla \Delta \Phi\|
 \notag\\
 &\leq C\|\Phi\|_{H^1(\Omega)}^\frac12\big(\|\Phi\|_{H^1(\Omega)}+ \|\nabla \Delta \Phi\|\big)^\frac12 \|\nabla \Delta \Phi\|
 \notag\\
 &\leq  \frac{m_*}{12} \|\nabla \Delta \Phi\|^2
 + C\|\Phi\|_{H^1(\Omega)}^2.
 \notag
\end{align}
Testing the fifth equation in \eqref{di-NSCH} with $ \Sigma$, after integration by parts, we deduce that
\begin{align}
\frac{1}{2} \frac{\mathrm{d}}{\mathrm{d} t}\|\Sigma\|^2 + \|\nabla \Sigma \|^2
& = - \int_{\Omega} (\bm{V}  \cdot \nabla \sigma_2)  \Sigma \, \mathrm{d} x
-\int_\Omega \big(\sigma_1\nabla \beta(\varphi_1)-\sigma_2\nabla\beta(\varphi_2)\big)\cdot \nabla \Sigma\,\d x
\notag\\
&=: Z_{13}+Z_{14}.
\label{dif-sig}
\end{align}
We observe that
\begin{align}
|Z_{13}|& \le \|\bm{V}\|_{\L^4(\Omega)}\|\nabla\sigma_2\| \|\Sigma\|_{L^4(\Omega)}
\notag\\
&\leq C\|\bm{V}\|^\frac12 \|\nabla \bm{V}\|^\frac12 \|\Sigma\|^\frac12 \|\Sigma\|_{H^1(\Omega)}^\frac12
\notag\\
&\leq \frac{\nu_*}{24(1+C_{**})}\|D\bm{V}\|^2
+ \frac{1}{4}\|\nabla \Sigma\|^2
+ C\|\bm{V}\|^2 +C\|\Sigma\|^2,
\notag
\end{align}
and
\begin{align*}
|Z_{14}|&\leq \|\beta'(\varphi_1)\|_{L^\infty(\Omega)}
\|\nabla \varphi_1\|_{\L^4(\Omega)}\|\Sigma\|_{L^4(\Omega)} \|\nabla \Sigma\|
+ \|\beta'(\varphi_1)\|_{L^\infty(\Omega)}\|\nabla \Phi\| \|\sigma_2\|_{L^\infty(\Omega)}
\|\nabla \Sigma\|
\notag\\
&\quad + \left\|\int_0^1\beta''(s\varphi_1+(1-s)\varphi_2)\Phi\,\d s\right\|_{L^4(\Omega)}
\|\nabla \varphi_2\|\|\sigma_2\|_{L^\infty(\Omega)}\|\nabla \Sigma\|
\notag\\
&\leq \frac{1}{4}\|\nabla \Sigma\|^2
+ C \|\sigma_2\|_{H^2(\Omega)} \|\Phi\|_{H^1(\Omega)}^2
 + C\|\Sigma\|^2.
\end{align*}

Collecting the above estimates, multiplying \eqref{dif-sig} by $4(1+C_{**})$ and adding the resultant with \eqref{di-NS}, \eqref{di-vphi}, we obtain
\begin{align}
&\frac{\d}{\d t} \left(\int_\Omega \rho(\varphi_1) |\bm{V}|^2\,\d x + \|\nabla \Phi\|^2 + |\overline{\Phi}|^2 + 4(1+C_{**})\|\Sigma\|^2\right)
\notag\\
&\qquad + 2\nu_*\|D \bm{V}\|^2
+  m_* \|\nabla \Delta \Phi\|^2
+ 2(1+C_{**})\|\nabla \Sigma\|^2
\notag\\
&\quad \leq C\mathcal{F}(t)\left(\int_\Omega \rho(\varphi_1) |\bm{V}|^2\,\d x
+ \|\nabla \Phi\|^2
+ |\overline{\Phi}|^2
+ 4(1+C_{**})\|\Sigma\|^2\right),
\label{conti-1}
\end{align}
for almost all $t\in (0,T)$, where
$$
\mathcal{F}(t)=1 + \|\partial_t\vv_2(t)\|^2
+ \|\vv_2(t)\|^2_{\H^2(\Omega)} +\|\sigma_2(t)\|_{H^2(\Omega)}^2.
$$
We note that the constant $C>0$ in \eqref{conti-1} depends on the strict separation property of $\varphi_1$, $\varphi_2$ on $[0,T]$, $\|\varphi_1\|_{L^\infty(0,T;H^3(\Omega))}$, $\|\varphi_2\|_{L^\infty(0,T;H^3(\Omega))}$, $\|\sigma_2\|_{L^\infty(0,T;H^1(\Omega))}$, $\rho_*$, $\rho^*$, $\nu_*$, $m_*$, $m^*$, $\beta^*$, $C_*$, $\Omega$ and $T$. In the above derivation, we have also used the mass conservation $\overline{\Phi(t)}= \overline{\Phi_0}$ and the Poincar\'e--Wirtinger inequality. Since $\mathcal{F} \in L^1(0, T)$, if
$(\vv_{0,1}, \varphi_{0,1}, \sigma_{0,1})=(\vv_{0,2}, \varphi_{0,2}, \sigma_{0,2})$, the conclusion $(\vv_1,\varphi_1,\sigma_1)=(\vv_2,\varphi_2,\sigma_2)$ on $[0, T]$ is a direct consequence of Gronwall's lemma. Hence, the global strong solution to problem \eqref{NSCH}--\eqref{NSCH-bic} is unique.

The proof of Theorem \ref{Reg-SOL} is complete.
\hfill $\square$

\section{Propagation of Regularity for Global Weak Solutions}
\label{Propa-reg}
\setcounter{equation}{0}

In this section, we prove Theorem \ref{regw} on the propagation of regularity for global weak solutions.

The proof is based on a bootstrap-type argument in the sprit of Section \ref{exe-glo-strong}. Let $(\vv^*, \varphi^*,\mu^*,\sigma^*)$ be a global weak solution given by Theorem \ref{WEAK-SOL}-(2). Assume that $\tau\in (0,1)$ is an arbitrary but fixed time.
\smallskip

\textbf{Step 1. Global regularity of $\sigma^*$.}
Since $\sigma^* \in  L^2(0,1; H^1(\Omega))$, there exists $\tau_1\in(0,\tau)$ such that $\sigma^*(\tau_1) \in H^1(\Omega)$ with $\sigma^*(\tau_1)\geq 0$ almost everywhere in $\Omega$. At the same time, we have the regularity properties
\begin{align*}
&\vv^* \in L^{\infty}(0,\infty;\L^2_{\sigma}(\Omega))\cap L^2(0,\infty;\H^1_{0,\sigma}(\Omega)),
\\
&\varphi^* \in L^\infty(0,\infty;H^1(\Omega))\cap L^2_{\mathrm{uloc}}([0,\infty);W^{2,q}(\Omega)),
\end{align*}
for any $q\geq 2$.
Hence, taking $\sigma^*(\tau_1)$ as the new initial datum, we infer from Proposition \ref{sigma-strong} and the argument in Section \ref{exe-glo-strong} that problem
\eqref{re-di-sigma}--\eqref{re-di-sigma-bdini} with $\vv=\vv^*$, $\varphi=\varphi^*$ admits a unique global strong solution denoted by $\sigma^\natural$ on $[\tau_1,\infty)$ such that
\begin{equation}
\label{REG1-b}
\begin{split}
\sigma^\natural \in L^{\infty}(\tau_1,\infty;H^1(\Omega)) \cap L^2_{\mathrm{uloc}}([\tau_1,\infty); H^2(\Omega))\cap H^1_{\mathrm{uloc}}([\tau_1,\infty);L^2(\Omega)).
\end{split}
\end{equation}
Since $\sigma^*$ is a global weak solution to the same problem on $[\tau_1,\infty)$, then by the uniqueness of weak solutions due to Proposition \ref{sigma-weak}, we find $\sigma^*=\sigma^\natural$, which establishes the regularization of $\sigma^*$ for $t\geq \tau_1$.
\smallskip

\textbf{Step 2. Global regularity of $(\varphi^*,\mu^*)$.}
Since
\begin{align*}
&\varphi^* \in L^\infty(0,1;H^1(\Omega))\cap L^2(0,1; H_N^2(\Omega)),\quad
\mu^*\in L^2(0,1; H^1(\Omega)),
\\
&\sigma^*\in L^\infty(0,1;L^2(\Omega))\cap L^2(0,1;H^1(\Omega)),
\end{align*}
then applying \eqref{es-beta-sigH1} and \eqref{Dephi-L2} to $(\varphi^*,\sigma^*)$, we find $\|\beta'(\varphi^*)\sigma^*\|_{L^2(0,1;H^1(\Omega))}$ is bounded as well. As a result, it holds $-\Delta \varphi^*+\Psi'(\varphi^*)\in L^2(0,1;H^1(\Omega))$. Hence, there exist $\tau_2\in (\tau_1,\tau)$ such that $\varphi^*(\tau_2) \in H^2_N(\Omega)$ with
	$\| \varphi^*(\tau_2)\|_{L^\infty(\Omega)}\leq 1$, $\big|\overline{\varphi^*(\tau_2)}\big|<1$, $\mu^*_{\tau_2}:=-\Delta \varphi^*(\tau_2)+\Psi'(\varphi^*(\tau_2)) \in H^1(\Omega)$ and $\sigma^*(\tau_2)\in H^1(\Omega)$.

Consider problem \eqref{CH}--\eqref{bcic}, with $\varphi^*(\tau_2)$ being the new initial datum and $\vv=\vv^*$, $\sigma=\sigma^*$. Thanks to the fact $\sigma^*=\sigma^\natural$, the regularity \eqref{REG1-b} and the Sobolev embedding theorem, we can conclude  that
$$
\sigma^* \in L^{\infty}(\tau_2,\infty;L^q(\Omega))\cap H^1_{\mathrm{uloc}}([\tau_2,\infty);L^2(\Omega)),
$$ for any $q\geq 2$.
Applying Proposition \ref{CH-strong} and the argument in Section \ref{exe-glo-strong}, we see that problem \eqref{CH}--\eqref{bcic} admits a unique strong solution denoted by  $(\varphi^\natural,\mu^\natural)$ on $[\tau_2,\infty)$ such that
\begin{equation}
\label{REG-b}
\begin{split}
&\varphi^\natural \in L^\infty(\tau_2,\infty;W^{2,q}(\Omega)), \quad \partial_t \varphi^\natural \in L^2_{\mathrm{uloc}}([\tau_2,\infty);H^1(\Omega)),\\
&\mu^\natural \in L^{\infty}(\tau_2,\infty; H^1(\Omega))\cap L^2_{\mathrm{uloc}}([\tau_2,\infty);H^3(\Omega)),
\quad\Psi'(\varphi^\natural) \in L^\infty(\tau_2,\infty; L^q(\Omega)),
\end{split}
\end{equation}
for any $q\geq 2$. Since $(\varphi^*,\mu^*)$ is a global weak solution to the same problem on $[\tau_2,\infty)$, then by the uniqueness of weak solutions due to Proposition \ref{well-pos}, we find $(\varphi^*,\mu^*)=(\varphi^\natural,\mu^\natural)$ on $[\tau_2,\infty)$, which establishes the regularization of $(\varphi^*,\mu^*)$ for $t\geq \tau_2$.

Next, we establish the strict separation property of $\varphi^*$. At the initial time $\tau_2$, since $\mu^*({\tau_2})\in H^1(\Omega)$ and $\beta'(\varphi^*(\tau_2))\sigma^*(\tau_2)\in H^1(\Omega)$ (owing to the facts $\varphi^*(\tau_2)\in H^2(\Omega)$, $\sigma^*(\tau_2)\in H^1(\Omega)$), by \cite[Lemma 7.4]{GiGrWu2018}, we find $\|\Psi_0''(\varphi^*(\tau_2))\|_{L^q(\Omega)} \leq C(q)$ for any $q\in [2,\infty)$.
Then applying \cite[Lemma 3.2]{HW}, we can conclude that $\Psi_0'(\varphi^*(\tau_2)) \in W^{1,q}(\Omega)$ for any $q\in[2,\infty)$, which further implies $\Psi_0'(\varphi^*(\tau_2))\in L^\infty(\Omega)$. Hence, there exists a constant $\delta(\tau_2)\in (0,1)$ such that
\begin{equation}
\|\varphi^*(\tau_2)\|_{L^{\infty}(\Omega)} \leq 1-\delta(\tau_2).
\label{inisep}
\end{equation}
By the same reasoning, we deduce from the facts $$\varphi^* \in L^\infty(\tau_2,\infty;W^{2,q}(\Omega)),\quad \mu^* \in L^{\infty}(\tau_2,\infty; H^1(\Omega)),\quad \sigma^* \in L^{\infty}(\tau_2,\infty;H^1(\Omega))
$$
that
there exists a constant $\delta_2(\tau_2)\in (0,\delta(\tau_2)]$ such that
\begin{equation}
\|\varphi^*(t)\|_{L^{\infty}(\Omega)} \leq 1-\delta_2(\tau_2),\quad
\forall\, t\geq \tau_2.
\label{T-sep}
\end{equation}
Finally, the strict separation property \eqref{T-sep} combined with the elliptic estimate for \eqref{CH}$_2$ further implies that $
\varphi^*\in L^\infty(\tau_2,\infty;H^3(\Omega))$.
\smallskip

\textbf{Step 3. Global regularity of $\vv^*$.}
Since $\vv^*\in L^2(0,1;\bm{H}^1_{0,\sigma}(\Omega))$, there exists some $\tau_3\in (\tau_2,\tau)$ such that $\vv(\tau_3)\in \bm{H}^1_{0,\sigma}(\Omega)$.  From Proposition \ref{exe-NS-for}-(2), we find that problem \eqref{NS-for}--\eqref{NS-for-bic} with $\uu=\vv^*$, $\varphi=\varphi^*$, $\mu=\mu^*$, $\sigma=\sigma^*$ (note that $(\varphi^*, \mu^*, \sigma^*)= (\varphi^\natural, \mu^\natural, \sigma^\natural)$  is actually a strong solution that satisfies the improved estimates obtained above) and the initial datum $\vv(\tau_3)$ admits a unique global strong solution on $[\tau_3,\infty)$, which is denoted by $\vv^\natural$. Since $\vv^*$ is a global weak solution to the same problem, then by the weak-strong uniqueness result again, we find $\vv^*=\vv^\natural$ on $[\tau_3,\infty)$.
\smallskip

Since $\tau\in (0,1)$ is arbitrary, we can conclude that every global weak solution to problem \eqref{NSCH}--\eqref{NSCH-bic} becomes a global strong one for $t>0$.

The proof of Theorem \ref{regw} is complete.
\hfill $\square$

\appendix

\section{Appendix}
\setcounter{equation}{0}
\subsection{Proof of Lemma \ref{NSSa}}
\label{appen-1}
In the appendix, we sketch the proof of Lemma \ref{NSSa}.

\begin{proof}[Proof of Lemma \ref{NSSa}] In view of Equation \eqref{g4.d}, we find that $\mu^k$ (i.e., $\{c_i^{k}\}_{i=1}^{k}$) can be uniquely determined by $(\varphi^k, \sigma^k)$. Hence, problem \eqref{aatest3.c}--\eqref{aatest3.cini} can be reduced to a system with $2k$ nonlinear ordinary differential equations for the time-dependent coefficients $\{a_{i}^{k}\}_{i=1}^{k}$ (by taking $\bm{\zeta}=\bm{y}_i$, $i=1,\cdots,k$) and $\{b_{i}^{k}\}_{i=1}^{k}$ (by taking $w=z_i$, $i=1,\cdots,k$). Since $\widehat{\rho}$ is bounded with a strictly positive lower bound, then the Gram matrix $\{(\widehat{\rho}(\varphi^k)\yy_i,\yy_j)\}_{i,j=1,\cdots,k}$ in the first term on the left-hand side of \eqref{aatest3.c} is non-degenerate. From the assumptions (H2)--(H4), the construction of $\Psi_\epsilon$  and the regularity property of $\sigma^k$, we can apply the Cauchy--Lipschitz theorem for nonlinear ODE systems to conclude the existence and uniqueness of local solutions $\{a_{i}^{k}\}_{i=1}^k$, $\{b_{i}^{k}\}_{i=1}^{k}\subset  C^1([0,T_k])$, which satisfy the above mentioned ODE system on a certain time interval $[0,T_k]\subset[0,T]$.
Therefore, we obtain a unique local solution $(\bm{v}^k,\varphi^k,\mu^k)$ to problem \eqref{aatest3.c}--\eqref{aatest3.cini} that satisfies
$$
\bm{v}^k\in C^1([0,T_{k}];\bm{Y}_{k}),
\quad
\varphi^k\in C^1([0,T_{k}];Z_{k}),
\quad
\mu^k\in C^1([0,T_{k}];Z_{k}).
$$

We proceed to show that the existence time $T_k$ can be extended to $T$.
Testing \eqref{aatest3.c} by $\bm{v}^{k}$, using the incompressibility condition and integration by parts, we have
\begin{align}
&\frac12 \frac{\mathrm{d}}{\mathrm{d}t}\big(\widehat{\rho}(\varphi^k) \bm{v}^k, \bm{v}^k\big) +\int_{\Omega}2\nu(\varphi^{k} )|D\bm{v}^{k}|^2 \, \mathrm{d}x + \gamma \int_\Omega |\nabla \vv^k|^4 \, \mathrm{d}x
 \notag \\
 &\quad  = \big( \mu^{k}\nabla \varphi^{k},\bm{v}^{k}\big)
-\big(\beta'(\varphi^k)\sigma^{k}\nabla \varphi^{k},\bm{v}^{k}\big),
\label{aenergy}
\end{align}
for any $t\in(0,T_k)$. Next, testing \eqref{g1.a} by $\mu^{k}$, we find
\begin{align}
& \frac{\mathrm{d}}{\mathrm{d}t}
\left(\frac{1}{2}\|\nabla \varphi^{k}\|^2+ \int_\Omega  \Psi_\epsilon(\varphi^{k})\, \mathrm{d}x
  \right)
 + \int_\Omega m(\varphi^k)|\nabla \mu^{k}|^2\,\mathrm{d}x
\notag \\
&\quad  = -\big(\bm{v}^{k} \cdot \nabla \varphi^{k},\mu^{k}\big)
-\big( \partial_t\beta(\varphi^{k}) \sigma^{k}\big),
\label{menergy1}
\end{align}
for any $t\in(0,T_k)$.
 We note that the first terms on the right-hand side of \eqref{aenergy} and \eqref{menergy1} cancel each other.
By H\"{o}lder's inequality and the Poincar\'{e}--Wirtinger inequality, we find
\begin{align}
\left|\big(\beta'(\varphi^k) \sigma^{k} \nabla \varphi^{k},\bm{v}^{k}\big)\right|
&  \le  \| \beta'(\varphi^k) \|_{L^\infty(\Omega)} \| \sigma^{k}\|  \|\nabla\varphi^{k}\|_{\bm{L}^{\infty}(\Omega)}\|\bm{v}^{k}\|
\notag \\
&  \le  C  \| \sigma^{k}\|  \| \varphi^{k}\|_{H^3(\Omega)}  \|\bm{v}^{k}\|
\notag \\
& \le C_k\| \sigma^{k}\| \big( \|\nabla \varphi^{k}\|^2 +\|\bm{v}^{k}\|^2 +1 \big).
\label{me-a}
\end{align}
Concerning the second term on the right-hand side of \eqref{menergy1}, we take
$\xi=\partial_t\varphi^k$ in \eqref{g1.a} and apply the same argument as in \cite[Appendix 2]{GHW1} to obtain
\begin{align}
\|\partial_t\varphi^k\|
 \leq C_k(\|\bm{v}^{k}\| \|\varphi^{k}\| + \|\nabla \mu^{k}\|).
\notag
\end{align}
Then, by Young's inequality and the assumption (H4), we have
\begin{align}
\left|\int_\Omega \partial_t\beta(\varphi^{k}) \sigma^{k}\, \mathrm{d}x\right|
& \le \| \beta'(\varphi^k) \|_{L^\infty(\Omega)}\|\partial_t\varphi^{k}\| \|\sigma^{k}\|
\notag \\
&\leq \frac{m_*}{2}\|\nabla \mu^k\|^2
+C_k  \|\sigma^k\|^2 + C_k \|\bm{v}^{k}\|  \|\varphi^{k}\|\|\sigma^k\|.
\label{me-b}
\end{align}
Adding \eqref{aenergy} with \eqref{menergy1}, we infer from \eqref{me-a}, \eqref{me-b} and the Poincar\'{e}--Wirtinger inequality  that
\begin{align}
& \frac{\mathrm{d}}{\mathrm{d}t}
\left(\frac12\big(\widehat{\rho}(\varphi^k) \bm{v}^k, \bm{v}^k\big)
+\frac{1}{2}\|\nabla \varphi^{k}\|^2
+\int_\Omega \Psi_\epsilon (\varphi^{k})\, \mathrm{d}x
  \right)
+2\nu_*\int_{\Omega} |D\bm{v}^{k}|^2 \, \mathrm{d}x
\notag\\
&\qquad
+\gamma \int_\Omega |\nabla \vv^k|^4 \, \mathrm{d}x
+ \frac{m_*}{2}\|\nabla  \mu^{k}\|^2
\notag \\
&\quad  \le C_{1,k}(1+\|\sigma^{k}\|^2)\left(\frac12\big(\widehat{\rho}(\varphi^k) \bm{v}^k, \bm{v}^k\big)
+\frac{1}{2}\|\nabla \varphi^{k}\|^2
+\int_\Omega \Psi_\epsilon(\varphi^{k})\, \mathrm{d}x
\right)
	+C_{1,k}(1+\|\sigma^{k}\|^2).
\label{menergy2}
\end{align}
Moreover, the initial value of the energy can be controlled as follows
\begin{align}
&\int_{\Omega}\left[\frac{1}{2}\widehat{\rho}(\bm{P}_{Z_{k}}\varphi_{0,\gamma})|\bm{P}_{\bm{Y}_{k}} \bm{v}_{0}|^{2}
+\frac{1}{2}|\nabla \bm{P}_{Z_{k}}\varphi_{0}|^{2}
+  \Psi_\epsilon(\bm{P}_{Z_{k}}\varphi_{0})
 \right] \mathrm{d}x
\notag\\
&\quad \le C\Big(\rho^*,\|  \bm{v}_{0}\|, \|\varphi_{0}\|_{H^1(\Omega)}, \max_{r\in[-1,1]}|\Psi_0(r)|, \Omega\Big)
 =: C_0.
\label{iniC0}
\end{align}
Exploiting the convexity of $\Psi_{0,\epsilon}$, we have
\be
\frac{1}{2}\|\nabla \varphi^{k}\|^2
+\int_\Omega \Psi_\epsilon(\varphi^{k})\, \mathrm{d}x
\ge \frac{1}{2}\|\varphi^{k}\|^2-C_{0,k},
\label{me2}
\ee
for some $C_{0,k}\geq 0$.
Hence, applying Gronwall's lemma to \eqref{menergy2}, we deduce from \eqref{me2} that
\be
\begin{aligned}
&  \rho_*\|\bm{v}^k(t)\|^2 + \|\varphi^{k}(t)\|^2
\le 2 \mathrm{e}^{N_1(t)}\big( C_{0} + N_1(t)\big)
+ 2C_{0,k}=:N_2(t),
\quad \forall\, t\in [0,T_k],
\label{auvm1}
\end{aligned}
\ee
 with
\be
\begin{aligned}
N_1(t)&=t C_{1,k}\Big(1+ \sup_{t\in[0,T]} \|\sigma^{k}(t)\|^2\Big).
\notag
\end{aligned}
\ee
Thanks to \eqref{auvm1}, a further integration in time of \eqref{menergy2} gives
\begin{equation}
\int_0^t \| \nabla  \mu^k(s)\|^2 \, \mathrm{d}s \leq
C_0 + C_{0,k} + N_1(t)N_2(t) + N_1(t)=:N_3(t), \quad \forall \, t \in [0,T_k].\notag
\end{equation}
Based on the above estimates and Lemma \ref{fp},
we can extend the unique local solution $(\bm{v}^k,\varphi^k)$ to the whole interval $[0,T]$,
with the same estimate as \eqref{auvm1}.
Using \eqref{R-app}, we can rewrite \eqref{aatest3.c} as
\begin{align}
	& \big(\widehat{\rho}(\varphi^k) \partial_t \bm{v}^k, \bm{\zeta}\big)
	+ \big(  (( \widehat{\rho}(\varphi^k) \vv^k + \widehat{\J}^k)\cdot\nabla )\vv^k, \bm{ \zeta}\big)
    +\big(  2\nu(\varphi^{k}) D\bm{v}^k, D\bm{\zeta}\big)
    +\gamma(|\nabla \vv^k|^2\nabla\vv^k,\nabla \bm{\zeta}\big)
    \notag \\
&\quad =\big((\mu^{k}-\beta'(\varphi^k)\sigma^{k})\nabla \varphi^{k},\bm {\zeta}\big)- \frac12 \big(\widehat{R}^k\vv^k,\bm{\zeta}\big)
\notag\\
&\qquad - \frac12\big(\widehat{\rho}'(\varphi^k)(\bm{I}-\bm{P}_{Z_{k}}) \big((\bm{v}^k\cdot\nabla\varphi^k)-\div \big(m(\varphi^k)\nabla \mu^k\big)\big)\bm{v}^k,\bm{\zeta}\big).
	\label{aatest3.c-b}
\end{align}
Testing \eqref{aatest3.c-b} by $\partial_t \bm{v}^k$, we find
\begin{align*}
&\int_0^T \| \partial_t \bm{v}^k(s)\|^2 \, \mathrm{d}s
\\
&\quad  \leq \frac{C_k\max\{\rho^*,1\}}{\rho_*} \int_0^T \left(
\| \bm{v}^k(s)\|^4 +
\| \nabla  \mu^k(s)\|^2  \| \bm{v}^k(s)\|^2
+ \| \bm{v}^k(s)\|^2 +\| \bm{v}^k(s)\|^6 \right) \mathrm{d}s
\\
&\qquad  + \frac{C_k\max\{\rho^*,1\}}{\rho_*} \int_0^T \left(
\big(\|\nabla \mu^k\|^2+\| \sigma^k(s)\|^2\big) \| \varphi^k(s)\|^2
+ \| \nabla  \mu^k(s)\|^2 \| \varphi^k(s)\|^2\| \bm{v}^k(s)\|^2 \right) \mathrm{d}s
\\
&\qquad  + \frac{C_k\max\{\rho^*,1\}}{\rho_*} \int_0^T
\left( \| \vv^k(s)\|^2 \| \varphi^k(s)\|^2
+ \| \nabla  \mu^k(s)\|^2  \|\bm{v}^k(s)\|^2 \right) \mathrm{d}s
\\
&\quad \leq C_k \left( T N^2_2(T) + N_3(T) N_2(T) + TN_2(T) + T N^3_2(T)+ T N_2(T) \sup_{t\in [0,T]}\|\sigma^{k}(t)\|^2  \right)
\notag\\
&\qquad + C_k \left( N_3(T) N_2(T)^2 + N_2(T)^2 + N_3(T) N_2(T)\right).
\end{align*}
In a similar manner, we can conclude from \eqref{g1.a} that
\begin{align*}
\int_0^T \| \partial_t \varphi^k(s)\|^2 \, \mathrm{d}s
& \leq C_k \int_0^T \left( \| \bm{v}^k(s)\|^2 \| \varphi^k(s)\|^2
+\| \nabla  \mu^k(s)\|^2 \right) \mathrm{d}s
\\
&\leq C_k \big(  T N^2_2(T) + N_3(T) \big).
\end{align*}
Hence,  $(\bm{v}^k,\varphi^k)$ is bounded in $H^1(0,T;\bm{Y}_{k})\times H^1(0,T;Z_{k})$.

The proof of Lemma \ref{NSSa} is complete.
\end{proof}

\section*{Declarations}
\noindent \textbf{Acknowledgments.}
Part of this work was conducted during A. Giorgini’s visit to Fudan University in 2023 supported by the ``Fudan Fellow'' program. J.-N. He acknowledges the support by the RFS Grant from the Research Grants Council (Project P0047825, PI: Prof. Xianpeng Hu) for her research stay in Research Center for Nonlinear Analysis, The Hong Kong Polytechnic University.
H. Wu is a member of Key Laboratory of Mathematics for Nonlinear Sciences (Fudan University), Ministry of Education of China.
\smallskip

\noindent \textbf{Funding.}
A. Giorgini was supported by the MUR grant Dipartimento di Eccellenza 2023--2027 of Dipartimento di Matematica, Politecnico di Milano and by GNAMPA (Gruppo Nazionale per l’Analisi Matematica, la Probabilit\`{a} e le loro Applicazioni) of INdAM (Istituto Nazionale di Alta Matematica).
J.-N. He was supported by Zhejiang Provincial Natural Science Foundation of China (Grant No. LQ24A010011) and the National Natural Science Foundation of China (Grant No. 12401251).
H. Wu was partially supported by Natural Science Foundation of Shanghai (No. 25ZR1401023).
\smallskip

\noindent \textbf{Competing interests.}
The authors have no relevant financial or non-financial interests to disclose.
\smallskip

\noindent \textbf{Data availability statement.}
Data sharing not applicable to this article as no datasets were generated or analyzed during the current study.



\begin{thebibliography}{99}
\itemsep=-1pt
	
\bibitem{Abels2009}
{\au H.~Abels},
{\ti On a diffuse interface model for two-phase flows of viscous,
incompressible fluids with matched densities},
{\jou Arch.\ Ration.\ Mech.\ Anal.}
\no{194}{463--506}{2009}


\bibitem{ADG2013}
{\au H.~Abels, D.~Depner, H.~Garcke},
{\ti Existence of weak solutions for a diffuse interface model for two-phase flows of incompressible fluids with different densities},
{\jou J.\ Math.\ Fluid\ Mech.}
\no{15}{453--480}{2013}

\bibitem{ADG2013-2}
{\au H.~Abels, D.~Depner, H.~Garcke},
{\ti On an incompressible {N}avier--{S}tokes/{C}ahn--{H}illiard system with degenerate mobility},
{\jou Ann.\ Inst.\ H.\ Poincar\'{e} Anal. Non Lin\'{e}aire}
\no{30}{1175--1190}{2013}

\bibitem{AGG2012}
{\au H.~Abels, H.~Garcke, G.~Gr\"{u}n},
{\ti Thermodynamically consistent, frame indifferent diffuse interface models for incompressible two-phase flows with different densities},
{\jou Math.\ Models Methods Appl.\ Sci.}
\no{22}{1150013}{2012}

\bibitem{AGG2023}
{\au H.~Abels, H.~Garcke, A.~Giorgini},
{\ti Global regularity and asymptotic stabilization for the incompressible Navier--Stokes--Cahn--Hilliard model with unmatched densities},
{\jou Math. Ann.}
\no{389}{1267--1321}{2024}

\bibitem{AGP2024}
{\au H.~Abels, H.~Garcke, A.~Poiatti},
{\ti Mathematical analysis of a diffuse interface modelfor multi-phase flows  of incompressible viscous fluids with different densities},
{\jou J. Math. Fluid Mech.}
\no{26}{article number 29}{2024}

\bibitem{ad2003}
R.~Adams, J.~Fournier,
\emph{Sobolev Spaces},
2nd ed., Elsevier, Amsterdam, San Diego, Oxford, London, 2003.

\bibitem{AS24}
{\au A.~Agosti, A.~Signori},
{\ti Analysis of a multi-species Cahn--Hilliard--Keller--Segel tumor growth model with chemotaxis and angiogenesis},
{\jou J. Differential Equations}
\no{403}{308--367}{2024}

\bibitem{Aki14}
{\au G. L.~Aki, W.~Dreyer, J.~Giesselmann, C.~Kraus},
{\ti A quasi-incompressible diffuse interface model with phase transition},
{\jou Math. Models Methods Appl. Sci.}
\no{24}{827--861}{2014}

\bibitem{Ali79}
{\au N.~Alikakos},
{\ti $L^p$ bounds of solutions of reaction-diffusion equations},
{\jou Comm. Partial Differential Equations}
\no{4}{827–868}{1979}

\bibitem{BS2012}
{\au J. W.~Barrett, E.~S\"{u}li},
{\ti Reflections on Dubinskii's nonlinear compact embedding theorem},
{\jou Publ. Inst. Math. (Beograd)}
\no{91}{95--110}{2012}

\bibitem{BLS2017}
{\au J. W.~Barrett, Y.~Lu, E.~S\"{u}li},
{\ti Existence of large-data finite-energy global weak solutions to a compressible Oldroyd-B model},
{\jou Commun. Math. Sci.}
\no{15}{1265--1323}{2017}

\bibitem{BBTW}
{\au N.~Bellomo, A.~Bellouquid, Y.~Tao, M.~Winkler},
{\ti Toward a mathematical theory of Keller--Segel models of pattern formation in biological tissues},
{\jou Math. Models Methods Appl. Sci.}
\no{25}{1663--1763}{2015}

\bibitem{Bo02}
{\au F.~Boyer},
{\ti A theoretical and numerical model for the study of incompressible mixture flows},
{\jou Comput. Fluids}
\no{31}{41--68}{2002}

\bibitem{Bo13}
{\au F.~Boyer, P.~Fabrie},
{\bk Mathematical Tools for the Study of the Incompressible Navier--Stokes
Equations and Related Models},
volume 183, Applied Mathematical Sciences,
Springer-Verlag, New York, 2013.

\bibitem{CH}
{\au J.~Cahn, J.~Hilliard},
{\ti Free energy of a nonuniform system I. Interfacial free energy},
{\jou J. Chem. Phys.}
\no{28}{258--267}{1958}

\bibitem{CGSS23}
{\au P.~Colli, G.~Gilardi, A.~Signori, J.~Sprekels},
{\ti Cahn--Hilliard--Brinkman model for tumor growth with possibly singular potentials},
{\jou Nonlinearity}
\no{36}{4470--4500}{2023}

\bibitem{CGSS25}
{\au P.~Colli, G.~Gilardi, A.~Signori, J.~Sprekels},
{\ti Solvability and optimal control of a multi-species Cahn--Hilliard--Keller--Segel tumor growth model},
{\jou ESAIM Control Optim. Calc. Var.},
online first (2025). DOI:10.1051/cocv/2025070.

\bibitem{CGGG}
{\au M.~Conti, P.~Galimberti, S.~Gatti, A.~Giorgini},
{\ti New results for the Cahn--Hilliard equation with non-degenerate mobility: well-posedness and longtime behavior},
{\jou Cal. Var.}
\no{64}{article number 87}{2025}

\bibitem{CG2020}
{\au M.~Conti, A.~Giorgini},
{\ti Well-posedness for the {B}rinkman--{C}ahn--{H}illiard system with unmatched viscosities},
{\jou J.\ Differential Equations}
\no{268}{6350--6384}{2020}

\bibitem{CL10}
{\au V. Cristini, J. S. Lowengrub},
{\bk Multiscale Modeling of Cancer: An Integrated Experimental
and Mathematical Modeling Approach},
Cambridge University Press, Leiden (2010).

\bibitem{DSS2007}
{\au H.~Ding, P.~D.~M.~Spelt, C.~Shu},
{\ti Diffuse interface model for incompressible two-phase flows with large density ratios},
{\jou J.\ Comput.\ Phys.}
\no{226}{2078--2095}{2007}

\bibitem{DD95}
{\au A. Debussche, L. Dettori},
{\ti On the Cahn--Hilliard equation with a logarithmic free energy},
{\jou Nonlinear Anal.}
\no{24}{1491--1514}{1995}

\bibitem{Dubin1965}
{\au J. A. Dubinskii},
{\ti Weak convergence for nonlinear elliptic and parabolic equations},
{\jou Mat. Sb. (N.S.)}
\no{67}{609--642}{1965}


\bibitem{EG19jde}
{\au M. Ebenbeck, H. Garcke},
{\ti Analysis of a Cahn--Hilliard--Brinkman model for tumor growth with
chemotaxis},
{\jou J. Differential Equations}
\no{266}{5998--6036}{2019}

\bibitem{Frigeri2016}
{\au S.~Frigeri},
{\ti Global existence of weak solutions for a nonlocal model for two-phase flows of incompressible fluids with unmatched densities},
{\jou Math. Models Methods Appl. Sci.}
\no{26}{1955--1993}{2016}

\bibitem{Frigeri2021}
{\au S.~Frigeri},
{\ti On a nonlocal {C}ahn--{H}illiard/{N}avier--{S}tokes system with degenerate mobility and singular potential for incompressible fluids with different densities},
{\jou Ann. Inst. H. Poincar\'{e} Anal. Non Lin\'{e}aire}
\no{38}{647--687}{2021}

\bibitem{FLRS18}
{\au S. Frigeri, K.-F. Lam, E. Rocca, G. Schimperna},
{\ti On a multi-species Cahn--Hilliard--Darcy tumor growth model with singular potentials},
{\jou Commun. Math. Sci.}
\no{16}{821--856}{2018}

\bibitem{GZ98}
{\au H. Gajewski, K. Zacharias},
{\ti Global behaviour of a reaction-diffusion system modelling chemotaxis}, {\jou Math. Nachr.}
\no{195}{77--114}{1998}


\bibitem{G}
{\au G. P. Galdi},
{\bk An Introduction to the Mathematical Theory of the Navier--Stokes Equations: Steady State Problems},
Second edition, Springer, New York, 2011.

\bibitem{GGG2023}
{\au C. G. Gal, A. Giorgini, M. Grasselli},
{\ti The separation property for 2D Cahn--Hilliard equations: local, nonlocal and fractional energy cases},
{\jou Discrete Contin. Dyn. Syst.}
\no{43}{2270--2304}{2023}

\bibitem{GalGW2019}
{\au C.~G. Gal, M.~Grasselli, H.~Wu},
{\ti Global weak solutions to a diffuse interface model for incompressible two-phase flows with moving contact lines and different densities},
{\jou Arch.\ Ration.\ Mech.\ Anal.}
\no{234}{1--56}{2019}

\bibitem{Gal2025}
{\au C. G. Gal, A. Poiatti},
{\ti Unified framework for the separation property in binary phase-segregation processes with singular entropy densities},
{\jou European J. Appl. Math.}
\no{36}{40--67}{2025}

\bibitem{GLam17a}
{\au H. Garcke, K.-F. Lam},
{\ti Well-posedness of a Cahn--Hilliard system modelling tumour growth
with chemotaxis and active transport},
{\jou Eur. J. Appl. Math.}
\no{28}{284--316}{2017}

\bibitem{GLam17b}
{\au H. Garcke, K.-F. Lam},
{\ti Analysis of a Cahn--Hilliard system with non-zero Dirichlet conditions
modeling tumor growth with chemotaxis},
{\jou Discrete Contin. Dyn. Syst.}
\no{37}{4277--4308}{2017}

\bibitem{GLSS}
{\au H. Garcke, K.-F. Lam, E. Sitka, V. Styles},
{\ti A Cahn--Hilliard--Darcy model for tumour growth with chemotaxis and active transport},
{\jou Math. Models Methods Appl. Sci.}
\no{26}{1095--1148}{2016}

\bibitem{Gior2021}
{\au A.~Giorgini},
{\ti  Well-posedness of the two-dimensional {A}bels--{G}arcke--{G}r\"{u}n model for two-phase flows with unmatched densities},
{\jou Calc. Var.}
\no{60}{article number 100}{2021}

\bibitem{G2021}
{\au A.~Giorgini},
{\ti Existence and stability of strong solutions to the {A}bels--{G}arcke--{G}r\"{u}n model in three dimensions},
{\jou Interfaces Free Bound.}
\no{24 (4)}{565--608}{2022}

\bibitem{GiGrWu2018}
{\au A.~Giorgini, M.~Grasselli, H.~Wu},
{\ti The {C}ahn--{H}illiard--{H}ele--{S}haw system with singular potential},
{\jou Ann. Inst. H. Poincar\'{e} C Anal. Non Lin\'{e}aire}
\no{35}{1079--1118}{2018}


\bibitem{GHW1}
{\au A.~Giorgini, J.-N. He, H.~Wu},
{\ti Global weak solutions to a Navier--Stokes--Cahn--Hilliard system with chemotaxis and mass transport: Cross diffusion versus logistic degradation},
{\jou Math.\ Models Methods Appl.\ Sci., in press, (2025). DOI:10.1142/\break S0218202526500028}

\bibitem{GK23}
{\au A. Giorgini, P. Knpof},
{\ti Two-phase flows with bulk-surface interaction: Thermodynamically consistent Navier--Stokes--Cahn--Hilliard models with dynamic boundary conditions},
{\jou J. Math. Fluid Mech.}
\no{25}{article number 65}{2023}


\bibitem{GMT2019}
{\au A.~Giorgini, A.~Miranville, R.~Temam},
{\ti Uniqueness and regularity for the {N}avier--{S}tokes--{C}ahn--{H}illiard system},
{\jou SIAM J.\ Math.\ Anal.}
\no{51}{2535--2574}{2019}


\bibitem{GPV96}
{\au M.~E.~Gurtin, D.~Polignone, J.~Vi\~{n}als},
{\ti Two-phase binary fluids and immiscible fluids described by an order parameter},
{\jou Math. Models Methods Appl. Sci.}
\no{6}{815--831}{1996}

\bibitem{Hawk2011}
{\au A. Hawkins-Daarud, K. G. van der Zee, J. T. Oden},
{\ti Numerical simulation of a thermodynamically consistent four-species tumor growth model},
{\jou Int. J. Numer. Methods Biomed. Eng.}
\no{28}{3--24}{2011}

\bibitem{H}
{\au J.-N. He},
{\ti Global weak solutions to a Navier--Stokes--Cahn--Hilliard system with chemotaxis and singular potential},
{\jou Nonlinearity}
\no{34}{2155--2190}{2021}

\bibitem{HW}
{\au J.-N. He, H. Wu},
{\ti Global well-posedness of a Navier--Stokes--Cahn--Hilliard system with chemotaxis and singular potential in 2D},
{\jou J. Differential Equations}
\no{297}{47--80}{2021}

\bibitem{HW24}
{\au J.-N. He, H. Wu,}
{\ti On a Navier--Stokes--Cahn--Hilliard system for viscous incompressible two-phase flows with chemotaxis, active transport and reaction},
{\jou Math. Ann.}
\no{389}{2193--2257}{2024}

\bibitem{HH77}
{\au P.~C.~Hohenberg, B.~I.~Halperin},
{\ti Theory of dynamic critical phenomena},
{\jou Rev. Mod. Phys.}
\no{49}{435--479}{1977}

\bibitem{Hor2004}
{\au D. Horstmann},
{\ti From 1970 until now: the Keller--Segel model in chemotaxis and its consequences},
{\jou Jahresber. Deutsch. Math.-Verein.}
\no{106}{51--69}{2004}


\bibitem{HorW01}
{\au D. Horstmann, G. Wang},
{\ti Blow-up in a chemotaxis model without symmetry assumptions},
{\jou European J. Appl. Math.}
\no{12}{159--177}{2001}


\bibitem{HorW2005}
{\au D. Horstmann, M. Winkler},
{\ti Boundedness vs. blow-up in a chemotaxis system},
{\jou J. Differential Equations},
\no{215}{52--107}{2005}

\bibitem{KS}
{\au E. F. Keller, L. A. Segel},
{\ti Model for chemotaxis},
{\jou J. Theor. Biol.}
\no{30}{225--234}{1971}

\bibitem{KS22}
{\au P. Knopf, A. Signori},
{\ti Existence of weak solutions to multiphase Cahn--Hilliard--Darcy and
Cahn--Hilliard--Brinkman models for stratified tumor growth with chemotaxis and general source terms},
{\jou Comm. Partial Differential Equations}
\no{47}{233--278}{2022}

\bibitem{LW2018}
{\au K.-F.~Lam, H.~Wu},
{\ti Thermodynamically consistent Navier--Stokes--Cahn-Hilliard models with mass transfer and chemotaxis},
{\jou European J. Appl. Math.},
\no{29}{595--644}{2018}

\bibitem{Lan2016}
{\au J. Lankeit},
{\ti Long-term behaviour in a chemotaxis-fluid system with logistic source}, {\jou Math. Models Methods Appl. Sci.}
\no{26}{2071--2109}{2016}

\bibitem{LT98}
{\au J.~Lowengrub, L.~Truskinovsky},
{\ti Quasi-incompressible {C}ahn--{H}illiard fluids and topological transitions},
{\jou R. Soc. Lond. Proc. Ser. A Math. Phys. Eng. Sci.}
\no{454}{2617--2654}{1998}

\bibitem{MT16}
{\au A. Miranville, R. Temam},
{\ti On the Cahn--Hilliard--Oono--Navier--Stokes equations with singular
potentials},
{\jou Appl. Anal.}
\no{95}{2609--2624}{2016}

\bibitem{MZ04}
{\au A.~Miranville, S.~Zelik},
{\ti Robust exponential attractors for {C}ahn--{H}illiard type equations with singular potentials},
{\jou Math. Methods. Appl. Sci.}
\no{27}{545--582}{2004}



\bibitem{NSY97}
{\au T. Nagai, T. Senba, K. Yoshida},
{\ti Application of the Trudinger--Moser inequality to a parabolic system of chemotaxis},
{\jou Funkcial. Ekvac.}
\no{40}{411--433}{1997}


\bibitem{Oden10}
{\au J. T. Oden, A. Hawkins, S. Prudhomme},
{\ti General diffuse-interface theories and an approach to predictive tumor growth modeling},
{\jou Math. Models Methods Appl. Sci.}
\no{20}{477--517}{2010}

\bibitem{Ons31}
{\au L. Onsager},
{\ti Reciprocal relations in irreversible processes. I},
{\jou Phys. Rev.}
\no{37}{405--426}{1931}

\bibitem{RSS2023}
{\au E.~Rocca, G.~Schimperna, A.~Signori},
{\ti On a Cahn--Hilliard--Keller--Segel model with generalized logistic source describing tumor growth},
{\jou J. Differential Equations}
\no{343}{530--578}{2023}

\bibitem{Sch07}
{\au G. Schimperna},
{\ti Global attractors for Cahn--Hilliard equations with nonconstant mobility},
{\jou Nonlinearity}
\no{20}{2365--2387}{2007}

\bibitem{Sch24}
{\au G. Schimperna},
{\ti On a modified Cahn--Hilliard--Brinkman model with chemotaxis
and nonlinear sensitivity},
preprint (2024). arXiv:2411.12505v1

\bibitem{SS2025}
{\au G. Schimperna, A. Segatti},
{\ti Global attractor for a Cahn--Hilliard--chemotaxis model with logistic degradation},
preprint (2025). arXiv:2511.11363v1

\bibitem{Shen2013}
{\au J. Shen, X. Yang, Q. Wang},
{\ti Mass and volume conservation in phase field models for binary fluids}, {\jou Commun. Comput. Phys.}
\no{13}{1045--1065}{2013}

\bibitem{SSBZ2017}
{\au M.~Shokrpour Roudbari, G.~\c{S}im\c{s}ek, E.~H.~van Brummelen, K.~G.~van der Zee},
{\ti Diffuse-interface two-phase flow models with different densities: a new quasi-incompressible form and a linear energy-stable method},
{\jou Math.\ Models Methods Appl.\ Sci.}
\no{28}{733--770}{2018}

\bibitem{si87}
{\au J. Simon},
{\ti Compact sets in the space $L^p(0, T; B)$},
{\jou Ann. Mat. Pura Appl. (4)}
\no{146}{65--96}{1987}

\bibitem{S}
{\au H. Sohr},
{\bk The Navier--Stokes Equations: An Elementary Functional Analytic Approach},
Birkh\"{a}user Advanced Texts, Springer, Basel, 2012.

\bibitem{Temam}
{\au R. Temam},
{\bk Infinite-Dimensional Dynamical Systems in Mechanics and Physics},
2nd ed., Appl. Math. Sci., Vol. 68, Springer-Verlag, New York, 1997.

\bibitem{ten2023}
{\au M. F. P. ten Eikelder, K. G. van der Zee, I. Akkerman, D. Schillinger},
{\ti A unified framework for Navier--Stokes Cahn--Hilliard models with non-matching densities},
{\jou Math. Models Methods Appl. Sci.}
\no{33}{175--221}{2023}

\bibitem{Tao11}
{\au Y.-S. Tao},
{\ti Boundedness in a chemotaxis model with oxygen consumption by bacteria},
{\jou J. Math. Anal. Appl.}
\no{381}{521--529}{2011}


\bibitem{TW2014}
{\au Y.-S.~Tao, M.~Winkler},
{\ti Energy-type estimates and global solvability in a two-dimensional chemotaxis-haptotaxis model with remodeling of non-diffusible attractant},
{\jou J. Differential Equations}
\no{257}{784--815}{2014}

\bibitem{WWX18}
{\au Y.-L. Wang, M. Winkler, Z.-Y. Xiang},
{\ti Global classical solutions in a two-dimensional chemotaxis Navier--Stokes system with subcritical sensitivity},
{\jou Ann. Sc. Norm. Super. Pisa Cl. Sci. (5)}
\no{18}{421--466}{2018}


\bibitem{W2016}
{\au M. Winkler},
{\ti Global weak solutions in a three-dimensional chemotaxis--Navier--Stokes system},
{\jou Ann. Inst. H. Poincar\'{e} Anal. Non Lin\'{e}aire}
\no{33}{1329--1352}{2016}

\bibitem{W2020}
{\au M. Winkler},
{\ti Small-mass solutions in the two-dimensional Keller--Segel system coupled to the Navier--Stokes equations},
{\jou SIAM J. Math. Anal.}
\no{52}{2041--2080}{2020}

\end{thebibliography}
\end{document}